\documentclass[11pt, reqno]{amsart}
\usepackage{fullpage}
\usepackage{thmtools}
\usepackage{hhline}
\usepackage{thm-restate}
\usepackage{subfiles}

\newif\ifHideFoot
\HideFoottrue  
\HideFootfalse   

\newif\ifAppendix
\Appendixtrue

\usepackage{enumitem, amssymb,amsmath,amsthm,amscd,mathrsfs,graphicx, color}
\usepackage[cmtip,all,matrix,arrow,tips,curve]{xy}
\usepackage{xr-hyper}
\usepackage{longtable}
\usepackage{hyperref}
\usepackage{cleveref}
\usepackage{caption}
\usepackage{subcaption}
\usepackage[usenames,dvipsnames]{xcolor}
\usepackage{xypic}
\hypersetup{colorlinks=true,citecolor=ForestGreen,linkcolor=Maroon,urlcolor=NavyBlue}
\usepackage{mathtools}
\usepackage{mathpazo}
\usepackage{tikz}
\usepackage{array}
\usepackage{tikz-cd}
\usepackage{circuitikz}
\usepackage{float}
\setlist[enumerate]{leftmargin=8.5mm}

\numberwithin{equation}{section}
\newtheorem{teo}{Theorem}[section]
\newtheorem{pro}[teo]{Proposition}
\newtheorem{lem}[teo]{Lemma}
\newtheorem{cor}[teo]{Corollary}

\newtheorem*{teo*}{Theorem}
\newtheorem*{pro*}{Proposition}
\newtheorem*{dfn*}{Definition}

\theoremstyle{definition}
\newtheorem{dfn}[teo]{Definition}

\newtheorem{conv}[teo]{Convention}
\newtheorem{proc}[teo]{Procedure}

\theoremstyle{remark}
\newtheorem{rem}[teo]{Remark}

\ifHideFoot

\newcommand{\Yano}[1]{}
\newcommand{\Jon}[1]{}

\else

\newcommand{\marg}[1]{\normalsize{{
			\color{red}\footnote{{\color{blue}#1}}}{\marginpar[\vskip
			-.25cm{\color{Maroon}\hfill\thefootnote$\implies$}]{\vskip
				-.2cm{\color{Maroon}$\impliedby$\tiny\thefootnote}}}}}
\newcommand{\Yano}[1]{\marg{(Yano) #1}}
\newcommand{\Jon}[1]{\marg{(Jon) #1}}

\fi

\author{Jon Kim}
\address{University of Colorado, Department of Mathematics,
	Boulder, CO 80309, USA }
\email{joki5923@colorado.edu}

\title{KSBA moduli spaces of cubic surfaces with a marked line}

\subjclass{14D20 (primary), 14J10, 14J26, 14J15 (secondary)}
\keywords{moduli space, wall-crossing, compactification, cubic surface, stable pair}

\begin{document}
	\begin{abstract}
		The moduli space of cubic surfaces and its compactifications are classical and date back to the mid-nineteenth century. Recently, Schock described compactifications of moduli spaces of fully marked cubic surfaces with their 27 lines via Koll\'ar--Shepherd-Barron--Alexeev (KSBA) weighted stable pairs where the 27 lines are uniformly weighted. Furthermore, he provided an explicit finite wall-and-chamber decomposition of the weight domain, together with explicit descriptions of the weighted stable pairs parameterized by the moduli spaces in each chamber. We extend this work by describing compactifications of moduli spaces of cubic surfaces with a marked line via KSBA stable pairs with nonuniform weights, in which the marked line is weighted differently from the other 26 lines. In particular, we provide an explicit finite wall-and-chamber decomposition of the weight domain, yielding new KSBA coarse moduli spaces that have not previously been studied. We also give explicit descriptions of these nonuniform weighted stable pairs parameterized by the moduli spaces in each chamber. 
	\end{abstract}
	
	\maketitle
	
	\section{Introduction}\label{S:intro}
The moduli space $Y$ of marked cubic surfaces is one of the most classical moduli spaces in algebraic geometry, whose study dates back to the mid-nineteenth century, beginning with Cayley and Salmon's investigation of smooth cubic surfaces and their $27$ lines \cite{cayley1849triple}. There is a natural action of the Weyl group $W(E_6)$ of the root system $E_6$ on $Y$, obtained by permuting the marking of the lines. Furthermore, the geometry of marked cubic surfaces, the moduli space $Y$, and its compactifications is intimately connected with the combinatorics of the $E_6$ root system (see \cite{manin1986cubic}). In the KSBA setting, if one weights all $27$ lines uniformly by a rational number $c \in (\frac{1}{9}, 1] \cap \mathbb Q$, Schock  \cite{schock_moduli_2024} constructed an explicit wall-and-chamber decomposition of the weight domain yielding five walls (see \Cref{S:detectingwalls}) at $c = \frac{1}{6}, \frac{1}{4}, \frac{1}{3}, \frac{1}{2}$, and $\frac{2}{3}$, with birational wall-crossing morphisms between the KSBA moduli spaces in different chambers. 

We extend Schock's work by considering moduli spaces of marked cubic surfaces with non-uniform rational weights $(b, c) \in (\frac{1}{9}, 1]^2\cap \mathbb Q^2$ satisfying $b \geq c$, where one line is weighted by $b$ and the remaining $26$ lines are uniformly weighted by $c$. While KSBA wall crossing  \cite{ascher_wall_2023, meng2023mmplocallystablefamilies} for pairs with a uniform weight has been studied extensively, explicit computations in higher-dimensional weight domains remain relatively rare \cite{alexeev2015moduli, alexeev2008weighted, gallardo2025explicit}. In addition to contributing further examples of KSBA wall-crossings and explicit degenerations of weighted marked cubic surfaces, the case considered here provides a concrete two-parameter example arising naturally from the geometry of cubic surfaces. Allowing a separate weight for a distinguished line introduces new wall crossings corresponding to interactions between the distinguished line and the singularities of the cubic surface, giving rise to a moduli space of cubic surfaces with a marked line (\Cref{S:ksba}). In particular, we show that the normalization of the KSBA compactification $\overline{Y}_\ell$ coincides with the toroidal compactification $\overline{\mathbb{B}_4/ \Gamma_\ell}$ (see \cite{doran2004moduli} and \cite{dolgachev2005complex}). The birational geometry of $\overline{\mathbb{B}_4/ \Gamma_\ell}$ was studied extensively by Casalaina-Martin--Grushesky--Hulek \cite{casalaina-martin_birational_2024}. The following is our main result:

	\begin{teo}\label{teo:main}
		The following is a wall-and-chamber decomposition of the weight domain $(b, c) \in \mathrm{Amp}$ (\Cref{eqn:ampcone}) with $b \ge c$. There are precisely 20 chambers where the \textbf{bold lines}
		\begin{align*}
			c = \frac{b}{3}, \hspace{.2in} c = \frac{b}{4}, \hspace{.2in} c = \frac{1}{4} \hspace{.1in} (\frac{1}{2} < b \leq 1), \hspace{.2in} \frac{1}{9} < b=c \leq \frac{1}{6}, \hspace{.2in} \frac{1}{6} < b=c \leq \frac{1}{4}, \hspace{.1in} \text{ and } \hspace{.1in} \frac{1}{4} < b=c \leq \frac{1}{3}
		\end{align*}
		are chambers themselves.
		\begin{center}
			\begin{tikzpicture}[scale=14]
				\draw[very thick,-latex] (1/9, 1/9) -- (1.03, 1/9) 
				node[right]{$b$};
				
				\draw[very thick,-latex] (1/9, 1/9) -- (1/9, 1.03)
				node[left]{$c$};
				
				\draw[solid] (1/9,1/9) -- (1,1);
				\node at (1/9, 1/9) [below] {\tiny $(\frac{1}{9}, \frac{1}{9})$};
				\node at (1, 1) [left] {$(1, 1)$};
				
				\draw[thick] (1, 1/9) -- (1,1);
				\node at (1, 1/9) [below] {$(1, \frac{1}{9})$};
				
				\draw[thick] (2/3, 2/3) -- (1, 2/3) node[right] {\tiny{$c=\frac{2}{3}$}};
				\node[circle, fill, inner sep=1pt] at (2/3, 2/3) {};
				\node at (2/3, 2/3) [left] {\tiny$(\frac{2}{3}, \frac{2}{3})$};
				\node at (8/9, 7/9) {$\overline{\mathcal{Y}}_{(\frac{2}{3}, 1]}$};
				
				\draw[thick] (1/2, 1/2) -- (1, 1/2);
				\node[circle, fill, inner sep=1pt] at (1/2, 1/2) {};
				\node at (1/2, 1/2) [left] {\tiny $(\frac{1}{2}, \frac{1}{2})$};
				\node at (0.7, 0.56) {$\overline{\mathcal{Y}}_{(\frac{1}{2}, \frac{2}{3}]}^{\leq}$};
				
				\node at (0.9, 0.63) {$\overline{\mathcal{Y}}_{(\frac{1}{2}, \frac{2}{3}]}^{>}$};
				\draw[solid] (2/3, 2/3) -- (1, 1/2) node[right] {\color{black} \tiny{$c = \frac{1}{2}$}};
				\node[rotate=-26.6] at (5/6, 57/96) {\tiny wall due to $\ell$ containing Eckardt point(s)};
				
				\draw[thick] (1/2, 1/2) -- (4/5, 1/5);
				\node[rotate=-45] at (7/12, 5/12) [above] {\tiny $c = -b+1$};
				\node[align=center] at (5/6, 0.42) {$\overline{\mathcal{Y}}_{(\frac{1}{3}, \frac{1}{2}]}^>$};
				
				\draw[thick] (1/4, 1/4) -- (7/13, 2/13);
				\node[circle, fill, inner sep=1pt] at (1/4, 1/4) {};
				\node[rotate=-18.43] at (3/8, 24/120) [above] {\tiny$c=-\frac{b}{3} + \frac{1}{3}$};
				\node at (7/13, 2/13) [below] {\tiny{$(\frac{7}{13}, \frac{2}{13})$}};
				\node at (1/4, 1/4) [left] {\tiny$(\frac{1}{4}, \frac{1}{4})$};
				\node at (1.77/6, 4.8/24) {$\overline{\mathcal{Y}}_{(\frac{1}{6}, -\frac{b}{3} + \frac{1}{3}]}$};
				
				\draw[thick] (1/3, 1/3) -- (1,1/3);
				\node at (1, 1/3) [right] {\tiny$c=\frac{1}{3}$};
				\node at (1/3, 1/3) [left] {\tiny$(\frac{1}{3}, \frac{1}{3})$};
				\node[align=center] at (5.2/6, 7/24) {$\overline{\mathcal{Y}}_{(\frac{1}{4}, \frac{1}{3}]}^>$};
				\node[align=center] at (1/2, 4.7/12) {$\overline{\mathcal{Y}}_{(\frac{1}{3}, \frac{1}{2}]}^\leq$};
				\node[align=center] at (1/2, 7/24) {$\overline{\mathcal{Y}}_{(\frac{1}{4}, \frac{1}{3}]}^\leq$};
				
				\draw[thick] (1/4, 1/4) -- (3/4,1/4);
				\node at (1/4, 1/4) [left] {\tiny$(\frac{1}{4}, \frac{1}{4})$};
				
				\draw[thick] (1/6, 1/6) -- (2/3, 1/6);
				\node[circle, fill, inner sep=1pt] at (1/6, 1/6) {};
				\node at (7/36, 7/36) [left] {\tiny$(\frac{1}{6}, \frac{1}{6}$)};
				\node at (2/3, 1/6) [below] {\tiny{$(\frac{2}{3}, \frac{1}{6})$}};
				\draw[->] (7/24, 1/11) -- (1/4, 10/72);
				\node[circle, fill, inner sep=1.5pt] at (1/4, 10/72) [above] {};
				
				\draw[very thick] (1/4, 1/4) -- (1/3, 1/3);
				\draw[->] (1/11, 1/3) -- (7/24, 7/24);
				\node at (1/10, 1/3) [left] {$\overline{\mathcal{Y}}_{(\frac{1}{4}, \frac{1}{3}]}^=$};		
				\node[circle, fill, inner sep=1pt] at (1/4, 1/4) {};
				\node[circle, fill, inner sep=1pt] at (1/3, 1/3) {};
				
				\draw[very thick] (1/6, 1/6) -- (1/4, 1/4);
				\draw[->] (1/11, 1/4) -- (5/24, 5/24);
				\node at (1/10, 1/4) [left] {$\overline{\mathcal{Y}}_{(\frac{1}{6}, \frac{1}{4}]}^=$};
				\node[circle, fill, inner sep=1pt] at (1/6, 1/6) {};
				%
				\draw[thick] (1/6, 1/6) -- (2/3, 1/6);
				\node at (7/36, 7/36) [left] {\tiny$(\frac{1}{6}, \frac{1}{6}$)};
				\draw[->] (7/24, 1/11) -- (1/4, 10/72);
				\node[circle, fill, inner sep=1pt] at (1/4, 10/72) [above] {};
				\node[align=center] at (7/24, 1/20) {$\overline{\mathcal{Y}}_{(\frac{b}{10} + \frac{1}{10}, \frac{1}{6}]}^\leq$};
				%
				\draw[thick] (1/4, 1/4) -- (3/4, 1/4);
				\draw[very thick] (3/4, 1/4) -- (1, 1/4);
				\node[align=center] at (6.5/13, 2.5/12) {$\overline{\mathcal{Y}}_{(-\frac{b}{3} + \frac{1}{3}, \frac{1}{4}]}$};
				\node at (1, 1/4) [right] {\tiny{$c=\frac{1}{4}$}};
				%
				\draw[thick] (4/5, 1/5) -- (1, 1/5);
				%
				\draw[very thick] (1/2, 1/6) -- (3/4, 1/4);
				\node[rotate=18.43] at (2/3, 17/81) [above] {\tiny$c=\frac{b}{3}$};
				%
				\draw[very thick] (2/3, 1/6) -- (4/5, 1/5);
				\node at (2/3, 1/6) [below] {\tiny{$(\frac{2}{3}, \frac{1}{6})$}};
				\node[rotate=14.04] at (22/30, 21/120) [above] {\tiny $c=\frac{b}{4}$};
				
				\draw[->] (1.1, 1/6) -- (5.2/6, 10.5/48);
				\node[circle, fill, inner sep=1.5pt] at (5.15/6, 10.5/48) [above] {};
				\node[align=left] at (1.16, 1/6) {$\overline{\mathcal{Y}}_{(\frac{1}{5}, \frac{1}{4})}^>$};
				
				\draw[->] (1.05, .85/3) -- (8.5/13, 1/5);
				\node[circle, fill, inner sep=1.5pt] at (8.5/13, 1/5) [left] {};
				\node[align=center] at (1.1, .85/3) {$\overline{\mathcal{Y}}_{(\frac{b}{4}, \frac{b}{3})}$};
				
				\draw[->] (7/13, 1/11) -- (7.3/13, 9.4/60);
				\node[circle, fill, inner sep=1.5pt] at (7.3/13, 9.4/60) [above] {};
				\node[align=center] at (7/13, 1/20) {$\overline{\mathcal{Y}}_{(\frac{b}{10} + \frac{1}{10}\frac{1}{6}]}^>$};
				
				\draw[->] (10/13, 1/11) -- (10.5/13, 1.1/6);
				\node[circle, fill, inner sep=1.5pt] at (10.5/13, 1.1/6) [above] {};
				\node[align=center] at (10/13, 1/20) {$\overline{\mathcal{Y}}_{(\frac{b}{10} + \frac{1}{10}, \frac{1}{5}]}$};
				
				\draw[very thick] (1/9, 1/9) -- (1/6, 1/6);
				\draw[->] (1/11, 1/6) -- (5/36, 5/36);
				\node at (1/10, 1/6) [left] {$\overline{\mathcal{Y}}_{(\frac{1}{9}, \frac{1}{6}]}^=$};
				
				\draw[dashed] (1/9, 1/9) -- (1, 1/5);
				\node[rotate=5.71] at (5/6, 11/60) [below] {\tiny$c=b/10 + 1/10$};
				\node at (1, 1/5) [right] {\tiny{$c=\frac{1}{5}$}};
				
			\end{tikzpicture}
		\end{center}
	\end{teo}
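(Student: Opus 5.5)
The plan follows Schock's strategy for the uniform case, adapted to the two-parameter weight $(b,c)$. For a stable pair $(X, bL_1 + c\sum_{i\ge 2} L_i)$ with $X$ a (degeneration of a) cubic surface, the walls of the decomposition are the loci of $(b,c)$ where one of two things changes: the log canonical threshold condition fails for some configuration that occurs on a degenerate surface in the family, or the divisor $K_X + bL_1 + c\sum L_i$ ceases to be ample on some component of the stable limit. We will therefore treat the two sources of walls separately.

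First we work on the interior of the family, over normal cubic surfaces with at worst ADE or simple-elliptic singularities. Write $K_X + B = -H + bL_1 + c\sum_{i\ge2}L_i$. On a smooth cubic, $\sum_{i=1}^{27} L_i \sim 9H$, so the divisor is numerically $(-1+b+26c)H$ plus a correction; ampleness gives the lower boundary of $\mathrm{Amp}$ (the line through $(1/9,1/9)$).

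Next we compute lc thresholds at the special points:
\begin{itemize}
\item At an Eckardt point lying on $L_1$, three lines meet, with weights $b+2c$. The lc condition $b+2c\le 1$ produces, in the relevant range, the wall $c=\tfrac12$ and the slanted wall through $(2/3,2/3)$ and $(1,1/2)$. At an Eckardt point avoiding $L_1$ we get Schock's wall $c=\tfrac13$.
\item At $A_n$ singularities, Schock's walls $c=\tfrac12,\tfrac14,\tfrac16$ (the last for $3A_2$, lines through the singularities, $\tilde E_6$ cusps) acquire $b$-dependence whenever $L_1$ passes through the singular point. Taking the minimal resolution and pulling back the boundary, we demand that the discrepancies satisfy $a\ge -1$ (resp.\ $>-1$). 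Solving these linear inequalities in $b,c$ for each of the finitely many configurations of $L_1$ relative to the singular points yields the lines $c=-b+1$, $c=-b/3+1/3$, $c=b/3$, $c=b/4$ and $c=b/10+1/10$.
\end{itemize}
The enumeration of configurations uses the classification of singular cubic surfaces and their lines, which is standard.

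Second, for the boundary strata, namely the non-normal and reducible degenerations appearing for small weights (the stable replacements of the $\tilde E_6$ and $3A_2$ surfaces, constructed by Schock as unions of components glued along double curves), we run the same analysis componentwise. On each component we compute the restriction of $K+B$, including the conductor, and find when it ceases to be ample or becomes trivial on a curve. This is where a component gets contracted or flipped as $(b,c)$ crosses a line, and where the extra wall-crossing involving $L_1$ sitting on a particular component arises. Because $L_1$ can sit on distinct types of components, the region $b=c$ behaves differently from $b>c$. This explains why the diagonal segments are chambers of their own: for $b>c$ the symmetry breaks and the stable limit over a point of the diagonal differs from nearby limits.

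Finally, we verify the decomposition. On each open region, the set of stable pairs (and hence the moduli functor) is constant. We argue this via the MMP for locally stable families (Ascher et al., Meng--Zhuang): reduction morphisms exist across walls, and they are isomorphisms precisely when no lc or ampleness condition changes. Checking that each listed line genuinely changes some surface gives the exact count of 20 chambers.

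The main obstacle is the boundary analysis: we must determine the stable limits for $b\neq c$ and check that no further walls arise from $L_1$ lying on a degenerate component. I would first compute explicit one-parameter degenerations to the $3A_2$ and $\tilde E_6$ surfaces with $L_1$ in each $W(E_6)$-stabilizer orbit position, then check ampleness of $K+B$ on each component.
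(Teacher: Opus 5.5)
There is a genuine gap in your plan, in two places.

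\textbf{The wall criteria are incomplete.} You take walls to come only from failure of lc on the fiber or failure of ampleness of $K+B$ on a component. For $b\neq c$ this does not characterize the stable limit. A fiber can be slc with ample $K+B$ and still not be the limit, because $K+b\mathcal L_1+c\sum_{i\ge2}\mathcal L_i$ must be $\mathbb{Q}$-Cartier on the total space of the degeneration. The paper enforces this through a third criterion, the constant-volume condition $\operatorname{vol}=-b^2+20bc-2b+224c^2-52c+3$, and that criterion is what produces the new one-dimensional chambers.

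Concretely, take the $2A_1$ cubic with $\ell$ through both nodes. For all small $(b,c)$ it is lc and $K+B\equiv(9c-1)H+(b-c)\ell$ is ample. But there $\ell^2=0$ instead of $-1$, so its volume exceeds the required one by $(b-c)^2$. For $b\neq c$ (small $c$) the actual limit is the minimal resolution, with the $(-2)$-curves in the boundary with multiplicity $6$. Geometrically, after the base change needed to mark the lines, the total space acquires threefold nodes through which $\mathcal L_1$ passes, and $K+B$ is $\mathbb{Q}$-Cartier there only when $b=c$. This is the actual reason the diagonal segments are separate chambers; your ``the symmetry breaks'' does not derive it.

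The same happens at $c=b/3$. There the glued surface has $K+B$ ample on every component even for $c$ near $b/3$. Only its volume, which is off by $2(b-3c)^2$, shows that it is stable only on that line.

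Your claim that $c=b/3$ and $c=b/4$ come out of discrepancy inequalities at $A_n$ points cannot be right. An lc condition has the form $m_1b+m_2c\le\gamma$ with $m_1,m_2\ge0$ and $\gamma>0$, which never gives a positive-slope line through the origin. Also, $c=b/10+1/10$ is just the boundary of $\mathrm{Amp}$, not a wall.

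\textbf{Nothing produces the complete list of stable limits.} The paper starts from the explicit weight-one universal family $\ddot\pi:\ddot Y(E_7)\to\overline{Y}_{(1,\dots,1)}$ of Hacking--Keel--Tevelev, whose fibers over every boundary stratum are known from Schock. It splits each stratum by whether $\ell$ passes through $0$, $1$ or $2$ nodes, then lowers $(b,c)$, contracting components and regluing them. The regluing is justified by the gluing corollary, which needs Kodaira vanishing and base-point-freeness along the conductor; your plan does not address this either. Completeness follows because every stable pair is obtained by lowering weights from one of these fibers.

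By contrast, your plan has the following problems:
\begin{itemize}
\item You propose to analyze ADE/simple-elliptic cubics and $\tilde E_6$ cusps, which never occur. At minimal weight the surfaces are Naruki's fibers: $kA_1$ cubics with $k\le4$ and the $3A_2$-type degenerations.
\item You locate the reducible surfaces at small weights. For every $kA_1$ stratum they occur near weight one.
\item Ad hoc one-parameter degenerations cannot certify that no further walls exist, which is what ``precisely 20 chambers'' requires.
\end{itemize}

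Several thresholds are also miscomputed. Three lines meeting at a smooth point are lc iff their weights sum to at most $2$. So an Eckardt point on $\ell$ gives the slanted wall $b+2c=2$, and one off $\ell$ gives $c=2/3$, not $1/3$. The walls $c=-b+1$ and $c=-b/3+1/3$ come from $\ell$ lying in a double (resp.\ quadruple) line of the boundary, with coefficient $b+c$ (resp.\ $b+3c$) that must be at most $1$. They do not come from discrepancies at singular points.
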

	
	The following table gives a detailed description of the $(b, c)$-chamber region for each moduli stack in the wall-and-chamber decomposition in \Cref{teo:main}. The subscript in the notation for the moduli spaces below  refers to bounds on $c$, whereas the superscript indicates that $b$ and $c$ are further constrained by one or more further linear inequalities. Recall that we always have $\frac{1}{9} < b \le 1$.  
	\begin{center}
		\begin{tabular}{ | m{1.5cm} | m {5.5cm} | } 
			\hline
			Moduli & $(b, c)$ chamber region with $b \geq c$\\
			\hline
			$\overline{\mathcal{Y}}_{(\frac{2}{3}, 1]}$& $\frac{2}{3} < c \leq 1$ \\ 
			\hline
			$\overline{\mathcal{Y}}_{(\frac{1}{2}, \frac{2}{3}]}^>$ & $\frac{1}{2} < c \leq \frac{2}{3}$, $c > -\frac{b}{2}+1$ \\ 
			\hline
			$\overline{\mathcal{Y}}_{(\frac{1}{2}, \frac{2}{3}]}^\leq$ & $\frac{1}{2} < c \leq \frac{2}{3}$, $c \leq -\frac{b}{2}+1$ \\ 
			\hline
			$\overline{\mathcal{Y}}_{(\frac{1}{3}, \frac{1}{2}]}^>$ & $\frac{1}{3} < c \leq \frac{1}{2}$, $c > -b+1$ \\ 
			\hline
			$\overline{\mathcal{Y}}_{(\frac{1}{3}, \frac{1}{2}]}^\leq$ & $\frac{1}{3} < c \leq \frac{1}{2}$, $c \leq -b+1$\\
			\hline
			$\overline{\mathcal{Y}}_{(\frac{1}{4}, \frac{1}{3}]}^>$ &$\frac{1}{4} < c \leq \frac{1}{3}$, $c > -b+1$\\
			\hline
			$\overline{\mathcal{Y}}_{(\frac{1}{4}, \frac{1}{3}]}^\leq$ &$\frac{1}{4} < c \leq \frac{1}{3}$, $c \leq -b+1$, $b \neq c$\\
			\hline
			$\overline{\mathcal{Y}}_{(\frac{1}{4}, \frac{1}{3}]}^=$ & $\frac{1}{4} < b=c \leq \frac{1}{3}$\\
			\hline
			$\overline{\mathcal{Y}}_{c = \frac{1}{4}}$ & $c = \frac{1}{4}$, $\frac{3}{4} < b \leq 1$\\
			\hline
			$\overline{\mathcal{Y}}_{(\frac{1}{5}, \frac{1}{4})}^>$ & $\frac{1}{5} < c < \frac{1}{4}$, $c > - b +1$\\
			\hline
		\end{tabular}
		\hspace{.5cm}
		\begin{tabular}{ | m{1.5cm} | m {5.5cm} | } 
			\hline
			Moduli & $(b, c)$ chamber region with $b \geq c$\\
			\hline
			$\overline{\mathcal{Y}}_{(\frac{b}{10} + \frac{1}{10}, \frac{1}{5}]}$ & $\frac{b}{10} + \frac{1}{10} < c \leq \frac{1}{5}$, $c < \frac{b}{4}$\\
			\hline
			$\overline{\mathcal{Y}}_{c=\frac{b}{4}}$& $c=\frac{b}{4}$, $\frac{2}{3} < b \leq \frac{4}{5}$ \\ 
			\hline
			$\overline{\mathcal{Y}}_{(\frac{b}{4}, \frac{b}{3})}$ & $\frac{b}{4} < c < \frac{b}{3}$, $\frac{1}{6} < c \leq -b+1$\\
			\hline
			$\overline{\mathcal{Y}}_{c = \frac{b}{3}}$ & $c = \frac{b}{3}$, $\frac{1}{2} < c \leq \frac{3}{4}$\\
			\hline
			$\overline{\mathcal{Y}}_{(-\frac{b}{3} + \frac{1}{3}, \frac{1}{4}]}$ & $-\frac{b}{3} + \frac{1}{3} < c \leq \frac{1}{4}$, $c > \frac{b}{3}$\\
			\hline
			$\overline{\mathcal{Y}}_{(\frac{1}{6}, -\frac{b}{3} + \frac{1}{3}]}$ & $\frac{1}{6} < c \leq - \frac{b}{3} + \frac{1}{3}$, $b \neq c$\\
			\hline
			$\overline{\mathcal{Y}}_{(\frac{1}{6}, \frac{1}{4}]}^=$ & $\frac{1}{6} < b =c \leq \frac{1}{4}$\\
			\hline
			$\overline{\mathcal{Y}}_{(\frac{b}{10} + \frac{1}{10}, \frac{1}{6}]}^>$ & $\frac{b}{10} + \frac{1}{10} < c \leq \frac{1}{6}$, $c > - \frac{b}{3} + \frac{1}{3}$\\
			\hline
			$\overline{\mathcal{Y}}_{(\frac{b}{10} + \frac{1}{10}, \frac{1}{6}]}^\leq$ & $\frac{b}{10} + \frac{1}{10} < c \leq \frac{1}{6}$, $c \leq - \frac{b}{3} + \frac{1}{3}$, $b \neq c$\\
			\hline
			$\overline{\mathcal{Y}}_{(\frac{1}{9}, \frac{1}{6}]}^=$ & $\frac{1}{9} < b = c \leq \frac{1}{6}$\\
			\hline
		\end{tabular}
	\end{center}
	
	\begin{teo}\label{cor:mainmorphisms}
		For the wall-and-chamber decomposition of \Cref{teo:main}, we obtain wall-crossing morphisms of the moduli stacks, which induce the following wall-crossing morphisms of coarse moduli spaces
\[\begin{tikzcd}
	{\overline{Y}_{(\frac{1}{9},\frac{1}{6}]}^=} & {\overline{Y}_{(\frac{1}{6},\frac{1}{4}]}^=} && {\overline{Y}_{(\frac{1}{4},\frac{1}{3}]}^=} & {\overline{Y}_{(\frac{1}{3},\frac{1}{2}]}^\leq} & {\overline{Y}_{(\frac{1}{2},\frac{2}{3}]}^\leq} & {\overline{Y}_{(\frac{2}{3},1]}} \\
	{\overline{Y}_{(\frac{b}{10}+\frac{1}{10},\frac{1}{6}]}^\leq} & {\overline{Y}_{(\frac{1}{6}, -\frac{b}{3}+\frac{1}{3}]}} && {\overline{Y}_{(\frac{1}{4},\frac{1}{3}]}^\leq} & {\overline{Y}_{(\frac{1}{3},\frac{1}{2}]}^>} & {\overline{Y}_{(\frac{1}{2},\frac{2}{3}]}^>} \\
	{\overline{Y}_{(\frac{b}{10}+\frac{1}{10},\frac{1}{6}]}^>} & {\overline{Y}_{(-\frac{b}{3}+\frac{1}{3}, \frac{1}{4}]}} && {\overline{Y}_{(\frac{1}{4},\frac{1}{3}]}^>} \\
	& {\overline{Y}_{c=\frac{b}{3}}} & {\overline{Y}_{c=\frac{1}{4}}} \\
	& {\overline{Y}_{(\frac{b}{4}, \frac{b}{3})}} & {\overline{Y}_{(\frac{1}{5}, \frac{1}{4})}^>} \\
	& {\overline{Y}_{c=\frac{b}{4}}} \\
	& {\overline{Y}_{(\frac{b}{10} + \frac{1}{10}, \frac{1}{5})}}
	\arrow[from=1-2, to=1-1]
	\arrow["\times"{description}, from=1-4, to=1-2]
	\arrow[from=1-5, to=1-4]
	\arrow[from=1-5, to=2-4]
	\arrow[from=1-6, to=1-5]
	\arrow[from=1-6, to=2-5]
	\arrow[from=1-7, to=1-6]
	\arrow[from=1-7, to=2-6]
	\arrow[from=2-1, to=1-1]
	\arrow[from=2-2, to=1-2]
	\arrow[from=2-2, to=2-1]
	\arrow[from=2-4, to=1-4]
	\arrow["\times"{description}, from=2-4, to=3-2]
	\arrow[from=2-5, to=1-5]
	\arrow[from=2-5, to=2-4]
	\arrow[from=2-5, to=3-4]
	\arrow[from=2-6, to=1-6]
	\arrow[from=2-6, to=2-5]
	\arrow["\times"{description}, from=3-1, to=2-1]
	\arrow["\times"{description}, from=3-2, to=2-1]
	\arrow["\times"{description}, from=3-2, to=2-2]
	\arrow[from=3-4, to=2-4]
	\arrow["\times"{description}, from=3-4, to=4-2]
	\arrow["\times"{description}, from=3-4, to=4-3]
	\arrow[from=4-2, to=3-2]
	\arrow[from=4-3, to=5-3]
	\arrow[from=5-2, to=3-1]
	\arrow[from=5-2, to=4-2]
	\arrow[from=5-3, to=5-2]
	\arrow[from=5-3, to=6-2]
	\arrow[from=5-3, to=7-2]
	\arrow[from=6-2, to=5-2]
	\arrow[from=7-2, to=6-2]
\end{tikzcd}\]
		where each morphism is an isomorphism except for the ones labeled by an $\times$.
	\end{teo}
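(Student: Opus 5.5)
The plan is to invoke the general KSBA wall-crossing theory for weighted stable pairs \cite{ascher_wall_2023, meng2023mmplocallystablefamilies}. Whenever $(b',c') \le (b,c)$ coordinatewise, or more generally whenever one weight vector lies in the closure of the chamber of the other, that theory gives a reduction morphism $\overline{\mathcal{Y}}_{(b,c)} \to \overline{\mathcal{Y}}_{(b',c')}$ of moduli stacks. On a pair $(X, bL + c\sum D_i)$ it is given fibrewise by running the MMP and taking the log canonical model of $(X, b'L + c'\sum D_i)$. These stack morphisms descend to the coarse spaces. So the task is threefold: check that every arrow in the diagram points from a chamber to an adjacent chamber (or wall-chamber) in its closure with weights decreasing; identify the arrows that are isomorphisms; and show that the arrows marked $\times$ are not.

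For existence, take each arrow and pick weight vectors $w \in$ source chamber and $w'$ in the closure of the target chamber with $w' \le w$, reading the adjacencies off the picture in \Cref{teo:main}. For example, the arrow $\overline{Y}_{(\frac{2}{3},1]} \to \overline{Y}^>_{(\frac12,\frac23]}$ corresponds to lowering $c$ across $c=\frac23$ with $b$ large. The bold-line chambers such as $c=\frac b3$ and $c=\frac b4$ receive and emit maps because they lie in the boundaries of the adjacent open chambers. The general theory then gives a morphism whenever the target's weight vector lies in the closure of the source chamber.

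For the isomorphism claims, I will use the explicit descriptions, from the body of the paper, of the stable pairs parametrized in each chamber. If the underlying surface and divisor are the same on both sides of a wall, and only the coefficients change, then the log canonical model does not change. The reduction morphism is then bijective on points and on automorphism groups, hence an isomorphism of coarse spaces; here normality is used, and I would invoke normality of both targets, e.g.\ via the identification with $\overline{\mathbb{B}_4/\Gamma_\ell}$. This is the situation for walls like $c=-\frac b2+1$ (the marked line passes through an Eckardt point). On such a wall only the log canonical threshold condition along $\ell$ changes, so the new divisor is still lc, the pairs are unchanged, and the map is an isomorphism. The same reasoning handles the map from the diagonal chambers $b=c$ to the corresponding off-diagonal chambers across $c=-b+1$.

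For the $\times$ arrows, I will exhibit, for each one, a stable pair on the source side whose log canonical model on the target side contracts a component. Examples are a line through an $A_1$ or $A_2$ singularity when crossing $c=\frac b3$ or $c=\frac b4$, or the components created at $c=\frac14$ and $c=\frac16$ from Schock's walls. Such a contraction forgets moduli (cross-ratios on a contracted component), so the map is not injective; alternatively, the source and target boundary strata have different dimensions. The main obstacle is this case analysis. For each wall, I must show that the singular cubics or their degenerations, together with the chosen line $\ell$, really change their stable model precisely there and nowhere else. In particular, I need to see why walls such as $c=\frac b{10}+\frac1{10}$ produce isomorphisms between some chambers but not others. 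I would organize this wall by wall, computing the discrepancy and log canonical thresholds of $\ell$ and the other 26 lines at each singularity type.
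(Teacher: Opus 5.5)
Your existence step matches the paper: reduction morphisms come from \cite[Thm.~1.1]{ascher_wall_2023} whenever the weights decrease coordinatewise. The part that decides which arrows are isomorphisms, however, has a genuine gap.

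\textbf{Your isomorphism criterion does not work.} You require the stable pairs to be literally the same on both sides of a wall. That never happens across an actual wall, since a wall is exactly where some stable model changes. Most arrows the statement declares to be isomorphisms cross walls where the stable models do change.
\begin{itemize}
\item Across $c=\frac{b}{3}$, the surface of \Cref{E2A1_mult4_14c12} is replaced by those of \Cref{E2A1_mult4_c=b3} and \Cref{E2A1_mult4_b10110c14}.
\item Across $c=-\frac{b}{2}+1$, the copy of $E:\mathbb{P}^2$ attached at an Eckardt point on $\ell$ is contracted (\Cref{rem:elleckcase}). So your claim that the pairs are unchanged there is false.
\end{itemize}
The idea you are missing is \Cref{lem:isomoduli}. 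For a wall of positive slope ($c=\frac{b}{3}$, $c=\frac{b}{4}$, $b=c$), move from chamber $A$ down into $B$, then left back into $A$. By \Cref{teo:wallcrossing}, the composite $\overline{Y}_A\to\overline{Y}_B\to\overline{Y}_A$ is the canonical isomorphism. This forces the proper birational map $\overline{Y}_B\to\overline{Y}_A$ to be quasi-finite, hence an isomorphism of normal varieties by Zariski's main theorem. It gives isomorphisms of coarse spaces across every positive-slope wall with no surface-by-surface analysis, even though the stacks differ.

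For the remaining horizontal and negative-slope walls, the right test is whether the components contracted at the wall carry moduli, i.e.\ whether the reduction morphism has a positive-dimensional fibre. For example, the $\mathbb{P}^2$ with four general lines contracted at $c=\frac{2}{3}$ or $c=-\frac{b}{2}+1$ carries no moduli, which is why those arrows are isomorphisms.

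\textbf{Your list of non-isomorphisms contradicts the statement.} Every arrow crossing $c=\frac{b}{3}$, $c=\frac{b}{4}$ or $c=\frac{1}{6}$ is an isomorphism in the diagram. Examples are $\overline{Y}_{(\frac{b}{4},\frac{b}{3})}\to\overline{Y}_{c=\frac{b}{3}}\to\overline{Y}_{(-\frac{b}{3}+\frac{1}{3},\frac{1}{4}]}$ and $\overline{Y}^{=}_{(\frac{1}{6},\frac{1}{4}]}\to\overline{Y}^{=}_{(\frac{1}{9},\frac{1}{6}]}$. So the moduli-forgetting contractions you propose to exhibit at those walls cannot exist.

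The arrows marked $\times$ are exactly those crossing $c=\frac{1}{4}$ or the new wall $c=-\frac{b}{3}+\frac{1}{3}$, and your proposal never identifies the latter. At $c=-\frac{b}{3}+\frac{1}{3}$ the component containing $\ell$ is contracted (Steps 4 and 5 of the proof of \Cref{E2A1_mult4}). This is what produces $\ddot{Y}_\ell\to\overline{Y}_\ell$ in \Cref{cor:mainmoduli}.

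\textbf{Smaller points.}
\begin{itemize}
\item $c=\frac{b}{10}+\frac{1}{10}$ is the boundary of $\mathrm{Amp}$, not a wall between chambers.
\item The diagonal chambers are not adjacent to $c=-b+1$, which meets the diagonal only at $(\frac{1}{2},\frac{1}{2})$.
\end{itemize}
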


In order to describe the birational morphisms among the three non-isomorphic (coarse) moduli spaces, we recall from \cite{casalaina-martin_birational_2024} some of the geometry of $\overline{\mathbb{B}_4/\Gamma_\ell}$. Recall the boundary divisors $D_{A_1, \ell}^{\text{in}}$ and $D_{A_1, \ell}^{\text{out}}$ of $\overline{\mathbb{B}_4/ \Gamma_\ell}$ given by the closure of the locus of cubic surfaces with a unique node and the marked line $\ell$ containing and not containing the node, respectively. As a convention, we denote by $D_{2A_1, \ell}^{\text{in}}$ as the self-intersection of $D_{A_1, \ell}^{\text{in}}$. Analogously, for $i = 2, 3, 4$, we denote by $D_{iA_1, \ell}^{\text{out}}$ as the self-intersection of $i$ $D_{A_1, \ell}^{\text{out}}$ divisors. The following table gives a list of the $A_1$ boundary strata of $\overline{\mathbb{B}_4/\Gamma_\ell}$.
\begin{center}
\begin{tabular}{ | m{1.1cm} | m {7cm} | } 
	\hline
	Codim & Boundary strata\\
	\hline
	\hline
	1 & $D_{A_1, \ell}^{\text{in}}$ and $D_{A_1, \ell}^{\text{out}}$\\
	\hline
	2 & $D_{2A_1, \ell}^{\text{in}}$, $D_{A_1, \ell}^{\text{in}} \cap D_{A_1, \ell}^{\text{out}}$, and $D_{2A_1, \ell}^{\text{out}}$\\
	\hline
	3 & $D_{2A_1, \ell}^{\text{in}} \cap D_{A_1, \ell}^{\text{out}}$, $D_{A_1, \ell}^{\text{out}} \cap D_{2A_1, \ell}^{\text{out}}$, and $D_{3A_1, \ell}^{\text{out}}$\\
	\hline
	4 & $D_{2A_1, \ell}^{\text{in}} \cap D_{2A_1, \ell}^{\text{out}}$ and $D_{4A_1, \ell}^{\text{out}}$\\
	\hline
\end{tabular}
\end{center}
	
	\begin{teo}\label{cor:mainmoduli}
		In the wall-and-chamber decomposition of \Cref{teo:main}, there are three non-isomorphic coarse moduli spaces with birational morphisms
		\begin{align*}
			\widetilde{Y}_\ell \to \ddot Y_\ell \to \overline{Y}_\ell
		\end{align*}
		where the spaces and morphisms can be described as follows:
		\begin{enumerate}
			\item The morphism $\ddot Y_\ell \to \overline{Y}_\ell$ is the blow up along the strict transforms of the intersections of $D_{2A_1, \ell}^{in}$ with other $A_1$ boundary strata, described above, in increasing order of dimension.
			\item The morphism $\widetilde{Y}_\ell \to \ddot{Y}_\ell$ is the blow up along the strict transforms of the intersections of the remaining $A_1$ boundary strata, in increasing order of dimension.
		\end{enumerate}
		The composition $\widetilde{Y}_\ell \to \overline{Y}_\ell$ is the blow up along the strict transforms of the intersections of $A_1$ boundary strata, in increasing order of dimension.
	\end{teo}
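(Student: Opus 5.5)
We first identify the three non-isomorphic coarse spaces among the 20 chambers. By \Cref{cor:mainmorphisms}, the only non-isomorphic wall-crossings are the ones marked $\times$. Following them, the coarse spaces split into three isomorphism classes, and we name them by where they occur:
\begin{itemize}
\item the large-weight chambers, with $c>\frac14$ or $c>\frac13$ near $b=c$, together with the chambers isomorphic to them; this class is $\overline{Y}_\ell$;
\item the intermediate chambers, namely $\overline{Y}_{(\frac16,\frac14]}^=$, $\overline{Y}_{(-\frac b3+\frac13,\frac14]}$, $\overline{Y}_{(\frac16,-\frac b3+\frac13]}$ and those isomorphic to them; this class is $\ddot Y_\ell$;
\item the small-weight chambers near $c=\frac{b}{10}+\frac1{10}$ and $b=c\le\frac16$; this class is $\widetilde Y_\ell$.
\end{itemize}
The maps $\widetilde Y_\ell\to\ddot Y_\ell\to\overline Y_\ell$ are the compositions of wall-crossing morphisms, which decrease weights toward the large-weight side.

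To identify $\overline{Y}_\ell$, we use the fact quoted in the introduction that the normalization of the KSBA compactification for large weights is the toroidal compactification $\overline{\mathbb B_4/\Gamma_\ell}$. In the large-weight chambers, the stable pairs have at worst $A_1$ singularities, or are the explicit degenerations Schock describes, with the boundary strata indexed by configurations of nodes. Comparing with the description of \cite{casalaina-martin_birational_2024}, we identify these strata with the $A_1$ strata of the table. Each point of $D_{iA_1,\ell}^{\mathrm{in/out}}$ corresponds to a nodal cubic, with the node on $\ell$ or off $\ell$.

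For the two blow-up statements, we analyse the surfaces acquired across each $\times$ wall. When $c$ drops below the walls $c=\frac14$, $c=\frac{b}{3}$ and $c=-\frac b3+\frac13$, the nodes lying on $\ell$ stop being log canonical with the chosen coefficients. The stable limit then replaces the nodal surface by a surface with an extra component, obtained by a weighted blow up of the node. This replacement is needed exactly at points where two nodes lie on $\ell$, i.e. along $D^{\mathrm{in}}_{2A_1,\ell}$ and its intersections with the other strata. We will show the fibre of the wall-crossing morphism over such a point is the space of these new components, a projective space of the expected dimension. We then check that the morphism is an isomorphism away from these strata. Smoothness of the moduli stack together with normality then identifies the map as the iterated blow up of the strict transforms, in increasing dimension, by the universal property of blow-ups: the exceptional locus is Cartier and the centres are smooth because the strata are snc in the toroidal picture. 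The same argument at the walls with $b=c$ near $\frac16$ handles the remaining strata, including those off $\ell$, giving $\widetilde Y_\ell\to\ddot Y_\ell$ and Schock's blow up description.

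The main obstacle is to prove that the wall-crossing maps are exactly blow ups, rather than merely birational morphisms with the right exceptional loci. Our first approach is local: near a deepest point, such as $D^{\mathrm{in}}_{2A_1,\ell}\cap D^{\mathrm{out}}_{2A_1,\ell}$, we write the KSBA family in coordinates given by smoothing parameters of the nodes. We then check that the replaced family is pulled back along the iterated blow-up of coordinate subspaces, in the manner of the wonderful compactification, so that the ordering by increasing dimension falls out automatically. The final sentence of the statement follows by composing the two parts.
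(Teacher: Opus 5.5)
Your proposal has the orientation reversed, and this is a genuine gap, not a labelling slip. In the paper, $\overline{Y}_\ell$ is by definition the \emph{small}-weight space $\overline{Y}_{(\frac19+\epsilon,\frac19+\epsilon)}$ (\Cref{def:markedlinemodulispace}). It is this space that \Cref{teo:GKS+epsilon} identifies with $\overline{\mathbb{B}_4/\Gamma_\ell}$. At the minimal weights $b=c\le\frac16$ the stable pairs are the nodal cubics themselves, e.g.\ the $2A_1$ cubic $(S_0,(b,c)B_0)$ with $\ell$ of multiplicity $4$ through both nodes. That is why the $D^{\mathrm{in}}_{iA_1,\ell}$, $D^{\mathrm{out}}_{iA_1,\ell}$ strata live on $\overline{Y}_\ell$. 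The large-weight end is $\widetilde{Y}_\ell=\overline{Y}_{(1,1)}$, the analogue of Schock's $\overline{Y}_{(1,\dots,1)}$, which is the blow-up of $\overline{N}$ along intersections of $A_1$ divisors.

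The wall-crossing morphisms of \Cref{teo:wallcrossing} only go from larger to smaller weights. With your labelling, the maps $\widetilde Y_\ell\to\ddot Y_\ell\to\overline Y_\ell$ would run from small to large weights, and nothing supplies them. Worse, composed with the wall-crossing morphism going the other way, they would give birational self-morphisms that are the identity on a dense open set, hence the identity. That would force all three spaces to be isomorphic.

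Your mechanism is reversed for the same reason. Lowering coefficients never destroys log canonicity: by \Cref{lem:lineslccriteria} and \Cref{lem:linesA1lccriteria}, the condition is that a sum of coefficients is at most $2$. So nodes on $\ell$ do not ``stop being log canonical'' as $c$ drops, and no new components appear. What actually happens as $(b,c)$ decreases is that $K+(b,c)B$ fails to be ample on some component, which is then contracted. Examples are the type $4'$ copies of $\mathbb{F}_0$ at $c=-b+1$, the $\mathrm{Bl}_1\mathbb{P}^2$ component at $c=-\frac b3+\frac13$ in Step 5, and the $(-2)$-curves at $b=c$. The surfaces lose components and pairs of different boundary types become identified, which is why the wall-crossing maps contract exceptional loci. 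Extra components appear only as weights increase.

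Your bookkeeping of where the coarse space changes is also off. By \Cref{cor:mainmorphisms}, the only non-isomorphic crossings are across $c=\frac14$ and across $c=-\frac b3+\frac13$:
\begin{itemize}
\item the $\times$ arrow into $\overline Y_{c=\frac b3}$ is a crossing of $c=\frac14$;
\item the wall $c=\frac b3$ changes only the stack;
\item along $b=c$, both walls are crossed at once at $(\frac14,\frac14)$.
\end{itemize}
In the paper, $\widetilde Y_\ell\to\ddot Y_\ell$ (across $c=\frac14$) contracts the components coming from non-flat $2A_3$ divisors not containing $\ell$. The map $\ddot Y_\ell\to\overline Y_\ell$ (across $c=-\frac b3+\frac13$) contracts those containing $\ell$, i.e.\ those over strata involving $D^{\mathrm{in}}_{2A_1,\ell}$ (cf.\ \Cref{E2A1_mult4}). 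Nothing happens to coarse spaces near $b=c\le\frac16$, since the arrows into $\overline Y^{=}_{(\frac19,\frac16]}$ are isomorphisms. So the ``remaining strata'' cannot be handled there.

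Your grouping is also wrong. $\overline Y^{=}_{(\frac16,\frac14]}$ and $\overline Y_{(\frac16,-\frac b3+\frac13]}$ lie in the class of $\overline Y_\ell$, since the arrow $\overline Y_{(-\frac b3+\frac13,\frac14]}\to\overline Y_{(\frac16,-\frac b3+\frac13]}$ is marked $\times$. Meanwhile, the chambers along $c=\frac b{10}+\frac1{10}$ lying above $c=-\frac b3+\frac13$ belong to the class of $\ddot Y_\ell$.

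Once reoriented, your plan to verify the blow-up structure in local smoothing coordinates would be a more hands-on alternative to the paper, which reads the contracted loci off the explicit stable surfaces. But your universal-property step is incomplete as stated. You need the inverse image of the ideal of the centre to be invertible, not just the exceptional locus to be Cartier. You then need a further argument, e.g.\ finiteness plus Zariski's main theorem, that the induced map to the blow-up is an isomorphism.
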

	
	\subsection{Outline} In \Cref{S:moduliproblems} and \Cref{S:ksba} we introduce the moduli problems we consider and their KSBA compactifications.
	
	In \Cref{S:univfam}, we give a summary of Hacking--Keel--Tevelev's \cite{hacking_stable_2009} construction of the weight 1 (fully) marked compactification $\overline{Y}_{(1, \dots, 1)}$ and its universal family $\ddot{Y}(E_7) \to \overline{Y}_{(1, \dots 1)}$. We also summarize Schock's \cite{schock_moduli_2024} explicit computation of the general fibers of $\ddot{Y}(E_7) \to \overline{Y}_{(1, \dots, 1)}$ over each boundary stratum of $\overline{Y}_{(1, \dots 1)}$. 
	
	Sections \ref{S:modulichange} and \ref{S:gluing} establish technical lemmas needed for the proofs of the main results. In particular, we show that the naive approach to constructing stable replacements at each wall-crossing, by working on each irreducible component separately, and then gluing the irreducible components obtained by taking the appropriate contractions, is indeed the same as the log canonical model under certain base-point free conditions. 
	
	In \Cref{S:wallandchamberbytype}, we organize the wall-and-chamber decomposition of \Cref{teo:main} by considering boundary strata in $\overline{Y}_{(1, \dots, 1)}$ determined by the number of singularities, as well as the number of singularities the marked line $\ell$ goes through.
	
	Finally, in \Cref{S:proofofchambers}, we run the log minimal model program on the fibers $(S, (b, c)B)$ via the induced forgetful morphism from the universal family over $\overline{Y}_{(1, \dots, 1)}$ to the universal family over $\overline{Y}_{(1, 1)}$ as one decreases $(b, c)$ from $(1, 1)$ toward the minimal weight $(1/9 + \epsilon, 1/9 + \epsilon)$ for a fixed marked line $\ell$. This will give us the wall-and-chamber decomposition as well as the corresponding descriptions of the stable models parameterized by the moduli spaces in each chamber for this case of marked line $\ell$, and we repeat this process for every possible case.
	
	\subsection*{Acknowledgements} I would first like to thank my advisor, Sebastian Casalaina-Martin, for introducing and guiding me through this project. I would also like to thank Patricio Gallardo, Kaden Saucedo, Luca Schaffler, and Nolan Schock, for their very helpful conversations, feedback, and encouragement.
	
	\section{The smooth moduli problems and existing compactifications}\label{S:moduliproblems}
In this section, we will establish the moduli problems of smooth cubic surfaces that are unmarked, marked, or marked by a single line $\ell$, together with a brief survey of their compactifications. Namely, let $\mathcal{Y}_u$, $\mathcal{Y}_m$, and $\mathcal{Y}_\ell$ denote the moduli stacks of smooth cubic surfaces that are unmarked, fully marked, and singly marked, respectively. Similarly, let $Y_u$, $Y_m$, and $Y_\ell$ denote the corresponding coarse moduli spaces of smooth cubic surfaces that are unmarked, fully marked, and singly marked. We also call $\mathcal{Y}_\ell$ (resp. $Y_\ell$) the moduli stack (resp. moduli space) of smooth cubic surfaces with a marked line $\ell$.

\subsection{The moduli problem $\mathcal{Y}_u$}
The objects of this stack are smooth proper families $X \to T$, where $T$ is a reduced scheme over $\mathbb{C}$, such that for every geometric point $t \in T$, the fiber $X_t = X \times_T \mathrm{Spec}k(t)$ is a smooth cubic surface. Naturally, the morphisms are given by pullbacks. We also have a natural forgetful morphism $\mathcal{Y}_u \to \mathfrak{Sch}/\mathbb{C}$, where $\mathfrak{Sch}/\mathbb{C}$ is the stack of schemes over $\mathbb{C}$.

\subsection{The moduli problem $\mathcal{Y}_m$}
To define this moduli problem, we first clarify what we mean by a marked cubic surface. Following the discussion in \cite[\S~2]{dolgachev2005complex}, given a cubic surface $S$, there are two notions of markings: \textit{geometric} and \textit{lattice markings}.

\begin{dfn}(Geometric marking)
	Given a cubic surface, its minimal resolution $S$ admits a birational morphism $\pi : S \to \mathbb{P}^2$ whose factorization $\pi = \pi_6 \circ \cdots \circ \pi_1$ is called a \textit{geometric marking}. Two geometric markings $S = S_0 \to S_1 \to \cdots \to S_6 = \mathbb{P}^2$ and $S' = S_0' \to S_1' \to \cdots \to S_6' = \mathbb{P}^2$ are \textit{isomorphic} if there exist isomorphisms $\phi_i : S_i \to S_i'$ for each $i = 0, \dots, 6$, such that $\pi'_{i+1} \circ \phi_i = \phi_{i+1} \circ \pi_{i+1}$ for $i = 0, \dots, 5$.
\end{dfn}

\begin{dfn}(Lattice marking)\label{dfn:latticemarking}
	Let $I_{1, 6}$ denote the standard odd unimodular hyperbolic lattice of signature $(1, 6)$ with $k = -3 \epsilon_0 + \epsilon_1 + \cdots + \epsilon_6$. A \textit{lattice marking} for a cubic surface $S$ is given by an isometry $\phi : I_{1, 6} \to \mathrm{Pic}(S)$ such that $\phi(k) = K_S$. In particular, the restriction of $\phi$ to $k^\perp$ is an isometry $k^\perp \to K_S^\perp$. Using the identification of the root lattice $E_6$ with the sublattice $k^\perp \subset \mathrm{Pic}(S)$, its Weyl group $W(E_6)$ is the subgroup of the orthogonal group of $\mathrm{Pic}(S)$ generated by reflections in the classes of the $(-2)$ curves on $S$. Equivalently, it is the subgroup of automorphisms of $I_{1, 6}$ preserving $k$ and the intersection pairing (see \cite[Theorem 23.9]{manin1986cubic}). Two lattice markings $\phi, \phi'$ are \textit{equivalent} if there exists a $\sigma \in W(E_6)$ such that $\phi = \sigma \circ \phi$.  
\end{dfn}

Given a geometric marking $(S, \pi)$, the ordered sequence of blow ups $\pi = \pi_6 \circ \cdots \circ \pi_1$ defines an ordered basis $(e_0, e_1, \dots, e_6)$ for $\mathrm{Pic}(S)$ where $e_0$ is the divisor class of the pullback of the hyperplane class in $\mathbb{P}^2$ and $e_i$ for $1 \leq i \leq 6$ is the exceptional divisor class obtained by blowing up the $i$-th point in $\mathbb{P}^2$. The divisor classes in $S$ corresponding to the 27 lines have unique representations as $\mathbb{Z}$-linear combinations of these basis elements, e.g. $e_i, e_0 - e_i - e_j $, etc. Furthermore, this isomorphism class of a geometric marking defines an equivalence class of a lattice marking by defining 
\begin{align*}
	\phi : I_{1, 6} \to \mathrm{Pic}(S) \hspace{.2in} \phi(\epsilon_i) = e_i,
\end{align*}
and there is a natural bijection between the isomorphism classes of geometric markings and equivalence classes of lattice markings on $S$ (see \cite[Prop.~2.4]{dolgachev2005complex}, \cite[Thm.~4.6]{looijenga1981rational})

We now define the moduli problem $\mathcal{Y}_m$ as follows. Its objects are families
 $$\pi : (X, \mathrm{Pic}_{X/T}) \to T$$
where each geometric fiber is the pair $(X_t, \mathrm{Pic}(X_t))$ composed of a cubic surface $X_t$ and its Picard group with its intersection pairing. In particular, note that $\mathrm{Pic}_{X/T} \cong R^2 \pi_* \underline{\mathbb{Z}}$ is a locally constant sheaf and we have the intersection pairing
\begin{align*}
	R^2 \pi_* \underline{\mathbb{Z}} \times R^2 \pi_* \underline{\mathbb{Z}} \to R^4 \pi_* \underline{\mathbb{Z}}.
\end{align*}
Equivalently, the objects of this moduli problem are families $(X, L_1, \dots, L_{27}) \to T$ where each fiber is made up of a cubic surface $X_t$ and a labeling of its 27 lines $L_{1, t}, \dots, L_{27, t}$ (including multiplicity) with their intersections.

\subsection{The moduli problem $\mathcal{Y}_\ell$}
The objects of this stack are given by $(X, \ell)\to T$ where each fiber $(X_t,  \ell_t)$ is a pair of a cubic surface $X_t$ and a marked line $\ell_t$. To specify the marked line $\ell$, by our discussion of $\mathcal{Y}_m$ above, we first take a full marking $(X, L_1, \dots, L_{27})$ of $X$ and choose $\ell = L_1$. This choice of marked line is dependent on our full marking, so we have a chain of natural forgetful morphisms
\begin{align*}
	\mathcal{Y}_m \to \mathcal{Y}_\ell \to \mathcal{Y}_u
\end{align*}

\subsection{Existing compactifications of the moduli of marked cubic surfaces}\label{S:existingcpt}
To place our results in context, we will do a brief literature survey of the compactifications of the moduli space of unmarked, marked, and singly marked cubic surfaces.

\subsubsection{The GIT compactification}
Recall the GIT compactification of the moduli space of unmarked cubic surfaces. We refer the reader to \cite{oxbury_introduction_2003} for an introduction to GIT and particularly Section 7.2. for details on the moduli space of  cubic surfaces. 
\begin{align*}
	\overline{Y}_u^{\mathrm{GIT}} := |\mathcal{O}_{\mathbb{P}^3}(3)| /\!\!/_{\mathcal{O}_{\mathbb{P}^{19}}(1)} \mathrm{SL}(4, \mathbb{C})
\end{align*}
By explicit computations dating back to Cayley and Salmon \cite{salmon1912treatise}, there are homogeneous polynomials $I_8, I_{16}, I_{24}, I_{32}, I_{40}$
of degrees 8, 16, 24, 32, 40, respectively, generating the ring of $\mathrm{SL}(4, \mathbb{C})$ invariant homogeneous polynomials on $|\mathcal{O}_{\mathbb{P}^3}(3)|$, so we get an alternative description of the moduli space
\begin{align*}
	\overline{Y}_u^{\mathrm{GIT}} = \mathbb{P}(8, 16, 24, 32, 40) = \mathbb{P}(1, 2, 3, 4, 5)
\end{align*}
as a weighted projective space. The (semi)stable locus consists of cubic surfaces with at worst $A_1$ or $A_2$ singularities with a single isolated boundary point corresponding to the GIT polystable $3A_2$ cubic surface given by $x_0 x_1 x_2 + x_3^3 = 0$.

Following \cite[\S~2.8]{dolgachev2005complex}, we can modify this GIT quotient to construct the GIT compactification of $Y_m$. Let $\mathbb{C}(Y_m)$ be the field of rational functions of $Y_m$, which is an extension of the field of rational functions $\mathbb{C}(\overline{Y}_u^{\mathrm{GIT}})$ with Galois group $W(E_6)$. Define the compactification $\overline{Y}_m^\mathrm{GIT}$ to be the normalization of $\overline{Y}_u^\mathrm{GIT}$ in $\mathbb{C}(Y_m)$. By construction, $W(E_6)$ acts on $\overline{Y}_m^\mathrm{GIT}$ and the quotient by this action recovers $\overline{Y}_u^\mathrm{GIT}$. Thus, we have a natural $51840 : 1$ cover
\begin{align*}
	\overline{Y}_m^{\mathrm{GIT}} \to \overline{Y}_u^{\mathrm{GIT}}
\end{align*}
with 40 singular points parameterizing different markings of the unique GIT polystable $3A_2$ cubic surface. The 40 singular points are identified with the vertex of the cone over the Segre embedding of $(\mathbb{P}^1)^3$ into $\mathbb{P}^7$ \cite[\S 2.12]{dolgachev2005complex}.

\subsubsection{Naruki's cross-ratio variety}\label{S:naruki}
Related to the GIT compactification of $Y_m$,  we have Naruki's compactification $\overline{N}$ and his family of cubic surfaces over it introduced in \cite{naruki_cross_1982}. Namely, from the cover
\begin{align*}
	\overline{Y}_m^{\mathrm{GIT}} \to \overline{Y}_u^{\mathrm{GIT}},
\end{align*}
the Naruki compactification $\overline{N}$ is then obtained by blowing up the 40 singular points
\begin{align*}
	\overline{N} \to \overline{Y}_m^{\mathrm{GIT}} \to \overline{Y}_u^{\mathrm{GIT}}.
\end{align*} 
The Naruki compactification is smooth, and its boundary $\overline{N} - Y_m$ is normal crossing and decomposes into the union of 76 irreducible divisors. 36 of these divisors are associated to marked cubic surfaces with $A_1$ singularities called \textit{Type $A_1$} \cite{naruki_cross_1982}[Prop.~11.1]. Cubic surfaces with multiple $A_1$ singularities define higher-codimension strata called \textit{Type} $2A_1$, $3A_1$, or $4A_1$. The remaining 40 divisors are pairwise disjoint and are isomorphic to $(\mathbb{P}^1)^3$; these are called \textit{Type $3A_2$} divisors \cite[Prop.~11.2]{naruki_cross_1982}. 

Furthermore, there is a family of cubic surfaces over $\overline{N}$ constructed by Naruki in \cite{naruki_cross_1982} by modifying the family of cubic surfaces by Cayley in \cite[pg.~446]{cayley1849triple}. In particular,  let $(X: Y : Z : W)$ denote the coordinates of $\mathbb{P}^3$ and $(\lambda, \mu, \nu, \rho)$ denote the coordinates of the torus $T = (\mathbb{C})^4$. Then define the family $\mathcal{S} \subseteq \mathbb{P}^3 \times T$ over $T$ of cubic surfaces defined by the following equation found in \cite[Eq.~(5.1)]{naruki_cross_1982}:
\begin{align*}
	\rho W &\Big (\lambda X^2 + \mu Y^2 + \nu Z^2 + (\rho -1)^2 (\lambda \mu \nu \rho - 1)^2 W^2 + (\mu \nu + 1) Y Z + (\lambda \nu + 1) X Z + (\lambda \mu + 1) XY\\
	& - (\rho - 1) (\lambda \mu \nu \rho - 1) W ((\lambda + 1) X + (\mu + 1) Y + (\nu + 1) Z)\Big ) + XYZ = 0.
\end{align*}
The family $\mathcal{S} \to T$ extends to a family of cubic surfaces $\overline{\mathcal{S}} \to \overline{N}$ by \cite[Prop.~12.2]{naruki_cross_1982}.  The fibers over a general point in each boundary strata is given in \Cref{tab:narukifibers}.

\begin{table}
	\centering
	\caption{
		 This table is copied from \cite[Table 1]{gallardo_geometric_2021}. Fibers of the family $\overline{S} \to \overline{N}$ over a general point in each boundary strata of $\overline{N}$ that also shows the degenerations of the 27 lines. Solid lines have multiplicity $1$, dashed lines have multiplicity $2$, and dotted lines have multiplicity $4$. For the fibers over the boundary of Type $i A_1$ for $i = 1, 2, 3, 4$, lines with multiplicity $1$ are not drawn. Similar conventions are used in \Cref{conv:descpresent}.
	}
	\label{tab:narukifibers}
	\renewcommand{\arraystretch}{1.4}
	\begin{tabular}{|>{\centering\arraybackslash}m{7.8cm}|>{\centering\arraybackslash}m{7.8cm}|}
		\hline
		Type $A_1$
		&
		Type $3A_2$
		\\
		\hline
		\begin{tikzpicture}[scale=0.5]
			
			\draw[dashed,line width=.5pt] (0,0) -- (-1.4,.5);
			\draw[dashed,line width=.5pt] (0,0) -- (-1,1);
			\draw[dashed,line width=.5pt] (0,0) -- (-.7,1.5);
			\draw[dashed,line width=.5pt] (0,0) -- (1.4,.5);
			\draw[dashed,line width=.5pt] (0,0) -- (1,1);
			\draw[dashed,line width=.5pt] (0,0) -- (.7,1.5);
			
			\fill (0,0) circle (5pt);
			
			\node at (0,-0.7) {$A_1$};

		\end{tikzpicture}
		&
		\begin{tikzpicture}[scale=0.5]
			
			\draw[line width=1pt] (0,0) -- (0,4);
			\draw[line width=1pt] (0,0) -- (-4,-3);
			\draw[line width=1pt] (0,0) -- (4,-3);
			
			\draw[line width=.5pt] (0,1) -- (1,-3/4);
			\draw[line width=.5pt] (0,1) -- (2,-3/2);
			\draw[line width=.5pt] (0,1) -- (3,-9/4);
			\draw[line width=.5pt] (0,2) -- (1,-3/4);
			\draw[line width=.5pt] (0,2) -- (2,-3/2);
			\draw[line width=.5pt] (0,2) -- (3,-9/4);
			\draw[line width=.5pt] (0,3) -- (1,-3/4);
			\draw[line width=.5pt] (0,3) -- (2,-3/2);
			\draw[line width=.5pt] (0,3) -- (3,-9/4);
			
			\draw[line width=.5pt] (0,1) -- (-1,-3/4);
			\draw[line width=.5pt] (0,1) -- (-2,-3/2);
			\draw[line width=.5pt] (0,1) -- (-3,-9/4);
			\draw[line width=.5pt] (0,2) -- (-1,-3/4);
			\draw[line width=.5pt] (0,2) -- (-2,-3/2);
			\draw[line width=.5pt] (0,2) -- (-3,-9/4);
			\draw[line width=.5pt] (0,3) -- (-1,-3/4);
			\draw[line width=.5pt] (0,3) -- (-2,-3/2);
			\draw[line width=.5pt] (0,3) -- (-3,-9/4);
			
			\draw[line width=.5pt] (-1,-3/4) -- (1,-3/4);
			\draw[line width=.5pt] (-1,-3/4) -- (2,-3/2);
			\draw[line width=.5pt] (-1,-3/4) -- (3,-9/4);
			\draw[line width=.5pt] (-2,-3/2) -- (1,-3/4);
			\draw[line width=.5pt] (-2,-3/2) -- (2,-3/2);
			\draw[line width=.5pt] (-2,-3/2) -- (3,-9/4);
			\draw[line width=.5pt] (-3,-9/4) -- (1,-3/4);
			\draw[line width=.5pt] (-3,-9/4) -- (2,-3/2);
			\draw[line width=.5pt] (-3,-9/4) -- (3,-9/4);

		\end{tikzpicture}
		\\
		\hline
		\hline
		Type $2A_1$
		&
		Type $A_1 \cap 3A_2$
		\\
		\hline
		\begin{tikzpicture}[scale=0.5]
			
			\draw[dotted,line width=.5pt] (-3,0) -- (3,0);
			
			\draw[dashed,line width=.5pt] (0+1.5,0) -- (-1+1.5,1);
			\draw[dashed,line width=.5pt] (0+1.5,0) -- (-.7+1.5,1.5);
			\draw[dashed,line width=.5pt] (0+1.5,0) -- (1+1.5,1);
			\draw[dashed,line width=.5pt] (0+1.5,0) -- (.7+1.5,1.5);
			
			\draw[dashed,line width=.5pt] (0-1.5,0) -- (-1-1.5,1);
			\draw[dashed,line width=.5pt] (0-1.5,0) -- (-.7-1.5,1.5);
			\draw[dashed,line width=.5pt] (0-1.5,0) -- (1-1.5,1);
			\draw[dashed,line width=.5pt] (0-1.5,0) -- (.7-1.5,1.5);
			
			\fill (1.5,0) circle (5pt);
			\fill (-1.5,0) circle (5pt);
			
			\node at (1.5,-0.7) {$A_1$};
			\node at (-1.5,-0.7) {$A_1$};
			
		\end{tikzpicture}
		&
		\begin{tikzpicture}[scale=0.5]
			
			\draw[line width=1pt] (0,0) -- (0,4);
			\draw[line width=1pt] (0,0) -- (-4,-3);
			\draw[line width=1pt] (0,0) -- (4,-3);
			
			\draw[line width=.5pt] (0,1) -- (1,-3/4);
			\draw[line width=.5pt] (0,1) -- (2,-3/2);
			\draw[line width=.5pt] (0,1) -- (3,-9/4);
			\draw[line width=.5pt] (0,2) -- (1,-3/4);
			\draw[line width=.5pt] (0,2) -- (2,-3/2);
			\draw[line width=.5pt] (0,2) -- (3,-9/4);
			\draw[line width=.5pt] (0,3) -- (1,-3/4);
			\draw[line width=.5pt] (0,3) -- (2,-3/2);
			\draw[line width=.5pt] (0,3) -- (3,-9/4);
			
			\draw[line width=.5pt] (0,1) -- (-1,-3/4);
			\draw[dashed,line width=.5pt] (0,1) -- (-3,-9/4);
			\draw[line width=.5pt] (0,2) -- (-1,-3/4);
			\draw[dashed,line width=.5pt] (0,2) -- (-3,-9/4);
			\draw[line width=.5pt] (0,3) -- (-1,-3/4);
			\draw[dashed,line width=.5pt] (0,3) -- (-3,-9/4);
			
			\draw[line width=.5pt] (-1,-3/4) -- (1,-3/4);
			\draw[line width=.5pt] (-1,-3/4) -- (2,-3/2);
			\draw[line width=.5pt] (-1,-3/4) -- (3,-9/4);
			\draw[dashed,line width=.5pt] (-3,-9/4) -- (1,-3/4);
			\draw[dashed,line width=.5pt] (-3,-9/4) -- (2,-3/2);
			\draw[dashed,line width=.5pt] (-3,-9/4) -- (3,-9/4);

		\end{tikzpicture}
		\\
		\hline
		\hline
		Type $3A_1$
		&
		Type $2A_1 \cap 3A_2$
		\\
		\hline
		\begin{tikzpicture}[scale=0.5]
			
			\draw[dotted,line width=.5pt] (0,4) -- (-2,0);
			\draw[dotted,line width=.5pt] (0,4) -- (2,0);
			\draw[dotted,line width=.5pt] (-2,0) -- (2,0);
			
			\draw[dashed,line width=.5pt] (0,4) -- (-.3,2.5);
			\draw[dashed,line width=.5pt] (0,4) -- (.3,2.5);
			
			\draw[dashed,line width=.5pt] (-2,0) -- (-.7,.5);
			\draw[dashed,line width=.5pt] (-2,0) -- (-1,1);
			
			\draw[dashed,line width=.5pt] (2,0) -- (.7,.5);
			\draw[dashed,line width=.5pt] (2,0) -- (1,1);
			
			\fill (-2,0) circle (5pt);
			\fill (2,0) circle (5pt);
			\fill (0,4) circle (5pt);
			
			\node at (-2.7,0) {$A_1$};
			\node at (2.7,0) {$A_1$};
			\node at (0,4.5) {$A_1$};
			
		\end{tikzpicture}
		&
		\begin{tikzpicture}[scale=0.5]
			
			\draw[line width=1pt] (0,0) -- (0,4);
			\draw[line width=1pt] (0,0) -- (-4,-3);
			\draw[line width=1pt] (0,0) -- (4,-3);
			
			\draw[line width=.5pt] (0,1) -- (1,-3/4);
			\draw[dashed,line width=.5pt] (0,1) -- (3,-9/4);
			\draw[line width=.5pt] (0,2) -- (1,-3/4);
			\draw[dashed,line width=.5pt] (0,2) -- (3,-9/4);
			\draw[line width=.5pt] (0,3) -- (1,-3/4);
			\draw[dashed,line width=.5pt] (0,3) -- (3,-9/4);
			
			\draw[line width=.5pt] (0,1) -- (-1,-3/4);
			\draw[dashed,line width=.5pt] (0,1) -- (-3,-9/4);
			\draw[line width=.5pt] (0,2) -- (-1,-3/4);
			\draw[dashed,line width=.5pt] (0,2) -- (-3,-9/4);
			\draw[line width=.5pt] (0,3) -- (-1,-3/4);
			\draw[dashed,line width=.5pt] (0,3) -- (-3,-9/4);
			
			\draw[line width=.5pt] (-1,-3/4) -- (1,-3/4);
			\draw[dashed,line width=.5pt] (-1,-3/4) -- (3,-9/4);
			\draw[dashed,line width=.5pt] (-3,-9/4) -- (1,-3/4);
			\draw[dotted,line width=.5pt] (-3,-9/4) -- (3,-9/4);
			
		\end{tikzpicture}
		\\
		\hline
		\hline
		Type $4A_1$
		&
		Type $3A_1 \cap 3A_2$
		\\
		\hline
		\begin{tikzpicture}[scale=0.5]
			
			\draw[dotted,line width=0.5 pt] (-3,0) -- (3,0);
			\draw[dotted,line width=0.5 pt] (-3,3) -- (3,3);	
			\draw[dotted,line width=0.5 pt] (-2,-1) -- (-2,4);
			\draw[dotted,line width=0.5 pt] (2,-1) -- (2,4);	
			\draw[dotted,line width=0.5 pt] (-2,0) -- (2,3);	
			\draw[dotted,line width=0.5 pt] (2,0) -- (-2,3);	
			
			\fill (-2,0) circle (5pt);
			\fill (2,0) circle (5pt);
			\fill (-2,3) circle (5pt);
			\fill (2,3) circle (5pt);
			
			\node at (-2.7,-0.5) {$A_1$};
			\node at (2.7,-0.5) {$A_1$};
			\node at (2.7,3.5) {$A_1$};
			\node at (-2.7,3.5) {$A_1$};
			
		\end{tikzpicture}
		&
		\begin{tikzpicture}[scale=0.5]
			
			\draw[line width=.5pt] (-1,-3/4) -- (0,1);
			\draw[line width=1pt] (0,0) -- (0,4);
			\draw[line width=1pt] (0,0) -- (-4,-3);
			\draw[line width=1pt] (0,0) -- (4,-3);
			
			\draw[line width=.5pt] (1,-3/4) -- (0,1);
			\draw[dashed,line width=.5pt] (0,1) -- (3,-9/4);
			\draw[dashed,line width=.5pt] (0,3) -- (1,-3/4);
			\draw[dotted,line width=.5pt] (0,3) -- (3,-9/4);
			
			\draw[dashed,line width=.5pt] (0,1) -- (-3,-9/4);
			\draw[dashed,line width=.5pt] (0,3) -- (-1,-3/4);
			\draw[dotted,line width=.5pt] (0,3) -- (-3,-9/4);
			
			\draw[line width=.5pt] (-1,-3/4) -- (1,-3/4);
			\draw[dashed,line width=.5pt] (-1,-3/4) -- (3,-9/4);
			\draw[dashed,line width=.5pt] (-3,-9/4) -- (1,-3/4);
			\draw[dotted,line width=.5pt] (-3,-9/4) -- (3,-9/4);
			
		\end{tikzpicture}
		\\
		\hline
	\end{tabular}
\end{table}

\subsubsection{The Satake-Baily-Borel and toroidal compactification}\label{SS:toroidalcpt}
For a Hodge-theoretic compactification of $Y_m$, $Y_\ell$, and $Y_u$, recall the work of \cite{allcock_complex_2002}, where the authors describe ball quotients $\mathbb{B}_4/\Gamma_m$, $\mathbb{B}_4/\Gamma_\ell$, and $\mathbb{B}_4/\Gamma$ for a suitable projective unitary group $\Gamma_m$, $\Gamma_\ell$, and $\Gamma$.  The Satake--Baily--Borel compactification is obtained as the $\mathrm{Proj}$ of the graded ring of automorphic forms and its toroidal compactification 
$\overline{\mathbb{B}_4/\Gamma_m} \to \overline{\mathbb{B}_4/\Gamma_m}^{\mathrm{BB}}$ is obtained by blowing up the singular cuspidal points (similarly for the singly marked and unmarked ball quotients). Recall from \cite{doran2004moduli, dolgachev2005complex, casalaina-martin_birational_2024} that $\overline{\mathbb{B}_4/\Gamma_\ell}$ sits in between the toroidal compactifications of the ball quotient of marked cubic surfaces $\overline{\mathbb{B}_4\Gamma_m}$ and unmarked cubic surfaces $\overline{\mathbb{B}_4/\Gamma}$ via forgetful morphisms
\begin{align*}
	\overline{\mathbb{B}_4/\Gamma_m} \to \overline{\mathbb{B}_4/\Gamma_\ell} \xrightarrow{\ell} \overline{\mathbb{B}_4/\Gamma}.
\end{align*}
Let $D$ denote the closure of the discriminant divisor of cubic surfaces with nodal $A_1$-singularities in the Satake--Baily--Borel compactification $\overline{\mathbb{B}_4/\Gamma}^{BB} \cong \overline{Y}_u^{GIT}$ given in \cite{allcock_complex_2002}. Let $D_{A_1}$ denote the strict transform of $D$ via the standard birational morphism $\overline{\mathbb{B}_4/ \Gamma_m} \to \overline{\mathbb{B}_4/\Gamma_m}^{\mathrm{BB}} \cong \overline{Y}_u^{GIT}$ While $D_{A_1}$ is irreducible, its preimage in $ \overline{\mathbb{B}_4/\Gamma_\ell}$ by the cover $\ell: \overline{\mathbb{B}_4/\Gamma_\ell} \to \overline{\mathbb{B}_4/\Gamma}$ decomposes into two irreducible components
\begin{align*}
	D_{A_1, \ell} := \ell^{-1}(D_{A_1}) = D_{A_1, \ell}^{\text{in}} \cup D_{A_1, \ell}^{\text{out}} \subseteq \overline{\mathbb{B}_4/\Gamma_\ell}
\end{align*}
The component $D_{A_1, \ell}^{\text{in}}$ is the closure of the locus of cubic surfaces with a unique node $P$ and the marked line $\ell$ containing $P$, and similarly, $D_{A_1, \ell}^{\text{out}}$ is the closure of the locus of nodal cubics with $\ell$ not containing the node. The preimage of $D_{A_1}$ in $\overline{\mathbb{B}_4/\Gamma_m}$ consists of 36 components, permuted by the action of $W(E_6)$. Similarly, $D_{3A_2, \ell}$ and $D_{3A_2, m}$ denote the preimages of the irreducible toric boundary of $\overline{\mathbb{B}_4/\Gamma}$ in $\overline{\mathbb{B}_4/\Gamma_\ell}$ and $\overline{\mathbb{B}_4/\Gamma_m}$, respectively. While $D_{3A_2, \ell}$ is irreducible, $D_{3A_2, m}$ has 40 irreducible components permuted by $W(E_6)$. From this boundary description of $\overline{\mathbb{B}_4/ \Gamma_m}$, it is natural to ask whether the birational map $\overline{N} \dasharrow \overline{\mathbb{B}_4/\Gamma_m}$ fitting in the diagram
\[\begin{tikzcd}
	{\overline{N}} & {\overline{\mathbb{B}_4/\Gamma_m}} \\
	{\overline{Y}^{\mathrm{GIT}}} & {\overline{\mathbb{B}_4/\Gamma_m}^{\mathrm{BB}}}
	\arrow[dashed, from=1-1, to=1-2]
	\arrow[from=1-1, to=2-1]
	\arrow[from=1-2, to=2-2]
	\arrow["\sim", from=2-1, to=2-2]
\end{tikzcd}\]
extends to an isomorphism on $\mathbb{B}_4/\Gamma_m$. This is confirmed in \cite{gallardo_geometric_2021} and we give a quick proof using a technique learned from \cite{casalaina2021complete}.

\begin{teo}\cite[Thm.~1.4]{gallardo_geometric_2021}\label{teo:GKStoroidal}
	The Naruki compactification $\overline{N}$ of the moduli space $Y_m$ of marked cubic surfaces is isomorphic to the toroidal compactification $\overline{\mathbb{B}_4/\Gamma_m}$ of its ball quotient. In other words, the birational map $\overline{N} \dashrightarrow \overline{\mathbb{B}_4/\Gamma_m}$ above extends to an isomorphism.
\end{teo}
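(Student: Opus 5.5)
Both $\overline{N}$ and $\overline{\mathbb{B}_4/\Gamma_m}$ are normal projective compactifications of the same open variety $Y_m \cong \mathbb{B}_4/\Gamma_m$. Both sit over the common base $\overline{Y}_m^{\mathrm{GIT}} \cong \overline{\mathbb{B}_4/\Gamma_m}^{\mathrm{BB}}$, and each structure map contracts exactly one divisor over each of the $40$ special points. The plan is therefore to show that the birational map $f\colon \overline{N} \dashrightarrow \overline{\mathbb{B}_4/\Gamma_m}$, which is compatible with the maps to the Baily--Borel compactification, is an isomorphism. I would do this by checking that it is a morphism and that it is finite and birational onto a normal target, so that Zariski's main theorem applies.

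\textbf{Step 1 (away from the cusps).} The structure maps $\overline{N} \to \overline{Y}_m^{\mathrm{GIT}}$ and $\overline{\mathbb{B}_4/\Gamma_m} \to \overline{\mathbb{B}_4/\Gamma_m}^{\mathrm{BB}}$ are isomorphisms over the complement $U$ of the $40$ singular points (respectively cusps). This holds because $\overline{N}$ is the blow up of those $40$ points, and the toroidal compactification is the blow up of the singular cusps. Since the isomorphism $\overline{Y}_m^{\mathrm{GIT}} \cong \overline{\mathbb{B}_4/\Gamma_m}^{\mathrm{BB}}$ identifies the $40$ singular points with the $40$ cusps, $f$ is an isomorphism over $U$. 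It remains to treat neighborhoods of the exceptional divisors.

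\textbf{Step 2 (local structure at a cusp).} Near each of the $40$ points, the GIT quotient is locally analytically the cone over the Segre embedding $(\mathbb{P}^1)^3 \subset \mathbb{P}^7$, as recalled above from \cite[\S 2.12]{dolgachev2005complex}. The Naruki divisor of Type $3A_2$ is exactly $(\mathbb{P}^1)^3$, so $\overline{N}$ is locally the blow up of the vertex of this cone, that is, the total space of $\mathcal{O}(-1,-1,-1)$ over $(\mathbb{P}^1)^3$. On the ball quotient side, the toroidal compactification at a cusp is likewise the blow up of the cusp, with exceptional divisor an abelian threefold quotient. Here I would follow the technique of \cite{casalaina2021complete}: both spaces are obtained as blow ups of the \emph{same} normal variety at the \emph{same} point. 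To identify them it suffices to show that the two ideal sheaves of the point have the same integral closure, or equivalently that the normalized blow ups agree. A cleaner route: a blow up of a point with smooth exceptional divisor on a normal space is determined as the normalization of the blow up of the maximal ideal, provided the exceptional divisor is irreducible and $-E$ is relatively ample. Since $\overline{N}$ is smooth (as recalled above), both candidates are the normalized blow up of the maximal ideal at the cone vertex. The one extra requirement is that the toroidal blow up of the cusp is the blow up of the reduced point up to normalization, which follows from the cone structure of the Baily--Borel cusp on the weighted-graded ring of automorphic forms.

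\textbf{Step 3 (conclude).} Steps 1 and 2 show that $f$ extends to a morphism that is finite and birational onto the normal variety $\overline{\mathbb{B}_4/\Gamma_m}$, so it is an isomorphism by Zariski's main theorem. An alternative route, if needed, compares boundary divisors directly: $f^{-1}$ contracts no divisor, since both spaces have the same $36 + 40$ boundary divisors matched via the $W(E_6)$-equivariant correspondence. Both are then $\mathbb{Q}$-factorial and relatively minimal over the common base, and the relative Picard number over the base is $1$ locally at each cusp. This forces the two models to be isomorphic. The main obstacle is Step 2, namely identifying the toroidal blow up of the cusp with the blow up of the maximal ideal of the cone vertex. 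I would first try to verify this through the explicit cone description of the $40$ singular points, checking that the exceptional divisor of the toroidal compactification is $(\mathbb{P}^1)^3$, with normal bundle $\mathcal{O}(-1,-1,-1)$.
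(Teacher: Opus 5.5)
Your Step~2 rests on a general claim that is false. Suppose $X\to Z$ is a normal model that is an isomorphism away from a point $p$, with irreducible (even smooth) exceptional divisor $E$ and $-E$ relatively ample. Such an $X$ is \emph{not} unique. These models are exactly $\operatorname{Proj}\bigoplus_{m}\{f : v_E(f)\ge m\}$, one for each divisorial valuation $v_E$ centred at $p$. For instance, the $(1,2)$-weighted blow up of $0\in\mathbb{A}^2$ is normal, its exceptional divisor is $\mathbb{P}(1,2)\cong\mathbb{P}^1$, and $-E$ is relatively ample, yet it is not the blow up of the maximal ideal. So ``both are blow ups of the same point'' carries no information, and smoothness of $\overline{N}$ says nothing about the toroidal side.

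The whole content of the theorem is that the toroidal boundary divisor over a cusp and Naruki's $3A_2$ divisor define the same valuation, namely $\operatorname{ord}_{\mathfrak m}$ at the cone vertex. Your justification for this, the weight grading on the ring of automorphic forms, does not address it. That grading gives the global $\operatorname{Proj}$ structure of $\overline{\mathbb{B}_4/\Gamma_m}^{\mathrm{BB}}$. It does not give a cone structure on the local ring at a cusp, which is governed by Fourier--Jacobi expansions. To finish the local route you would have to prove $\mathfrak m\cdot\mathcal{O}=\mathcal{O}(-E)$ on the toroidal side. One way is to show from the Siegel domain description that the boundary divisor over a cusp is $(\mathbb{P}^1)^3$ with $\mathcal{O}_E(-E)\cong\mathcal{O}(1,1,1)$, and then lift sections using the vanishing of $H^1((\mathbb{P}^1)^3,\mathcal{O}(k,k,k))$ for $k\ge 2$. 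You only list this as something to try.

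The fallback in Step~3 has the same hole. Equal numbers of boundary divisors do not prevent $f$ from contracting $D_{3A_2}$ while $f^{-1}$ contracts the toroidal divisor over the same cusp; these would be two different valuations with the same centre. The missing idea is the one the paper uses. Because $\overline{N}$ has normal crossing boundary, Borel's extension theorem upgrades $f$ to a \emph{morphism} $\overline{N}\to\overline{\mathbb{B}_4/\Gamma_m}$. Once $f$ is a morphism:
\begin{itemize}
\item every prime divisor of the target has a divisorial strict transform in $\overline{N}$, so the count of boundary divisors forces $f$ to contract no divisor;
\item $\mathbb{Q}$-factoriality of the target excludes a small contraction, since the exceptional locus of a birational morphism onto a $\mathbb{Q}$-factorial variety is pure of codimension one;
\item Zariski's main theorem then finishes.
\end{itemize}
With that extension in hand, your Step~3 is essentially the paper's proof, and no local analysis at the cusps is needed.
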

\begin{proof}
	Since $\overline{N}$ has normal crossing boundary, by the Borel extension theorem \cite[Thm.~A]{borel1972some}, the birational map extends to a morphism $\overline{N} \to \overline{\mathbb{B}_4/\Gamma_m}$. Now since $\dim \overline{N} = \dim \overline{\mathbb{B}_4/\Gamma_m}$, the morphism must either be an isomorphism, divisorial contraction, or a small contraction. It cannot be a small contraction as $\overline{\mathbb{B}_4/\Gamma_m}$ is $\mathbb{Q}$-factorial. On the other hand, it cannot be a divisorial contraction since both $\overline{N}$ and $\overline{\mathbb B_4/\Gamma_m}$ have the same number of boundary divisors.
\end{proof}

	\section{The moduli space of KSBA stable pairs}	\label{S:ksba}
In this section, we recall the necessary background on the moduli of KSBA stable pairs and construct the KSBA moduli stack of $(b, c)$-weighted stable marked cubic surfaces together with its coarse moduli space (\Cref{def:(bc)modulistack}). For a full treatment of KSBA theory, we refer the reader to \cite{kollar_families_2023}. Furthermore, see \cite{kollar_birational_1998, kollar_singularities_2013} for the singularities of the minimal model program.

\subsection{KSBA moduli stack of marked pairs with rational coefficients}
Here we give a brief recollection of some of the main results in  \cite[\S~8.2]{kollar_families_2023} on the KSBA stability of marked pairs with rational coefficients and their moduli stack.

\begin{dfn}[Marked pair]\cite[Dfn.~8.3]{kollar_families_2023}
	A \textit{marking} of an effective Weil $\mathbb{Q}$-divisor $B$ is a way of writing $B = \sum a_i D_i$, where $D_i$ are effective $\mathbb{Z}$-divisors and $0 < a_i \in \mathbb{Q}$. We call $\textbf{a} = (a_1, \dots, a_n)$ the \textit{coefficient vector}. A \textit{marked pair} is a pair $(X, B)$, plus a marking $B = \sum a_i D_i$.
\end{dfn}

\begin{rem}
	Under this definition, the pairs $(\mathbb{A}^1, (x=0))$ and $(\mathbb{A}^1, \frac{1}{2}(x^2=0))$ are different as \textit{marked pairs}.
\end{rem}

\begin{dfn}[Family of marked pairs]\cite[Dfn.~8.4]{kollar_families_2023}
	For a fixed rational coefficient vector $\textbf{a} = (a_1, \dots, a_n)$, a \textit{family of marked pairs} with coefficient vector $\textbf{a}$ is a morphism $f: (X, B) \to T$ over some base scheme $T$ that is flat with demi-normal fibers where $B$ is an effective, relative Mumford $\mathbb{Q}$-divisor \cite[Def.~4.68]{kollar_families_2023}, and a marking $B = \sum a_i D_i$ where each $D_i$ is an effective, relative, Mumford $\mathbb{Z}$-divisor \cite[(4.68)]{kollar_families_2023}.
\end{dfn}

\begin{dfn}[Semi-log canonical (slc)]\cite[(8.6) and Dfn.~11.37]{kollar_families_2023}\label{def:KSBApair}
	Let $X$ be a variety (over $\mathbb{C}$) and $B = \sum a_i B_i$ a $\mathbb{Q}$-divisor on $X$ with each $a_i \in (0, 1] \cap \mathbb{Q}$. The marked pair $(X, B)$ is called \textit{semi-log canonical (slc)} if the following conditions hold:
	\begin{enumerate}
		\item $X$ is demi-normal (codimension 1 points are at worst nodal);
		\item If $\nu : X^\nu \to X$ is the normalization with conductors $D \subseteq X$ and $D^\nu \subseteq X^\nu$, then the support of $D$ does not contain any irreducible component of $B$;
		\item $K_X + D$ is $\mathbb{Q}$-Cartier;
		\item The pair $(X^\nu, D^\nu + \nu_*^{-1} B)$ is log canonical, where $\nu_*^{-1} B$ denotes the strict transform of $B$.
	\end{enumerate}
	A marked pair $(X, B)$ is called \textit{KSBA stable} if $X$ is projective and $K_X + B$ is ample.
\end{dfn}

\begin{dfn}[KSBA stable]\cite[(8.7)]{kollar_families_2023}\label{dfn:KSBAfamily}
	A family of marked pairs $(X, B = \sum a_i B_i) \to T$ is \textit{KSBA stable} if
	\begin{enumerate}
		\item $f : X \to T$ is flat, finite type, and pure dimensional;
		\item The $B_i$ are K-flat \cite[Def.~7.1]{kollar_families_2023} families of relative, Mumford, $\mathbb{Z}$-divisors;
		\item The fibers $(X_t, B_t)$ are slc;\label{dfn:KSBAfamilyslc}
		\item $\omega_{X/S}^{[m]}(mB - D)$ is a flat family of divisorial sheaves, provided $\mathrm{lcd}(a_1, \dots, a_n)$ divides $m$ and $D = \sum_{j \in J} B_i$ with $a_j = 1$ for $j \in J$;
		\item $f$ is proper and $K_{X/S} + B$ is $f$-ample.
	\end{enumerate}
\end{dfn}
The main theorem of \cite{kollar_families_2023} is the following:

\begin{teo}\cite[Thm.~8.1]{kollar_families_2023}\label{teo:ksbastack}
	KSBA stability of marked pairs with rational coefficients is a good moduli theory \cite[(6.10)]{kollar_families_2023}. In other words, for a fixed positive rational number $v$, a positive integer $d$, and a rational coefficient vector $\textbf{a}$, there is a proper DM stack $\overline{\mathcal{M}}_{\textbf{a}, d, v}^{\mathrm{KSBA}}$ that, for base scheme $T$, represents the moduli problem of KSBA stable families $f : (X,  \sum a_i B_i) \to T$ with fibers of dimension $d$ and volume $v$. Furthermore, it has a coarse moduli space $\overline{M}_{\textbf{a}, d, v}^{\mathrm{KSBA}}$, which is projective over $T$. 
\end{teo}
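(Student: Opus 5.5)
Since this is \cite[Thm.~8.1]{kollar_families_2023}, the plan is to follow the standard architecture of KSBA moduli and check each axiom of a good moduli theory in \cite[(6.10)]{kollar_families_2023} for the functor of KSBA stable families (\Cref{dfn:KSBAfamily}) with fixed $d$, $v$ and $\textbf{a}$. The order is:
\begin{enumerate}
\item \textbf{Boundedness.} Fix the coefficient vector $\textbf{a}$, which lies in a finite, hence DCC, set. By Hacon--McKernan--Xu, slc pairs of dimension $d$ with coefficients in $\textbf{a}$ and volume $(K_X+B)^d=v$ form a bounded family. We also get a uniform $m$, divisible by $\mathrm{lcd}(a_1,\dots,a_n)$, such that $\omega_X^{[m]}(mB)$ is very ample with vanishing higher cohomology. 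This embeds every stable pair in a fixed $\mathbb{P}^N$, so the objects are parametrized by a locally closed subset of a product of Hilbert and Chow-type schemes.
\item \textbf{Representability of the stability conditions.} We show the conditions of \Cref{dfn:KSBAfamily} cut out a locally closed subscheme of that parameter space.
\begin{itemize}
\item Flatness of $X$ is open.
\item K-flatness of the $B_i$ is representable by Kollár's theory of K-flat divisors and hulls.
\item Flatness of the divisorial sheaves $\omega^{[m]}_{X/T}(mB-D)$ is representable by the theory of hulls and husks.
\item The slc condition on fibers is open in such families.
\item Ampleness is open.
\end{itemize}
The stack $\overline{\mathcal{M}}^{\mathrm{KSBA}}_{\textbf{a},d,v}$ is then the quotient of this locus by $\mathrm{PGL}_{N+1}$, so it is an algebraic stack of finite type.
\item \textbf{Separatedness and DM.} Stable pairs have finite automorphism groups, which are reduced in characteristic $0$, because $K_X+B$ is ample and $(X,B)$ is slc. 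Separatedness follows from the uniqueness of log canonical models. Two stable families over a DVR that agree generically are both the lc model of any common log resolution, since the pair is lc after adding the reduced special fiber (inversion of adjunction).
\item \textbf{Properness.} We use semistable reduction. Start with a family over a punctured DVR, pass to a finite base change, and take a simultaneous log resolution with reduced snc special fiber. We then run the relative log MMP with scaling to reach the relative lc model, which exists by Birkar--Cascini--Hacon--McKernan and Hacon--Xu in the lc setting, and check that it satisfies the conditions of \Cref{dfn:KSBAfamily}. The demi-normal (non-normal) case is reduced to the normal case by normalizing, using Kollár's gluing theory for slc pairs.
\item \textbf{Coarse space and projectivity.} Keel--Mori gives a coarse moduli space. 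Projectivity follows from Kollár's ampleness lemma, or from Kovács--Patakfalvi and Fujino: the determinants of $f_*\omega^{[m]}_{X/T}(mB)$ are ample on the coarse space.
\end{enumerate}

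The main obstacle is the second step. It requires making the marking and the coefficient conditions behave well in families: for non-reduced or non-integral coefficients, $B$ need not be flat, and naive divisorial pullback fails. This is exactly why K-flatness of the marked divisors $D_i$ and flatness of $\omega^{[m]}(mB-D)$ are imposed. The first thing I would try is to prove representability of these conditions via Kollár's hull and husk functors, and then use the marking to check that the limiting divisors are determined uniquely.
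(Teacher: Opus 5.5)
The paper gives no proof of this statement; it imports it verbatim from \cite[Thm.~8.1]{kollar_families_2023}. Your outline follows the architecture of that cited proof: HMX boundedness with a uniform pluri-log-canonical embedding, representability via K-flatness and hulls/husks, separatedness from uniqueness of lc models, properness via stable reduction plus Koll\'ar's gluing for the demi-normal case, and Keel--Mori with the ampleness results of Koll\'ar, Kov\'acs--Patakfalvi, or Fujino for a projective coarse space. It is correct as a summary, with the understanding that each step is an appeal to a major theorem rather than something you verify here.
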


\subsection{KSBA moduli spaces of cubic surfaces}
Considering boundary divisors separately or together (e.g., the $27$ lines on a cubic surface as one divisor or as $27$ separate divisors) gives slight variations on the moduli problem. We will want to compare these moduli spaces, and we fix some terminology for this here. Following the notation in \Cref{teo:ksbastack}, in our setting, we fix $d = 2$ and for rational coefficient vector $\textbf{a} = (a_1, \dots, a_n)$, let 
$$v = v(\textbf{a}) = (K_S + \sum a_i B_i)^2$$
be constant. We are concerned with the three moduli stacks $\overline{\mathcal{M}}_{\alpha, 2, v(\alpha)}^{KSBA}$, $\overline{\mathcal{M}}_{(b, c), 2, v(b, c)}^{KSBA}$, and $\overline{\mathcal{M}}_{\textbf{c}, 2, v(\textbf{c})}^{KSBA}$ for rational coefficient vectors $\alpha \in \mathbb{Q}$, $(b, c)\in \mathbb Q^2$, and $\textbf{c} = (c_1, \dots, c_{27})\in \mathbb Q^{27}$, respectively. These are DM stacks that parametrizes KSBA stable families of surfaces
\begin{itemize}
	\item $(X, \alpha B) \to T$ with constant volume $v = v(\alpha)$,
	\item $(X, bB_1 + cB_2) \to T$ with constant volume $v = v(b, c)$, and
	\item $(X, \sum c_i B_i) \to T$ with constant volume $v = v(\textbf{c})$,
\end{itemize}
respectively. When $(c_1,\dots,c_{27})=(b,c,\dots, c)$, i.e., when $c_2=c_3=\cdots = c_{27}$, we have a forgetful morphism 
\begin{align}\label{eqn:Forget-KSBAbc}
	\overline{\mathcal{M}}_{\textbf{c}, 2, v(\textbf{c})}^{KSBA} \to \overline{\mathcal{M}}_{(b, c), 2, v(b, c)}^{KSBA},
	\end{align}
	sending a family $(X, \sum c_i B_i) \to T$ to $(X, b B_1 + c(B_2 + \cdots + B_{27})) \to T$. When $(b,c)=(\alpha,\alpha)$, i.e., when $b=c=\alpha$, we have a forgetful morphism 
	\begin{align}\label{eqn:Forget-KSBAalpha}
	\overline{\mathcal{M}}_{(b, c), 2, v(b, c)}^{KSBA}  \to 
\overline{\mathcal{M}}_{\alpha, 2, v(\alpha)}^{KSBA}
\end{align}
sending $(X, bB_1 + cB_2) \to T$ to $(X, \alpha(B_1 + \cdots + B_{27})) \to T$. We define the following moduli stacks of smooth KSBA stable cubic surfaces 
\begin{align*}
	{\mathcal{M}}(\alpha) &\hookrightarrow \overline{\mathcal{M}}_{\alpha, 2, v(\alpha)}^{KSBA}\\
	{\mathcal{M}}_\ell (b, c) &\hookrightarrow \overline{\mathcal{M}}_{(b, c), 2, v(b, c)}^{KSBA}\\
	{\mathcal{M}}(\textbf{c}) &\hookrightarrow \overline{\mathcal{M}}_{\textbf{c}, 2, v(\textbf{c})}^{KSBA}
\end{align*}
parameterizing KSBA stable families of $(b, c)$-weighted smooth marked cubic surfaces:
\begin{itemize}
	\item $(S, \alpha (L_1 + \cdots + L_{27}))$,
	\item $(S, b\ell + c(L_2 + \cdots + L_{27}))$, and
	\item $(S, \sum c_i L_i)$,
\end{itemize}
respectively. Furthermore, we have isomorphisms of stacks 
\begin{align*}
	\mathcal{Y}_u &\stackrel{\sim}{\to} \mathcal{M}(\alpha) \\
	\mathcal{Y}_\ell &\stackrel{\sim}{\to} \mathcal{M}_\ell (b, c)\\
	\mathcal{Y}_m &\stackrel{\sim}{\to} \mathcal{M}(\textbf{c})
\end{align*}
of the moduli stacks of unmarked, singly marked, and fully marked smooth cubic surfaces, respectively. For instance, given a smooth cubic surface $S$ corresponding to a point of $\mathcal Y_u$, we assign the KSBA surface $(S,\alpha (L_1+\cdots+L_{27}))$. 
\begin{rem}
	The moduli stack $\mathcal{Y}_\ell = \mathrm{Fano}_{\mathcal{X}/\mathcal{M}}$ is the relative Fano stack of the universal cubic surface $\mathcal{X}$ over the moduli stack of smooth cubic surfaces $\mathcal{M}$.
\end{rem}

\begin{dfn}[KSBA moduli stacks of stable cubic surfaces]\label{def:(bc)modulistack}
	Let 
	\begin{align*}
		\overline{\mathcal{Y}}_\alpha^{\mathrm{KSBA}} = \overline{\mathcal{Y}}_\alpha
	\end{align*}
	denote the closure of $\mathcal{Y}_u$ in the KSBA moduli stack of unmarked pairs $\overline{\mathcal{M}}(\alpha)  \subseteq \overline{\mathcal{M}}_{\alpha, 2, v(\alpha)}^{KSBA}$. Similarly, let $\overline{\mathcal{Y}}_{(b, c)}^{\mathrm{KSBA}} = \overline{\mathcal{Y}}_{(b, c)}$ and $\overline{\mathcal{Y}}_\textbf{c}^{\mathrm{KSBA}} = \overline{\mathcal{Y}}_\textbf{c}$
	denote the closure of $\mathcal{Y}_\ell$ and $\mathcal{Y}_m$ in the KSBA moduli stacks of singly-marked and (fully) marked pairs, respectively. These moduli stacks admit a projective coarse moduli space (see \Cref{teo:wallcrossing}) and we write $\overline{Y}_u^{\mathrm{KSBA}} = \overline{Y}_u$, $\overline{Y}_{(b, c)}^{\mathrm{KSBA}} = \overline{Y}_{(b, c)}$, and $\overline{Y}_\textbf{c}^{\mathrm{KSBA}} = \overline{Y}_\textbf{c}$ to denote the \textit{normalization} of the KSBA coarse moduli spaces of $\overline{\mathcal{Y}}_\alpha, \overline{\mathcal{Y}}_{(b, c)},$ and $\overline{\mathcal{Y}}_\textbf{c}$, respectively.
\end{dfn}

\begin{rem}\label{rem:KSBAforgetfulmaps}
	Similarly to \ref{eqn:Forget-KSBAbc} and \ref{eqn:Forget-KSBAalpha},  when $(c_1 , \dots, c_{27}) = (b, c, \dots, c)$, we have a forgetful morphism
	\begin{align*}
		\overline{Y}_\textbf{c}  \to \overline{Y}_{(b, c)}.
	\end{align*}
	Furthermore, when $(b ,c) = (\alpha, \alpha)$, we have a forgetful morphism
	\begin{align*}
		\overline{Y}_{(b, c)} \to \overline{Y}_\alpha
	\end{align*}
\end{rem}

\begin{dfn}[KSBA moduli space of cubic surfaces with a marked line]\label{def:markedlinemodulispace}
	For $\epsilon >0$ a small rational number,  let $\overline{Y}_{\ell} := \overline{Y}_{(\frac{1}{9} + \epsilon, \frac{1}{9} + \epsilon)}$ be the normalization of the KSBA compactification of $(\frac{1}{9} + \epsilon, \frac{1}{9} + \epsilon)$-weighted stable marked cubic surfaces. We call $\overline{Y}_\ell$ the \textit{KSBA (coarse) moduli space of cubic surfaces with a marked line $\ell$}.
\end{dfn}

Similarly to \Cref{teo:GKStoroidal}, we give a quick proof that the (normalization of) the KSBA compactifications $\overline{Y}_{\ell}$ and $\overline{Y}_{\frac{1}{9} + \epsilon}$ is isomorphic to its toroidal compactifications. Recall by \Cref{rem:KSBAforgetfulmaps}, we have forgetful morphisms analogous to the ball quotient case.
\begin{align*}
	\overline{Y}_{(\frac{1}{9} + \epsilon, \dots, \frac{1}{9} + \epsilon)} \to \overline{Y}_\ell := \overline{Y}_{(\frac{1}{9} + \epsilon, \frac{1}{9} + \epsilon)} \to \overline{Y}_{\frac{1}{9} + \epsilon}
\end{align*}
In \cite[Thm.~1.5]{gallardo_geometric_2021}, the authors showed Naruki's cross-ratio variety $\overline{N}$ (\Cref{S:naruki}) is isomorphic to $\overline{Y}_{(\frac{1}{9} + \epsilon, \dots, \frac{1}{9} + \epsilon)}$. Thus, by \Cref{teo:GKStoroidal}, we get the isomorphism $\overline{Y}_{(\frac{1}{9} + \epsilon, \dots, \frac{1}{9} + \epsilon)} \cong \overline{\mathbb{B}_4/\Gamma_m}$, giving us the diagram
\[\begin{tikzcd}
	& {\overline{Y}_{(\frac{1}{9} + \epsilon, \dots, \frac{1}{9} + \epsilon)}} &&& {\overline{\mathbb{B}_4/\Gamma_m}} \\
	{\overline{Y}_{\ell}} &&& {\overline{\mathbb{B}_4/\Gamma_\ell}} \\
	\\
	& {\overline{Y}_{\frac{1}{9} + \epsilon}} &&& {\overline{\mathbb{B}_4/\Gamma}}
	\arrow["{\sim}", from=1-2, to=1-5]
	\arrow[ from=1-2, to=2-1]
	\arrow[{pos=0.7}, from=1-2, to=4-2]
	\arrow["{/W(D_5)}", from=1-5, to=2-4]
	\arrow["{/W(E_6)}"{pos=0.7}, from=1-5, to=4-5]
	\arrow["{\text{birational}}"{pos=0.6}, dashed, from=2-1, to=2-4]
	\arrow[from=2-1, to=4-2]
	\arrow["{27:1}", from=2-4, to=4-5]
	\arrow["{\text{birational}}"{pos=0.6}, dashed,  from=4-2, to=4-5]
\end{tikzcd}\] 
where the vertical arrows are forgetful maps.

\begin{cor}\label{teo:GKS+epsilon}
	The KSBA moduli space of cubic surfaces with a marked line $\overline{Y}_{\ell}$ is isomorphic to the toroidal compactification of the moduli space of cubic surfaces with a marked line $\overline{\mathbb{B}_4/\Gamma_\ell}$.
\end{cor}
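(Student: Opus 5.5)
We deduce the corollary from the isomorphism $\overline{Y}_{(\frac{1}{9}+\epsilon,\dots,\frac{1}{9}+\epsilon)}\cong\overline{\mathbb{B}_4/\Gamma_m}$ of \Cref{teo:GKStoroidal} and \cite[Thm.~1.5]{gallardo_geometric_2021} by passing to quotients. The subgroup of $W(E_6)$ fixing the marked line $\ell=L_1$ is the stabilizer $W(D_5)$. It acts on $\overline{Y}_{(\frac{1}{9}+\epsilon,\dots,\frac{1}{9}+\epsilon)}$ by relabeling $L_2,\dots,L_{27}$, and on $\overline{\mathbb{B}_4/\Gamma_m}$ compatibly, since the isomorphism of \Cref{teo:GKStoroidal} extends the identity on $Y_m$ and is therefore $W(E_6)$-equivariant by density. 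Moreover $\overline{\mathbb{B}_4/\Gamma_m}/W(D_5)\cong\overline{\mathbb{B}_4/\Gamma_\ell}$, because toroidal compactifications are functorial for the finite-index inclusion $\Gamma_m\subset\Gamma_\ell$ with quotient $W(D_5)$. So the statement reduces to showing that the forgetful morphism $F:\overline{Y}_{(\frac{1}{9}+\epsilon,\dots,\frac{1}{9}+\epsilon)}\to\overline{Y}_\ell$ of \Cref{rem:KSBAforgetfulmaps} identifies $\overline{Y}_\ell$ with the quotient by $W(D_5)$.

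First, $F$ is $W(D_5)$-invariant. Permuting the labels of $L_2,\dots,L_{27}$ does not change the divisor $b\ell+c(L_2+\cdots+L_{27})$, so the image pair is the same. Hence $F$ factors through $q:\overline{Y}_{(\frac{1}{9}+\epsilon,\dots,\frac{1}{9}+\epsilon)}/W(D_5)\to\overline{Y}_\ell$. Second, $F$ is surjective. It is proper, its source is proper, and its image contains the dense open $Y_\ell$ because $\overline{\mathcal{Y}}_{(b,c)}$ is defined as the closure of $\mathcal{Y}_\ell$. Third, $q$ is birational. Over the interior, $Y_m\to Y_\ell$ is the Galois cover with group $W(D_5)$: a full marking compatible with a given marked line is unique up to $W(D_5)$. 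The quotient $\overline{Y}_{(\frac{1}{9}+\epsilon,\dots)}/W(D_5)$ is normal, as a quotient of a normal variety by a finite group, and $\overline{Y}_\ell$ is normal by definition. Therefore, once $q$ is known to be finite, Zariski's main theorem makes the finite birational morphism $q$ of normal varieties an isomorphism.

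The main obstacle is finiteness of $q$, equivalently of $F$: we must rule out $F$ contracting a positive-dimensional boundary locus. If $F(p)=F(p')$, then $p$ and $p'$ have isomorphic underlying surfaces and divisors $\sum_i L_i$, with $L_1$ matched to $L_1$. Only finitely many full markings of a stable pair are compatible with this data, since the stable surface and its boundary determine the curves $L_i$ up to finitely many relabelings. So fibers of $F$ are finite sets, and $F$, being proper and quasi-finite, is finite. The delicate point is that at weight $\frac{1}{9}+\epsilon$ boundary lines may coincide, as in the multiplicities in \Cref{tab:narukifibers}. We must check that the data $(S,\ell,\sum L_i)$ still leaves only finitely many compatible full markings, i.e.\ the stable surfaces are not affected by how the coinciding lines are labeled. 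For this we rely on the explicit description of the fibers over $\overline{N}$.

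An alternative route, modeled on the proof of \Cref{teo:GKStoroidal}, runs as follows. Extend the birational map $\overline{Y}_\ell\dashrightarrow\overline{\mathbb{B}_4/\Gamma_\ell}$ to a morphism via the quotient description above. Then exclude small contractions by $\mathbb{Q}$-factoriality of the quotient. Finally, exclude divisorial contractions by comparing boundary divisors: $D^{\mathrm{in}}_{A_1,\ell}$, $D^{\mathrm{out}}_{A_1,\ell}$ and $D_{3A_2,\ell}$ on both sides.
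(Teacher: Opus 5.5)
Your argument is correct, but your main route differs from the paper's. Your closing ``alternative route'' is essentially the paper's proof. The paper extends $\overline{Y}_\ell\dashrightarrow\overline{\mathbb{B}_4/\Gamma_\ell}$ to a morphism by Borel's extension theorem, using that $\overline{N}\cong\overline{Y}_{(\frac{1}{9}+\epsilon,\dots,\frac{1}{9}+\epsilon)}$ has normal crossing boundary. It excludes a small contraction by $\mathbb{Q}$-factoriality of $\overline{\mathbb{B}_4/\Gamma_\ell}$. It excludes a divisorial contraction because both sides have boundary divisors given by the same quotient of those of $\overline{N}$.

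Your main route never compares $\overline{Y}_\ell$ and $\overline{\mathbb{B}_4/\Gamma_\ell}$ directly. Instead you realize both as $W(D_5)$-quotients of the two sides of the isomorphism of \Cref{teo:GKStoroidal}, which is automatically $W(E_6)$-equivariant. The new input is $\overline{Y}_\ell\cong\overline{Y}_{(\frac{1}{9}+\epsilon,\dots,\frac{1}{9}+\epsilon)}/W(D_5)$, which you obtain from finiteness of the forgetful map $F$, birationality on the interior, and Zariski's main theorem.

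The paper's version is shorter. However, it applies the extension argument to $\overline{Y}_\ell$ while only $\overline{N}$ is known to have normal crossing boundary. To descend the extended morphism from $\overline{N}$ to $\overline{Y}_\ell$, it tacitly needs that $\overline{N}\to\overline{Y}_\ell$ is finite, which is exactly what you prove. Your route avoids extension theorems, $\mathbb{Q}$-factoriality and the boundary-divisor count altogether. The cost is that you use compatibility of toroidal compactification with the finite quotient by $\Gamma_\ell/\Gamma_m\cong W(D_5)$; the paper's diagram also takes this for granted.

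The step you flag as delicate is not actually delicate. Up to the finite normalization maps, a point in the fiber of $F$ over $(S,b\ell+cB_2)$ is determined by a decomposition $B_2=\sum_{i\ge 2}L_i$ into $26$ effective Weil divisors with $L_1=\ell$. A fixed effective divisor has only finitely many such decompositions, however the lines coincide. So quasi-finiteness of $F$ needs no input from the explicit fibers over $\overline{N}$.
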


\begin{proof}
	We follow the same proof technique as in \Cref{teo:GKStoroidal} using the Borel extension theorem \cite[Thm.~A]{borel1972some}. Since $\overline{Y}_{(\frac{1}{9} + \epsilon, \dots, \frac{1}{9} + \epsilon)} \cong \overline{N}$ and $\overline{N}$ has normal crossing boundary, by the Borel extension theorem, the birational map $\overline{Y}_{\ell} \dasharrow \overline{\mathbb{B}_4/\Gamma_\ell}$ extends to a morphism. Since $\dim \overline{Y}_{\ell} = \dim \overline{\mathbb{B}_4/\Gamma_\ell}$, this morphism can either be an isomorphism, divisorial contraction, or a small contraction. It cannot be a small contraction as $\overline{\mathbb{B}_4/\Gamma_\ell}$ is $\mathbb{Q}$-factorial and it cannot be a divisorial contraction since both have boundary divisors given by the same quotient of the boundary divisors of $\overline{N}$.
\end{proof}

\begin{cor}
	The KSBA moduli space of unmarked cubic surfaces $\overline{Y}_{\frac{1}{9} + \epsilon}$ is isomorphic to the toroidal compactification of the moduli space of unmarked cubic surfaces $\overline{\mathbb{B}_4/\Gamma}$.
\end{cor}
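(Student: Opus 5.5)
The plan is to repeat the argument of \Cref{teo:GKStoroidal} and \Cref{teo:GKS+epsilon} one level further down the tower of forgetful morphisms. The diagram preceding \Cref{teo:GKS+epsilon} contains a birational map $\overline{Y}_{\frac{1}{9}+\epsilon} \dashrightarrow \overline{\mathbb{B}_4/\Gamma}$. It restricts to the identification of the interiors $Y_u \cong \mathbb{B}_4/\Gamma$, since both parametrize smooth cubic surfaces. We will show this map is an isomorphism.

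\textbf{Step 1: extend the map to a morphism.} The Borel extension theorem \cite[Thm.~A]{borel1972some} requires a source with normal crossing boundary. I would not apply it to $\overline{Y}_{\frac{1}{9}+\epsilon}$ directly. Instead, I would use the composite $\overline{N} \cong \overline{Y}_{(\frac{1}{9}+\epsilon,\dots,\frac{1}{9}+\epsilon)} \to \overline{Y}_{\frac{1}{9}+\epsilon}$, which is the quotient by $W(E_6)$ (\Cref{rem:KSBAforgetfulmaps}). The composite $\overline{N} \to \overline{\mathbb{B}_4/\Gamma_m} \to \overline{\mathbb{B}_4/\Gamma}$ is a morphism by \Cref{teo:GKStoroidal}, and it is $W(E_6)$-invariant. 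Since $\overline{Y}_{\frac{1}{9}+\epsilon}$ is normal, it is the categorical quotient $\overline{N}/W(E_6)$. Hence the invariant morphism descends to a morphism $\overline{Y}_{\frac{1}{9}+\epsilon} \to \overline{\mathbb{B}_4/\Gamma}$, and this morphism extends the birational map. Equivalently, both spaces are the quotient of the same variety $\overline{N} \cong \overline{\mathbb{B}_4/\Gamma_m}$ by the same group $W(E_6)$.

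\textbf{Step 2: conclude the morphism is an isomorphism.} If the two quotient actions agree, the descended morphism is already an isomorphism. More robustly, I would argue as in the cited proofs. The morphism is birational and proper between varieties of equal dimension. It is not a small contraction, because $\overline{\mathbb{B}_4/\Gamma}$ is $\mathbb{Q}$-factorial: it is a finite quotient of the smooth $\overline{N}$. It is not a divisorial contraction, because both boundaries are the images of the 76 boundary divisors of $\overline{N}$ under the same quotient. Under this quotient the 36 $A_1$ divisors map to the single divisor $D_{A_1}$, and the 40 $3A_2$ divisors map to the single toric boundary divisor. So both spaces have exactly two boundary divisors and none is contracted. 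A birational morphism that contracts no divisor onto a normal $\mathbb{Q}$-factorial target is an isomorphism, by Zariski's main theorem together with van der Waerden purity of the exceptional locus.

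\textbf{Main obstacle.} The delicate point is Step 2's matching of group actions, namely that the $W(E_6)$-action on the KSBA side (relabeling the 27 lines) corresponds under $\overline{N} \cong \overline{\mathbb{B}_4/\Gamma_m}$ to the action $\Gamma/\Gamma_m \cong W(E_6)$ on the ball quotient side. Both actions extend the same action on the interior $Y_m$, and $Y_m$ is dense. Therefore they agree on all of $\overline{N}$, and the isomorphism $\overline{N}\cong\overline{\mathbb{B}_4/\Gamma_m}$ is $W(E_6)$-equivariant. Once this is established, the isomorphism descends directly to the quotients. This also justifies the boundary divisor count used above.
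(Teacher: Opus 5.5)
Your plan is sound, but the step that carries all of the content is asserted rather than proved. \Cref{rem:KSBAforgetfulmaps} only provides the forgetful morphism $f\colon \overline{N}\cong\overline{Y}_{(\frac{1}{9}+\epsilon,\dots,\frac{1}{9}+\epsilon)}\to\overline{Y}_{\frac{1}{9}+\epsilon}$. It does not say that $f$ is the quotient by $W(E_6)$, and normality of the target does not imply this. The $W(E_6)$-invariance of $f$ gives a factorization $q\colon \overline{N}/W(E_6)\to\overline{Y}_{\frac{1}{9}+\epsilon}$, which is proper and birational because $Y_m\to Y_u$ is the $W(E_6)$-quotient. A priori, however, $q$ could contract boundary strata. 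In that case your invariant morphism $\overline{N}\to\overline{\mathbb{B}_4/\Gamma}$ would not descend. In fact, no morphism $\overline{Y}_{\frac{1}{9}+\epsilon}\to\overline{\mathbb{B}_4/\Gamma}$ extending the identity of $Y_u$ could then exist: precomposing it with $q$ would give the identity of $\overline{N}/W(E_6)\cong\overline{\mathbb{B}_4/\Gamma}$, forcing $q$ to be injective. So, once equivariance is known, the corollary is \emph{equivalent} to $q$ being an isomorphism. By contrast, what you single out as the main obstacle is routine. Two morphisms from $\overline{N}$ to a separated variety that agree on the dense open $Y_m$ coincide. This gives both the invariance of $\overline{N}\to\overline{\mathbb{B}_4/\Gamma}$ and the equivariance of $\overline{N}\cong\overline{\mathbb{B}_4/\Gamma_m}$.

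What is missing is the finiteness of $f$.
\begin{itemize}
\item On coarse spaces, the fiber of the forgetful map over a point $[(X,cB)]$ consists of the markings of that pair. These are ordered decompositions $B=L_1+\cdots+L_{27}$ into effective Weil divisors, modulo $\mathrm{Aut}(X,B)$.
\item There are only finitely many such decompositions, since each $L_i\le B$. So the map is quasi-finite and proper, hence finite, and therefore $f$ is finite as well.
\item Then $q$ is finite and birational onto a normal variety, hence an isomorphism by Zariski's main theorem.
\end{itemize}
With this in place both versions of your Step 2 go through. In particular, the boundary count in route (b) becomes justified, since the boundary divisors of $\overline{Y}_{\frac{1}{9}+\epsilon}$ are the images of the two $W(E_6)$-orbits of boundary divisors of $\overline{N}$.

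For comparison, the paper's sketch (Borel extension, then excluding small and divisorial contractions, i.e.\ your route (b)) implicitly needs the same descent. Its Borel step is justified by the normal crossing boundary of $\overline{N}$, which yields a morphism out of $\overline{N}$ rather than out of $\overline{Y}_{\frac{1}{9}+\epsilon}$. Once $q$ is known to be an isomorphism, your route (a) is shorter, since it dispenses with $\mathbb{Q}$-factoriality and divisor counting entirely.
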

\begin{proof}
	The proof is similar to \Cref{teo:GKStoroidal} and \Cref{teo:GKS+epsilon}.
\end{proof}

\begin{rem}
	Note that \Cref{teo:GKS+epsilon} shows us that the cubic surfaces parameterized at the boundary of $\overline{Y}_\ell \cong \overline{\mathbb{B}_4/\Gamma_\ell}$ are precisely the cubic surfaces parameterized at the boundary of Naruki's cross-ratio variety $\overline{N}$ (see \Cref{tab:narukifibers}) with a choice of a marked line $\ell$. The cubic surfaces at the Naruki $\overline{N}$ boundary will be in a different boundary stratum in $\overline{Y}_\ell$ depending on whether or not $\ell$ passes through a node. In particular, a general cubic surface parameterized in the boundary divisor $D_{A_1, \ell} \subseteq \overline{Y}_\ell$ (see \Cref{SS:toroidalcpt} or \cite{casalaina-martin_birational_2024}) will either be in $D_{A_1, \ell}^{\text{in}}$ or $D_{A_1, \ell}^{\text{out}}$ depending on whether $\ell$ contains the node or not.
\end{rem}

\subsection{Positivity of the log canonical divisors for (generalized) Del Pezzo surfaces}
As ampleness of the log canonical bundle is one of the KSBA stability conditions, we compute the ample cones of (generalized) Del Pezzo surfaces that will be needed in the proof of \Cref{teo:main} found in \Cref{proof:E2A1_mult4}. We will use the main results of \cite{derenthal2008nef} to compute the ample cone of the log canonical divisor $K_S + (b, c)B$ for degree $2$ and $3$ Del Pezzo surfaces with $k = 1, 2, 3, 4$ $A_1$ singularities.

To begin, recall for a smooth Del Pezzo surface $X$ of degree $d \leq 8$, the cone of curves $\overline{\mathrm{NE}}(X)$ is generated by its $(-1)$ curves. Indeed, let $C$ be an irreducible rational curve with $C^2 < 0$. By the genus formula,
\begin{align*}
	2 &= (K_X \cdot C) + C^2,
\end{align*}
and since $-K_X$ is ample, it must be that $(K_X \cdot C) < 0$ as well. Thus, it must be that $C^2 = -1$, and since a smooth Del Pezzo surface has finitely many $(-1)$-curves, we can label them as $C_1, \dots, C_{n_d}$ where $n_d$ depends on the degree of $X$. So by Mori's cone theorem,
\begin{align*}
	\overline{\mathrm{NE}}(X) &= \sum_1^{n_d} \mathbb{R}_{\geq 0} [C_i],
\end{align*}
and we get
\begin{align*}
	\mathrm{Nef}(X) &= \{C \in N^1(X) : (C \cdot C_i) \geq 0 \text{ for } 1 \leq i \leq n_d\}. 
\end{align*}
Thus, for $(b, c)$-weighted smooth marked cubic surfaces $(S, (b, c)B)$, we get
\begin{equation}\label{eqn:ampcone}
	\mathrm{Amp}(S, (b, c)B) = \mathrm{Amp} = \{(b, c) \in ((1/9, 1] \cap \mathbb{Q})^2 \mid  c > b/10 + 1/10\}. 
\end{equation}

For singular cubics, we start with some notation: Denote for $k = 1, 2, 3,$ or $4$, $S_{kA_1}$ as a cubic surface with $k$ $A_1$-singularities and let $\widetilde{S}_{kA_1}$ denote its simultaneous resolution. For $k = 1$, we will just write $S_{A_1}$ and $\widetilde{S}_{A_1}$. We will compute the nef cones of the log canonical divisors of a smooth cubic surface $S$, as well as for the singular cubic surfaces $\widetilde{S}_{k A_1}$. By \cite[Prop.~3.3, 3.5]{tu_semi-stable_2005} $\widetilde{S}_{kA_1}$ can be obtained as a blow up of $\mathbb{P}^2$ where the six points are now in special position shown in \Cref{tab:specialposcubics}.
\begin{table}[h]
	\centering
	\begin{tabular}{| c | c |}
			\hline
			$k$ & Special position\\
			\hline
			\hline
			1 & six points lie on a conic \\
			2 & exactly two pairwise intersecting sets of three colinear points\\
			3 & exactly three pairwise intersecting sets of three colinear points\\
			4 & exactly four pairwise intersecting sets of three colinear points\\
			\hline
		\end{tabular}
	\caption{The left column indicates the number of $A_1$ singularities while the right column describes a special position of the six points $p_1, \dots, p_6$ to obtain $\widetilde{S}_{k A_1}$ after blowing them up.}
	\label{tab:specialposcubics}
\end{table}
For each $\widetilde{S}_{kA_1}$, its Picard group is given by $\mathbb{Z}\langle h, e_1, \dots, e_6 \rangle$, where $h$ is the class of a pullback of a line in $\mathbb{P}^2$ and $e_1, \dots, e_6$ are the classes of the exceptional divisors.

In the language of \cite[\S~1]{derenthal2008nef}, each $\widetilde{S}_{kA_1}$ is a generalized Del Pezzo surface. A \textit{generalized Del Pezzo surface} is a smooth projective rational surface $Y$ on which $-K_Y$ is big and nef. However, since we are defined over $\mathbb{C}$, $Y$ must be one of the following: $\mathbb{P}^2$, $\mathbb{F}_0$, $\mathbb{F}_2$, or a surface obtained from $\mathbb{P}^2$ by a sequence of blowing up at up to 8 points, possibly infinitely near, each not lying on any $(-2)$-curve. One of the main results of \cite{derenthal2008nef} is that the cone of curves of these generalized Del Pezzo surfaces are generated by the $(-1)$ and $(-2)$-curves.

\begin{teo}\cite[Cor.~3.12]{derenthal2008nef}\label{teo:gendelpezzonefcone}
	Let $\Gamma \subseteq N^1(Y)_\mathbb{R}$ be the cone spanned by the set of $(-2)$-curves on $Y$. Then if $X$ is a smooth Del Pezzo surface with $\deg(X) = \deg(Y) \leq 7$,
	\begin{align*}
		\mathrm{Nef}(Y) = \mathrm{Nef}(X) \cap \Gamma^\lor.
	\end{align*}
\end{teo}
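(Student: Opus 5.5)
The plan is to identify the Néron--Severi lattices of $Y$ and $X$ and reduce both sides of the equality to duals of cones of curves. Both $Y$ and $X$ are obtained from $\mathbb{P}^2$ by blowing up $r = 9-\deg \geq 2$ points, with $Y$'s points possibly infinitely near or in special position. Using the bases $(h, e_1, \dots, e_r)$ on both, I get an isometry $N^1(Y) \cong N^1(X) \cong I_{1,r}$ sending $K_Y \mapsto K_X \mapsto -3h + \sum e_i$. Under this identification, $\mathrm{Nef}(X)$ is the dual of the cone spanned by the \emph{exceptional classes} $E$, meaning those with $E^2 = -1$ and $E\cdot K = -1$. This follows from the discussion before \eqref{eqn:ampcone}: every $(-1)$-curve on $X$ is such a class, and conversely every such class is a $(-1)$-curve on a smooth del Pezzo surface. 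The statement therefore becomes: a class $D$ is nef on $Y$ if and only if $D\cdot E \ge 0$ for every exceptional class $E$ and $D\cdot C \ge 0$ for every $(-2)$-curve $C$ on $Y$.

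\textbf{Step 1 (cone of curves of $Y$).} First I show that $\overline{\mathrm{NE}}(Y)$ is spanned by the $(-1)$- and $(-2)$-curves on $Y$. This is the main geometric input, and I expect it to be the principal obstacle. I would apply the cone theorem to write
\[
\overline{\mathrm{NE}}(Y) = \overline{\mathrm{NE}}(Y)_{K_Y \ge 0} + \sum_i \mathbb{R}_{\ge 0}[R_i],
\]
where the $R_i$ are $K_Y$-negative extremal rational curves. Since $-K_Y$ is nef, $\overline{\mathrm{NE}}(Y)_{K_Y\ge 0}$ lies in $K_Y^\perp$. Since $-K_Y$ is big and nef with $K_Y^2 > 0$, the Hodge index theorem makes $K_Y^\perp$ negative definite. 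So any effective curve class there is a combination of irreducible curves $C$ with $C^2 < 0$ and $K_Y\cdot C = 0$, and adjunction forces these to be $(-2)$-curves. There are finitely many of them, because a negative definite lattice contains finitely many $(-2)$-vectors, so this part of the cone is polyhedral. For the extremal rays, $\rho(Y) \ge 3$ when $\deg \le 7$, which rules out conic bundle and $\mathbb{P}^2$ contractions. Hence each $R_i$ spans a divisorial contraction of a $(-1)$-curve. Finiteness of the $(-1)$-curves then gives closedness of the cone.

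\textbf{Step 2 ($\subseteq$).} The inclusion $\mathrm{Nef}(Y) \subseteq \Gamma^\vee$ is immediate. For $\mathrm{Nef}(Y) \subseteq \mathrm{Nef}(X)$, let $E$ be an exceptional class. By Riemann--Roch, $\chi(E) = 1 + \tfrac12(E^2 - E\cdot K_Y) = 1$. Serre duality gives $h^2(E) = h^0(K_Y - E)$, and this vanishes because $(K_Y - E)\cdot(-K_Y) = -K_Y^2 - 1 < 0$ while $-K_Y$ is nef. Hence $E$ is effective on $Y$, so $D \cdot E \ge 0$ for every nef $D$.

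\textbf{Step 3 ($\supseteq$).} Let $D \in \mathrm{Nef}(X) \cap \Gamma^\vee$. Every $(-1)$-curve on $Y$ satisfies $E^2 = -1$ and, by adjunction, $K_Y\cdot E = -1$, so it is an exceptional class and $D\cdot E \ge 0$. By construction $D$ is also nonnegative on every $(-2)$-curve. Step 1 then shows $D$ is nonnegative on all of $\overline{\mathrm{NE}}(Y)$, so $D$ is nef. The hypothesis $\deg \le 7$ is used twice: in the rank argument of Step 1, and to make the lattice identification unambiguous, since in degree $8$ the surfaces $\mathbb{F}_0$ and $\mathbb{F}_2$ are not blow-ups of $\mathbb{P}^2$.
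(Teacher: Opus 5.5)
The paper does not prove this statement; it imports it from Derenthal--Joyce--Teitler. Your outline is the standard argument for it: exceptional classes are effective on $Y$ by Riemann--Roch, and $\overline{\mathrm{NE}}(Y)$ is spanned by $(-1)$- and $(-2)$-curves. Step~2, the lattice identification, the use of Mori's classification for $\rho(Y)\ge 3$, and Step~3 (granting Step~1) are all fine.

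The gap is in Step~1, at the face $F=\overline{\mathrm{NE}}(Y)_{K_Y\ge 0}=\overline{\mathrm{NE}}(Y)\cap K_Y^\perp$. Your argument only shows that an \emph{effective} class in $K_Y^\perp$ is a nonnegative combination of $(-2)$-curves, i.e.\ $\mathrm{NE}(Y)\cap K_Y^\perp=\Gamma$. Elements of $F$ are merely limits of effective classes $D_n$, which in general have $K_Y\cdot D_n<0$, so your decomposition does not apply to them. Finiteness of the $(-2)$-curves shows $\Gamma$ is polyhedral, not that $F=\Gamma$. A priori $\overline{\mathrm{NE}}(Y)\cap K_Y^\perp$ could be strictly larger than the closure of $\mathrm{NE}(Y)\cap K_Y^\perp$. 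Ruling this out is exactly what Step~3 needs.

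One way to close it is the following. Suppose $z\neq 0$ spans an extremal ray of $\overline{\mathrm{NE}}(Y)$ and $z^2<0$. Then $z$ is not nef, so $z\cdot C<0$ for some irreducible curve $C$. Write $z=\lim D_n$. For $n\gg 0$, $C$ occurs in $D_n$ with coefficient at least $D_n\cdot C/C^2>0$. Hence $z-(z\cdot C/C^2)\,C\in\overline{\mathrm{NE}}(Y)$, and extremality forces $z\in\mathbb{R}_{>0}[C]$. Now $F$ is a closed pointed face lying in the negative definite space $K_Y^\perp$. It is therefore spanned by extremal rays of $\overline{\mathrm{NE}}(Y)$ with negative square, each spanned by an irreducible curve in $K_Y^\perp$, i.e.\ a $(-2)$-curve. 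So $F=\Gamma$.

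Simpler still, you can drop the cone theorem entirely. To show $D\in\mathrm{Nef}(X)\cap\Gamma^\vee$ is nef on $Y$, it suffices to test irreducible curves $C$ on $Y$:
\begin{itemize}
\item If $C^2<0$, adjunction and nefness of $-K_Y$ force $C$ to be a $(-1)$- or $(-2)$-curve, handled as in your Step~3.
\item If $C^2\ge 0$, then $C\cdot(-K_Y)\ge 0$. Also $D^2\ge 0$ and $D\cdot(-K)\ge 0$, since $D$ is nef on $X$ and the isometry preserves these numbers. So both classes lie in $\{x : x^2\ge 0,\ x\cdot(-K)\ge 0\}$. Since $K^2>0$, the Hodge index theorem gives $x\cdot y\ge 0$ for any two classes in this cone, so $D\cdot C\ge 0$.
\end{itemize}
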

Using \Cref{teo:gendelpezzonefcone} above and the following table listing the possible $(-1)$ and $(-2)$-curves for $\widetilde{S}_{kA_1}$, we get \Cref{teo:nefcones} giving the ample cone for the (generalized) Del Pezzo surfaces needed in the proof of \Cref{teo:main} in \Cref{proof:E2A1_mult4}.
\begin{table}[h]
	\centering
	\begin{tabular}{| c | c |c|}
		\hline
		$k$ & $(-1)$-curves & $(-2)$-curves\\
		\hline
		\hline
		$1$ & $e_1, \dots, e_6$, $h - e_i - e_j$ for $1 \leq i < j \leq 6$ & $2h - e_1 - \cdots e_6$ \\
		$k = 2,3, 4$ & $h - e_i - e_j$ & $e_1, \dots, e_6$, $h - e_i-e_j-e_l$\\
		&  for $\{p_i, p_j\}$ in general position & for $\{p_i, p_j, p_l\}$ of three colinear points\\
		\hline
	\end{tabular}
	\caption{The $(-1)$ and $(-2)$ curves for $\widetilde{S}_{kA_1}$ for $k =1,2,3,4$.}
	\label{tab:(-1)and(-2)curves}
\end{table}

\begin{teo}\label{teo:nefcones}
	Under the special position of the six points $p_1, \dots, p_6$ described in \Cref{tab:specialposcubics}, the ample cones of a smooth cubic $S$ and the simultaneous resolution of a cubic surface with $k = 1, 2, 3,$ or $4$ $A_1$-singularities $\widetilde{S}_{k A_1}$ are given by divisors $L = ah -  \sum_1^6 b_i e_i$, such that
	\begin{table}[H]
			\centering
			\begin{tabular}{| c | c |}
					\hline
					Type & Conditions\\
					\hline
					\hline
					$S$ & $b_1, \dots b_6 > 0$, $a  \geq b_i + b_j$ where $1 \leq i < j \leq 6$, and $2a > b_{i_1} + \cdots + b_{i_5}$\\
					$\widetilde{S}_{A_1}$ & $b_1, \dots, b_6 > 0$, $a > b_i + b_j$ where $1 \leq i < j \leq 6$, and $2a > b_1 + \cdots + b_6$\\
					$\widetilde{S}_{kA_1}$ for $k = 2, 3, 4$ & $b_1, \dots, b_6 > 0$, $a > b_i + b_j + b_l$ for the $k$ pairwise intersecting sets\\
					& of three colinear points,  and $a > b_i + b_j$ for the remaining points\\
					\hline
				\end{tabular}
				\label{tab:nefconditions}
		\end{table}
	Similarly, if $p_7$ is a seventh point in general position, then the ample cones of the blow up of $S$ and $\widetilde{S}_{k A_1}$ at $p_7$ with exceptional divisor $e_7$ ((generalized) degree $2$ Del Pezzo surfaces) are given by divisors $L = ah - \sum_1^7 b_i e_i$ satisfying the conditions of the table above with the extra conditions that $b_7 > 0$ and $a > b_i + b_7$ for $1 \leq i \leq 6$.
\end{teo}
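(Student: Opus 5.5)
Since each surface in question is a smooth projective surface whose Mori cone is known, the plan is to determine the cone of curves in each case and then take duals. For the smooth cubic $S$ (a blow up of $\mathbb{P}^2$ at six general points) the discussion preceding \Cref{eqn:ampcone} shows that $\overline{\mathrm{NE}}(S)$ is generated by the $27$ lines. For $\widetilde{S}_{kA_1}$ we use \Cref{teo:gendelpezzonefcone}: $\mathrm{Nef}(\widetilde{S}_{kA_1}) = \mathrm{Nef}(X)\cap \Gamma^\vee$. Equivalently, $\overline{\mathrm{NE}}(\widetilde{S}_{kA_1})$ is generated by the $(-1)$- and $(-2)$-curves listed in \Cref{tab:(-1)and(-2)curves}. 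By Kleiman's criterion the ample cone is the interior of the nef cone, so $L = ah - \sum b_i e_i$ is ample if and only if $L\cdot C>0$ for every generator $C$.

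The computation is then a list of intersection numbers, using $h^2=1$, $e_i^2=-1$, $h\cdot e_i=0$:
\[
L\cdot e_i = b_i,\qquad L\cdot(h-e_i-e_j)=a-b_i-b_j,
\]
\[
L\cdot(h-e_i-e_j-e_l)=a-b_i-b_j-b_l,\qquad L\cdot(2h-\textstyle\sum_{m\ne i}e_m)=2a-\sum_{m\neq i}b_m .
\]
The cases go as follows.
\begin{itemize}
\item For $S$, the generators $e_i$, $h-e_i-e_j$ and the conics $2h-\sum_{m\ne i}e_m$ give the first row of the table.
\item For $k=1$, the six points lie on a conic. Each conic through five of the points is then the $(-2)$-curve $2h-e_1-\cdots-e_6$ plus $e_i$, so it is not extremal. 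The inequality is therefore replaced by $2a>\sum b_i$, which together with $b_i>0$ recovers $2a>\sum_{m\ne i}b_m$.
\item For $k=2,3,4$, the exceptional classes $e_i$ become $(-2)$-curves in the sense of \Cref{tab:(-1)and(-2)curves} (infinitely near configuration). The extremal rays are then $e_i$, $h-e_i-e_j-e_l$ for each collinear triple, and $h-e_i-e_j$ for the remaining pairs, which gives the third row.
\end{itemize}

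For the degree $2$ statement, blow up a seventh general point $p_7$. The new curves relevant to the stated conditions are $e_7$ and the lines $h-e_i-e_7$; intersecting gives $b_7>0$ and $a>b_i+b_7$.

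The main obstacle is checking that the listed curves really generate the Mori cone in each configuration. For $k\ge 2$ one must confirm that \Cref{tab:(-1)and(-2)curves} is the complete list; this should follow from \cite{derenthal2008nef} via the classification of $(-1)$- and $(-2)$-classes $D$ with $D\cdot K=-1,0$. For degree $2$, \Cref{teo:gendelpezzonefcone} also produces $(-1)$-curves such as conics through five of the seven points and cubics; I expect to show these inequalities follow from the listed ones, possibly only in the range of $L=K+(b,c)B$ used later. If the full cone is actually needed, this is the step where I would write out all $56$ exceptional classes and check them directly.
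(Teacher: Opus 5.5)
Your route is the same as the paper's: take the generators of $\overline{\mathrm{NE}}$ from \Cref{teo:gendelpezzonefcone} and \Cref{tab:(-1)and(-2)curves}, then dualize. The gap is in the enumeration of generators for $k=2$, which is exactly the check you deferred.

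In \Cref{tab:specialposcubics} the six points are distinct, so the $e_i$ remain $(-1)$-curves. There is no infinitely near configuration. This does not affect the inequality $b_i>0$, but it shows the table was taken on trust.

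The irreducible $(-1)$-curves are:
\begin{itemize}
\item the $e_i$;
\item the $h-e_i-e_j$ with $\{p_i,p_j\}$ in no collinear triple;
\item the conics $2h-\sum_{m\neq i}e_m$, for those $i$ such that the remaining five points contain no collinear triple.
\end{itemize}
For $k=3,4$ no such $i$ exists, so your list is complete there.

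For $k=2$, take triples $\{p_1,p_2,p_3\}$ and $\{p_3,p_4,p_5\}$. Deleting the common point $p_3$ destroys both triples, so $C=2h-e_1-e_2-e_4-e_5-e_6$ is an irreducible $(-1)$-curve. The paper itself lists it among the lines of the $\widetilde{E}_{2A_1}$ surfaces, and the count $6+9+1=16$ matches the number of lines on a $2A_1$ cubic. Since $C$ is irreducible with $C^2<0$, it spans an extremal ray. Its inequality $2a>b_1+b_2+b_4+b_5+b_6$ does not follow from the third row. For example, take $a=\tfrac{11}{5}$, $b_3=\epsilon$ with $0<\epsilon<\tfrac15$, and $b_1=b_2=b_4=b_5=b_6=1$. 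This satisfies every listed condition, yet $L\cdot C=\tfrac{22}{5}-5<0$. So for $k=2$ your argument would establish a false description of the ample cone; the conic inequality must be added.

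The degree $2$ step fails for the same kind of reason, so the hope that the extra inequalities follow from the listed ones cannot be realized in general. For a general $p_7$, the conics $2h-e_7-e_i-e_j-e_k-e_l$ and the cubics $3h-2e_i-\sum_{j\neq i}e_j$ are irreducible $(-1)$-curves. Their inequalities are not consequences of the stated ones. Two counterexamples on the smooth $S$:
\begin{itemize}
\item $b_1=b_2=b_3=b_4=b_7=1$, $b_5=b_6=\epsilon$ small, $a=\tfrac{11}{5}$. All stated conditions hold, but $2a<b_1+b_2+b_3+b_4+b_7$.
\item All $b_i=1$ and $a=\tfrac{13}{5}$. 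All stated conditions hold, but $L\cdot(3h-2e_1-e_2-\cdots-e_7)=\tfrac{39}{5}-8<0$.
\end{itemize}
So you must either add the conic and cubic inequalities to the statement, or verify them separately for each divisor $K+(b,c)B+\Delta$ where the result is later applied. Finally, the $\geq$ in the first row has to be strict for ampleness.
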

	
	\section{The universal family $\ddot{\pi} : \ddot Y(E_7) \to \overline{Y}_{(1, \dots, 1)}$ and its fibers}\label{S:univfam}
In this section, we first summarize the construction of the universal family $\ddot Y(E_7) \to \overline{Y}_{(1, \dots, 1)}$ due to Hacking--Keel--Tevelev \cite[\S~10]{hacking_stable_2009}. We also summarize Schock's computation of the fibers in \cite[\S~7]{schock_moduli_2024}. By determining the explicit fibers over $\overline{Y}_{(1, \dots, 1)}$, we also obtain the fibers over $\overline{Y}_{(1, 1)}$.

\subsection{Log canonical compactifications of $Y_m$ and $Y(E_7)$}
First, we recall the moduli space of marked quadric del Pezzo surfaces $Y(E_7)$ analogous to \Cref{dfn:latticemarking} for marked cubic surfaces. 
\begin{dfn}[Marking of quadric del Pezzo]\label{dfn:markedquadric}
	Let $I_{1, 7} = \langle 1 \rangle \oplus \langle -1 \rangle^7$ be the standard odd unimodular hyperbolic lattice of signature $(1, 7)$ with standard orthonormal basis $\langle e_0, \dots, e_7 \rangle$. Furthermore, let $k = -3e_0 + e_1 + \cdots + e_7$. Then a \textit{marking} of a smooth quadric del Pezzo surface $S$ is an isometry
	\begin{align*}
		\phi : I_{1, 7} \to \mathrm{Pic}(S)
	\end{align*}
	sending $k$ to the canonical divisor $K_S$ of $S$. Let $Y(E_7)$ denote the moduli space of marked quadric del Pezzo surfaces.
\end{dfn}

\begin{rem}
	We could have applied the same convention as in \Cref{dfn:markedquadric} and denoted $Y_m$ as $Y(E_6)$. This is the convention used in \cite{schock_moduli_2024} and \cite{hacking_stable_2009}. However, we choose not to follow this convention as this article is mainly concerned with cubic surfaces, while the articles referenced are concerned with del Pezzo surfaces in general.
\end{rem}
 
For $n = 6, 7$, there are $W(E_n)$-equivariant compactifications of $Y_m$ and $Y(E_7)$ done by Naruki \cite{naruki_cross_1982}, Sekiguchi \cite{sekiguchi_cross_2000}, and Hacking--Keel--Tevelev \cite{hacking_stable_2009}. Namely, for a smooth variety $X$, let $X \subseteq \overline{X}$ denote a compactification of $X$ with normal crossing boundary $B = \overline{X} - X$. Then for $n \geq 0$, the vector space
\begin{align*}
	H^0(\overline{X}, n(K_{\overline{X}} + B))
\end{align*}
only depends on $X$. The \textit{log canonical ring} of $X$ is the associated graded ring
\begin{align*}
	R(\overline{X}, K_{\overline{X}} + B) := \bigoplus_{n \geq 0} H^0(\overline{X}, n(K_{\overline{X}} + B)),
\end{align*}
and if it is finitely generated, the induced rational map
\begin{align*}
	X \dashrightarrow \overline{X}^{\mathrm{lc}} := \mathrm{Proj} R(\overline{X}, K_{\overline{X}} + B) 
\end{align*}
is called the \textit{log canonical model}. We call $X$ \textit{log-minimal} if there is an $n \geq 0$ sufficiently divisible such that the rational map
\begin{align*}
	X \dashrightarrow \mathbb{P}(H^0(\overline{X}, n(K_{\overline{X}} + B))^\lor)
\end{align*}
is an embedding. In this case, it is conjectured that the log canonical ring is finitely generated and that $\overline{X}^\mathrm{lc}$ is the \textit{log canonical compactification} of $X$  (see \cite[2.4]{birkar2010existence}). The following result by Hacking--Keel--Tevelev shows that the varieties $Y_m$ and $Y(E_7)$ are log-minimal and admit log canonical compactifications. 

\begin{teo}\cite[Thm.~1.2]{hacking_stable_2009}
	The varieties $Y_m$ and $Y(E_7)$ are log-minimal. The log canonical compactifications $\overline{Y}_m^\mathrm{lc}$ and $\overline{Y}(E_7)^\mathrm{lc}$ are smooth and their boundary is a union of smooth divisors with normal crossings. Furthermore, the compactifications are, respectively, $W(E_6)$ and $W(E_7)$-equivariant. \qed
\end{teo}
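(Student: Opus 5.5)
We would follow the tropical approach of Hacking--Keel--Tevelev: realize $Y_m$ and $Y(E_7)$ as very affine varieties, compactify them inside a suitable toric variety, and then check that this compactification is already the log canonical model. Concretely, for $n=6,7$ we first embed $Y(E_n)$ (with $Y(E_6)=Y_m$) as a closed subvariety of an algebraic torus $T$. Since a marked del Pezzo surface is $\mathbb{P}^2$ blown up at $n$ points in general position, $Y(E_n)$ is the open subset of $(\mathbb{P}^2)^n/\mathrm{PGL}_3$ cut out by the root hyperplanes. These are the loci where a root of $E_n$ becomes effective, i.e.\ two points coincide, three points are collinear, or six points lie on a conic. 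The determinants defining these loci (Coble/Cayley cross-ratios) give a closed embedding $Y(E_n)\hookrightarrow T$. Because the set of roots is $W(E_n)$-stable, $W(E_n)$ acts on the character lattice, and the embedding is equivariant.

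Next we would compute the tropicalization $\mathrm{Trop}(Y(E_n))\subseteq N_{\mathbb{R}}$ and give it a $W(E_n)$-invariant fan structure $\Sigma$. The rays should be indexed by the natural boundary types. For $E_6$ these are the $36$ $A_1$-subsystems and the $40$ $A_2^3$-subsystems, matching Naruki's $76$ boundary divisors. Cones should correspond to compatible collections of such subsystems. We would then let $\overline{Y}$ be the closure of $Y(E_n)$ in the toric variety $X(\Sigma)$ and prove that it is \emph{sch\"on}. The plan is to identify each open boundary stratum $\overline{Y}\cap O_\sigma$ with a product of smaller moduli spaces: $Y(E_k)$ for smaller $k$, or $M_{0,m}$ and its relatives, obtained by degenerating the point configuration, e.g.\ two points colliding gives $Y(E_{n-1})$. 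Each such space is known inductively to be smooth and very affine. Tevelev's criterion then shows that $\overline{Y}$ is smooth, that the boundary $B=\overline{Y}\setminus Y(E_n)$ is a normal crossing divisor, and that $K_{\overline{Y}}+B$ is the restriction of the log canonical divisor of $X(\Sigma)$. In particular $H^0(\overline{Y},m(K_{\overline{Y}}+B))$ computes the log canonical ring.

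It remains to prove that $K_{\overline{Y}}+B$ is ample on $\overline{Y}$. This gives both log-minimality and the identification $\overline{Y}=\overline{Y}(E_n)^{\mathrm{lc}}$, with smoothness and normal crossings inherited from the previous step. By Kleiman's criterion together with the stratification, it suffices to check positivity on the one-dimensional strata closures, which are $\mathbb{P}^1$'s with marked boundary points. On such a curve $C$ we have $(K_{\overline{Y}}+B)\cdot C=-2+\#(C\cap B)$, computed via adjunction from the product structure. So we need every one-dimensional stratum to meet the boundary in at least three points. This is a finite $W(E_n)$-orbit check, reducing to a handful of types of rank-one degenerations. Equivariance is automatic, since $\Sigma$ and the embedding are $W(E_n)$-invariant and the log canonical model is canonical.

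The main obstacle is the sch\"on step, in particular for $E_7$. There one must find the correct fan structure on the tropicalization, for which the naive root-subsystem fan may need refinement. One must also verify that every boundary stratum, including the ones coming from higher-codimension degenerations of $7$ points, is smooth and meets the expected torus orbits transversally. We would first handle $E_6$, where Naruki's explicit cross-ratio variety $\overline{N}$ serves as a check. For $E_7$ we would try to reduce the stratum analysis to the $E_6$ and $D_n$/$M_{0,n}$ cases via the restriction of the root system to the relevant subsystems.
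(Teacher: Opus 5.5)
Your final step has a genuine gap. Kleiman's criterion needs positivity on all of $\overline{\mathrm{NE}}(\overline{Y})$. Even granting that $K_{\overline{Y}}+B$ is semiample, so that positivity on each irreducible curve suffices, you still have to control curves that are not closures of one-dimensional strata. These are curves meeting $Y(E_n)$ itself, or meeting the interior of a boundary stratum of dimension at least two. Nothing in your argument shows that the classes of one-dimensional strata span the cone of curves. There is no torus acting on $\overline{Y}$ that would let you degenerate an arbitrary curve into the boundary. For $\overline{M}_{0,n}$, itself a tropical compactification of this kind, the analogous statement is the F-conjecture, which is open in general. So your count $(K_{\overline{Y}}+B)\cdot C=-2+\#(C\cap B)$ over the $W(E_n)$-orbits of one-dimensional strata only shows positivity on boundary curves. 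It gives neither ampleness, nor log-minimality, nor the identification of $\overline{Y}$ with the log canonical compactification. (The paper gives no argument of its own here; it quotes the result from Hacking--Keel--Tevelev, whose tropical framework you are following.)

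The missing input has to say something about every open stratum, including the interior. First, your claim that $K_{\overline{Y}}+B$ is the restriction of the log canonical divisor of $X(\Sigma)$ is wrong: $K_{X(\Sigma)}+\partial X(\Sigma)\sim 0$, so this would make $K_{\overline{Y}}+B$ trivial. What sch\"onness gives is that $\Omega^1_{\overline{Y}}(\log B)$ is a quotient of $M\otimes\mathcal{O}_{\overline{Y}}$. Hence $K_{\overline{Y}}+B$ is globally generated and equals $\gamma^*\mathcal{O}(1)$ for the log Gauss map $\gamma\colon\overline{Y}\to\mathrm{Gr}(\dim Y,N_{\mathbb{C}})$. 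Sch\"onness alone also gives only toroidal singularities; smoothness needs $\Sigma$ unimodular.

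Then $K_{\overline{Y}}+B$ is ample if and only if $\gamma$ is finite. At a point of $Y_\sigma=\overline{Y}\cap O_\sigma$, the log tangent space contains $\mathrm{span}(\sigma)$, and its image in $N_{\mathbb{C}}/\mathrm{span}(\sigma)$ is the tangent space of $Y_\sigma\subset O_\sigma$. So ampleness is equivalent to the log Gauss map of every open stratum $Y_\sigma$ being quasi-finite. For proper strata your product decompositions are the natural inductive tool. For the top stratum $Y(E_n)$, however, this is exactly the content of log-minimality and needs a separate argument.

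Alternatively, suppose you show that, on $\overline{Y}$ and on every stratum closure, the log canonical divisor is $\mathbb{Q}$-linearly equivalent to a combination of all boundary components with strictly positive coefficients. Each open stratum is affine, so every positive-dimensional complete subvariety of its closure meets its boundary. Nakai--Moishezon then reduces ampleness inductively to the boundary, and finally to your one-dimensional check. The proposal contains neither of these ingredients.
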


We have nice geometric interpretations of these log canonical compactifications, but let us first establish some conventions regarding root subsystems of $E_6$ and $E_7$.

\begin{dfn}[Abstract simplicial complex $\mathcal{R}(E_n)$]\cite[1.14]{hacking_stable_2009}\label{dfn:simplicalcomplex}
	For $n = 6,7$, let $\mathcal{R}(E_n)$ be the abstract simplicial complex whose vertices are in bijections with the root subsystems of $E_6$ of the form $A_1$ and $3A_2$ for $n = 6$ or $A_1$, $A_2$, $2A_3$, and $A_7$ for $n = 7$. A collection of vertices forms a simplex if the corresponding root subsystems are either pairwise orthogonal or disjoint. In addition, for $E_7$, remove the $7$-simplices formed by $7$-tuples of pairwise orthogonal $A_1$ root subsystems.
\end{dfn}

\begin{conv}[Boundary divisors of $\overline{Y}_m^\mathrm{lc}$ and $\overline{Y}(E_7)^\mathrm{lc}$]
\label{conv:simplicalcomplex}
	We follow the notation in \cite[2.6]{schock_moduli_2024}. For a vertex $\Theta$ of $\mathcal{R}(E_n)$, we write $D_\Theta$ for the corresponding boundary divisor of $\overline{Y}_m^\mathrm{lc}$ and $\overline{Y}(E_7)^\mathrm{lc}$. 
\end{conv}

\begin{teo}[{\cite{hacking_stable_2009}}]
\label{teo:lcboundarydiv}
	The boundary complex of $\overline{Y}_m^\mathrm{lc}$ and $\overline{Y}(E_7)^\mathrm{lc}$ is the simplicial complex $\mathcal{R}(E_6)$ and $\mathcal{R}(E_7)$, respectively. We also have the following descriptions.
	\begin{enumerate}
		\item $\overline{Y}_m^\mathrm{lc}$ is Naruki's cross-ratio variety \cite{naruki_cross_1982} (see \Cref{S:naruki}). It has a total of 76 irreducible boundary divisors. 36 boundary divisors $D_{A_1}$ isomorphic to $\overline{M}_{0, 6}$ (Type $A_1$ in \Cref{tab:narukifibers}) and 40 boundary divisors $D_{3A_2}$ isomorphic to $(\overline{M}_{0, 4})^3$ (Type $3A_2$ in \Cref{tab:narukifibers}).
		\item $\overline{Y}(E_7)^\mathrm{lc}$ is Sekiguchi's cross-ratio variety \cite{sekiguchi_cross_2000}. It has a total of 1065 boundary divisors.
		\begin{enumerate}
			\item 63 boundary divisors $D_{A_1}$, see \cite{sekiguchi_cross_2000} or \cite[2.2]{schock_moduli_2024} for a description.
			\item 336 boundary divisors $D_{A_2}$ isomorphic to $\overline{M}_{0, 4} \times \overline{M}_{0, 7}$.
			\item 630 boundary divisors $D_{2A_3}$ isomorphic to $\overline{M}_{0, 5} \times \overline{M}_{0, 5} \times \overline{M}_{0, 4}$.
			\item 36 boundary divisors $D_{A_7}$ isomorphic to $\overline{M}_{0, 8}$.
		\end{enumerate}
	\end{enumerate}\qed
\end{teo}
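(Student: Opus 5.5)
This statement is cited from \cite{hacking_stable_2009}, so the plan is to recover it from the constructions summarized above rather than to reprove the log minimality theorem. For $\overline{Y}_m^{\mathrm{lc}}$, first identify it with Naruki's compactification $\overline{N}$. Then read off the boundary from Naruki's computation recalled in \Cref{S:naruki}. For $\overline{Y}(E_7)^{\mathrm{lc}}$, use Sekiguchi's cross-ratio variety in the same way. In both cases the proof has three parts: identify the log canonical model with the explicit cross-ratio variety, match boundary divisors with vertices of $\mathcal{R}(E_n)$, and match intersection patterns with simplices.

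\textbf{Step 1: the identification.} HKT's realization of $Y_m$ and $Y(E_7)$ goes through their tropical/Chow-quotient construction. This gives a smooth normal crossing compactification $\overline{X}$ on which $K_{\overline X}+B$ is ample, and ampleness is what makes $\overline{X}$ equal to $\operatorname{Proj} R(\overline X, K_{\overline X}+B)$. For $E_6$ one then shows this $\overline X$ is $\overline N$. The cleanest route is the argument already used in \Cref{teo:GKStoroidal}. First, $W(E_6)$-equivariance and the Borel-type extension across a normal crossing boundary give a morphism between the two models. Second, count boundary divisors on both sides. Third, a birational morphism of $\mathbb{Q}$-factorial models with the same number of boundary divisors and no exceptional divisors is an isomorphism.

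\textbf{Step 2: boundary divisors.} Classify them by $W(E_n)$-orbits of root subsystems.
\begin{itemize}
\item For $E_6$: the 36 positive roots give the 36 $A_1$ divisors, and the 40 sub-root-systems of type $3A_2$ give the 40 $3A_2$ divisors. The divisor isomorphism types come from the degenerate surfaces. A $D_{A_1}$ divisor parametrizes nodal cubics, whose resolutions are blowups of six points on a conic (\Cref{tab:specialposcubics}), hence a copy of $\overline{M}_{0,6}$. A $3A_2$ divisor parametrizes three-component degenerations (\Cref{tab:narukifibers}), each component contributing a cross-ratio, hence $(\overline{M}_{0,4})^3$.
\item For $E_7$: count orbits using stabilizer orders in $W(E_7)$. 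This yields $63$ positive roots, and then $336$, $630$ and $36$ for the $A_2$, $2A_3$ and $A_7$ subsystems. The sum $63+336+630+36=1065$ gives the stated total.
\end{itemize}

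\textbf{Step 3: intersections.} Show two boundary divisors meet exactly when their root subsystems are orthogonal or disjoint, and that deeper strata are the corresponding products. This is where I expect the main obstacle to lie, especially the $E_7$ exception that seven mutually orthogonal $A_1$'s do not form a stratum. I would attempt it by local toric charts of the Chow-quotient/tropical fan: cones correspond to compatible collections of subsystems, and one checks directly that the cone for seven orthogonal $A_1$'s is absent. Since this is a cited result, it is reasonable at this point to defer the full verification to \cite{hacking_stable_2009}.
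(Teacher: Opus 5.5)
The paper gives no proof of this statement. It is imported from \cite{hacking_stable_2009}, with Naruki and Sekiguchi supplying the explicit models, and closed with \qed. You also defer Step 3 and the divisor descriptions to those sources, so your proposal agrees with the paper in substance there. Your subsystem counts ($36, 40$ and $63, 336, 630, 36$) are correct.

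The one place where you offer an independent argument, Step 1, has a genuine gap.
\begin{itemize}
\item Borel's theorem extends holomorphic maps from $(\Delta^*)^r\times\Delta^s$ into an arithmetic quotient of a bounded symmetric domain, and the extension lands in its Baily--Borel compactification. In \Cref{teo:GKStoroidal} it is used with the toroidal compactification $\overline{\mathbb{B}_4/\Gamma_m}$ as target. It says nothing about extending into an arbitrary compactification such as $\overline{N}$ or $\overline{Y}_m^{\mathrm{lc}}$. So nothing gives you a morphism between those two, and $W(E_6)$-equivariance does not supply one.
\item The divisor count is circular. The number $76$ for $\overline{Y}_m^{\mathrm{lc}}$ is part of what the theorem asserts. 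Your Step 2 obtains it by reading it off $\overline{N}$, which uses the identification Step 1 is meant to prove.
\item In the same vein, knowing that the open part of $D_{A_1}$ parametrizes six points on a conic identifies only its interior with $M_{0,6}$, not its closure with $\overline{M}_{0,6}$.
\end{itemize}

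The repair is to avoid comparison morphisms altogether. Since $\overline{N}$ is a smooth compactification of $Y_m$ with normal crossing boundary $B$, the ring $R(\overline{N},K_{\overline{N}}+B)$ is the log canonical ring of $Y_m$. Once $K_{\overline{N}}+B$ is ample, you get $\overline{Y}_m^{\mathrm{lc}}=\operatorname{Proj}R=\overline{N}$ directly.

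Adjunction handles the boundary part. The restriction $(K_{\overline{N}}+B)|_D=K_D+(B-D)|_D$ is $K+\partial$ on $D_{A_1}\cong\overline{M}_{0,6}$ and $\mathcal{O}(1,1,1)$ on $D_{3A_2}\cong(\mathbb{P}^1)^3$, both ample. Conversely, uniqueness of log canonical compactifications, with $\overline{M}_{0,n}$ the one for $M_{0,n}$, is the honest way to get the isomorphism types in your Step 2. Positivity away from the boundary is the substantive input, and that is what HKT supply.

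Alternatively, take HKT's route, recalled in the paper's discussion of flattening $\pi$. Realize $\overline{Y}_m^{\mathrm{lc}}$ and $\overline{Y}(E_7)^{\mathrm{lc}}$ as the closures of $Y_m$ and $Y(E_7)$ in the toric varieties $X(\mathcal{F}(E_n))$. The boundary strata are then the intersections with the torus orbits indexed by cones of $\mathcal{F}(E_n)$, which is the right mechanism for your Step 3. If you want to keep Borel, use $\overline{\mathbb{B}_4/\Gamma_m}$ as a common target for both models. Take the divisor count of HKT's model from the rays of $\mathcal{F}(E_6)$, not from $\overline{N}$.

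Finally, the $E_7$ exception you flag as the main obstacle is just a dimension count. Since $\dim\overline{Y}(E_7)^{\mathrm{lc}}=6$, at most six components of a normal crossing boundary can meet, while seven pairwise orthogonal roots exist in $E_7$. In $E_6$ at most $4=\dim\overline{Y}_m^{\mathrm{lc}}$ roots are pairwise orthogonal, so no exception is needed there.
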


The following result describes the intersections of boundary divisors on $\overline{Y}_m^\mathrm{lc}$. For the explicit subsystems of $E_6$ and $E_7$ corresponding to these boundary divisors, please refer to \cite[App.~C]{schock_moduli_2024}.

\begin{pro}[{\cite{sekiguchi_cross_2000}, \cite[Prop.~9.17]{hacking_stable_2009},\cite[Prop.~2.7]{schock_moduli_2024}, \cite{naruki_cross_1982}}]\label{pro:boundaryint}
Given divisors $D_{\Theta}$ and $D_{\Theta'}$ corresponding to vertices $\Theta$ and $\Theta'$ (see \Cref{dfn:simplicalcomplex} and \Cref{conv:simplicalcomplex}), the intersections $D_\Theta\cap D_{\Theta'}$ are described below:  
	\begin{enumerate}
		\item On $\overline{Y}_m^\mathrm{lc}$, one has the following types of intersections among boundary divisors.
		\begin{enumerate}
			\item For vertices $A_1$ and $A_1'$ (of type $A_1$), the corresponding divisors $D_{A_1}$ and $D_{A_1'}$ intersect if and only if $A_1 \perp A_1'$. In this case, the intersection
			is isomorphic to $\overline{M}_{0,6} \cong D_{2,4}$ on $D_{A_1} \cong \overline{M}_{0,6}$.
			\item $D_{A_1}$ and $D_{3A_2}$ intersect if and only if $A_1 \subset A_2^3$. In this case, the intersection is isomorphic to $\overline{M}_{0,4} \times \overline{M}_{0,4} \cong D_{3,3}$ on $D_{A_1} \cong \overline{M}_{0,6}$. This is a divisor of the form $p \times \mathbb{P`}^1 \times \mathbb{P}^1$, $\mathbb{P}^1 \times p \times \mathbb{P}^1$, or $\mathbb{P}^1 \times \mathbb{P}^1 \times p$, $p=0$, $1$, or $\infty$, on $D_{3A_2} \cong (\mathbb{P}^1)^3$.
			\item No two $D_{3A_2}$ divisors intersect.
		\end{enumerate}
		\item On $\overline{Y}(E_7)^\mathrm{lc}$, one has the following types of intersections among boundary divisors.
		\begin{enumerate}
			\item $D_{A_1}$ and $D_{A_1'}$ intersect if and only if $A_1 \perp A_1'$. In this case, the intersection is isomorphic to a smooth projective fourfold called $\overline{Z}(D_4)$, described in \cite{sekiguchi_cross_2000}, \cite[2.2]{schock_moduli_2024}.
			\item $D_{A_1}$ and $D_{A_2}$ intersect if and only if either $A_1 \subset A_2$, or $A_1 \perp A_2$. In the former case, the intersection is isomorphic to $p \times \overline{M}_{0,7}$ on $D_{A_2} \cong \overline{M}_{0,4} \times \overline{M}_{0,7}$ (where $p=0,1,\infty$ is a boundary divisor of $\overline{M}_{0,4} \cong \mathbb{P}^1$). In the latter case, the intersection is isomorphic to $\overline{M}_{0,4} \times \overline{M}_{0,6} \cong \overline{M}_{0,4} \times D_{2,5} \subset D_{A_2}$.
			\item $D_{A_1}$ and $D_{2A_3}$ intersect if and only if $A_1 \subset 2A_3$ or $A_1 \perp 2A_3$. In the former case, the intersection is isomorphic to $\overline{M}_{0,4} \times \overline{M}_{0,5} \times \overline{M}_{0,4} \cong D_{2,3} \times \overline{M}_{0,5} \times \overline{M}_{0,4}$ or $\overline{M}_{0,5} \times \overline{M}_{0,4} \times \overline{M}_{0,4} \cong \overline{M}_{0,5} \times D_{2,3} \times \overline{M}_{0,4}$ as a divisor on $D_{2A_3} \cong \overline{M}_{0,5} \times \overline{M}_{0,5} \times \overline{M}_{0,4}$. In the latter case, the intersection is isomorphic to $\overline{M}_{0,5} \times \overline{M}_{0,5} \times p$ on $D_{2A_3}$, where $p=0,1,\infty$ is a boundary divisor of $\overline{M}_{0,4} \cong \mathbb{P}^1$.
			\item $D_{A_1}$ and $D_{A_7}$ intersect if and only if $A_1 \subset A_7$. In this case, the intersection is isomorphic to $\overline{M}_{0,7} \cong D_{2,6}$ on $D_{A_7} \cong \overline{M}_{0,8}$.
			\item $D_{A_2}$ and $D(A_2')$ intersect if and only if $A_2 \perp A_2'$. In this case, the intersection is isomorphic to $\overline{M}_{0,4} \times \overline{M}_{0,4} \times \overline{M}_{0,5} \cong \overline{M}_{0,4} \times D_{3,4}$ on $D_{A_2} \cong \overline{M}_{0,4} \times \overline{M}_{0,7}$.
			\item $D_{A_2}$ and $D_{2A_3}$ intersect if and only if $A_2 \subset 2A_3$. In this case, the	intersection is isomorphic to $\overline{M}_{0,4} \times \overline{M}_{0,4} \times \overline{M}_{0,5} \cong \overline{M}_{0,4} \times D_{3,4}$ on $D_{A_2} \cong \overline{M}_{0,4} \times \overline{M}_{0,7}$, and to $\overline{M}_{0,4} \times \overline{M}_{0,5} \times \overline{M}_{0,4} \cong D_{2,3} \times \overline{M}_{0,5} \times \overline{M}_{0,4}$ or $\overline{M}_{0,5} \times \overline{M}_{0,4} \times \overline{M}_{0,4} \cong \overline{M}_{0,5} \times D_{2,3} \times \overline{M}_{0,4}$ on $D_{2A_3} \cong \overline{M}_{0,5} \times \overline{M}_{0,5} \times \overline{M}_{0,4}$.
			\item $D_{A_2}$ and $D_{A_7}$ intersect if and only if $A_2 \subset A_7$. In this case, the intersection is isomorphic to $\overline{M}_{0,4} \times \overline{M}_{0,6} \cong \overline{M}_{0,4} \times D_{2,5}$ on $D_{A_2} \cong \overline{M}_{0,4} \times \overline{M}_{0,7}$, and $D_{3,5}$ on $D_{A_7} \cong \overline{M}_{0,8}$.
			\item No two $D_{2A_3}$ divisors intersect.
			\item $D_{2A_3}$ and $D_{A_7}$ intersect if and only if $2A_3 \subset A_7$. In this case, the intersection is isomorphic to $\overline{M}_{0,5} \times \overline{M}_{0,5} \times p$ as a divisor on $D_{2A_3} = \overline{M}_{0,5} \times \overline{M}_{0,5} \times \overline{M}_{0,4}$ (where $p=0,1,\infty$ is a boundary divisor of $\overline{M}_{0,4}$), and to $D_{4,4}$ on $D_{A_7} \cong \overline{M}_{0,8}$.
			\item No two $D_{A_7}$ divisors intersect.
		\end{enumerate}
	\end{enumerate}
\end{pro}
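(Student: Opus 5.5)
The statement is cited from Sekiguchi, Hacking--Keel--Tevelev, Schock and Naruki. I would verify it through the modular description of the boundary strata, working root-theoretically: each boundary divisor $D_\Theta$ corresponds to a root subsystem $\Theta$, and $D_\Theta$ is itself a product of moduli spaces $\overline{M}_{0,n}$.

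First, I would reduce the existence of the intersections to the simplicial complex of \Cref{dfn:simplicalcomplex}. By \Cref{teo:lcboundarydiv} the boundary complex is $\mathcal{R}(E_n)$, and the boundary is simple normal crossing. Hence $D_\Theta\cap D_{\Theta'}\neq\emptyset$ exactly when $\{\Theta,\Theta'\}$ is an edge, that is, when the subsystems are orthogonal or nested. This gives all the ``if and only if'' clauses at once. It also gives the negative statements: two distinct $3A_2$ subsystems of $E_6$ are never orthogonal, since a $3A_2$ already has rank $6$, and they are never nested. The same argument applies to two $2A_3$ or two $A_7$ subsystems of $E_7$, which have rank $6$ and $7$ respectively.

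Second, I would identify each intersection as a divisor on the larger stratum. For $D_{A_1}\cong\overline{M}_{0,6}$ in $E_6$, the residual root system $A_1^\perp\cong A_5$ governs the stratum. Its boundary divisors are the $D_{I}$ with $|I|=2,3$. An orthogonal $A_1'$ lies in $A_5$ and corresponds to a transposition, giving $D_{2,4}\cong\overline{M}_{0,5}$ up to the stated product form. A $3A_2\supset A_1$ corresponds to a $3|3$ partition, giving $D_{3,3}\cong\overline{M}_{0,4}^2$. For $E_7$, I would perform the same bookkeeping. For $A_2$, one has $A_2^\perp\cong A_5$ together with the $\overline{M}_{0,4}$ factor from $A_2$ itself, so $D_{A_2}\cong\overline{M}_{0,4}\times\overline{M}_{0,7}$; containment $A_1\subset A_2$ then hits a point of the $\overline{M}_{0,4}$ factor, while orthogonality hits $D_{2,5}$. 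For $2A_3$, the residual factors $\overline{M}_{0,5}^2\times\overline{M}_{0,4}$ are matched in the same way. For $A_7\leftrightarrow\overline{M}_{0,8}$, nested subsystems $A_1,A_2,2A_3\subset A_7$ correspond to the subsets $|I|=2,3,4$.

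Third, I would check the symmetric descriptions, such as the identification of $D_{A_2}\cap D_{A_7}$ from both sides. This reduces to the compatibility of the $\overline{M}_{0,n}$ stratification with root inclusions, as in Hacking--Keel--Tevelev Prop.~9.17. The main obstacle is the $\overline{Z}(D_4)$ case for $A_1\perp A_1'$ in $E_7$, which is not a product of $\overline{M}_{0,n}$. For this case I would simply cite Sekiguchi's description rather than derive it.
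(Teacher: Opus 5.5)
The paper gives no argument for this proposition. It is quoted from \cite{sekiguchi_cross_2000}, \cite[Prop.~9.17]{hacking_stable_2009}, \cite[Prop.~2.7]{schock_moduli_2024} and \cite{naruki_cross_1982}.

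\textbf{Step one is sound.} It is a legitimate derivation of every incidence (``intersect if and only if'') clause from \Cref{teo:lcboundarydiv}. For an snc boundary with dual complex $\mathcal{R}(E_n)$, two divisors meet exactly when they span an edge. The alternative omitted in each clause is excluded by rank: nothing is orthogonal to $A_7$; $(2A_3)^\perp\cong A_1$ in $E_7$, so no $A_2$ is orthogonal to a $2A_3$; distinct subsystems of the same type are never nested. ``At once'' is slightly optimistic, but these checks are routine. One caveat: you are using HKT's simplex condition ``pairwise orthogonal or nested.'' \Cref{dfn:simplicalcomplex} as printed says ``orthogonal or disjoint,'' and with that wording none of the containment clauses would follow. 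Say explicitly which definition you use.

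\textbf{Step two is the gap.} It checks plausibility but does not prove the isomorphism statements. The principle ``the residual root system governs the stratum'' does not determine the answer. In $E_6$, $A_1^\perp\cong A_5$ goes with $D_{A_1}\cong\overline{M}_{0,6}$, while in $E_7$, $A_2^\perp\cong A_5$ goes with the factor $\overline{M}_{0,7}$ of $D_{A_2}$.

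Matching subsystems to boundary divisors by type and count is also not enough. On $D_{A_2}\cong\overline{M}_{0,4}\times\overline{M}_{0,7}$ there are $3+56$ boundary divisors, cut out by:
the $3$ subsystems $A_1\subset A_2$;
the $15$ with $A_1\perp A_2$;
the $20$ with $A_2'\perp A_2$;
the $15$ subsystems $2A_3\supset A_2$;
the $6$ subsystems $A_7\supset A_2$.
So only $15$ of the $21$ divisors of type $D_{2,5}$ come from orthogonal $A_1$'s. Counting alone cannot decide which family of size $15$ joins the $6$ to give $D_{2,5}$ and which joins the $20$ to give $D_{3,4}$.

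Settling this needs genuine geometric input:
that under the isomorphisms of \Cref{teo:lcboundarydiv} the boundary of $\overline{Y}(E_7)^{\mathrm{lc}}$ restricts to the product boundary of each $D_\Theta$;
that $\mathrm{Stab}_{W(E_7)}(\Theta)$ preserves the types of boundary divisors on $D_\Theta$;
a cross-check such as computing $D_{A_2}\cap D_{2A_3}$ on $D_{2A_3}$, where it is $\cong\overline{M}_{0,4}\times\overline{M}_{0,5}\times\overline{M}_{0,4}\not\cong\overline{M}_{0,4}\times\overline{M}_{0,6}$.
These inputs are what \cite[Prop.~9.17]{hacking_stable_2009} and the explicit coordinates of Sekiguchi and Naruki supply. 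You are already citing for the $2A_3$ case (``matched in the same way'') and for $\overline{Z}(D_4)$. So for the isomorphism halves your proposal rests on the same references as the paper; present step two as a consistency check, not as a verification.

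\textbf{Clause (1)(a) is misstated.} On $D_{A_1}\cong\overline{M}_{0,6}$, the divisor $D_{2,4}$ is $\overline{M}_{0,3}\times\overline{M}_{0,5}\cong\overline{M}_{0,5}$, a surface, whereas $\overline{M}_{0,6}$ is a threefold. Your $\overline{M}_{0,5}$ is the intended statement. Flag the $\overline{M}_{0,6}$ as a typo explicitly; ``up to the stated product form'' does not reconcile the two.
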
\qed

Let $Y(E_7) \to Y_m$ denote the  natural forgetful morphism obtained by contracting a $(-1)$-curve on a quadric del Pezzo surface. As we know the explicit boundary divisors of the log canonical compactifications, Hacking---Keel--Tevelev were able to show that this morphism extends to the log canonical compactifications that can be described explicitly.

\begin{teo}\cite[Prop.~10.9]{hacking_stable_2009}
	The morphism $Y(E_7) \to Y_m$ obtained by contracting a $(-1)$-curve extends to a morphism of log canonical compactifications $\pi : \overline{Y}(E_7)^\mathrm{lc} \to \overline{Y}_m^\mathrm{lc}$. The restriction of $\pi$ to each boundary divisor of $\overline{Y}(E_7)^\mathrm{lc}$ is described in \Cref{tab:piboundary}, where each morphism $\pi_I : \overline{M}_{0, n} \to \overline{M}_{0, m}$ denotes the canonical fibration dropping points not labeled by $I$. \qed
\end{teo}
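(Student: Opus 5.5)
The extension of $\pi$ will come from the functorial characterization of the two log canonical compactifications as KSBA-type moduli of stable pairs, rather than from a direct analysis of the rational map. Concretely, $\overline{Y}(E_7)^\mathrm{lc}$ and $\overline{Y}_m^\mathrm{lc}$ are normalizations of the closures of $Y(E_7)$ and $Y_m$ in moduli of stable pairs $(S, B)$. Here $S$ is the degree $2$ (resp. $3$) del Pezzo surface and $B$ is the sum of its $(-1)$-curves with weight $1$, since the complement of the lines is log canonical and of log general type. Over $\overline{Y}(E_7)^\mathrm{lc}$ there is a universal family of stable pairs $(\mathcal{S}, \mathcal{B})$, with $\mathcal{B}$ a sum of $56$ marked divisors. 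On the interior, contracting a fixed $(-1)$-curve $E$ gives a family of marked cubic surfaces with their $27$ lines. I will run a relative log MMP on $(\mathcal{S}, \mathcal{B} - E)$ over $\overline{Y}(E_7)^\mathrm{lc}$, reducing the coefficient of $E$ to $0$ and discarding the $(-1)$-curves meeting $E$ in the interior.

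The key steps, in order:

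\begin{enumerate}
\item Take the relative log canonical model of $(\mathcal{S}, \sum_{L \cdot E = 0} L)$, the sum over the $27$ curves disjoint from $E$, which correspond to the lines of the contracted cubic. Use the log canonical model of each fiber, working component by component as in the later gluing lemmas.
\item Check that this relative model is a flat family of slc stable marked pairs whose generic fiber is the marked cubic. By the universal property of the moduli space (\Cref{teo:ksbastack}) this yields a morphism to the stack.
\item Pass to normalizations to obtain $\pi$ extending $Y(E_7)\to Y_m$.
\end{enumerate}

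An alternative route for the extension alone: since $\overline{Y}(E_7)^\mathrm{lc}$ is smooth with normal crossing boundary, a Borel-type extension or the toroidal description might suffice. For this I would use the tropical compactification of Hacking--Keel--Tevelev, namely embeddings in tori given by cross-ratios and the fan of $\mathcal{R}(E_n)$. The forgetful map is a monomial map of tori, $\mathbb{T}_{E_7}\to\mathbb{T}_{E_6}$, given by restriction of cross-ratios. The extension then reduces to checking that the fan of $\overline{Y}(E_7)^\mathrm{lc}$ maps each cone into some cone of $\overline{Y}_m^\mathrm{lc}$. This compatibility holds because a root subsystem of $E_7$ restricts to its intersection with the $E_6$ orthogonal to $E$. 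I would take this combinatorial route as primary, since it also yields the table.

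For the restriction to each boundary divisor $D_\Theta$, I will compute which vertex or cone of $\mathcal{R}(E_6)$ the ray of $\Theta$ lands in. Each case follows from the intersection of $\Theta$ with $E_6=E^\perp$:
\begin{itemize}
\item For $A_7$, the divisor $D_{A_7}\cong\overline{M}_{0,8}$ maps to $D_{A_1}\cong\overline{M}_{0,6}$ or to the interior, via dropping two points, which is some $\pi_I$.
\item For $2A_3$, the divisor maps to $D_{3A_2}\cong(\overline{M}_{0,4})^3$ through the $\overline{M}_{0,5}\to\overline{M}_{0,4}$ forgetful maps.
\item For $A_2$, the divisor $D_{A_2}\cong\overline{M}_{0,4}\times\overline{M}_{0,7}$ projects to $\overline{M}_{0,6}$, or onto $\overline{M}_{0,4}\subset$ a $3A_2$ factor.
\item For $A_1$, the divisor is either contained in $E^\perp$, giving $D_{A_1}$, or not.
\end{itemize}
In each case, identifying the map with a $\pi_I$ uses the explicit moduli interpretation of points on $\overline{M}_{0,n}$ (points on conics/lines in the degenerate surfaces). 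The main obstacle is exactly this case bookkeeping: verifying cone containment for every simplex of $\mathcal{R}(E_7)$, including the higher cones that are not just rays, and correctly identifying the label sets $I$. I would handle it using the $W(E_7)$-stabilizer of $E$, which is $W(E_6)$, to reduce to orbit representatives of each divisor type relative to $E$.
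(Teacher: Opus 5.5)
Your primary route is the right one: realize both spaces as tropical compactifications and obtain $\pi$ from a map of fans $\mathcal{F}(E_7)\to\mathcal{F}(E_6)$ sending cones into cones. The paper gives no proof of its own beyond citing Hacking--Keel--Tevelev, and it later uses exactly this fact, that $\pi$ is the pullback of the toric morphism $X(\mathcal{F}(E_7))\to X(\mathcal{F}(E_6))$.

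The KSBA route you open with should be discarded, not kept as motivation, because its premise is false. $\overline{Y}_m^{\mathrm{lc}}\cong\overline{N}$ is the KSBA space for weights $\frac19+\epsilon$. The normalized weight-one stable pair space is instead the toroidal blow-up $\widetilde{Y}_m=\overline{Y}_{(1,\dots,1)}$, and the paper (following HKT) only gets a morphism to it after replacing $\overline{Y}(E_7)^{\mathrm{lc}}$ by the modification $\widetilde{Y}(E_7)$. So a universal-property argument does not produce $\pi$ on $\overline{Y}(E_7)^{\mathrm{lc}}$. The flat family of stable degree-$2$ pairs over $\overline{Y}(E_7)^{\mathrm{lc}}$ that you start from is also unjustified.

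The genuine gap is in the second half of the statement. The slogan that $\Theta$ ``restricts to $\Theta\cap E_6$'' does not tell you which cone the ray of $\Theta$ lands in, and the case list you derive from it is wrong in ways that change the table:
\begin{itemize}
\item If $A_2\subset E_6$, the ray goes to the $3A_2$ ray of $A_2+A_2^{\perp}$. Then $D_{A_2}$ maps \emph{onto} all of $D_{3A_2}\cong(\overline{M}_{0,4})^3$, with $\overline{M}_{0,7}\to\overline{M}_{0,4}\times\overline{M}_{0,4}$, not onto a single $\overline{M}_{0,4}$.
\item One always has $A_7\cap E_6=A_1+A_5$: restricted to $A_7$, the $56$ minuscule weights are those of $\Lambda^2\oplus\Lambda^6$, each with stabilizer $A_1+A_5$. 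So $D_{A_7}$ never dominates.
\item Most importantly, you omit the case $2A_3\cap E_6=A_3+2A_1$. There the ray lands in the relative interior of the two-dimensional cone of two orthogonal $A_1$'s. The divisor $D_{2A_3}\cong\overline{M}_{0,5}\times\overline{M}_{0,5}\times\overline{M}_{0,4}$ then maps onto the surface $D_{A_1}\cap D_{A_1}\cong\overline{M}_{0,5}$ with three-dimensional fibres. These are the non-flat $2A_3$ divisors, the whole reason for the later flattening, so a proof that misses them proves a false table.
\item The restriction for $A_1\subset E_6$ is not a $\pi_I$ at all. Its source is not an $\overline{M}_{0,n}$, and \Cref{tab:piboundary} defers to Schock's Lem.~4.10 for it.
\end{itemize}

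To turn your sketch into a proof, compute the image of each ray generator directly. The lattice map is dual to pulling back units, so the image of the ray of $D_\Theta$ is the vector of orders $\mathrm{ord}_{D_\Theta}(\pi^*\chi)$ as $\chi$ runs over the units (cross-ratios) on $Y_m$; locate these vectors in $\mathcal{F}(E_6)$. Then, for each $W(E_6)$-orbit of simplices of $\mathcal{R}(E_7)$ (not just rays), verify that the images lie in a common cone. This is not automatic: for instance, an $A_1$ ray and a $3A_2$ ray span a cone only if $A_1\subset A_2^3$, by \Cref{pro:boundaryint}.
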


\begin{table}[htpb]
		\centering
		\begin{tabular}{| c | c | c | c |}
			\hline
			$D_\Theta \subseteq \overline{Y}(E_7)^{\mathrm{lc}}$ & $\pi(D_\Theta) \subseteq \overline{Y}_m^{\mathrm{lc}} $ & Condition & $\pi\vert_{D(\Theta)}$ \\
			\hline
			\hline
			$D_{A_1}$ & $D_{A_1}$ & $A_1 \subseteq E_6$ & see \cite[Lem.~4.10]{schock_moduli_2024}\\
			$D_{A_1}$ & $\overline{Y}_m^\mathrm{lc}$ & $A_1 \not\subseteq E_6$ & \\
			\hline
			$D_{A_2}$ & $D_{A_1}$ & $A_2 \cap E_6 = A_1$ & $\overline{M}_{0,4} \times \overline{M}_{0,7} \to pt \times \overline{M}_{0,6}$ \\
			{$D_{A_2}$} & {$D_{3A_2}$} & $A_2 \subseteq E_6$,  & {$\overline{M}_{0,4} \times
				\overline{M}_{0,7} \to \overline{M}_{0,4} \times (\overline{M}_{0,4} \times \overline{M}_{0,4})$} \\
			& & $3A_2 = A_2 + A_2^{\perp}$ & \\
			\hline
			{$D_{2A_3}$} & {$D_{3A_2}$} & $2A_3 \cap E_6 = A_2^2$,  & {$\overline{M}_{0,5}
				\times \overline{M}_{0,5} \times \overline{M}_{0,4} \to \overline{M}_{0,4} \times \overline{M}_{0,4} \times \overline{M}_{0,4}$} \\
			& & $3A_2 = 2A_2 + (2A_2)^{\perp}$ & \\
			$D_{2A_3}$ & $D_{A_1} \cap D_{A_1}$ & $2A_3 \cap E_6 = A_3 + 2A_1$ & $\overline{M}_{0,5} \times
			\overline{M}_{0,5} \times \overline{M}_{0,4} \to \overline{M}_{0,5}$ \\
			\hline
			$D_{A_7}$ & $D_{A_1}$ & $A_7 \cap E_6 = A_1 + A_5$ & $\overline{M}_{0,8} \to \overline{M}_{0,6}$ \\
			\hline
		\end{tabular}
		\caption{The morphism $\pi : \overline{Y}(E_7)^{lc} \to \overline{Y}_m^{lc}$. This table is copied from \cite[Table 1]{schock_moduli_2024}.}
		\label{tab:piboundary}
\end{table}
Note that from the explicit description of $\pi$ to each boundary divisor of $\overline{Y}(E_7)^\mathrm{lc}$ in \Cref{tab:piboundary}, the morphism $\pi : \overline{Y}(E_7)^\mathrm{lc} \to \overline{Y}_m^\mathrm{lc}$ is not flat along the union of the $D_{2A_3} \mapsto D_{A_1} \cap D_{A_1}$ divisors. We will call these $D_{2A_3}$ divisors, \textit{non-flat $2A_3$ divisors} following \cite[Dfn.~10.10]{hacking_stable_2009}.  Fortunately, there is a canonical flattening procedure of the morphism $\pi$.

\subsection{Flattening of $\pi : \overline{Y}(E_7)^\mathrm{lc} \to \overline{Y}_m^\mathrm{lc}$}
The key step of flattening the morphism $\pi$ is to realize the log canonical compactification as a \textit{tropical compactification}, obtained as the closure  in a toric variety $X(\mathcal{F}(E_n))$ associated to a fan $\mathcal{F}(E_n)$ whose underlying topological simplicial complex is $\mathcal{R}(E_n)$ of \Cref{dfn:simplicalcomplex} (see \cite[Thm~1.16]{hacking_stable_2009}). The dominant morphism $Y(E_7) \to Y_m$ induces a surjection of fans $\mathcal{F}(E_7) \to \mathcal{F}(E_6)$ by \cite[Thm.~1.17]{hacking_stable_2009}, and the corresponding morphism of toric varieties pulls back to the morphism $\pi : \overline{Y}(E_7)^{\mathrm{lc}} \to \overline{Y}_m^\mathrm{lc}$:
\[\begin{tikzcd}
	{\overline{Y}(E_7)^{\mathrm{lc}}} & {X(\mathcal{F}(E_7))} \\
	{\overline{Y}_m^\mathrm{lc}} & {X(\mathcal{F}(E_6))}
	\arrow[from=1-1, to=1-2]
	\arrow["\pi"', from=1-1, to=2-1]
	\arrow[from=1-2, to=2-2]
	\arrow[from=2-1, to=2-2]
\end{tikzcd}\]
where the horizontal maps are closed embeddings, i.e. $\pi$ is entirely determined by the morphism of toric varieties $X(\mathcal{F}(E_7)) \to X(\mathcal{F}(E_6))$. Now there is a canonical combinatorial procedure for flattening a morphism of toric varieties.

\begin{pro}\cite[Section 10]{hacking_stable_2009}\label{pro:flattening}
	Let $p : \mathcal{F}(E_7) \to \mathcal{F}(E_6)$ be the morphism of fans induced by $\pi: \overline{Y}(E_7)^\mathrm{lc} \to \overline{Y}_m^\mathrm{lc}$. Let $\widetilde{\mathcal{F}}(E_6)$ be the refinement of $\mathcal{F}(E_6)$ obtained by taking the barycentric subdivision of cones formed by 4-tuples of pairwise orthogonal $A_1$ subsystems, as well as the corresponding minimal subdivisions of the cones formed by $3A_1$ subsystems and an $A_2^3$ subsystem. Let $\widetilde{\mathcal{F}}(E_7)$ be the corresponding refinement of $\mathcal{F}(E_7)$ given by
	\begin{align*}
		\widetilde{\mathcal{F}}(E_7) = \{p^{-1}(\gamma) \cap \sigma \mid \gamma \in \widetilde{\mathcal{F}}(E_6), \sigma \in \mathcal{F}(E_7)\}.
	\end{align*}
	Let $\widetilde{Y}_m$ and $\widetilde{Y}(E_7)$ be the toroidal modifications of $\overline{Y}_m^\mathrm{lc}$ and $\overline{Y}(E_7)^\mathrm{lc}$, respectively, corresponding to the refinement $\widetilde{\mathcal{F}}(E_n)$ of $\mathcal{F}(E_n)$. Then the induced morphism $\widetilde{\pi} : \widetilde{Y}(E_7) \to \widetilde{Y}_m$ is flat with reduced fibers. \qed
\end{pro}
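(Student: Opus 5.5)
The plan is to reduce the statement to a purely combinatorial criterion for toric morphisms, and then transport it to the tropical compactifications. The criterion I have in mind is that of Abramovich--Karu for toroidal morphisms. A morphism of toric varieties $X(\mathcal{F}') \to X(\mathcal{F})$ induced by a map of fans $p$ is flat with reduced fibers provided two conditions hold:
\begin{itemize}
\item[(a)] \emph{(equidimensionality)} every cone $\sigma' \in \mathcal{F}'$ maps \emph{onto} a cone of $\mathcal{F}$;
\item[(b)] \emph{(reducedness)} for each such $\sigma'$ one has $p(N' \cap \sigma') = N \cap p(\sigma')$, where $N', N$ denote the cocharacter lattices.
\end{itemize}
These conditions say that the morphism is weakly semistable. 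By the discussion before the statement, $\pi$ is the restriction of the toric morphism $X(\mathcal{F}(E_7)) \to X(\mathcal{F}(E_6))$. So the first task is to check (a) and (b) for $\widetilde{\mathcal{F}}(E_7) \to \widetilde{\mathcal{F}}(E_6)$.

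\textbf{Condition (a).} By construction, every cone of $\widetilde{\mathcal{F}}(E_7)$ has the form $p^{-1}(\gamma) \cap \sigma$, so its image lies in the cone $\gamma$. I must show the image is \emph{all} of a cone of $\widetilde{\mathcal{F}}(E_6)$. Using \Cref{tab:piboundary}, I go through the rays of $\mathcal{F}(E_7)$ one type at a time.
\begin{itemize}
\item Rays of types $A_1$, $A_2$ and $A_7$ map onto rays, or to $0$.
\item The flat $2A_3$ rays map onto $3A_2$ rays.
\item The only problematic rays are the non-flat $2A_3$ rays. Each of these maps into the interior of a two-dimensional cone spanned by a pair of orthogonal $A_1$ rays.
\end{itemize}
The barycentric subdivision of the $4A_1$ cones, together with the induced subdivisions of the $3A_1$ cones and of the cones meeting $A_2^3$, is designed to insert exactly these image rays into $\widetilde{\mathcal{F}}(E_6)$. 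After this, every cone spanned by images of rays coincides with a cone of the refined fan.

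Concretely, I would enumerate the maximal cones of $\mathcal{F}(E_7)$ up to $W(E_7)$-symmetry, using \Cref{pro:boundaryint}(2) for the adjacency relations. For each such cone I compute its image and check that the image is a union of cones of $\widetilde{\mathcal{F}}(E_6)$ compatible with the intersections $p^{-1}(\gamma)\cap\sigma$.

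\textbf{Condition (b).} Here I compute explicit primitive ray generators in the lattices $N(E_7)$ and $N(E_6)$, as in \cite{hacking_stable_2009}. I then verify that the image of each primitive generator is primitive, and that a cone's lattice points map onto the lattice points of its image. For a non-flat $2A_3$ ray this amounts to showing that its image is the sum of the two $A_1$ generators, not a multiple of that sum.

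\textbf{Transport to the compactifications.} Both $\overline{Y}(E_7)^{\mathrm{lc}}$ and $\overline{Y}_m^{\mathrm{lc}}$ are sch\"on tropical compactifications. Hence the refined closures $\widetilde{Y}(E_7)$ and $\widetilde{Y}_m$ are obtained by pulling back along the toric modifications, are again sch\"on, and are therefore toroidal. Moreover, $\widetilde{\pi}$ is \'etale-locally (smooth-locally) a base change of the toric morphism. Flatness and reducedness of fibers descend along smooth morphisms, so $\widetilde{\pi}$ is flat with reduced fibers.

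I expect the main obstacle to be condition (a) near the cones mixing $A_1$ rays with an $A_2^3$ ray, and near the $4A_1$ cones. There, several non-flat $2A_3$ rays map into the same $4A_1$ cone. I must show that the minimal subdivisions really make all image cones coincide with refined cones, rather than leaving a cone whose image is only a proper subcone. I would try first to use $W(E_6)$-symmetry to reduce to a single $4A_1$ cone and a single $3A_1 + 3A_2$ configuration, and check these by hand.
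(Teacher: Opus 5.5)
Your combinatorial framework is the natural one: check (a) and (b) for $\widetilde{\mathcal F}(E_7)\to\widetilde{\mathcal F}(E_6)$, which matches the flattening procedure set up before the statement. Note that the criterion also uses that the target fan is unimodular, which holds here because barycentric subdivisions of smooth cones are smooth. The paper itself gives no proof and only cites \cite[Section 10]{hacking_stable_2009}.

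The genuine gap is the transport step. Sch\"onness gives smooth maps $\mu'\colon \widetilde{Y}(E_7)\times T'\to X'$ and $\mu\colon \widetilde{Y}_m\times T\to X$ with $f\circ\mu'=\mu\circ(\widetilde\pi\times\phi)$, but this square is not cartesian. Saying that $\widetilde\pi$ is smooth-locally a base change of $f$ means that the comparison map $\widetilde{Y}(E_7)\times T'\to(\widetilde{Y}_m\times T)\times_X X'$ is smooth, i.e.\ that $\widetilde\pi$ is a toroidal morphism. By the fibral criterion, given (a) and (b) this amounts to $\widetilde\pi$ restricting to smooth maps $\widetilde{Y}(E_7)\cap O(\sigma')\to\widetilde{Y}_m\cap O(p(\sigma'))$ on all open strata. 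The fans cannot detect this: $f$ has relative dimension $\dim T'-\dim T>2$, so toric flatness controls only the strata of $X'$.

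Here is a counterexample to the inference.
\begin{itemize}
\item Let $Y'=\{v+uz+w=1\}\subset\mathbb G_m^4$. It is sch\"on and maps smoothly onto $\mathbb G_m^2$ by $(u,v)$.
\item Take $X=\mathbb A^2$. Then $\operatorname{trop}(Y')\cap p^{-1}(\mathbb R^2_{\ge 0})$, with $p$ the projection to the $(u,v)$-weights, is covered by four unimodular cones, each mapping onto a face of the quadrant.
\item Star-subdivide $\langle(0,0,0,1),(0,0,1,0),(1,0,0,-1)\rangle$ at $\rho'=(1,0,1,0)$, using weights on $(u,v,w,z)$. This preserves (a) and (b).
\item However, $\operatorname{in}_{\rho'}(v+uz+w-1)=v-1$. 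So the surface $\overline{Y'}\cap O(\rho')$ maps to the single point $(0,1)\in\mathbb A^2$, while the relative dimension is $1$. The map is not flat; in effect you have blown up a point of a fibre.
\end{itemize}

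So you need geometric input showing that $\pi\colon\overline{Y}(E_7)^{\mathrm{lc}}\to\overline{Y}_m^{\mathrm{lc}}$ is toroidal.
\begin{itemize}
\item Part of this comes from \Cref{tab:piboundary}. The restrictions of $\pi$ to boundary divisors are products of forgetful maps of $\overline M_{0,n}$'s, which are smooth on open strata.
\item For the non-flat $2A_3$ divisors you need more. After subdivision, $D_{2A_3}^\circ$ must map smoothly onto the open part of the new exceptional divisor over $D_{A_1}\cap D_{A_1}$. Concretely, the map $D_{2A_3}^\circ\to(D_{A_1}\cap D_{A_1})^\circ\times\mathbb G_m$ must be smooth, where the first component is the map in \Cref{tab:piboundary} and the second is the restriction of the ratio of the pulled-back local equations of the two $A_1$ divisors. \Cref{tab:piboundary} records only the first component.
\end{itemize}
Once $\pi$ is toroidal, $\widetilde{\mathcal F}(E_7)$ is the fan of the normalized fibre product, so $\widetilde\pi$ is toroidal and your (a) and (b) finish the argument. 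Miracle flatness ($\widetilde{Y}(E_7)$ is Cohen--Macaulay and $\widetilde{Y}_m$ is smooth) would reduce flatness to equidimensional fibres, but that needs the same control of the strata.

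A smaller slip: $p$ is equivariant only for the stabilizer of $E_6\subset E_7$, so the enumeration of cones must be up to $W(E_6)$, not $W(E_7)$. Horizontal and vertical $A_1$ rays are $W(E_7)$-conjugate, yet they map to $0$ and to an $A_1$ ray respectively.
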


\begin{pro}\cite[Thm.~1.1, 6.5]{schock_moduli_2024}
	The toroidal modification $\widetilde{Y}_m$ is the (normalization of the) KSBA moduli space $\overline{Y}_{(1, \dots, 1)}$ of (fully) marked cubic surfaces with rational coefficient vector $(1, \dots, 1)$. \qed
\end{pro}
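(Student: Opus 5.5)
The plan is to realize $\widetilde{Y}_m$ directly as the base of a KSBA stable family of $(1,\dots,1)$-weighted marked surfaces, and then compare it with the KSBA coarse space. Concretely, I would use the flat morphism $\widetilde{\pi} : \widetilde{Y}(E_7) \to \widetilde{Y}_m$ with reduced fibers from \Cref{pro:flattening}. The boundary divisors $D_{A_1}$ of $\overline{Y}(E_7)^{\mathrm{lc}}$ with $A_1 \not\subseteq E_6$ dominate $\overline{Y}_m^{\mathrm{lc}}$ (\Cref{tab:piboundary}). Their strict transforms $B_1, \dots, B_{27}$ in $\widetilde{Y}(E_7)$ restrict, over the interior $Y_m$, to the $27$ marked lines of each cubic surface: a point of such a divisor is a marked degree-$2$ del Pezzo surface in which a $(-1)$-curve has been degenerated onto the contracted point. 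Setting $B = \sum B_i$, I then have a candidate family $(\widetilde{Y}(E_7), B) \to \widetilde{Y}_m$ extending the universal family over $Y_m$.

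\textbf{Step 1: the family is KSBA stable.} Since $\widetilde{Y}(E_7) \to \widetilde{Y}_m$ is a toroidal morphism of toroidal embeddings, obtained by pulling back the toric morphism of the refined fans, I would argue as follows.
\begin{enumerate}
\item The pair $(\widetilde{Y}(E_7), B + \widetilde{\pi}^{-1}\partial \widetilde{Y}_m)$ is log smooth in the toroidal sense, hence lc.
\item Flatness with reduced fibers, together with adjunction and inversion of adjunction along the toric strata, gives that every fiber $(S_t, B_t)$ is slc.
\item The $B_i$ are relative Mumford divisors that are flat over the base, because they are again toroidal, and the volume is constant.
\end{enumerate}
The remaining condition, relative ampleness of $K_{\widetilde{Y}(E_7)/\widetilde{Y}_m} + B$, I would check fiberwise, using the explicit fiber descriptions over each stratum. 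These fibers are unions of toric and del Pezzo pieces indexed by the simplicial complex $\mathcal{R}(E_7)$ mapping to the refinement of $\mathcal{F}(E_6)$. On each irreducible component, the restriction of $K + B$ is $K_{S_j} + D_j + B|_{S_j}$, with $D_j$ the double locus. I would verify positivity on every curve, using the $(-1)$/$(-2)$-curve description of cones of curves from \Cref{teo:gendelpezzonefcone} for the del Pezzo components and toric intersection numbers for the toric ones.

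\textbf{Step 2: comparison with the KSBA space.} By \Cref{teo:ksbastack}, Step 1 gives a classifying morphism $\widetilde{Y}_m \to \overline{\mathcal{Y}}_{(1,\dots,1)}$, and hence a morphism to its coarse space. This morphism is birational, because it is the identity on $Y_m$. I would then show it is quasi-finite, by checking that distinct points of $\widetilde{Y}_m$ give non-isomorphic marked stable pairs. On the interior this holds because cross-ratios are determined by the marked surface. On the boundary, the fiber over a point records the stratum, since the dual complex of $S_t$ together with the marking of the $B_i$ determines the cone of $\widetilde{\mathcal{F}}(E_6)$. It also records the moduli of the components, which are products of $\overline{M}_{0,n}$'s read off from the marked boundary points on each component. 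Since $\widetilde{Y}_m$ is normal and proper, Zariski's main theorem applied to the induced birational finite morphism to the normalization of the coarse space then yields the isomorphism.

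\textbf{Expected obstacle.} I expect the main difficulty to be the ampleness and injectivity checks over the strata where $\mathcal{F}(E_6)$ was refined: the barycentric subdivision of the $4A_1$ cones and the subdivisions of the $3A_1 + A_2^3$ cones. These are exactly the places where $\overline{Y}_m^{\mathrm{lc}}$ itself fails to carry a stable family, because of the non-flat $2A_3$ divisors. Here the fibers acquire extra components, and one must show both that $K + B$ is positive on the new components and that the new exceptional coordinates are detected by the pair. I would attack this by computing the fiber over the barycentric ray explicitly as a toric degeneration. I would then verify that $K + B$ has positive degree on each new one-dimensional torus-invariant curve, and that the exceptional parameter appears as a cross-ratio of marked points on the double curves.
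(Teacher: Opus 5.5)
There is a genuine gap in Step~1. With all $27$ coefficients equal to $1$, the pair $(\widetilde{Y}(E_7), B) \to \widetilde{Y}_m$ is \emph{not} a KSBA stable family, because of Eckardt points.

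\begin{itemize}
\item Cubics with three concurrent lines form divisors in the interior $Y_m$; these are not among the boundary divisors of $\overline{Y}_m^{\mathrm{lc}}$.
\item Eckardt points also occur on components of boundary fibers (\Cref{pro:cubiceck}, \Cref{teo:eckconfig}).
\item At such a point, which lies in the smooth locus of the fiber, three coefficient-one lines meet with distinct tangents. Since $1+1+1>2$, the fiber pair is not lc by \Cref{lem:lineslccriteria}.
\end{itemize}

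Your items (1)--(2) cannot detect this. Even granting that the total space is toroidal with respect to all of its boundary divisors, $\widetilde{\pi}$ is not log smooth along the Eckardt locus, i.e.\ the triple intersections of horizontal $A_1$ divisors. There, three horizontal boundary divisors meet over a point where the base has no boundary. Restrict to a curve germ transverse to an Eckardt divisor: four divisors $B_1, B_2, B_3, X_0$ pass through one point of a smooth threefold. Blowing up that point gives discrepancy $2-4=-2$, so $(X_T, B+X_0)$ is not lc. Inversion of adjunction then shows the fiber is not slc, the opposite of what you need. The paper says exactly this right after the statement: $\widetilde{\pi}$ fails to be the weight-one universal family because of Eckardt points.

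The missing idea is the Hacking--Keel--Tevelev modification (\Cref{teo:eckblowup}). Blow up $\widetilde{Y}(E_7)$ along the Eckardt locus and work with $\ddot{\pi} : (\ddot{Y}(E_7), B_{\ddot{\pi}}) \to \widetilde{Y}_m$. Each fiber is the $\widetilde{\pi}$-fiber with every Eckardt point blown up and a $\mathbb{P}^2$ glued along the exceptional curve $E$. The three lines meet this $\mathbb{P}^2$ in three general lines. Now $K+B$ restricts to $-3h+3h+h=h$ on the $\mathbb{P}^2$, and to $\pi^*(K_S+\sum L_i)-E$ on the blown-up component, which has degree $1$ on $E$. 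So the fibers become slc and stable.

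Consequently, your fiberwise ampleness checks and your injectivity argument must be run on these modified fibers, not on the $\widetilde{\pi}$-fibers. Likewise, over the Eckardt divisors in $Y_m$ the classifying map is not ``the identity'': the weight-one stable pair there is already reducible. It is still injective, since contracting the $\mathbb{P}^2$ tails recovers the marked cubic.

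Once the family is corrected, your Step~2 is the standard route of Hacking--Keel--Tevelev and Schock: classifying morphism, birationality, quasi-finiteness including the refined $4A_1$ and $3A_1+A_2^3$ strata, and Zariski's main theorem on the normal variety $\widetilde{Y}_m$. The paper does not reprove this; it simply cites their results.
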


Now while the flat morphism $\widetilde{\pi}  : \widetilde{Y}(E_7) \to \overline{Y}_{(1, \dots, 1)}$ seems like a good candidate for the universal family of weight 1 stable marked cubic surfaces, we still have the presence of Eckardt points (also called a triple point) on the fibers. In particular, if $(S, \sum c L_i)$ is a uniformly $c$-weighted marked smooth cubic surface with an Eckardt point $p$, then by \Cref{lem:linesA1lccriteria}, the pair is log canonical if and only if $c \leq \frac{2}{3}$. Thus, the presence of Eckardt points prevents $\widetilde{\pi}$ from being the universal family of weight 1 stable marked cubic surfaces as it fails the slc condition of \Cref{dfn:KSBAfamily}.

\subsection{Schock's explicit description of the fibers of $\widetilde{\pi} : \widetilde{Y}(E_7) \to \overline{Y}_{(1, \dots, 1)}$}\label{S:schockfiberdesc}
To handle the Eckardt point obstruction of  $\widetilde{\pi} :  \widetilde{Y}(E_7) \to \overline{Y}_{(1, \dots, 1)}$ being the universal family of weight 1 stable marked cubic surfaces, we must achieve two objectives:
\begin{enumerate}
	\item Describe the possible configurations of Eckardt points on the fibers of $\widetilde{\pi}$;\label{eck:desc}
	\item Explain the blow up of these Eckardt points to obtain the universal family of weight 1 stable marked cubic surfaces. \label{eck:blowup}
\end{enumerate}
In this subsection and the next, we will address \Cref{eck:desc} by summarizing Schock's explicit computation of the fibers of $\widetilde{\pi} :  \widetilde{Y}(E_7) \to \overline{Y}_{(1, \dots, 1)}$ found in \cite[\S~4]{schock_moduli_2024} and the Eckardt point configurations for each fiber found in \cite[\S~5]{schock_moduli_2024}. Recall that $\overline{Y}_{(1, \dots, 1)}$ is the blow up of $\overline{Y}_m^\mathrm{lc} \cong \overline{N}$ along the (strict transforms of the) intersections of $A_1$ divisors, in increasing order of dimension. With that in mind, we introduce the following convention of \textit{types of boundary divisors} for $\overline{Y}_{(1, \dots, 1)}$ that follows \cite[Ntn.~4.1]{schock_moduli_2024} and \cite{ren2016tropicalization}.

\begin{conv}[Boundary strata types]\label{conv:types}
	The boundary divisors of \textit{type $\widetilde{D}_{A_1}$} on $\overline{Y}_{(1, \dots, 1)}$ are the strict transforms of the $A_1$ divisors along the blow up $\overline{Y}_{(1, \dots, 1)}\to \overline{Y}_m^{\mathrm{lc}} \cong \overline{N}$. Similarly, the boundary divisors of \textit{type} $\widetilde{D}_{3A_2}$ on $\overline{Y}_{(1, \dots, 1)}$ are the strict transforms of the $3A_2$ divisors in $\overline{Y}_m^\mathrm{lc}$. For $i = 2, 3, 4$ the boundary divisors of \textit{type} $E_{iA_1}$ on $\overline{Y}_{(1, \dots, 1)}$ are the divisors in the exceptional locus of $\overline{Y}_{(1, \dots, 1)}\to \overline{Y}_m^{\mathrm{lc}}$ that lie over the strata given by the intersections of $i$ of the  $A_1$ divisors in $\overline{Y}_m^\mathrm{lc}$. Higher-codimension boundary strata are denoted by intersections. For example, the curves of type $$\widetilde{D}_{A_1} \cap \widetilde{E}_{2A_1} \cap \widetilde{D}_{3A_2}$$ are the curves obtained as intersections of boundary divisors of types $\widetilde{D}_{A_1}$, $\widetilde{E}_{2A_1}$, and $\widetilde{D}_{3A_2}$. Let $T$ be a boundary stratum of $\overline{Y}_{(1, \dots 1)}$, labeled as above; e.g., $T= \widetilde{D}_{A_1} \cap \widetilde{E}_{2A_1} \cap \widetilde{D}_{3A_2}$. We say a surface $(S, B)$ is of \textit{type} $T$ if it is obtained as the fiber of $\widetilde{\pi} : \widetilde{Y}(E_7) \to \overline{Y}_{(1, \dots, 1)}$ over a general point of the stratum $T$.
\end{conv}

\begin{rem}
	Note that \cite[Ntn.~4.1]{schock_moduli_2024} and \cite{ren2016tropicalization} use a different convention to denote the boundary types. Namely, they denote boundary divisors of type $\widetilde D_{A_1}$ by $a_1$ and $\widetilde E_{iA_1}$ by  $a_i$ for $i =  2, 3, 4$.  We prefer our notation, as we find it more descriptive. 
	\end{rem}
	
\begin{rem}\label{rem:line-choice-strata}
	For each boundary stratum of $\overline{Y}_{(1, \dots, 1)}$, the associated strata in $\overline{Y}_{(1, 1)}$ under the forgetful morphism $\overline{Y}_{(1, \dots, 1)} \to \overline{Y}_{(1, 1)}$ can be reducible. The different components correspond to different choices of the marked line $\ell$ (i.e., $L_1$) (see \Cref{S:ksba}).
\end{rem}

\begin{dfn}[Horizontal divisors and lines]\cite[Dfn.~10.18]{hacking_stable_2009}
	The \textit{horizontal} $A_1$ divisors on $\widetilde{Y}(E_7)$ are the strict transforms of the $A_1$ divisors on $\overline{Y}(E_7)^\mathrm{lc}$ which surject onto $\overline{Y}_m^\mathrm{lc}$ under the morphism $\pi : \overline{Y}(E_7)^\mathrm{lc} \to \overline{Y}_m^\mathrm{lc}$. In particular, by \Cref{tab:piboundary}, the horizontal $A_1$ divisors are the strict transforms of $D_{A_1} \subseteq \overline{Y}(E_7)^\mathrm{lc}$ such that $A_1 \not\subseteq E_6$; under the natural embedding of $E_6$ in $E_7$, there are $27$ horizontal $A_1$ divisors (see \cite[Ntn.~2.6, 4.3]{schock_moduli_2024} for more details). We denote by $B_{\widetilde{\pi}}$ the sum of the horizontal $A_1$ divisors on $\widetilde{Y}(E_7)$. 	
	The \textit{lines} on a fiber of $\widetilde{\pi} :  \widetilde{Y}(E_7) \to \overline{Y}_{(1, \dots, 1)}$ are given by the intersections of the fiber with the horizontal $A_1$ divisors.
\end{dfn}

\begin{teo}\cite[Thm.~4.4]{schock_moduli_2024}\label{teo:fullweightfibers}
	The fibers of the morphism $\widetilde{\pi} : (\widetilde{Y}(E_7), B_{\widetilde{\pi}}) \to \overline{Y}_{(1, \dots, 1)}$ over a general point of $\overline{Y}_{(1, \dots, 1)}$ is a smooth (fully) marked cubic surface. For a given boundary stratum $T$ of $\overline{Y}_{(1, \dots, 1)}$, the fiber of $\widetilde{\pi}$ over a general point of $T$ is a reducible surface as described in \cite[Figs.~4, 9, 10, 15, 16, 21, 22, 24, 28, 35, 36, 43, and 44]{schock_moduli_2024}. \qed
\end{teo}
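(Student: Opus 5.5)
The statement just before this point is \Cref{teo:fullweightfibers}, Schock's description of the fibers of $\widetilde{\pi}$. I will argue from the construction of $\widetilde{\pi}$, in two parts: the general fiber and the fibers over each boundary stratum $T$.

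\textbf{General fiber.} Over the open stratum $Y_m \subseteq \overline{Y}_{(1,\dots,1)}$ no toroidal modification takes place, so there $\widetilde{\pi}$ agrees with $\pi : Y(E_7) \to Y_m$ restricted to the preimage of $Y_m$.
\begin{itemize}
\item A point of $Y(E_7)$ lying over $[S] \in Y_m$ is a marked quadric del Pezzo surface $S'$ such that contracting the distinguished $(-1)$-curve gives $S$.
\item The horizontal $A_1$ divisors surject onto $\overline{Y}_m^{\mathrm{lc}}$. By \Cref{tab:piboundary} they correspond to the roots of $E_7$ not lying in $E_6$, and there are $27$ of them.
\item I will check that these $27$ roots are exactly the classes $e_7 - L_i$, where $L_i$ runs over the $27$ lines of $S$. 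The fiber point lies on such a divisor precisely when the blown-up point lies on the line $L_i$.
\item Hence the fiber of $\widetilde{\pi}$ over $[S]$ is $S$ itself, and its intersection with $B_{\widetilde{\pi}}$ is the $27$ lines, with the marking induced from $E_6 \subset E_7$.
\end{itemize}

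\textbf{Boundary fibers.} For a stratum $T$ the fiber is a toric degeneration, and I will compute it combinatorially.
\begin{itemize}
\item By \Cref{pro:flattening}, $\widetilde{\pi}$ is flat with reduced fibers and is pulled back from a map of toric varieties $X(\widetilde{\mathcal F}(E_7)) \to X(\widetilde{\mathcal F}(E_6))$.
\item The irreducible components of the fiber over a general point of the stratum attached to a cone $\gamma \in \widetilde{\mathcal F}(E_6)$ are therefore indexed by the cones $\sigma \in \widetilde{\mathcal F}(E_7)$ that map onto $\gamma$ with relative dimension $2$.
\item Each such component is the closure of the corresponding torus-orbit stratum of $\widetilde{Y}(E_7)$ over $\gamma$. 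These strata are products of $\overline{M}_{0,n}$'s, and I will read them off from \Cref{teo:lcboundarydiv} and \Cref{pro:boundaryint}.
\item Concretely, I will list the $E_7$-vertices $\Theta$ (types $A_2$, $2A_3$, $A_7$ and $A_1$) whose restriction to $E_6$ is compatible with the $E_6$-simplex defining $T$. I will then use the restriction maps in \Cref{tab:piboundary} to find the surface fiber of $D_\Theta \to \pi(D_\Theta)$. For example, $D_{A_7} \to D_{A_1}$ is $\overline{M}_{0,8} \to \overline{M}_{0,6}$, whose fiber is $\overline{M}_{0,8}$ with two points forgotten, a surface.
\item The gluing between components is given by the intersections in \Cref{pro:boundaryint}.
\item The lines on each component are the restrictions of the horizontal $A_1$ divisors. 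Their multiplicities can be checked against the Naruki fibers in \Cref{tab:narukifibers}.
\end{itemize}

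\textbf{Main obstacle.} The hard part is the strata involving the subdivided cones: the $E_{iA_1}$ exceptional divisors and their intersections with $\widetilde D_{3A_2}$.
\begin{itemize}
\item There the cones $p^{-1}(\gamma) \cap \sigma$ of the refined fan have to be enumerated by hand, and the new components appear as toric blowups of the non-flat $D_{2A_3}$ divisors.
\item My first step is to use $W(E_6)$-symmetry to reduce to a single representative $iA_1$ configuration.
\item I will then compute the fiber of $\overline{M}_{0,5}\times\overline{M}_{0,5}\times\overline{M}_{0,4} \to \overline{M}_{0,5}$ after barycentric subdivision explicitly.
\item Finally, I will verify that the pieces assemble into the pictures in Schock's figures, which are the configurations referenced in \Cref{teo:fullweightfibers}.
\end{itemize}
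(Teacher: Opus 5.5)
Your proposal follows essentially the same route as the strategy the paper summarizes from Schock. There, the components over a general point of $T$ come from the boundary strata of $\overline{Y}(E_7)^{\mathrm{lc}}$ dominating $T$ (equivalently, your cones $\sigma$ of the fan), computed via the restrictions in \Cref{tab:piboundary}, with lines cut out by the horizontal $A_1$ divisors and a separate explicit analysis of the strict transforms of the non-flat $2A_3$ divisors over the $\widetilde{E}_{iA_1}$ strata.

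One small correction: not every relevant stratum is a product of $\overline{M}_{0,n}$'s, since the $A_1$-type divisors of $\overline{Y}(E_7)^{\mathrm{lc}}$ and their pairwise intersections $\overline{Z}(D_4)$ are not. So the main $\widetilde{S}_{kA_1}$ components need the separate description of $D_{A_1} \to D_{A_1}$ in \cite[Lem.~4.10]{schock_moduli_2024}, rather than a read-off from a product structure.
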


\begin{proc}[Schock's strategy in computing the fibers of $\widetilde{\pi} : (\widetilde{Y}(E_7), B_{\widetilde{\pi}}) \to \overline{Y}_{(1, \dots, 1)}$]
	As the above theorem is proved in \cite{schock_moduli_2024}, we will not include its proof, but instead provide a summary of the general strategy. See \cite[Exs.~4.13, 4.14, 4.15, 4.16, and 4.17]{schock_moduli_2024} for concrete examples of applying this strategy.
	\begin{enumerate}
		\item By the intersection theory of boundary divisors of $\overline{Y}_m^\mathrm{lc} \cong \overline{N}$ in \Cref{pro:boundaryint}, the combinatorics of $E_6$ root system in \cite[App.~C]{schock_moduli_2024}, and the explicit blow up description of $\overline{Y}_{(1, \dots, 1)} \to \overline{Y}_m^\mathrm{lc}$, the possible boundary strata of $\overline{Y}_{(1, \dots, 1)}$ are listed below in \Cref{tab:bddstrata}. Recall the intersection of $i$ $D_{A_1}$ divisors in $\overline{Y}_m^\mathrm{lc} \cong \overline{N}$ is the codimension $i$ boundary stratum parameterizing cubic surfaces with $i$ $A_1$ singularities, i.e. the Type $iA_1$ surfaces in \Cref{tab:narukifibers}. On the other hand, the boundary divisor $D_{3A_2}$ parameterizes the Type $3A_2$ surfaces in \Cref{tab:narukifibers}.
		\item Pick a boundary stratum $T$ of codimension $k = 1, 2, 3, 4$ from above. Then either $T$ is outside or intersects the union of the non-flat $2A_3$ divisors (see \Cref{tab:piboundary}). Note that this boundary stratum $T$ is a chosen $W(E_6)$-representative, but this is sufficient as the boundary strata for a given type are transitively permuted by $W(E_6)$.
		\item \label{step:flatdivisors} If $T$ is outside the union of the non-flat $2A_3$ divisors, $\widetilde{\pi}$ is the pullback of $\pi: \overline{Y}(E_7)^\mathrm{lc} \to \overline{Y}_m^\mathrm{lc}$ (\Cref{pro:flattening}), so we can compute the fibers of $\pi$ instead. So let $(S, B)$ be a fiber of $\pi$ over a general point of $T$ and $U$ a $k$-codimensional boundary stratum of $\overline{Y}(E_7)^\mathrm{lc}$ dominating $T$. Then the general fiber of $\pi|_U : U \to T$ is a surface $S'$ yielding an irreducible component of $(S, B)$. Concretely, the possible choices of $U$ can be found using \Cref{tab:piboundary} and the combinatorics of the $E_7$ root subsystems found in \cite[App.~C]{schock_moduli_2024}. Then the irreducible component $S'$ is computed using the lemmas of \cite[\S 4.1]{schock_moduli_2024} where it explicitly describes the fibers of $\pi|_U$ for each restriction described in \Cref{tab:piboundary}. On the other hand, the lines $B'$ of $S'$ are obtained by the intersections $U\cap D_{A_1}$ of $U$ with the horizontal $A_1$ divisors $D_{A_1}$ on $\widetilde{Y}(E_7)$ (see \Cref{pro:boundaryint}).
		\item \label{step:non-flatdivisor} If $T$ intersects the union of the non-flat $2A_3$ divisors, \cite[Proof of Prop.~10.23]{hacking_stable_2009} or \cite[Lem.~4.12]{schock_moduli_2024} describes the restriction of $\widetilde{\pi}$ to the strict transforms of these divisors, yielding the irreducible components. The lines are obtained similarly.
	\end{enumerate}
	Applying this strategy, we obtain the surfaces described in \cite[Figs.~4, 9, 10, 15, 16, 21, 22, 24, 28, 35, 36, 43, and 44]{schock_moduli_2024} as listed in \Cref{tab:bddstrata}.
\end{proc}

\begin{table}
	\begin{tabular}{ | m{5cm} | m {5cm} | m{5cm} |}
		\hline
		Boundary strata in $\overline{Y}_{(1, \dots 1)}$ & Corresponding figure(s) in \cite{schock_moduli_2024} & Corresponding strata in $\overline{Y}_m^\mathrm{lc}$\\
		\hline
		$\widetilde{D}_{A_1}$ & Figure 4 & $D_{A_1}$\\
		\hline
		$\widetilde{E}_{2A_1}$& Figures 9 and 10 & $D_{A_1} \cap D_{A_1}$\\
		$\widetilde{D}_{A_1} \cap \widetilde{E}_{2A_1}$ & &\\ 
		\hline
		$\widetilde{E}_{3A_1}$& Figures 15 and 16 & $D_{A_1} \cap D_{A_1} \cap D_{A_1}$\\
		$\widetilde{D}_{A_1} \cap E_{3A_1}$& &\\
		$\widetilde{E}_{2A_1} \cap E_{3A_1}$& &\\
		$\widetilde{D}_{A_1} \cap \widetilde{E}_{2A_1} \cap \widetilde{E}_{3A_1}$ & &\\
		\hline
		$\widetilde{E}_{4A_1}$ & Figures 21 and 22 & $D_{A_1} \cap D_{A_1} \cap D_{A_1} \cap D_{A_1}$\\
		$\widetilde{D}_{A_1} \cap \widetilde{E}_{4A_1}$ & &\\
		$\widetilde{E}_{2A_1} \cap \widetilde{E}_{4A_1}$ & &\\
		$\widetilde{E}_{3A_1} \cap \widetilde{E}_{4A_1}$ & &\\
		$\widetilde{D}_{A_1} \cap \widetilde{E}_{2A_1} \cap \widetilde{E}_{4A_1}$ & &\\
		$\widetilde{D}_{A_1} \cap \widetilde{E}_{3A_1} \cap \widetilde{E}_{4A_1}$ & &\\
		$\widetilde{E}_{2A_1} \cap \widetilde{E}_{3A_1} \cap \widetilde{E}_{4A_1}$ & &\\
		$\widetilde{D}_{A_1} \cap E_{2A_1} \cap E_{3A_1} \cap E_{4A_1}$ & &\\
		\hline
		$\widetilde{D}_{3A_2}$ & Figure 24 & $D_{3A_2}$\\
		\hline
		$\widetilde{D}_{A_1} \cap \widetilde{D}_{3A_2}$ & Figure 28 & $D_{A_1} \cap D_{3A_2}$\\
		\hline
		$\widetilde{E}_{2A_1} \cap \widetilde{D}_{3A_2}$ & Figures 35 and 36 & $D_{A_1} \cap D_{A_1} \cap D_{3A_2}$\\
		$\widetilde{D}_{A_1} \cap \widetilde{E}_{2A_1} \cap \widetilde{D}_{3A_2}$ & &\\
		\hline
		$\widetilde{E}_{3A_1} \cap \widetilde{D}_{3A_2}$ & Figures 43 and 44 & $D_{A_1} \cap D_{A_1} \cap D_{A_1} \cap D_{3A_2}$\\
		$\widetilde{D}_{A_1} \cap E_{3A_1} \cap \widetilde{D}_{3A_2}$ & &\\
		$\widetilde{E}_{2A_1} \cap E_{3A_1} \cap \widetilde{D}_{3A_2}$& &\\
		$\widetilde{D}_{A_1} \cap \widetilde{E}_{2A_1} \cap \widetilde{E}_{3A_1} \cap \widetilde{D}_{3A_2}$ & &\\ 
		\hline
	\end{tabular}
	\caption{The most-left column lists all possible boundary strata in $\overline{Y}_{(1, \dots, 1)}$. The center column gives the corresponding figures in \cite{schock_moduli_2024} explicitly describing the fiber of $\widetilde{\pi} : (\widetilde{Y}(E_7), B_{\widetilde{\pi}}) \to \overline{Y}_{(1, \dots, 1)}$ over a general point in each boundary strata in $\overline{Y}_{(1, \dots, 1)}$. The most-right column lists the strata in $\overline{Y}_m^{\text{lc}}$ that the corresponding boundary strata in $\widetilde{Y}_{(1, \dots, 1)}$ maps to via the blow-down $\overline{Y}_{(1, \dots, 1)} \to \overline{Y}_{m}^{\text{lc}}$.}
	\label{tab:bddstrata}
\end{table}

\subsection{Configuration of Eckardt points}
With the fibers of $\widetilde{\pi}$ computed, we can now describe the possible configurations of Eckardt points on the fibers. From  \cite[Figs.~4, 9, 10, 15, 16, 21, 22, 24, 28, 35, 36, 43, and 44]{schock_moduli_2024},  we will need to describe the configuration of Eckardt points on the following irreducible surfaces:
\begin{enumerate}
	\item $S$ a cubic surface that is either smooth, has only $A_1$ singularities, or has exactly one singularity of type $A_2$;
	\item The blow up of $\mathbb{P}^2$ at five points $p_1, \dots, p_5$, such that $p_1, \dots, p_4$ are in general position and $p_5$ is the intersection point of the line through $p_1$ and $p_2$ with the line through $p_3$ and $p_4$.
\end{enumerate}

The following results regarding the configuration of Eckardt points are stated in \cite{tu_semi-stable_2005}, \cite{cools2011singular}, and \cite{schock_moduli_2024}, but we will restate it here for the reader's convenience. 

\begin{lem}\cite[Prop.~5.3]{tu_semi-stable_2005}\cite[Lem.~4.8]{cools2011singular}
	Let $S$ be a cubic surface that has only $A_1$ singularities, or has exactly one singularity, of type $A_2$. Then any Eckardt point of $S$ is the specialization of an Eckardt point on a smooth cubic surface. \qed
\end{lem}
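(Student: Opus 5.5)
The plan is to reduce to a local study of how Eckardt points can arise on a singular cubic. First I fix the characterization: a point $p$ of a cubic surface $S$ is an Eckardt point exactly when three lines of $S$ pass through $p$ and are coplanar. Equivalently, the tangent plane section $T_pS\cap S$ splits into three concurrent lines through $p$. For $S$ with only $A_1$ singularities, or a single $A_2$, the lines of $S$ are the specializations of the $27$ lines of a nearby smooth cubic, counted with multiplicity as in \Cref{tab:narukifibers}. So each Eckardt point $p\in S$ is given by three distinct lines $L_1,L_2,L_3$ through $p$ lying in a plane $H$ with $H\cap S=L_1+L_2+L_3$. I would also first observe that $p$ must be a smooth point of $S$. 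At a node or an $A_2$ point the tangent cone is a quadric cone, resp.\ a pair of planes. A plane section through the singular point that splits into three concurrent lines would then force a worse singularity, or force a line to pass through the singular point in a way incompatible with the multiplicity pattern of \Cref{tab:narukifibers}. I would check this directly in local coordinates.

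The main step is a deformation argument. The condition ``$H\cap S$ is a triangle of concurrent lines'' defines a closed subvariety $\mathcal{E}$ of the incidence variety $\{(S,H,p)\}$ over $|\mathcal{O}_{\mathbb{P}^3}(3)|$. Concretely, $\mathcal{E}$ is the locus of cubics whose restriction to $H$ is a product of three linear forms vanishing at a common point. Fix coordinates with $H=\{x_3=0\}$ and $p=[0:0:1:0]$. The cubics with an Eckardt point at $p$ in $H$ are then the linear space
\[
F = g(x_0,x_1) + x_3\, Q(x_0,x_1,x_2,x_3),
\]
with $g$ a binary cubic with distinct roots and $Q$ an arbitrary quadric. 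This family is irreducible, and its general member is smooth; for example, the Fermat cubic is of this form. Given our singular $S$ with an Eckardt point at $p$, I write its equation in this form. I then take the line segment $F_t=F+t\,x_3 Q'$ for a general quadric $Q'$. For $t\neq 0$ small, $F_t$ is smooth, since smoothness is open and nonempty in this irreducible linear family. Moreover $F_t$ keeps $p$ as an Eckardt point, because its restriction to $H$ is still $g$. Hence $p$ is the specialization of Eckardt points $p_t$ on smooth cubics $S_t$, and the three lines through $p$ are the limits of the three lines through $p_t$.

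The main obstacle is compatibility with the statement as it is used later. The deformation should stay in the relevant family, and the limiting triple of lines should correspond to an actual triple of the $27$ marked lines, with the correct multiplicities. For the degenerate multiplicities (dashed or dotted lines), a ``triple'' might consist of coincident limits. I would handle this by also requiring that the three concurrent lines are distinct, which the factor $g$ guarantees. For the case-by-case consistency with the marking, I would invoke the lists of \cite[Prop.~5.3]{tu_semi-stable_2005} and \cite[Lem.~4.8]{cools2011singular}, rather than recompute them.
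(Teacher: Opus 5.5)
Your main step is correct, and it gives a self-contained proof by a different route from the paper. The paper offers no argument; it only cites Tu (Prop.~5.3) and Cools--Coppens (Lem.~4.8). You put $S$ in the form $F=g(x_0,x_1)+x_3Q$, with $H=\{x_3=0\}$, $p=[0:0:1:0]$ and $g$ having distinct roots, and then deform only $Q$. This keeps $F_t|_H=g$, so $p$ and its three lines stay fixed while the general member becomes smooth. Note that this never uses the hypothesis that $S$ has only $A_1$ singularities or a single $A_2$, so it proves more than is stated.

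Two points are needed to make it airtight:
\begin{enumerate}
\item Say why the affine space $\{g+x_3Q\}$ for your \emph{fixed} $g$ contains a smooth member. Normalize $g=x_0^3+x_1^3$ by a linear change in $x_0,x_1$; then $x_0^3+x_1^3+x_3(x_2^2+x_3^2)$ is smooth. The Fermat cubic has this shape only after a coordinate change.
\item Take $Q'$ general, so that the line $t\mapsto F+tx_3Q'$ is not contained in the discriminant.
\end{enumerate}
What the citations provide beyond this is finer bookkeeping: which triples of lines in the blow-up models can be concurrent. That is what feeds the Eckardt-point counts used afterwards. It is not part of this lemma, so your final paragraph deferring to those lists is unnecessary here. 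As part of a proof of this very statement it would also be circular.

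The one false step is your preliminary claim that $p$ must be a smooth point. It holds at nodes: every line of $S$ through a node lies in the tangent cone, and a plane through the vertex meets an irreducible quadric cone in at most two lines. It fails at an $A_2$ point. Take general $f_3$ with $f_3(0,0,1)\neq 0$. The surface $x_0x_1x_3+f_3(x_0,x_1,x_2)=0$ has a single $A_2$ point at $[0:0:0:1]$. The plane $\{x_0=0\}$, one of the two planes of the tangent cone, cuts $S$ in the three distinct concurrent lines $f_3(0,x_1,x_2)=0$ through that point, and no worse singularity is forced. Consequently your \emph{equivalently} (three coplanar lines through $p$ versus $T_pS\cap S$ being three concurrent lines) is valid only at smooth points.

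This is harmless for the proof. You can build smoothness into the definition of an Eckardt point, as in the paper's setting where Eckardt points lie in the smooth locus. Or you can simply delete the claim: your normal form never requires $Q(p)\neq 0$, and the same deformation exhibits even that $A_2$ point as a limit of Eckardt points on smooth cubics.
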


\begin{rem}
	Note that the components isomorphic to $\mathrm{Bl}_6 \mathbb{P}^2$ are precisely the minimal resolutions of a cubic surface that has exactly one singularity of type $A_2$ as the six points blown up are in two sets of three colinear points.
\end{rem}

\begin{pro}\cite[Prop.~5.9]{schock_moduli_2024}\label{pro:cubiceck}
	Let $S$ be a cubic surface that is either smooth, has only $A_1$ singularities, or has exactly one singularity, of type $A_2$.
	\begin{enumerate}
		\item If $S$ is smooth, then $S$ has either $0, 1, 2, 3, 4, 6, 9, 10$, or $18$ Eckardt points.
		\item If $S$ has one $A_1$ singularity, then $S$ has either $0, 1, 2, 3, 4$, or $6$ Eckardt points.
		\item If $S$ has two $A_1$ singularities, then $S$ has either $0$ or $1$ Eckardt points.
		\item If $S$ has three $A_1$ singularities, then $S$ has either $0$ or $1$ Eckardt points.
		\item If $S$ has four $A_1$ singularities, then $S$ has no Eckardt points.
		\item If $S$ has one $A_2$ singularity, then $S$ has either $0, 1, 2$, or $3$ Eckardt points.
	\end{enumerate}\qed
\end{pro}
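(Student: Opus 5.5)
The plan is to translate Eckardt points into incidence conditions on the plane model, and then treat the smooth case and the singular cases separately. An Eckardt point of a cubic surface $S$ is a point where three coplanar lines of a tritangent plane are concurrent. Via a geometric marking $\pi : \widetilde S \to \mathbb{P}^2$, each of the $45$ tritangent triples is one of two kinds:
\begin{itemize}
\item a triple $\{e_i,\ 2h-\sum_{k\neq j}e_k,\ h-e_i-e_j\}$, which is concurrent exactly when the line $\overline{p_ip_j}$ is tangent at $p_i$ to the conic through the five points other than $p_j$;
\item a triple of the form $\{h-e_i-e_j,\ h-e_k-e_l,\ h-e_m-e_n\}$, which is concurrent exactly when the three lines $\overline{p_ip_j},\overline{p_kp_l},\overline{p_mp_n}$ meet in one point of $\mathbb{P}^2$.
\end{itemize}
So each part of the proposition becomes a question about which of these conditions can hold simultaneously for six points in the special position of \Cref{tab:specialposcubics}.

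\emph{Smooth case.} The key point is that each Eckardt point $p$ determines a harmonic homology $\sigma_p \in \mathrm{Aut}(S)$ fixing the tangent plane at $p$ pointwise. Its class acts on $K_S^\perp \cong E_6$ as a reflection in a root, and conversely such a reflection is realized as an automorphism only when the corresponding root configuration is effective. Hence the set of Eckardt points is determined by the subgroup of $W(E_6)$ generated by reflections that are realized by automorphisms of $S$. I would then run through the classification of automorphism groups of smooth cubic surfaces (Segre, Dolgachev) and count the reflections, i.e.\ the involutions of type $4A_1$ in Segre's notation, in each group. This yields exactly the counts $0,1,2,3,4,6,9,10,18$: for example, $18$ for the Fermat cubic and $10$ for the Clebsch cubic with $\mathrm{Aut}=S_5$.

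\emph{Singular cases.} I would use the lemma above that every Eckardt point on such $S$ specializes from an Eckardt point on a smooth cubic. So along a generic one-parameter smoothing, the Eckardt points of $S$ are limits of Eckardt points of the nearby fibers, which bounds their number by the smooth list. To sharpen the bound:
\begin{itemize}
\item A line through a node appears with multiplicity $2$ (or $4$) in the degeneration of \Cref{tab:narukifibers}. A plane containing such a line meets $S$ in a double line plus another line, so it cannot produce three distinct concurrent lines. Eckardt points of $S$ therefore come only from tritangent planes whose three lines avoid the nodes.
\item For $kA_1$, I would enumerate the reduced lines: there are $15$, $7$, $3$, $0$ for $k=1,2,3,4$. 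Then I would check which of their tritangent triples can be concurrent using the plane model of \Cref{tab:specialposcubics}.
\end{itemize}
For $k=1$ (six points on a conic), the reduced lines are exactly the $15$ lines $h-e_i-e_j$, so Eckardt points correspond to concurrent triples of chords of a conic pairing up the six points. These are Brianchon/involution-type conditions. Maximizing their number reduces to the points forming orbits of a finite group of automorphisms of the conic, which gives $0,1,2,3,4,6$.

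For $k=2,3$, the few remaining reduced lines form at most one candidate concurrent triple, so the count is $0$ or $1$. For $k=4$ no reduced triple exists, so the count is $0$.

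For one $A_2$ singularity, I would use the model of six points on two lines and count the concurrent triples among the reduced lines. I expect this to give at most $3$, realized by $\mu_3$-symmetric positions.

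The main obstacle is the exhaustiveness and realizability in the smooth and $1A_1$ cases: one has to show that no other numbers (e.g.\ $5,7,8$) occur, and that each listed number is attained. I would first try to settle this by the group-theoretic argument, i.e.\ that the realized reflections generate a reflection subgroup whose root system is one of a short list. For the smooth case this is the Segre/Dolgachev classification. For the $1A_1$ case I would use the centralizer of the node's root in $W(E_6)$, namely $W(A_5)$, acting on the conic model. Explicit families would then supply existence for each count.
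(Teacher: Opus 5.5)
The paper gives no proof of this statement. It quotes the result from Schock; the surrounding text points to Tu and Cools--Coppens for the singular cases, and the smooth case is Segre's classical classification. So your direct plane-model approach is a different route, and as written it has genuine gaps.

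\textbf{What is right.} Your dictionary for the $45$ tritangent triples is correct. So is the conclusion that on a singular $S$ only triples of lines avoiding the singular points can be concurrent. The reason, however, is not a double line. A plane through a singular point cuts a curve that is singular there, so that point lies on two of the three lines, and those two lines then cannot also meet at a smooth point. Indeed, the plane spanned by $e_i,e_j$ on a one-nodal cubic cuts the triangle $e_i+e_j+L_{ij}$, with no double line.

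\textbf{Smooth case.} Here $\sigma_p$ is a harmonic homology with centre $p$ and axis a plane \emph{not} through $p$; it preserves $T_pS$ but does not fix it pointwise. Its fixed locus on $S$ is $\{p\}$ plus a smooth plane cubic, so by Lefschetz its trace on $K_S^\perp$ is $-2$. It is therefore of type $4A_1$ and fixes only the $3$ lines through $p$, whereas a reflection fixes $15$ lines. In fact no reflection of $W(E_6)$ is induced by an automorphism of a smooth cubic. The ``realized reflections generate a reflection subgroup'' argument therefore has no content. What survives is the bijection between Eckardt points and harmonic homologies in $\mathrm{Aut}(S)$, together with the classification, which is just the citation.

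\textbf{Specialization step.} This contributes nothing. A generic smoothing has no Eckardt points, and the lemma is point-by-point rather than simultaneous. Even a common smoothing would only bound the count by $18$, since Eckardt points of nearby fibres can run into the nodes.

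\textbf{The $2A_1$ case fails as stated.} Take $\{p_1,p_2,p_3\}$ and $\{p_3,p_4,p_5\}$ collinear. The seven reduced lines $L_{14},L_{15},L_{24},L_{25},L_{36},C,e_6$ carry \emph{three} tritangent triples, all containing $L_{36}$: $\{L_{14},L_{25},L_{36}\}$, $\{L_{15},L_{24},L_{36}\}$ and $\{e_6,C,L_{36}\}$. So ``at most one candidate'' is false, and the missing step is that these three are mutually exclusive. Set $O=p_3$, $Q_1=p_1p_4\cap p_2p_5$ and $Q_2=p_1p_5\cap p_2p_4$. The three conditions become $p_6\in OQ_1$, $p_6\in OQ_2$ and $p_6\in Q_1Q_2$. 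The last holds because $Q_1Q_2$ is the polar of $O$ for every conic through $p_1,p_2,p_4,p_5$. Any two of these conditions force $p_6\in\{O,Q_1,Q_2\}$. That means either $p_6=p_3$, or there are extra collinear triples and hence more than two nodes.

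\textbf{The $4A_1$ case.} There are $3$ reduced lines, not $0$; \Cref{tab:narukifibers} simply omits multiplicity-one lines. They are the three coplanar lines of the Cayley cubic, i.e.\ the diagonals of the complete quadrilateral, and they do form a tritangent triple. The count is $0$ because the diagonal triangle is nondegenerate, not because no triple exists.

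\textbf{The $1A_1$ and $A_2$ cases are only announced.} For $1A_1$ you need the precise fact that three chords of the conic are concurrent iff their pairs are swapped by a single involution of the conic, namely projection from the common point. The count is then the number of involutions in the finite stabilizer of $\{p_1,\ldots,p_6\}$ in $\mathrm{PGL}_2$ that act freely on the six points. Running through the possible stabilizers (cyclic, $D_2$, $D_3$, $D_6$, $S_4$) yields exactly $0,1,2,3,4,6$, which in particular rules out $5$.

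For $A_2$ the candidates are the six bijections $\sigma$ between the two collinear triples. In reciprocal coordinates $\alpha_i,\beta_j$ on the two lines, concurrency reads $\det(\alpha,\beta^\sigma,\mathbf 1)=0$. The three even determinants sum to zero, and so do the three odd ones, so $0$, $1$ or $3$ of each parity vanish. All three even ones vanishing forces the $\mu_3$-symmetric position, in which no odd one vanishes, and symmetrically for odd. This gives at most $3$.
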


\begin{pro}\cite[Prop.~5.11]{schock_moduli_2024}\label{pro:bl5p2eck}
	Let $S$ be the blow up of $\mathbb{P}^2$ at five points $p_1, \dots, p_5$ such that $p_1, \dots, p_4$ are in general position and $p_5$ is the intersection point of the line through $p_1$ and $p_2$ with the line through $p_3$ and $p_4$. Consider the collection of lines on $S$ consisting of the four lines $\ell_{13}$, $\ell_{14}$, $\ell_{23}$, $\ell_{24}$, where $\ell_{ij}$ is the strict  transform of the line through $p_i$ and $p_j$, and a line $\ell$ the strict transform of a line passing through $p_5$, but not through  any of $p_1, \dots, p_4$. Then $S$ has two potential Eckardt points--the triple intersection $\ell \cap \ell_{13} \cap \ell_{24}$, and the triple intersection $\ell \cap \ell_{23} \cap \ell_{14}$. It is not possible to have both of these Eckardt points. Thus, $S$ has either $0$ or $1$ Eckardt points. \qed
\end{pro}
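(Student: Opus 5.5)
The plan is to reduce everything to an explicit computation in coordinates. Since $p_1,\dots,p_4$ are in general position, $\mathrm{PGL}_3$ lets us take
\begin{align*}
p_1=[1:0:0],\quad p_2=[0:1:0],\quad p_3=[0:0:1],\quad p_4=[1:1:1].
\end{align*}
Then the line through $p_1,p_2$ is $\{z=0\}$ and the line through $p_3,p_4$ is $\{x=y\}$, so $p_5=[1:1:0]$. The four relevant lines in $\mathbb{P}^2$ are
\begin{align*}
\ell_{13}=\{y=0\},\quad \ell_{24}=\{x=z\},\quad \ell_{14}=\{y=z\},\quad \ell_{23}=\{x=0\}.
\end{align*}
The line $\ell$ is any line $\{\alpha x+\beta y+\gamma z=0\}$ through $p_5$, which forces $\alpha+\beta=0$, and which avoids $p_1,\dots,p_4$.

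\textbf{Step 1: which pairs of lines can meet on $S$.} An Eckardt point here is a point of $S$ where three of the five strict transforms meet. Two lines that meet in $\mathbb{P}^2$ at a blown-up point $p_i$ have disjoint strict transforms, because they have distinct tangent directions there. Among the four lines $\ell_{ij}$:
\begin{itemize}
\item $\ell_{13}\cap\ell_{14}=p_1$, $\ell_{13}\cap\ell_{23}=p_3$, $\ell_{14}\cap\ell_{24}=p_4$ and $\ell_{23}\cap\ell_{24}=p_2$, so these pairs are disjoint on $S$;
\item the only pairs whose strict transforms meet are $\ell_{13}\cap\ell_{24}=[1:0:1]$ and $\ell_{14}\cap\ell_{23}=[0:1:1]$, and neither point is one of $p_1,\dots,p_5$.
\end{itemize}
A triple point therefore needs one of these two pairs, and the third line must be $\ell$. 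Two of the $\ell_{ij}$ cannot be concurrent with a third $\ell_{ij}$ at an unblown point, as the check above shows. This yields exactly the two candidate Eckardt points stated.

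\textbf{Step 2: each candidate is realized.} We check that $\ell$ can pass through each candidate point while still avoiding $p_1,\dots,p_4$.
\begin{itemize}
\item The line through $p_5$ and $[1:0:1]$ is $\{x=y+z\}$. Substituting, it misses $p_1,p_2,p_3,p_4$.
\item The line through $p_5$ and $[0:1:1]$ is $\{y=x+z\}$, and it likewise misses $p_1,\dots,p_4$.
\end{itemize}
So each potential Eckardt point does occur for a suitable choice of $\ell$.

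\textbf{Step 3: the two cannot occur together.} Both Eckardt points would force $\ell$ to pass through $p_5$, $[1:0:1]$ and $[0:1:1]$. These three points are not collinear: the determinant with rows $(1,1,0),(1,0,1),(0,1,1)$ equals $-2\neq 0$. Equivalently, the two lines found in Step 2 are distinct. Hence at most one of the two Eckardt points occurs, and $S$ has $0$ or $1$ Eckardt points.

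The only delicate step is Step 1. One must remember that incidences at blown-up points disappear on $S$, and must confirm that no unexpected concurrence occurs at $p_5$ or on the exceptional curve $e_5$. On $e_5$, only $\ell$ meets it among the five lines, since none of the $\ell_{ij}$ passes through $p_5$. The remaining steps are direct coordinate checks.
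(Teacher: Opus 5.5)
Your proof is correct and complete. The standard-frame coordinates, the observation that among the four $\ell_{ij}$ only the pairs $\{\ell_{13},\ell_{24}\}$ and $\{\ell_{14},\ell_{23}\}$ still meet on $S$, and the determinant $-2\neq 0$ together establish every claim, and Step 2 also shows that each candidate actually occurs.

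The paper gives no argument of its own here; it imports the statement from \cite[Prop.~5.11]{schock_moduli_2024}. Your Step 3 is the coordinate form of the classical fact that the three diagonal points $p_5$, $\ell_{13}\cap\ell_{24}$, $\ell_{14}\cap\ell_{23}$ of the complete quadrangle $p_1p_2p_3p_4$ are never collinear in characteristic $\neq 2$.
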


We can now describe the possible configurations of Eckardt points on the fibers of $\widetilde{\pi}$, completing \Cref{eck:desc} of \Cref{S:schockfiberdesc}.

\begin{teo}\cite[Thm.~5.13]{schock_moduli_2024}\label{teo:eckconfig}
	The possible configurations of Eckardt points on irreducible components of fibers of $\widetilde{\pi} :  (\widetilde{Y}(E_7), B_{\widetilde{\pi}}) \to \overline{Y}_{(1, \dots, 1)}$ of types
	\begin{align*}
		\widetilde{D}_{A_1}, \hspace{.1in} \widetilde{E}_{2A_1} , \hspace{.1in} \widetilde{E}_{3A_1}, \hspace{.1in} \widetilde{E}_{4A_1}, \hspace{.1in} \widetilde{D}_{3A_2}, \hspace{.1in} \widetilde{D}_{A_1} \cap \widetilde{D}_{3A_2}, \hspace{.1in}  \widetilde{E}_{2A_1} \cap \widetilde{D}_{3A_2}, \text{ and } \widetilde{E}_{3A_2} \cap \widetilde{D}_{3A_2}.
	\end{align*} 
	are listed in
	\ifAppendix
	Tables \ref{tab:DA1eck} to \ref{tab:E3A1D3A2eck} (also found in \cite[Tables~2 to 9]{schock_moduli_2024}). 
	\else
	\cite[Tables~2 to 9]{schock_moduli_2024}.
	\fi
	The remaining fibers of $\widetilde{\pi}$ are obtained as degenerations of the listed fibers, where some of the components isomorphic $\mathrm{Bl}_5 \mathbb{P}^2$ as in \Cref{pro:bl5p2eck} degenerate into four irreducible components, as described in \cite[Ex.~4.16]{schock_moduli_2024}. No Eckardt points can occur on these degenerate components. 
\end{teo}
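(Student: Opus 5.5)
The plan is to work stratum by stratum through the list in \Cref{tab:bddstrata}, using the explicit fiber descriptions of \Cref{teo:fullweightfibers}. For each of the eight listed types $\widetilde{D}_{A_1}, \widetilde{E}_{2A_1}, \widetilde{E}_{3A_1}, \widetilde{E}_{4A_1}, \widetilde{D}_{3A_2}, \widetilde{D}_{A_1}\cap\widetilde{D}_{3A_2}, \widetilde{E}_{2A_1}\cap\widetilde{D}_{3A_2}, \widetilde{E}_{3A_1}\cap\widetilde{D}_{3A_2}$, I would first record the irreducible components of the fiber over a general point of $T$ together with the lines on each component. These are the intersections with the horizontal $A_1$ divisors, read off from \Cref{pro:boundaryint} and \Cref{tab:piboundary}. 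By that description every component is of one of two kinds. Either it is (the minimal resolution of) a cubic surface with only $A_1$ singularities or a single $A_2$ singularity, or it is the surface $\mathrm{Bl}_5\mathbb{P}^2$ of \Cref{pro:bl5p2eck}, or a degeneration of it.

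Next I would bound the possible Eckardt configurations on each component. On cubic components, \Cref{pro:cubiceck} restricts the number of Eckardt points by singularity type. The specialization lemma of Tu and Cools then says every such Eckardt point is a limit of an Eckardt point on a smooth cubic. So the possible configurations on a cubic component are exactly the limits, in the relevant $A_1$ or $A_2$ stratum, of Eckardt configurations of smooth cubics. Concretely, an Eckardt point is a triple of coplanar lines $e_i, \ c_j, \ \ell_{ij}$ meeting in one point, and its compatibility with a node amounts to a condition on the root defining the node. This leaves a finite check in $W(E_6)$-combinatorics, which yields the tables. On $\mathrm{Bl}_5\mathbb{P}^2$ components, \Cref{pro:bl5p2eck} gives at most one of the two candidate triple points.

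The subtle part is the gluing. The Eckardt points on different components of one fiber are not independent, because they all come from the same one-parameter degeneration of a marked cubic. The double curves impose coincidences: a triple point on one component forces the matching data on the adjacent component. To determine which combinations actually occur over a general point of $T$, I would use the explicit parametrizations of the components. For example, the $3A_2$ components are $\mathbb{P}^2$ blown up at points on three concurrent lines, parametrized by $(\overline{M}_{0,4})^3$. In these coordinates the Eckardt conditions become explicit cross-ratio equations, and I would solve them simultaneously. I expect this enumeration, especially for $\widetilde{E}_{4A_1}$ and the $3A_2$ intersections, to be the main obstacle.

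Finally, the remaining strata are deeper intersections, whose general fibers are degenerations of the listed ones. On these, the only new feature is the splitting of a $\mathrm{Bl}_5\mathbb{P}^2$ component into four components, as in Schock's Example~4.16. For these degenerate pieces I would check directly that the lines restricted to each piece form at most double points. The two candidate triple intersections of \Cref{pro:bl5p2eck} get separated onto different components or into the double locus, so no Eckardt points occur there.
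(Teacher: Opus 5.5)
Your skeleton matches the paper's proof, which simply combines \Cref{pro:cubiceck}, \Cref{pro:bl5p2eck} and Schock's explicit fiber descriptions. Your first, second and last paragraphs do this. Re-deriving \Cref{pro:cubiceck} from the specialization lemma is unnecessary. If you do it anyway, your description of Eckardt points is incomplete: they also come from the 15 tritangent planes $\ell_{ij},\ell_{kl},\ell_{mn}$, not only from triples $e_i, c_j, \ell_{ij}$. The point $L_{14}\cap L_{25}\cap L_{36}$ used in Step 0 of the proof of \Cref{E2A1_mult4} is of that type.

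There is, however, a false step. It is not true that every component of these fibers is either a resolved cubic with $A_1$ or $A_2$ singularities or the surface of \Cref{pro:bl5p2eck}. The tables also list components $\mathbb{F}_0$, $\mathrm{Bl}_k\mathbb{F}_0$ for $k=3,4,5,6$, $\mathrm{Bl}_4\mathbb{P}^2$, and $\mathrm{Bl}_5\mathbb{P}^2$ with three points on one line and two on another. The ``$0$'' entries for these are part of the statement and follow from neither proposition, so your case analysis as written does not cover them. They must be read off from the explicit line configurations in Schock's figures, which is what the paper's appeal to the explicit descriptions supplies. For instance, on $\mathbb{F}_0$ and $\mathrm{Bl}_k\mathbb{F}_0$ the boundary lines are (strict transforms of) fibers of the two rulings, so at most two of them pass through any point. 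This is the same kind of direct check you propose for the degenerate pieces, so the repair is routine, but it is needed.

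Conversely, the step you flag as the main obstacle is not part of what has to be proved. The tables record, for each component type, the possible numbers of Eckardt points on that component. They do not record joint configurations over a whole fiber, so no simultaneous system of cross-ratio equations needs to be solved.

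The coupling mechanism you describe is also inaccurate. By \cite[Prop.~10.23]{hacking_stable_2009} (recalled before \Cref{teo:eckblowup}), Eckardt points lie in the smooth locus of the fiber, hence never on the double curves. So a triple point on one component imposes nothing on an adjacent component through the gluing. Any correlation between components comes only through shared moduli of the stratum, which would matter only for a finer statement than the one claimed.
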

\begin{proof}
	This follows directly from Propositions \ref{pro:cubiceck} and \ref{pro:bl5p2eck}, and Schock's explicit description of the fibers of $\widetilde{\pi}$ found in \cite[Figs.~4, 9, 10, 15, 16, 21, 22, 24, 28, 35, 36, 43, and 44]{schock_moduli_2024}.
\end{proof}

\subsection{Blow up of Eckardt points} 
In this subsection, we will explore \Cref{eck:blowup} of \Cref{S:schockfiberdesc} by explaining the blow up process of Eckardt points of the fibers of $\widetilde{\pi} : \widetilde{Y}(E_7) \to \overline{Y}_{(1, \dots, 1)}$ and obtain the universal family of weight 1 stable marked cubic surfaces as in \cite[Thm.~10.31]{hacking_stable_2009}. Let us first be precise in what we mean by an Eckardt point of a fiber of $\widetilde{\pi}$. This is also summarized in \cite[\S~5]{schock_moduli_2024}.

\begin{dfn}[Eckardt locus]\cite[Definition 10.21]{hacking_stable_2009}
	The \textit{Eckardt locus} of $\widetilde{Y}(E_7)$ is the union of the intersections of three horizontal $A_1$ divisors of $\widetilde{\pi} : \widetilde{Y}(E_7) \to \overline{Y}_{(1, \dots, 1)}$. The \textit{Eckardt points} on a fiber $(S, B)$ of $\widetilde{\pi} : (\widetilde{Y}(E_7), B_{\widetilde{\pi}}) \to \overline{Y}_{(1, \dots, 1)}$ are the intersection points of $(S, B)$ with the Eckardt locus of $\widetilde{Y}(E_7)$.
\end{dfn}

It was shown in \cite[Prop~10.23]{hacking_stable_2009} that any Eckardt points on a fiber $(S, B)$ of $\widetilde{\pi}$ lie in the smooth locus of $S$. This allows for the following result for obtaining the universal family of weight 1 stable marked cubic surfaces over $\overline{Y}_{(1, \dots, 1)}$.

\begin{teo}\cite[Thm~10.31]{hacking_stable_2009}\label{teo:eckblowup}
	Let $\ddot{Y}(E_7)$ denote the blow up of $\widetilde{Y}(E_7)$ along the Eckardt locus, and let
	\begin{align*}
		\ddot{\pi} : (\ddot Y(E_7), B_{\ddot \pi}) \to \overline{Y}_{(1, \dots, 1)}
	\end{align*}
	be the induced morphism to $\overline{Y}_{(1, \dots, 1)}$, where $B_{\ddot \pi}$ is the strict transform of $B_{\widetilde{\pi}}$. Then $\ddot\pi$ is a flat family of KSBA stable pairs, giving the universal family of weight 1 stable marked cubic surfaces over the moduli space $\overline{Y}_{(1, \dots, 1)}$. Each fiber $(\ddot S, \ddot B)$ of $\ddot\pi$ is obtained from a fiber $(S, B)$ of $\widetilde{\pi}$ by blowing up each Eckardt point on $(S, B)$ and attaching to the exceptional divisor a copy of $\mathbb{P}^2$ glued along a general line. The pullbacks of the three lines passing through the Eckardt point on $(S, B)$ intersect the $\mathbb{P}^2$ in three lines in general position, as pictured below (copied from \cite[Fig.~1]{schock_moduli_2024})
	\begin{center}
		\includegraphics[scale=0.3]{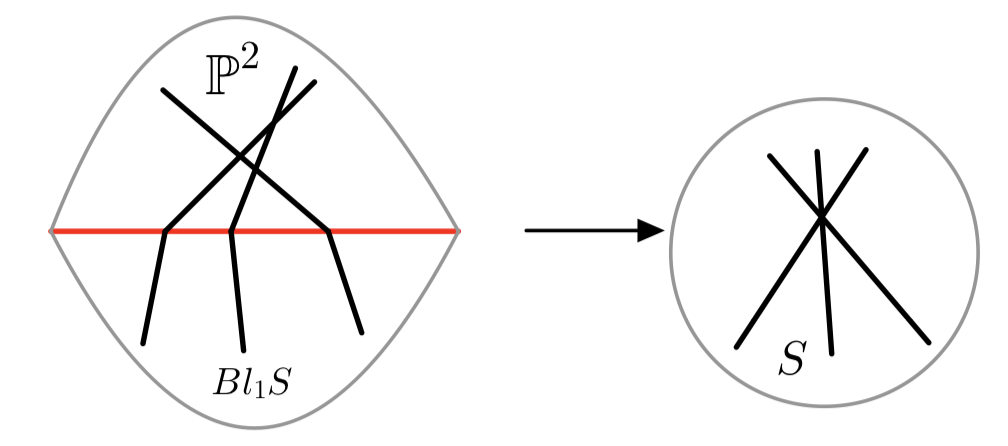}
	\end{center}\qed
\end{teo}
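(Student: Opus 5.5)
The statement just given is \Cref{teo:eckblowup}, i.e. \cite[Thm~10.31]{hacking_stable_2009}. My plan is to reduce it to local statements: first along the Eckardt locus of $\widetilde{Y}(E_7)$, and then fiberwise on the surfaces $(S,B)$ already described in \Cref{teo:fullweightfibers} and \Cref{teo:eckconfig}.

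\textbf{Step 1: local structure of the Eckardt locus.} By \cite[Prop~10.23]{hacking_stable_2009}, every Eckardt point on a fiber lies in the smooth locus of $S$. Hence near such a point $\widetilde{\pi}$ is smooth of relative dimension $2$, since it is flat with reduced fibers by \Cref{pro:flattening}. The three horizontal $A_1$ divisors through the point are relative Cartier divisors, and their pairwise intersections are transverse in each fiber, because the three lines meet pairwise transversally. So their triple intersection is a section-like smooth subvariety $Z$ of codimension $2$ in $\widetilde{Y}(E_7)$ that is finite and unramified over its image in $\overline{Y}_{(1,\dots,1)}$. Locally, $\widetilde{Y}(E_7)\cong \mathbb{A}^2_{x,y}\times \Delta$ with the three divisors given by $x=0$, $y=0$, $x=y$ (after a coordinate change), where $\Delta$ is a neighborhood in the base. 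Since $Z$ is contained in each of these divisors, the Eckardt point persists over all of $\pi(Z)$.

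\textbf{Step 2: compute the blow-up near $Z$.} Blowing up $Z\subset \mathbb{A}^2\times\Delta$ fiberwise is not what the statement describes, because $\pi(Z)$ is a proper closed subset of the base. Near a point of $Z$ lying over the image $W=\pi(Z)$, which is defined locally by $t=0$, the Eckardt locus is the codimension-$2$ triple intersection taken inside the total space. I will therefore write local coordinates $(x,y,t,\dots)$ and determine the ideal of the Eckardt locus. The triple intersection is $\{x=y=0\}$ only generically. Over the divisor where the Eckardt condition holds, the configuration is instead $x=0$, $y=0$, $x-y=t$, so the Eckardt locus is $\{x=y=t=0\}$, which has codimension $3$.

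Blowing up this ideal gives an exceptional $\mathbb{P}^2$ over each point of $W$. The fiber of the blow-up over $t=0$ is then $\mathrm{Bl}_p S\cup \mathbb{P}^2$, glued along the exceptional curve, which is a line in $\mathbb{P}^2$. The strict transforms of $x=0$, $y=0$ and $x-y=t$ meet this $\mathbb{P}^2$ in the lines $\{x=0\}$, $\{y=0\}$ and $\{x-y=t\}$. These three lines are in general position with one another and with the gluing line $\{t=0\}$, which yields the picture in the statement. The blow-up of a smooth codimension-$3$ center is smooth, and the fiber over $t=0$ is reduced, being the exceptional divisor plus the strict transform. Flatness then follows from miracle flatness, since the total space is smooth, the base is smooth, and the fibers are equidimensional.

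\textbf{Step 3: verify KSBA stability of each fiber $(\ddot S,\ddot B)$.} For slc, the new double curve is a smooth conic-free line meeting the boundary transversally. The $\mathbb{P}^2$ component carries the pair consisting of the gluing line $D$ and three general lines, which is lc since its four lines are in general position. On $\mathrm{Bl}_pS$, the three lines now meet the exceptional curve at distinct points. Away from $p$, slc of $(S,B)$ is already known from the flat family of \Cref{pro:flattening} and Schock's description. For ampleness, on $\mathbb{P}^2$ one has $K+D+\sum L_i=\mathcal{O}(1)$, which is ample. On $\mathrm{Bl}_pS$ the divisor $K+B+E$ pulls back to $\beta^*(K_S+B)-3E+E+E=\beta^*(K_S+B)-E$. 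It has degree $2$ on $E$, and I would check positivity on the other curves using \Cref{teo:nefcones}. Conditions (4) and (5) of \Cref{dfn:KSBAfamily} follow from the smoothness of the total space near the new locus and from the ampleness just established.

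\textbf{Main obstacle.} I expect Step 2 to be the hardest part, specifically justifying the local model $x-y=t$. This amounts to showing that the three horizontal divisors meet with multiplicity exactly one along $W$, which is a transversality statement on the Eckardt divisor in the base. I would first try to deduce it from the tropical (toric) embedding, computing it on one $W(E_7)$-representative and then spreading it to the degenerate components listed in \Cref{teo:eckconfig}.
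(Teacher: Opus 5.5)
\textbf{Context.} The paper does not prove this statement: it is quoted from \cite[Thm~10.31]{hacking_stable_2009} with no argument, so your sketch has to stand on its own. Its core, the computation in Step~2, is correct. Take the three horizontal divisors to be $x=0$, $y=0$, $x-y=t$ with $W=\{t=0\}$. Blowing up $\{x=y=t=0\}$ yields $\mathrm{Bl}_pS\cup\mathbb{P}^2_{[x:y:t]}$ glued along $\{t=0\}$. The strict transforms cut out three lines which, together with the gluing line, are in general position. Flatness near the new locus follows as you say.

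\textbf{The gap is in Step~3.} You take for granted that, away from the Eckardt points, the fibers $(S,B)$ of $\widetilde\pi$ are slc and that $K_S+B$ is ample on every component. You justify this by \Cref{pro:flattening} and Schock's description. But flatness with reduced fibers implies neither property, and nothing earlier in the paper asserts them. \Cref{teo:fullweightfibers} only describes the fibers, and the discussion after \Cref{pro:flattening} only says that Eckardt points \emph{do} obstruct stability. That they are the \emph{only} obstruction is the substantive content of the cited theorem: slc off the Eckardt locus, and relative ampleness of $K+B_{\widetilde\pi}$ over all of $\overline{Y}_{(1,\dots,1)}$. You must either import this from \cite{hacking_stable_2009} or verify it on every fiber type of \Cref{tab:bddstrata}, component by component, as the paper does for one type in Step~0 of the proof of \Cref{E2A1_mult4}.

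\textbf{Ampleness on the component containing $p$.} The check on the component $S'\ni p$ is also not covered by \Cref{teo:nefcones}, for two reasons. First, that result assumes the extra point is in general position, whereas $p$ lies on three lines whose strict transforms become $(-2)$-curves. Second, it does not treat components such as the special $\mathrm{Bl}_5\mathbb{P}^2$ of \Cref{pro:bl5p2eck} or the $A_2$-type $\mathrm{Bl}_6\mathbb{P}^2$, both of which can carry Eckardt points. Write $A=K_{S'}+B'+\Delta$. By Nakai--Moishezon you need $A^2>1$ and $A\cdot C>\mathrm{mult}_pC$ for every curve $C\ni p$. This holds if $A\cdot L_i\ge 2$ for the three lines $L_i$ through $p$ and $A-(L_1+L_2+L_3)$ is nef: then $A\cdot C-\mathrm{mult}_pC\ge 2\,\mathrm{mult}_pC$ for every $C\neq L_i$.

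\textbf{Smaller corrections.} You have $(\beta^*(K_S+B)-E)\cdot E=-E^2=1$, not $2$. Degree $1$ is exactly what must match $\mathcal{O}(1)$ restricted to the gluing line on the $\mathbb{P}^2$ side. Also, the ``universal family'' claim additionally needs two things: the identification of $\widetilde{Y}_m$ with the normalized KSBA space, quoted after \Cref{pro:flattening}, and separatedness of the moduli stack.

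\textbf{Step~1.} This step should be deleted, because it contradicts Step~2. If the divisors were $x=0$, $y=0$, $x=y$ on $\mathbb{A}^2\times\Delta$, every nearby fiber would have an Eckardt point, and the three divisors would not be normal crossing. In fact $Z$ has codimension $3$.

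\textbf{The ``main obstacle''.} What you call the main obstacle is immediate.
\begin{enumerate}
\item The three horizontal divisors correspond to the roots $L_i-e_7$. These are pairwise orthogonal because $L_i\cdot L_j=1$ and $e_7^2=-1$, so they span a simplex of $\mathcal{R}(E_7)$.
\item $\widetilde{Y}(E_7)$ is a toroidal modification of the snc pair $\overline{Y}(E_7)^{\mathrm{lc}}$. It is smooth along $Z$, since $\widetilde\pi$ is flat, its fiber is smooth at $p$, and the base is smooth. Hence its boundary is snc there, and the three divisors meet transversally in a smooth $3$-fold $Z$.
\item Since the three lines are distinct in the fiber, $T_Z\cap T_pS=0$. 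So $Z\to W$ is a local isomorphism onto a smooth divisor.
\end{enumerate}
This gives your local coordinates $(x,y,t)$ directly.
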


	\section{Determining change in moduli}\label{S:modulichange}
In this section and next, we will establish some useful results that will be needed to prove  \Cref{teo:main} in \Cref{S:proofofchambers} along with the explicit descriptions of the surfaces parameterized at the boundary. In this section, we establish a useful lemma that will help us determine when the coarse moduli space underlying its moduli stack is isomorphic. The following is a special case of main result in \cite{ascher_wall_2023}.

\begin{teo}{\cite[Thm.~1.1]{ascher_wall_2023} }\label{teo:wallcrossing}
	For $(b, c) \in \mathrm{Amp}$ (see \Cref{eqn:ampcone}), 
	there exists a normal projective variety $\overline{Y}_{(b, c)}$ (see \Cref{def:(bc)modulistack}) and a flat family $(\mathcal{S}_{(b, c)}, (b, c)\mathcal{B})$ compactifying the moduli space $Y_{(b, c)}$ of smooth $(b, c)$-weighted stable marked cubic surfaces. Furthermore, there is a finite rational polyhedral wall-and-chamber decomposition of the weight domain $\mathrm{Amp}$ (\Cref{eqn:ampcone}) such that the following hold.
	\begin{enumerate}[label=(\arabic*)]
		\item \label{teo:wallcrossing1}For $(b, c)$ and $(b' ,c')$ contained in the same chamber, there are canonical isomorphisms
		\[
		\begin{tikzpicture}[ampersand replacement=\&, 
			every node/.style={anchor=center},
			xscale=3, yscale=2]
			
			\matrix (m) [matrix of math nodes, row sep=2em, column sep=3em] {
				\mathcal{S}_{(b, c)} \& \mathcal{S}_{(b', c')} \\
				\overline{Y}_{(b, c)} \& \overline{Y}_{(b', c')} \\
			};
			
			\draw[->] (m-1-1) -- node[above] {$\sim$} (m-1-2);
			\draw[->] (m-2-1) -- node[below] {$\sim$} (m-2-2);
			
			\draw[->] (m-1-1) -- (m-2-1);
			\draw[->] (m-1-2) -- (m-2-2);
			
		\end{tikzpicture}
		\]
		\item \label{teo:wallcrossing2}For $(b, c)$ and $(b, c')$ in different chambers with $b \geq b'$ and $c \geq c'$, then there is a canonical functorial birational wall-crossing morphism 
		\begin{align*}
			\rho_{(b', c')}^{(b, c)} : \overline{Y}_{(b, c)} \to \overline{Y}_{(b' ,c')}
		\end{align*}
		induced by a canonical birational map $\mathcal{S}_{(b, c)} \dashrightarrow \mathcal{S}_{(b', c')}$ sending a stable pair $(S, (b, c)B)$ to the stable model of the pair $(S, (b', c')B)$. Furthermore, for any third weight vector $(b'', c'')$ with $b' \geq b''$ and $c' \geq c''$, we have 
		\begin{align*}
			\rho_{(b'', c'')}^{(b', c')} \circ \rho_{(b', c')}^{(b, c)} = \rho_{(b'', c'')}^{(b, c)}. 
		\end{align*}
	\end{enumerate}
	\qed
\end{teo}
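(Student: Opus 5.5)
The plan is to deduce the statement directly from the general wall-crossing theorem of \cite[Thm.~1.1]{ascher_wall_2023}, applied to the family of smooth marked cubic surfaces with the two-parameter boundary $b\ell + c(L_2+\cdots+L_{27})$. That theorem is stated for a smooth (or klt) base family of log canonical pairs $(X,\sum a_i D_i)\to U$ whose coefficient vector ranges over a rational polytope on which $K+\sum a_iD_i$ is fibrewise ample. So the work is to check these hypotheses on an open locus of $\mathcal{Y}_\ell$, and then restrict the resulting wall-and-chamber decomposition to the two-dimensional slice $(b,c,\dots,c)$.

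\textbf{Step 1: choose the base family.} I take $U\subseteq Y_\ell$ to be the open dense locus of smooth cubic surfaces with no Eckardt points, with its universal family $(\mathcal{X},\ell+\sum_{i\ge 2}L_i)\to U$, after passing to a finite étale cover on which the marking is fine if needed. On such a surface any two of the $27$ lines meet transversally and no three are concurrent, so $\sum L_i$ has simple normal crossings. Hence $(S,b\ell+c\sum_{i\ge2}L_i)$ is log canonical for all $(b,c)\in(0,1]^2$, and the lc condition imposes no restriction on the weight domain. Since $U$ is dense in $Y_\ell$, the closure of $U$ in the KSBA stack coincides with $\overline{\mathcal{Y}}_{(b,c)}$ of \Cref{def:(bc)modulistack}. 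Eckardt surfaces are therefore only seen through their stable limits, which is consistent with the role of \Cref{teo:eckblowup} later.

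\textbf{Step 2: ampleness region and constant volume.} Using $\sum_{i=1}^{27}L_i\sim -9K_S=9H$, write
\[
K_S+b\ell+c(L_2+\cdots+L_{27}) \sim (9c-1)H+(b-c)\ell .
\]
By the discussion preceding \Cref{eqn:ampcone}, the cone of curves of $S$ is generated by the $27$ lines, so Kleiman's criterion reduces ampleness to intersecting with the lines:
\begin{itemize}
\item[] with $\ell$ itself, $\ell^2=-1$ gives $(9c-1)-(b-c)=10c-b-1>0$;
\item[] with a line $L_i\neq\ell$, one gets $9c-1+(b-c)(\ell\cdot L_i)\ge 9c-1>0$ because $b\ge c>1/9$.
\end{itemize}
This recovers exactly $\mathrm{Amp}$ in \Cref{eqn:ampcone}, which is a rational polytope, so the hypotheses of \cite{ascher_wall_2023} hold on it. The volume $v(b,c)=((9c-1)H+(b-c)\ell)^2$ is a polynomial in $(b,c)$ and is constant on $U$, so the stacks $\overline{\mathcal{M}}_{(b,c),2,v(b,c)}^{\mathrm{KSBA}}$ are the correct ambient spaces.

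\textbf{Step 3: apply the general theorem.} \cite[Thm.~1.1]{ascher_wall_2023} then gives three things:
\begin{enumerate}
\item the closures $\overline{\mathcal{Y}}_{(b,c)}$ with projective coarse spaces and universal families, whose normalizations are $\overline{Y}_{(b,c)}$ and the pulled-back flat family $(\mathcal{S}_{(b,c)},(b,c)\mathcal{B})$;
\item a finite rational polyhedral chamber decomposition of the coefficient polytope with canonical isomorphisms inside chambers, giving \ref{teo:wallcrossing1};
\item reduction morphisms for coordinatewise-decreasing weights, sending a pair to the stable (log canonical) model of the pair with smaller weights, satisfying the cocycle identity, giving \ref{teo:wallcrossing2}.
\end{enumerate}
The isomorphisms, morphisms and cocycle identity pass to normalizations because normalization is functorial for dominant morphisms. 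Birationality holds because all the maps are the identity on the dense open $U$.

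\textbf{Main obstacle.} I expect the delicate point to be compatibility of the restriction to the slice $c_2=\cdots=c_{27}$. The general theorem is naturally stated for the full $27$-coordinate (or two-divisor) marking, and one must make sure that marking $L_2,\dots,L_{27}$ as a single divisor with coefficient $c$ yields the same stack as in \Cref{def:(bc)modulistack}. I would handle this by applying \cite{ascher_wall_2023} directly to the two-divisor marked family $(\mathcal{X},\ell,\sum_{i\ge2}L_i)$, so that no restriction is needed. The required K-flatness of $\sum_{i\ge 2}L_i$ follows because it is a relative Cartier divisor over the smooth base $U$.
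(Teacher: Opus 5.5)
Your route is the paper's route. The paper gives no argument beyond quoting \cite[Thm.~1.1]{ascher_wall_2023} as a special case, and your hypothesis checks are correct and agree with the paper's formulas. The relation $K_S+b\ell+c\sum_{i\ge 2}L_i\sim(9c-1)H+(b-c)\ell$ gives the inequality $10c-b-1>0$ of \Cref{eqn:ampcone}, and $\bigl((9c-1)H+(b-c)\ell\bigr)^2$ expands to exactly the polynomial in \Cref{eq:constantvol}. (Your bound against $L_i\neq\ell$ uses $b\ge c$; for $b<c$ it reads $8c+b-1>0$, which still holds on $\mathrm{Amp}$.) Working over the Eckardt-free locus is more careful than the paper's identification $\mathcal{Y}_\ell\cong\mathcal{M}_\ell(b,c)$. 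By \Cref{lem:lineslccriteria}, a smooth cubic with an Eckardt point is not lc once $3c>2$, or once $b+2c>2$ when $\ell$ passes through it. So $\mathcal{Y}_\ell$ does not literally lie in the KSBA stack for all weights in $\mathrm{Amp}$, whereas your $U$ does and has the same closure.

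The step that does not go through as written is ``$\mathrm{Amp}$ is a rational polytope, so the hypotheses of \cite{ascher_wall_2023} hold on it.'' $\mathrm{Amp}$ is not closed. Along the face $c=\frac{b}{10}+\frac{1}{10}$ your formula degenerates to $(b-c)(H+\ell)$, which is only big and nef, since it contracts $\ell$. At the vertex $(\frac19,\frac19)$ one has $K_S+\frac19\sum L_i\equiv 0$, so the volume tends to $0$. Hence no compact rational polytope containing $\mathrm{Amp}$ satisfies the hypotheses. Applying the theorem to compact subpolytopes exhausting $\mathrm{Amp}$ gives only a locally finite decomposition. The \emph{finiteness} in the statement therefore requires ruling out walls accumulating toward that face, above all near $(\frac19,\frac19)$. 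This is not formal. In the paper it effectively comes from the explicit chamber computation of \Cref{S:proofofchambers}, together with the uniform-weight computation of \cite{schock_moduli_2024}; these show the chambers adjacent to $c=\frac{b}{10}+\frac{1}{10}$ reach that face with no further walls.

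A smaller point of the same kind: on the faces $b=1$ or $c=1$ your snc pairs are dlt but not klt. If you use the generically-klt form of the theorem, you should either treat those faces separately or appeal to the extension in \cite{meng2023mmplocallystablefamilies}. The paper's bare citation passes over both issues in the same way.
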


Using this theorem, we can deduce a case of wall-crossing where the underlying moduli space does not change.

\begin{lem}\label{lem:isomoduli}
	Suppose we are in the following scenario locally in the wall-and-chamber decomposition:
	\begin{center}
		\begin{tikzpicture}[scale=2]
			\draw[dashed] (0, 0) -- (2, 0);
			\draw[dashed] (2, 0) -- (2, 1);
			\draw[dashed] (2, 1) -- (0, 1);
			\draw[dashed] (0, 1) -- (0, 0);
			\draw[thick] (0, 0) -- (2, 1);
			\node at (1/2, 1.5/2) {Chamber $A$};
			\node at (3/2, .5/2) {Chamber $B$};
		\end{tikzpicture}
	\end{center}
	where chamber $A$ includes the wall of positive slope, i.e. we are still in chamber $A$ at the wall. Denote by $\overline{Y}_A$ the coarse moduli space $\overline{Y}_{(b, c)}$ for $(b, c)$ in chamber $A$ and similarly for $\overline{Y}_B$. Then the coarse moduli spaces $\overline{Y}_A \cong \overline{Y}_B$ are isomorphic (even though, by assumption, the moduli stacks are different).
\end{lem}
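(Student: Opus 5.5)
The plan is to use the wall-crossing morphisms of \Cref{teo:wallcrossing} in both directions, exploiting that the wall of positive slope can be approached monotonically from either side. Pick a point $(b_0,c_0)$ on the wall; by hypothesis it lies in chamber $A$. Because the wall has positive slope, there are points $(b_1,c_1)$ in chamber $B$ with $b_1 \ge b_0$ and $c_1 \le c_0$. There are also points in $B$ that are dominated coordinatewise by wall points: take $(b_2,c_2)\in B$ with $b_2 \le b_0$ and $c_2 \le c_0$, just below the wall. Conversely, take $(b_3,c_3)\in A$ with $b_3\ge b_2$ and $c_3\ge c_2$; for instance, a point on the wall itself slightly above and to the right of $(b_2,c_2)$ works.

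The key steps are as follows.

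\textbf{Step 1.} By \Cref{teo:wallcrossing}\ref{teo:wallcrossing1}, $\overline{Y}_A$ and $\overline{Y}_B$ are independent of the chosen points. By part \ref{teo:wallcrossing2}, we obtain a birational morphism $\rho\colon \overline{Y}_A=\overline{Y}_{(b_0,c_0)}\to \overline{Y}_{(b_2,c_2)}=\overline{Y}_B$, since $(b_0,c_0)\ge(b_2,c_2)$ coordinatewise.

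\textbf{Step 2.} Construct a reverse morphism. Choose $(b_2,c_2)\in B$ and a wall point $(b_3,c_3)$ with $b_2\ge b_3$ and $c_2\ge c_3$. This is possible because, below a positive-slope line, one can move down-left onto the line: from $(b_2,c_2)$, decrease $b$ until the wall is hit. Decreasing $b$ at fixed $c$ moves toward the line $c = $ (slope)$\cdot b + \cdots$ from below, since with positive slope the region below the line is the region to the right. Then \ref{teo:wallcrossing2} gives $\sigma\colon \overline{Y}_B\to\overline{Y}_A$.

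\textbf{Step 3.} Compose the two maps. By the functoriality identity $\rho^{(b',c')}_{(b'',c'')}\circ\rho^{(b,c)}_{(b',c')}=\rho^{(b,c)}_{(b'',c'')}$ and the chamber isomorphisms, $\sigma\circ\rho$ is a wall-crossing morphism from chamber $A$ to itself. Such a morphism is identified with the canonical isomorphism of \ref{teo:wallcrossing1}, because both are induced by taking stable models of the same pairs and agree on the dense open locus of smooth surfaces. Hence $\sigma\circ\rho=\mathrm{id}$ and likewise $\rho\circ\sigma=\mathrm{id}$, since all spaces are separated and reduced (normal). Therefore $\overline{Y}_A\cong\overline{Y}_B$.

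The main obstacle is the self-consistency in Step 3. We need that a wall-crossing morphism between two points of the same chamber coincides with the canonical isomorphism. This should follow because both are birational morphisms between normal separated varieties that restrict to the identity on the common open subset $Y_\ell$, so they agree everywhere. We also need the remark that only coarse spaces are compared: the stacks may differ, because the stable pairs parametrized on the wall differ from those in $B$. The morphisms, however, are bijective birational morphisms between normal varieties, and by Zariski's main theorem such a map is an isomorphism. This gives a fallback if the composition argument is awkward: show $\rho$ is finite and birational onto a normal target.
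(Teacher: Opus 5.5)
Your proof is correct, and the points you use in Steps 1--2 are exactly the paper's: drop vertically from a wall point into $B$, then slide horizontally left back onto the wall. There is one slip. In your opening paragraph you ask for $(b_3,c_3)\in A$ with $b_3\ge b_2$ and $c_3\ge c_2$, which would only give another map $\overline{Y}_A\to\overline{Y}_B$. Step 2 has the correct inequalities $b_2\ge b_3$, $c_2\ge c_3$, so just delete the earlier sentence.

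Where you differ from the paper is the conclusion.

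The paper takes $\rho''\circ\rho'$ to be the canonical isomorphism without further comment. It then deduces that $\rho''$ is quasi-finite: a positive-dimensional fiber of $\rho''$ would pull back under the surjective proper map $\rho'$ to a positive-dimensional fiber of an isomorphism. Hence $\rho''$ is finite, and the paper finishes with Zariski's main theorem. This is essentially your fallback, applied to the reverse map.

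You instead use rigidity: two morphisms from a reduced scheme to a separated one that agree on a dense open subset are equal. Applied to both $\sigma\circ\rho$ and $\rho\circ\sigma$, this shows $\rho$ and $\sigma$ are mutually inverse. Your route has two advantages:
\begin{itemize}
\item It avoids Zariski's main theorem and the fiber-dimension count.
\item It justifies a step the paper leaves implicit. \Cref{teo:wallcrossing}\ref{teo:wallcrossing2} as stated only gives morphisms between different chambers, so functoriality alone does not identify a composite landing back in chamber $A$ with the isomorphism of \ref{teo:wallcrossing1}. Your dense-open argument does.
\end{itemize}

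To make it airtight, name the common open set on which every map is the identity. The cleanest choice is the locus of smooth cubic surfaces with no Eckardt points, since there $(S,(b,c)B)$ is itself stable for every $(b,c)\in\mathrm{Amp}$. On the full smooth locus this fails for $c>\frac{2}{3}$, where surfaces with Eckardt points are replaced by blow-ups. In exchange for this extra bookkeeping, the paper's route only needs one composite to be an isomorphism.
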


\begin{proof}
	First, recall that from \Cref{teo:wallcrossing}\ref{teo:wallcrossing1}, we have that for all $(b,c)$ in chamber $A$ that the coarse moduli spaces $\overline Y_{(b,c)}$ are isomorphic, so that it makes sense to write $\overline Y_A$, and similarly for $\overline Y_B$.
	Now for the isomorphism, without loss of generality, suppose the wall is given by $c = mb$ for some rational $m > 0$. Then take a point $(b, mb)$ on the wall in chamber $A$, so that for some small rational $0 < \epsilon \ll 1$ we have the point $(b, mb-\epsilon)$ (i.e., moving down by $\epsilon$) is in chamber $B$. Then by   \Cref{teo:wallcrossing}\ref{teo:wallcrossing2}, there is a birational wall-crossing morphism $\rho' : \overline{Y}_A \to \overline{Y}_B$. On the other hand, by now moving left by $\frac{1}{m}\epsilon$, we have the point $(b - \frac{1}{m} \epsilon, mb - \epsilon)$ on the wall in chamber $A$, and  \Cref{teo:wallcrossing}\ref{teo:wallcrossing2}  gives us a birational wall-crossing morphism $\rho'' : \overline{Y}_B \to \overline{Y}_A$. Composing $\rho'$ and $\rho''$, we get a canonical isomorphism
	\begin{align*}
		\rho = \rho'' \circ \rho' : \overline{Y}_A \to \overline{Y}_B \to \overline{Y}_A.
	\end{align*}
	It remains to show that $\rho''$ is finite. Since $\rho''$ is proper, to show $\rho''$ is finite, it suffices to show that $\rho''$ is quasi-finite. Suppose for contradiction there is a point $x \in \overline{Y}_A$ such that $(\rho'')^{-1}(x)$ has positive dimension. Using the fact that $\rho$ is an isomorphism, there is a unique $y \in \overline{Y}_A$ such that $\rho(y) = (\rho'' \circ \rho')(y) = x$. However, we get that
	\begin{align*}
		\rho^{-1}(x) &= (\rho')^{-1} \circ (\rho'')^{-1}(x)
	\end{align*}
	would be positive dimensional, a contradiction. Hence $\rho''$ is finite and since $\rho''$ is a finite birational morphism between two normal proper varieties, by Zariski's main theorem, $\rho''$ must be an isomorphism. Thus $\overline{Y}_B \cong \overline{Y}_A$ and the desired result is obtained.
\end{proof}
	
	\section{Lemmas regarding gluing and the log canonical model}\label{S:gluing}
In this section, we discuss a method to obtain log canonical models of reducible surfaces by gluing log canonical models of the components. This is well-understood in the literature, but we discuss here in detail a special case that appears in our situation. Informally, this says that our ``naive" approach at each wall-crossing of checking ampleness on each irreducible component of $X$ and then gluing the irreducible components after taking the appropriate contractions is indeed the same as the log canonical model under certain base-point free conditions. In particular, our goal is to prove the following result.

\begin{pro}[\Cref{cor:gluing}]
	Let $X$ be a complex projective variety with irreducible components  $X_1, \dots, X_n$ with normalization $\overline{X}_1, \dots, \overline{X}_n$, let $\overline B_i = \overline B|_{\overline X_i}$ be the restriction of $\overline B$ to $\overline X_i$ and similarly, let $\overline{D} = \sum_i \overline D_i = \sum_i \overline D|_{\overline X_i}$ be the restriction of the conductor. Suppose for all $1 \leq i \leq n$ the pair $(\overline X_i,\overline B_i+\overline D_i)$ is klt. Further suppose
	\begin{enumerate}
		\item $\omega_{\overline X_i}^{m-1}((m-1)\overline B_i + (m-2)\overline D_i)$ is big and nef, 
		\item $\omega_{\overline X_i}^m (m(\overline B_i + \overline D_i))$ is semi-ample, and 
		\item $|\omega_{\overline X_i}^{m} (m(\overline B_i + \overline D_i))|_{\overline D_i}|$ is basepoint-free.
	\end{enumerate}
	Then the log canonical model of $(X, B)$ is obtained by taking the log canonical models of each of the $(\overline X_i, \overline B_i + \overline D_i)$ and gluing them along the conductors $\overline D_i$ under a generically fixed-point-free Galois involution $\tau : (\overline D^n, \operatorname{Diff}_{\overline D^n} \overline B) \to (\overline D^n, \operatorname{Diff}_{\overline D^n} \overline B)$ of the normalization $\overline{D}^n \to \overline{D}$ with the different $\operatorname{Diff}_{\overline D^n} \overline B$.
\end{pro}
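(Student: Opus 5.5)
The plan is to reduce the statement to Kollár's gluing theory for slc pairs: an slc pair is the same as its normalization, a log canonical pair $(\overline X, \overline D + \overline B)$, together with a generically fixed-point-free involution $\tau$ of $(\overline D^n, \operatorname{Diff}_{\overline D^n}\overline B)$. The log canonical ring of $(X,B)$ is then the $\tau$-invariant part of the log canonical ring of the normalization. Concretely, a section $s\in H^0(X,\omega_X^{[m]}(mB))$ corresponds to a tuple of sections $s_i\in H^0(\overline X_i,\omega_{\overline X_i}^m(m(\overline B_i+\overline D_i)))$. Their residues (Poincaré residues along the conductor) on $\overline D^n$ must match under $\tau$, with the sign $(-1)^m$ absorbed by taking $m$ even. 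I would first record this description of $R(X, K_X+B)$ as a fibre product
\[
R(X,K_X+B)=\textstyle\prod_i R(\overline X_i, K_{\overline X_i}+\overline B_i+\overline D_i)\times_{R(\overline D^n,\,K+\operatorname{Diff})}R(\overline D^n,K+\operatorname{Diff}),
\]
with the two maps given by restriction/residue and by residue composed with $\tau$.

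Second, on each component, hypothesis (2) and the klt assumption give a log canonical model $Z_i=\operatorname{Proj} R(\overline X_i, K+\overline B_i+\overline D_i)$ together with a contraction $\phi_i:\overline X_i\to Z_i$ defined by $|m(K+\overline B_i+\overline D_i)|$. I would then show that the residue map
\[
H^0(\overline X_i,\omega^m(m(\overline B_i+\overline D_i)))\to H^0(\overline D_i^n, \omega^m(m\operatorname{Diff}))
\]
is surjective for all sufficiently divisible $m$. Its cokernel sits inside $H^1(\overline X_i,\omega^m(m\overline B_i+(m-1)\overline D_i))$. Writing this sheaf as $K_{\overline X_i}+\overline B_i + \big((m-1)(K+\overline B_i)+(m-2)\overline D_i\big)+\dots$, hypothesis (1) (big and nef) together with the klt assumption lets Kawamata–Viehweg vanishing kill this $H^1$. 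This surjectivity, combined with hypothesis (3) (basepoint-freeness on $\overline D_i$), shows that the image $\phi_i(\overline D_i)$ is exactly the log canonical model of $(\overline D^n_i,\operatorname{Diff})$. It also shows that $\phi_i|_{\overline D_i}$ is the contraction defined by the restricted linear system, so the involution $\tau$ descends to the images of the conductor.

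Third, I would glue. The $\tau$-equivalence relation on $\sqcup Z_i$ is finite, since it is induced from the involution on the normalized conductor models. By Kollár's gluing theorem for stable pairs, the quotient $Z$ exists as a projective slc pair whose normalization is $\sqcup Z_i$. By the fibre-product description together with the surjectivity step, $R(Z,K_Z+B_Z)$ coincides with $R(X,K_X+B)$, so $Z=\operatorname{Proj}R(X,K_X+B)$ is the log canonical model.

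The main obstacle is the vanishing and surjectivity step. Matching the precise twist in hypothesis (1), namely $(m-1)\overline B_i+(m-2)\overline D_i$, to a Kawamata–Viehweg setup requires care with the round-downs and with $\overline D_i$ possibly having coefficient $1$. That is precisely the case the klt hypothesis excludes, so I would first try applying vanishing to $(\overline X_i,\overline B_i+(1-\delta)\overline D_i)$ with a small perturbation. A secondary point is checking that $\tau$ on the contracted conductors remains generically fixed-point free, which follows from the birationality of $\phi_i|_{\overline D_i}$ on the relevant components.
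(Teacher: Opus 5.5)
Your first two steps are the paper's: the $\tau$-invariant description of $R(X,K_X+B)$ is \Cref{pro:kollar5.8} as used in \Cref{lem:gluing}, and the residue surjectivity from (1) plus vanishing is \Cref{lem:nobspts}. The gap is in your third step. The paper uses that surjectivity to show $|\omega_X^{[m]}(mB)|$ has no base points along $D$ and then reads off $\operatorname{Proj}$ of the $\tau$-invariant ring. You instead invoke Koll\'ar's gluing theorem (\Cref{teo:kollar5.13}) to produce an slc pair $Z$. That theorem takes as input an involution of the normalization of a \emph{divisor} on $\sqcup_i Z_i$, and it only outputs slc pairs. So you are tacitly assuming each $\phi_i$ is birational on every component of $\overline D_i$ (your ``relevant components''), and the hypotheses do not give this.

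Here is an example. Read klt as klt away from $\overline D_i$, as one must since the conductor has coefficient $1$. Let $X=X_1\cup X_2$ be glued along a single curve, where $\overline D_1\subset\overline X_1$ and $\overline D_2=\tau(\overline D_1)\subset\overline X_2$ are smooth rational curves of negative self-intersection with $\overline B_i\cdot\overline D_i=2$. Then $(K_{\overline X_i}+\overline B_i+\overline D_i)\cdot\overline D_i=0$. One can arrange $K_{\overline X_i}+\overline B_i+\overline D_i$ to be semiample and big with null locus exactly $\overline D_i$, so that (1)--(3) hold for $m\gg0$. But then $\phi_1,\phi_2$ contract the double curve to points $p\in Z_1$ and $q\in Z_2$. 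The restricted bundles are trivial, so $\tau$-compatibility of residues forces every section of $R(X,K_X+B)$ to take the same value at $p$ and $q$. Hence $\operatorname{Proj}R(X,K_X+B)$ is $Z_1$ and $Z_2$ joined at one point, which is not $S_2$ and so not slc. Your $Z$ either omits this identification, and then $R(Z,K_Z+B_Z)\supsetneq R(X,K_X+B)$, or includes it, and then neither \Cref{teo:kollar5.13} nor \Cref{pro:kollar5.8} applies to it. Relatedly, ``finite since induced from the involution'' is not a justification: finiteness of the relation generated by $\tau$ is precisely the hard content of \Cref{teo:kollar5.13}.

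When no component of $\overline D_i$ is contracted, your route does work, but it then never uses the surjectivity. Since $\phi_i$ is crepant, it preserves the different, and since it is an isomorphism near the generic points of $\overline D_i$, it preserves residues. So \Cref{pro:kollar5.8} applied to both $X$ and $Z$ gives $R(Z,K_Z+B_Z)=R(\sqcup_i Z_i)^{\tau}=R(X,K_X+B)$ directly, and all the weight rests on the finiteness theorem behind \Cref{teo:kollar5.13}. The paper makes the opposite trade. It spends (1) and (3), through exactly the surjectivity you prove, on basepoint-freeness of $|\omega_X^{[m]}(mB)|$ along $D$. It then describes $\operatorname{Proj}$ of the $\tau$-invariant ring as the quotient of $\sqcup_i Z_i$ by the identifications $\tau$ induces on the images of the $\overline D_i$, without claiming the result is slc, so its conclusion is correct in the example above. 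To repair your proof, either assume no component of $\overline D_i$ is $\phi_i$-exceptional, or replace your last step by the paper's.

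Finally, the vanishing you flagged as the main obstacle is not one. With $N$ the divisor in (1), $N+\delta\overline D_i=(1-\delta)N+\delta(m-1)(K_{\overline X_i}+\overline B_i+\overline D_i)$ is nef and big by (1) and (2). So Kawamata--Viehweg for the klt pair $(\overline X_i,\overline B_i+(1-\delta)\overline D_i)$ gives the required $H^1=0$.
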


In our setting of a $(b, c)$-weighted marked cubic surface $(X, B)$, as we decrease our $(b,c)$-weights and reach a wall where the ampleness condition fails, we can apply the proposition above along with the following theorem:

\begin{teo}\cite[Thm.~5.13]{kollar_singularities_2013}, \cite[Thm.~11.38]{kollar_families_2023}\label{teo:kollar5.13}
	Normalization gives a one-to-one correspondence between proper slc pairs $(X, B)$ such that $K_X + B$ is ample and proper lc pairs $(\overline X, \overline B + \overline D)$ along with a generically fixed point free involution $\tau$ of $(\overline D^n, \operatorname{Diff}_{\overline D^n}\overline B)$ such that $K_{\overline X} + \overline B + \overline D$ is ample.
\end{teo}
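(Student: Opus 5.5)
The statement is the classical Kollár gluing theorem (\cite[Thm.~5.13]{kollar_singularities_2013}, \cite[Thm.~11.38]{kollar_families_2023}), which the paper cites rather than reproves. If one wants to reconstruct the argument, I would split it into two directions and prove each separately.

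\emph{From $(X,B)$ to the normalized data.} Let $\nu:\overline X\to X$ be the normalization of a proper slc pair $(X,B)$.
\begin{enumerate}
\item Since $X$ is demi-normal, the conductor $\overline D\subset\overline X$ is reduced. Its normalization $\overline D^n\to D^n$ is generically $2:1$, because codimension-one points of $X$ are nodes.
\item Near a node the two preimage branches are exchanged. This gives a generically fixed-point-free involution $\tau$ of $\overline D^n$.
\item Adjunction gives $\nu^*(K_X+B)=K_{\overline X}+\overline B+\overline D$. Since $\nu$ is finite, ampleness pulls back.
\item The lc property of $(\overline X,\overline B+\overline D)$ is condition (4) of \Cref{def:KSBApair}.
\item To see that $\tau$ preserves $\operatorname{Diff}_{\overline D^n}\overline B$, compare the different computed from each branch at a general point of $D$. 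This is a codimension-two computation on $X$, done using the local structure of slc surface germs, i.e.\ nodes crossed with a curve germ.
\end{enumerate}

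\emph{From the normalized data back to $(X,B)$.} This is the substantial direction. Start from a proper lc pair $(\overline X,\overline B+\overline D)$ with $K_{\overline X}+\overline B+\overline D$ ample, together with an involution $\tau$ preserving the different.
\begin{enumerate}
\item Build $X$ as the geometric quotient of $\overline X$ by the finite equivalence relation generated by $\tau$ on $\overline D$.
\item Use Kollár's criterion for when a finite set-theoretic equivalence relation has a geometric quotient. The key step is to show that the relation is finite. This relies on the lc centers of $(\overline X,\overline B+\overline D)$ and their sources and springs, which control how $\tau$ propagates to strata of $\overline D$.
\item Show that the quotient is demi-normal and that $K_X+B$ descends as a $\mathbb Q$-Cartier divisor. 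I would first try to prove descent of sections of $m(K_{\overline X}+\overline B+\overline D)$ via Poincaré residue maps to $\overline D^n$. The $\tau$-invariance of residues, which holds for suitable even $m$ using $\tau$-invariance of the different, shows that $\tau$-compatible sections descend.
\item Because the divisor on $\overline X$ is ample, $X$ is then $\operatorname{Proj}$ of the ring of $\tau$-invariant sections, hence projective with $K_X+B$ ample.
\item Check that the two constructions are mutually inverse. This follows from uniqueness of normalization and of the quotient.
\end{enumerate}

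The main obstacle is the finiteness of the equivalence relation together with the descent of the pluricanonical sections. These need the full theory of lc centers and the $\mathbb Q$-Cartier descent of $K_X+B$. I would simply invoke \cite[Ch.~5]{kollar_singularities_2013} for them rather than reprove anything.
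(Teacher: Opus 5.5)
As a proof by citation, your proposal matches the paper, which gives no argument and simply invokes Kollár's gluing theorem; your two-direction outline is also the architecture of Kollár's proof. However, the one step you propose to carry out by hand would fail as written. That is step 5 of the first direction, the $\tau$-invariance of $\operatorname{Diff}_{\overline D^n}\overline B$.

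The local model ``node $\times$ curve germ'' describes $X$ only at codimension-one points of $D$. There the different is zero on both branches, so comparing there is vacuous. The different is supported over codimension-two points of $X$, where $X$ can be a pinch point, a degenerate cusp, or two branches one of which is singular.

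Nor can demi-normality together with lc of the normalization suffice. Take a line $L_1\subset\mathbb{A}^2$ and a quadric cone $Q$ with a ruling $L_2$ through the vertex $p$. Glue $(\mathbb{A}^2,L_1)$ to $(Q,L_2)$ via an isomorphism $L_1\cong L_2$ sending $0\mapsto p$. The result is $S_2$ and nodal in codimension one, and both normalized pairs are plt. Yet $\operatorname{Diff}_{L_1}(0)=0$ while $\operatorname{Diff}_{L_2}(0)=\frac12 p$, so $\tau$ does not preserve the different; correspondingly, $K_X$ is not $\mathbb{Q}$-Cartier at $p$.

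The missing input is the hypothesis that $K_X+B$ is $\mathbb{Q}$-Cartier, used through residues:
\begin{enumerate}
\item Choose $m$ even with $m(K_X+B)$ Cartier near a point $x\in D$, and take a local generator of $\omega_X^{[m]}(mB)$.
\item It pulls back to a local generator of $\omega_{\overline X}^{[m]}(m(\overline B+\overline D))$.
\item The Poincar\'e residue of this pullback generates $\omega_{\overline D^n}^{[m]}(m\operatorname{Diff}_{\overline D^n}\overline B)$ over $x$.
\item Since the section descends, its residue is $\tau$-invariant by \Cref{pro:kollar5.8}.
\item Hence its divisor, $-m\operatorname{Diff}_{\overline D^n}\overline B$, is $\tau$-invariant.
\end{enumerate}
Alternatively, you could check invariance case by case from the classification of slc surface germs, which already builds in $\mathbb{Q}$-Cartierness, but not from the normal-crossing model alone.

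Relatedly, in step 3 of the converse direction, invariance of the different does not make residues $\tau$-invariant; it only lets $\tau$ act on $\omega_{\overline D^n}^{[m]}(m\operatorname{Diff}_{\overline D^n}\overline B)$. The real content of $\mathbb{Q}$-Cartier descent is producing, near every point of $X$, a nowhere-vanishing section of $\omega_{\overline X}^{[m]}(m(\overline B+\overline D))$ with $\tau$-invariant residue. This is where Kollár needs sources and springs of lc centers and finiteness of the associated pluricanonical representations. Since you defer that step to Kollár anyway, it does not break the proposal, but that is where the hypotheses actually enter.
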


This will allow us to take a birational replacement $\overline X_i'$ for each irreducible component $\overline X_i$ via an associated contraction morphism $\overline X_i \to \overline X_i'$. Then after an appropriate gluing of each $\overline X_i'$, we then check if we get a stable replacement of $(X, B)$. We continue this process with the stable replacement of $(X, B)$ until we reach our minimal weight of $(b, c) = (\frac{1}{9} + \epsilon, \frac{1}{9} + \epsilon)$. 

\subsection{The different and the Poincar\'e residue map} 
We start by reviewing some of the standard setup which can also be found in \cite[\S~4]{kollar_singularities_2013} or \cite[\S~11]{kollar_families_2023}, regarding the different and the Poincar\'e residue map.  This is important for the process of gluing components of reducible surfaces.  In the case we are in, where all of the surfaces have at worst isolated $A_1$ singularities, and the curves of attachment are all smooth (in fact smooth rational curves), this is somewhat simpler, but we review how to make some of the simplifications here. Let $S$ be a regular surface and $B \subseteq S$ a regular curve (as in \cite[(1.5)]{kollar_singularities_2013}, see \cite[Dfn~4.2]{kollar_singularities_2013} or \cite[Dfn~11.14]{kollar_families_2023} for a higher dimensional generalization). Recall the classical adjunction formula states that $K_S \sim (K_S + B)|_B$. If $X$ is only normal, then each singularity of $(S, B)$ leads to a correction term, called \textit{the different}. 

\begin{dfn}[The different]\cite[Dfn~2.34]{kollar_singularities_2013}
	Let $S$ be a normal surface and $B \subseteq S$ a reduced curve with normalization $\overline{B} \to B$. Let $B'$ be a $\mathbb{Q}$-Cartier divisor that has no components in common with $B$. Let $f : T \to S$ be a log resolution of $(S, B)$ with exceptional curves $C_i$ and $B_T, B'_T$ the strict transforms of $B, B'$, respectively. Then there is a unique $\Delta(S, T, B + B') := \sum d_j C_j$ such that
	\begin{align*}
		(\Delta(S, T, B + B') \cdot C_i) = -((K_T + B_T + B_T') \cdot C_i) \hspace{.2in} \text{for all } i.
	\end{align*}
	Then \textit{the different} is defined as
	\begin{align*}
		\operatorname{Diff}_{\overline{B}}(B') := (B'_T + \Delta(S, T , B + B'))|_{B_T}.
	\end{align*}
\end{dfn}

\begin{rem}
	In terms of $\mathbb{Q}$-divisors, the different $\operatorname{Diff}_{\overline{B}}(B')$ is the divisor such that
	\begin{align*}
		(K_S + B + B')|_{\overline{B}} \sim_{\mathbb{Q}} K_{\overline{B}} + \operatorname{Diff}_{\overline{B}}(B').
	\end{align*}
	In our setting, $S$ will have at worst $A_1$ singularities and $B$ is smooth, so the classical adjunction formula gives us
	\begin{align*}
		\operatorname{Diff}_{B}(B') \sim_{\mathbb{Q}} B'|_{B}.
	\end{align*}
\end{rem}

\begin{dfn}[Poincar\'e residue map]\cite[Dfn.~4.1]{kollar_singularities_2013}\cite[Dfn~11.13]{kollar_families_2023} \label{def:PResM}
  	If $X$ is a regular scheme, we view $\omega_X$ and $\omega_D$ as dualizing sheaves and we get the usual short exact sequence
	\begin{align*}
		0 \to \omega_X \to \omega_X(D) \xrightarrow{\mathcal{R}_{X \to D}} \omega_D \to 0
	\end{align*}
	where $\mathcal{R}_{X \to D} : \omega_X(D) \to \omega_D$ is the Poincar\'e residue map. As a slight abuse of terminology, we also refer to the induced map on global sections
	\begin{align*}
		\mathcal{R}_{X \to D} : H^0(X, \omega_X(D)) \to H^0(D, \omega_D)
	\end{align*}
	as the \textit{Poincar\'e residue map}. This also generalizes to pairs $(X, B)$ where $B$ is a $\mathbb{Q}$-divisor with no components in common with $D$ (see \cite[(1.5)]{kollar_singularities_2013}), to get
	\begin{align*}
		\mathcal{R}_{X \to D} : \omega_X(\Delta + D) \to \omega_S(\operatorname{Diff}_D(B))
	\end{align*}
	and
	\begin{align*}
		\mathcal{R}_{X \to D} : H^0(X, \omega_X(B + D)) \to H^0(S, \omega_S(\operatorname{Diff}_{\overline{D}}(B))).
	\end{align*}
\end{dfn}

The following result tells us when pluri-log canonical sections on a divisor descends to a pluri-log canonincal section of the entire surface.

\begin{pro}\cite[Prop.~5.8]{kollar_singularities_2013}\label{pro:kollar5.8}
	A section $\phi$ of $\omega_{\overline X}^m(m(\overline B + \overline D))$ descends to a section of $\omega_X^{[m]}(mB)$ if and only if $\mathcal{R}^m_{\overline X \to \overline D}(\phi)$ is $\tau$-invariant if $m$ is even and $\tau$-anti-invariant if $m$ is odd, where $\mathcal{R}^m_{\overline X \to \overline D}$ is the Poincar\'e residue map. \qed
\end{pro}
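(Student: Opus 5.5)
The approach is to reduce the statement to a computation at the generic point of the conductor, where $X$ is an ordinary double normal crossing, and then to extend across codimension two using reflexivity. Let $\nu:\overline X\to X$ be the normalization, with conductor $D\subseteq X$ and $\overline D\subseteq\overline X$, and let $\tau$ be the involution of $\overline D^n$ over $D$. Since $X$ is demi-normal, it is $S_2$. The sheaf $\omega_X^{[m]}(mB)$ is reflexive, hence $S_2$ as well. So a rational section of it is regular as soon as it is regular off a closed subset of codimension $\ge 2$.

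\emph{Step 1: reduction to the generic point of $D$.} Off $D$ the map $\nu$ is an isomorphism, so $\phi$ automatically descends there. Let $Z\subseteq D$ be the set where $X$ is not a double normal crossing, together with the singular locus of $D$ and of $\overline D$. Demi-normality means $X$ is nodal in codimension one, so $Z$ has codimension $\ge 2$ in $X$. By condition (2) of Definition~\ref{def:KSBApair}, no component of $B$ lies in $D$. Hence, after enlarging $Z$ by a codimension-$2$ set, we may assume $B$ is disjoint from $D\setminus Z$. By $S_2$, a section $\phi$ descends if and only if it descends over $X\setminus Z$. Likewise, $\tau$-(anti-)invariance of $\mathcal R^m(\phi)$ is a condition that may be checked generically along $\overline D^n$. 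It therefore suffices to prove the equivalence étale (or formally) locally near a general point of $D$.

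\emph{Step 2: local computation at a node.} Étale locally, $X=(xy=0)\subseteq \mathbb A^2_{x,y}\times U$ with $U$ smooth. Then $\overline X=\overline X_1\sqcup\overline X_2$, the two branches $\{y=0\}$ and $\{x=0\}$, each isomorphic to $\mathbb A^1\times U$. Their conductors are $\{x=0\}$ and $\{y=0\}$, both identified with $U$, and $\tau$ swaps the two copies. A local generator $\eta$ of $\omega_X$ is the form whose pullback is $\frac{dx}{x}\wedge dU$ on $\overline X_1$ and $-\frac{dy}{y}\wedge dU$ on $\overline X_2$. The sign comes from $x\,dy+y\,dx=0$, or equivalently from the Rosenlicht description of $\omega_X$ as forms with log poles whose residues sum to zero. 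So $\mathcal R(\nu^*\eta)=(dU,-dU)$ is $\tau$-anti-invariant, and consequently $\mathcal R^m(\nu^*\eta^{m})$ is multiplied by $(-1)^m$ under $\tau$.

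\emph{Step 3: the criterion.} Every section of $\omega_{\overline X}^m(m\overline D)$ can be written locally as $\phi=(f_1,f_2)\cdot\nu^*\eta^m$ with $f_i$ regular on $\overline X_i$. It descends to a section $f\eta^m$ of $\omega_X^{[m]}$ if and only if $(f_1,f_2)$ comes from a regular function on $X$. Since $\mathcal O_X$ is the fibre product of $\mathcal O_{\overline X}$ and $\mathcal O_D$ over $\mathcal O_{\overline D}$, this holds if and only if $f_1|_{\overline D}=\tau^*(f_2|_{\overline D})$. By Step 2, this condition says exactly that $\mathcal R^m(\phi)$ is $\tau$-invariant for $m$ even and $\tau$-anti-invariant for $m$ odd. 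Combining this with Step 1 gives the proposition.

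The main obstacle is the careful bookkeeping of the sign in Step 2, namely the residue convention along the two branches. A secondary point is justifying in Step 1 that $B$ and the different play no role generically along $D$: the twist by $mB$ only matters away from the generic points of $D$, and there the $S_2$ extension applies.
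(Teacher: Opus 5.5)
Your proposal is correct and is essentially the standard proof of this result; the paper gives no proof of its own and only quotes Koll\'ar's Prop.~5.8, whose argument is the same. That argument uses the $S_2$ property of $\omega_X^{[m]}(mB)$ to reduce to generic points of the conductor, where $X$ is a double normal crossing and $\omega_X$ is generated by a form whose residues on the two branches differ by a sign, so descent becomes $f_1|_{\overline D}=\tau^*(f_2|_{\overline D})$, i.e.\ $\tau^*\mathcal{R}^m(\phi)=(-1)^m\mathcal{R}^m(\phi)$ (your sign computation is right, and the \'etale-local bookkeeping for normalization, conductor, $\tau$ and faithfully flat descent of regularity is routine).
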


\subsection{Gluing lemmas and main strategy}
We now prove how one can glue sections of log canonical rings on reducible surfaces to obtain stable models. The key points are that the irreducible components are rational surfaces, and have well-understood nef cones. This allows us to have the ''naive" approach to constructing stable replacements as we reduce the coefficients on the boundary divisors.

\subsubsection{Main strategy}
The set-up for us will be a reducible surface $X$ with irreducible components $X_1, \dots, X_n$ where each $X_i$ has at worst isolated $A_1$ singularities. In fact, recall that we have a pair $(X,(b, c)B)$ where $(b, c) \in ((1/9, 1] \cap \mathbb{Q})^2$
\begin{align*}
	(b, c)B &= b \ell + \sum_2^{27} c L_i
\end{align*}
is the sum of the 27 lines (counting multiplicity) where our marked line $\ell$ has weight $b$ and the other 26 lines have uniform weight $c$. In our situation, we will start with weight $(b, c)$ in a chamber of $\operatorname{Amp} \subset ((\frac{1}{9}, 1] \cap \mathbb{Q})^2$ where the pair is KSBA stable. We then start decreasing the weight along some ray until we hit a wall $(b_0, c_0)$ where the pair $(X, (b_0, c_0)B)$ is still slc and the log canonical form $K_X + (b_0, c_0)B$ is semi-ample and $\mathbb{Q}$-factorial, but is no longer ample. Then by Theorem \ref{teo:kollar5.13}, there must be some irreducible component(s) of the normalization $(\overline{X}_i, (b_0, c_0) \overline{B}_i)$ of the normalization $(\overline{X}, (b, c)\overline{B})$ where $K_{\overline{X}_i} + (b_0, c_0) \overline{B}_i + \overline{D}_i$ is no longer ample, where $\overline{D}_i$ is the restriction of the conductor subscheme to $\overline{X}_i$. For each irreducible component, we obtain a candidate an irreducible surface via an associated contraction morphism $(\overline{X}_i, (b_0, c_0) \overline{B}_i) \to (\overline{X}_i', (b_0, c_0)\overline{B}_i')$. In other words, since in our situation, the gluing curves $\overline{D}_i$ are always smooth, we have a natural candidate $(X', (b_0, c_0)B')$ for the stable replacement of $(X, (b_0, c_0)B)$ given by gluing the $(\overline{X}_i', (b_0, c_0)\overline{B}_i')$ along the $D_i$.

However, $(X', (b_0, c_0)B')$ is not necessarily the stable replacement of $(X, (b_0, c_0)B)$. The issue is that the stable replacement is obtained from the $\operatorname{Proj}$ of the log canonical ring of the entire surface, and there can be issues when gluing sections of the log canonical ring on each component. In principle, the log canonical form $K_{\overline{X}_i} + \overline{B}_i + \overline{D}_i$ of the irreducible components $(\overline{X}_i, (b_0, c_0)\overline{B}_i)$ might have basepoints along $\overline{D}_i$, the volume condition might fail (see \Cref{E2A1_mult4_c=b3} for an example) and/or the $\mathbb{Q}$-factorial condition might fail. Thus we would like conditions guaranteeing that there are enough log canonical sections on each irreducible component to glue and give log canonical sections on the entire surface. This allows us to conclude in these situations that the stable replacement is indeed the ``naive" candidate, obtained by gluing the stable replacements of each component; this is the goal of \Cref{cor:gluing} below. 

\begin{rem}
	As a small aside, we note that in the cases where the conditions of Lemmas \ref{lem:nobspts} and \ref{lem:gluing} does not hold, one can often do a series of blow ups and blow-downs to obtain new surfaces where the ``naive" approach works.
\end{rem}

\subsubsection{Gluing lemmas used to execute the main strategy}

Let us formalize the discussion above. Following \cite[\S 5]{kollar_singularities_2013}, let $X$ be a complex projective demi-normal scheme with normaliztion $\pi : \overline X \to X$ and conductors $D \subseteq X$, $\overline D \subseteq \overline X$, and let $\tau: (\overline D^n, \operatorname{Diff}_{\overline D^n} \overline B) \to (\overline D^n, \operatorname{Diff}_{\overline D^n} \overline B)$ be a generically fixed point free (fixed point set does not cointain any irreducible components of $\overline D^n$) Galois involution of the normalization $\overline D^n \to \overline D$. Since we work in characteristic $0$, the Galois condition is automatic.  Furthermore, let $B$ be an effective $\mathbb{Q}$-divisor called the \textit{boundary divisor} whose support does not contain any irreducible component of $D$ and $\overline B$ the divisorial part of $\pi^{-1}(D)$. 

We use the following terminology going forward: we assume  $X$ is reducible with complex projective irreducible components  $X_1, \dots, X_n$ with normalization $\overline{X}_1, \dots, \overline{X}_n$, $\overline B_i = \overline B|_{\overline X_i}$ as the restriction of $\overline B$ to $\overline X_i$ and similarly, $\overline D_i = \overline D|_{\overline X_i}$. We further make the assumption that each pair $(\overline{X_i}, \overline{B_i} + \overline{D_i})$ is at worst klt. We now state our lemmas regarding our log canonical linear systems, that essentially state that if the log canonical divisors with conductors are big and nef on each component, and are base-point free along the conductors, then the ``naive" strategy outlined above will work.

\begin{lem}\label{lem:nobspts}
	Fix a sufficiently divisble $m \geq 0$. Suppose for all $1 \leq i \leq n$ the pair $(\overline X_i,\overline B_i+\overline D_i)$ is klt. Further suppose
	\begin{itemize}
		\item $\omega_{\overline X_i}^{m-1}((m-1)\overline B_i + (m-2)\overline D_i)$ is big and nef, and
		\item $|\omega_{\overline X_i}^{m} (m(\overline B_i + \overline D_i))|_{\overline D_i}|$ is basepoint-free.
	\end{itemize}
	Then $|\omega_X^{[m]}(mB)|$ is basepoint-free along $D$; i.e., has no basepoints at points in the support of $D$.
\end{lem}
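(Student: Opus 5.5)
Fix a point $p$ in the support of $D$ and let $\overline p_i \in \overline D_i \subseteq \overline X_i$ be its preimages (an even number of them, exchanged by $\tau$ in pairs). The goal is a section of $\omega_X^{[m]}(mB)$ not vanishing at $p$. By \Cref{pro:kollar5.8}, this amounts to finding sections $\phi_i$ of $\omega_{\overline X_i}^m(m(\overline B_i + \overline D_i))$ such that $\phi = (\phi_i)_i$ satisfies two conditions: the residues $\mathcal{R}^m_{\overline X\to \overline D}(\phi)$ are $\tau$-(anti-)invariant according to the parity of $m$, and $\phi_i(\overline p_i)\neq 0$. Since $m$ is sufficiently divisible, we may take $m$ even, so we only need $\tau$-invariance.

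First, work on $\overline D^n$. Put $L_D := \omega_{\overline D^n}^m(m\operatorname{Diff}_{\overline D^n}\overline B)$, which is the restriction of $\omega_{\overline X}^m(m(\overline B+\overline D))$ via the residue isomorphism. Then:
\begin{itemize}
\item The bpf hypothesis gives a section $\psi$ of the restricted linear system $|\omega_{\overline X_i}^{m}(m(\overline B_i+\overline D_i))|_{\overline D_i}|$ that is nonzero at the $\overline p_i$.
\item Replacing $\psi$ by $\psi + \tau^*\psi$, or choosing generically among such sections so that no cancellation occurs at the finitely many points in question, gives a $\tau$-invariant section $\psi$ of $L_D$ with $\psi(\overline p_i)\neq 0$.
\end{itemize}
The conductor curves are smooth rational curves in our setting, so this $\tau$-symmetrization is harmless.

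Second, lift $\psi$ to each $\overline X_i$. Use the restriction sequence
\[0\to \omega_{\overline X_i}^{m}(m\overline B_i+(m-1)\overline D_i)\to \omega_{\overline X_i}^{m}(m(\overline B_i+\overline D_i))\to L_D|_{\overline D_i}\to 0.\]
Surjectivity on $H^0$ follows once $H^1$ of the left term vanishes. Write the left term as $K_{\overline X_i}+\big(\lceil\cdot\rceil\text{-adjusted}\big)\,\overline B_i + \overline D_i$ plus the divisor $(m-1)(K_{\overline X_i}+\overline B_i)+(m-2)\overline D_i$. This last divisor is big and nef by the first hypothesis. Since $(\overline X_i,\overline B_i+\overline D_i)$ is klt, Kawamata--Viehweg vanishing (in its klt form) gives $H^1=0$.

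This vanishing step is the main obstacle. One must handle the fractional part of $m\overline B_i$ correctly and make sure the pair used in KV is klt; to arrange this, I would take $m$ divisible enough that $m\overline B_i$ is integral, and absorb the boundary properly. Granting the vanishing, each $\psi|_{\overline D_i}$ lifts to a section $\phi_i$. The resulting $\phi=(\phi_i)$ has $\tau$-invariant residue $\psi$, so it descends to a section of $\omega_X^{[m]}(mB)$ by \Cref{pro:kollar5.8}, and that section is nonzero at $p$. Hence $p$ is not a base point.
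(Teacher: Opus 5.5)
Your proposal follows the same route as the paper: the restriction sequence for $\overline D_i \subset \overline X_i$, vanishing for klt pairs applied to the big and nef divisor $(m-1)(K_{\overline X_i}+\overline B_i)+(m-2)\overline D_i$ to make the residue map surjective, and then basepoint-freeness on $\overline D_i$ combined with \Cref{pro:kollar5.8} to descend sections. The only addition is your explicit $\tau$-symmetrization of $\psi$, a step the paper leaves implicit when it invokes \Cref{pro:kollar5.8}; your ``generic choice'' justification for it actually needs the sections near $\overline p$ and near $\tau(\overline p)$ to be choosable independently (as in the paper's configurations of smooth rational gluing curves), because basepoint-freeness alone does not prevent every $\tau$-invariant section from vanishing at $\overline p$.
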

\begin{proof}
	Consider the short exact sequence obtained by twisting the standard exact sequence for the divisor $\overline D_i$ on $\overline X_i$ with  $\omega_{\overline X_i}^{m}(m\overline B_i +(m-1) \overline D_i)$; we get 
	\begin{align*}
		0 \to \omega_{\overline X_i}^{m}(m \overline B_i + (m-1)\overline D_i) \to \omega_{\overline X_i}^{m}(m(\overline B_i + \overline D_i)) \to \omega_{\overline X_i}^{m} (m(\overline B_i + \overline D_i))|_{\overline D_i} \to 0.
	\end{align*}
	Now $\omega_{\overline X_i}^{m-1}((m-1)\overline B_i + (m-2)\overline D_i))$ is big and nef by assumption, so applying the generalized Kodaira vanishing theorem for klt pairs \cite[Thm.~10.37]{kollar_singularities_2013}, we get
	\begin{align*}
		H^1(\overline X_i, \omega_{\overline X_i}^{m}(m \overline B_i + (m-1)\overline D_i)) = 0.
	\end{align*}
	So taking global sections gives us that the Poincar\'e residue map
	\begin{align*}
		\mathcal{R}^m_{\overline X_i \to \overline D_i} : H^0(\overline X_i, \omega_{\overline X_i}^{m}(m(\overline B_i + \overline D_i))) \twoheadrightarrow  H^0(\overline D_i, \omega_{\overline X_i}^{m} (m(\overline B_i + \overline D_i))|_{\overline D_i})
	\end{align*}
	is surjective. Now since $|\omega_{\overline X_i}^{m} (m(\overline B_i + \overline D_i))|_{\overline D_i}|$ has no basepoints by assumption, applying Proposition \ref{pro:kollar5.8}, it must be that $|\omega_X^{[m]} (mB)|$ has no basepoints along $D$.
\end{proof}

We will use this lemma in conjunction with the following lemma:

\begin{lem}\label{lem:gluing} Suppose we are in the situation of  \Cref{pro:kollar5.8}, so that we have 
	\begin{align*}
		\mathrm{Proj} R(X, B) &= \mathrm{Proj} R(\overline{X}, \overline B + \overline D)^\tau
	\end{align*}
	where
	\begin{align*}
		R(\overline{X}, \overline B + \overline D)^\tau &= \bigoplus_{m \geq 0} H^0(\overline X, \omega_{\overline X}^{m}(m(\overline B + \overline D)))^\tau
	\end{align*}
	are the sections whose image under the Poincar\'e residue map is $\tau$-invariant (up to a sign). If for all $1 \leq i \leq n$ we have $\omega_{\overline X_i}^m (m(\overline B_i + \overline D_i))$ is semi-ample and $|\omega_X^{[m]}(mB)|$ has no basepoints along $D$ for $m \geq 0$ sufficiently divisible, then the log canonical model of $(X, B)$ is obtained by gluing the log canonical models of each $(\overline X_i, \overline B_i + \overline D_i)$ along the conductors $\overline D_i$ under $\tau$.
\end{lem}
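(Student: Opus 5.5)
The plan is to identify the log canonical ring of $(X,B)$ with a $\tau$-invariant subring of the product of the components' log canonical rings, and then compare Proj's. By the hypothesis (the situation of \Cref{pro:kollar5.8}),
\begin{align*}
R(X,B)=R(\overline X,\overline B+\overline D)^\tau\subseteq \prod_i R(\overline X_i,\overline B_i+\overline D_i).
\end{align*}
For each $i$, let $\phi_i:\overline X_i\to \overline X_i^{\mathrm{lc}}:=\operatorname{Proj}R(\overline X_i,\overline B_i+\overline D_i)$ be the morphism given by $|\omega_{\overline X_i}^m(m(\overline B_i+\overline D_i))|$ for $m$ sufficiently divisible. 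This is a morphism by the semi-ampleness assumption. It contracts exactly the curves on which the log canonical divisor is numerically trivial, and $K+\phi_{i*}(\overline B_i+\overline D_i)$ is ample on $\overline X_i^{\mathrm{lc}}$. Since $\tau$ preserves the different, adjunction (the remark after the definition of the different) shows that $\tau$ identifies the two restricted bundles on $\overline D^n$. Hence $\tau$ descends to an involution $\tau'$ of the normalization of $\phi(\overline D)$. One also checks that the $\tau$-compatibility of residues is preserved, so $\tau'$ is generically fixed-point free on the images of the conductor.

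Next I would apply \Cref{teo:kollar5.13} to the pairs $(\overline X_i^{\mathrm{lc}},\phi_{i*}(\overline B_i+\overline D_i))$ with involution $\tau'$. This produces a unique slc pair $(X',B')$ with $K_{X'}+B'$ ample, obtained by gluing. By construction, $R(X',B')$ is the subring of $\prod_i R(\overline X_i^{\mathrm{lc}},\ldots)=\prod_i R(\overline X_i,\ldots)$ cut out by the residue condition of \Cref{pro:kollar5.8} for $\tau'$. This is the same condition as for $\tau$, since residues commute with $\phi$ on the open set where $\phi$ is an isomorphism near $\overline D$, and they are determined there. It follows that $R(X,B)\cong R(X',B')$ in sufficiently divisible degrees. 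Therefore $\operatorname{Proj}R(X,B)\cong X'$, which is the claim.

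The main obstacle is the step where the base-point freeness along $D$ is used. The issue is that the composite $X\dashrightarrow \operatorname{Proj}R(X,B)$ could in principle fail to be a morphism near $D$, or could identify or contract differently along the conductor than the components' maps do. For example, $\phi_i$ might contract a component of $\overline D_i$ while its $\tau$-partner is not contracted. In that case the glued sections would separate fewer points, and the invariant subring would not be the full ring of the glued model.

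I would argue as follows. First, since $|\omega_X^{[m]}(mB)|$ is base-point free along $D$ and each component map is base-point free, the linear system on $X$ defines a morphism $\Phi:X\to\operatorname{Proj}R(X,B)$. Second, by surjectivity of the residue map (as in the proof of \Cref{lem:nobspts}), every section on $\overline D_i$ extends. It follows that $\Phi|_{X_i}$ and $\phi_i$ have the same fibers over a neighborhood of $D$; away from $D$, sections on one component extend by zero, so the fibers also agree there. Third, $\tau$ preserves the degree of $K+B+D$ restricted to each conductor curve, so a conductor curve is contracted by $\phi_i$ if and only if its partner is contracted. Hence $\Phi$ factors through the glued model and is finite birational onto it. Since the target is slc and in particular $S_2$, $\Phi$ is an isomorphism onto it.
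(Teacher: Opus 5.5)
Your first two paragraphs, together with your degree argument that $\phi_i$ contracts a conductor curve exactly when the $\tau$-partner's map contracts that partner, give a correct proof under two extra conditions. These are that every $K_{\overline X_i}+\overline B_i+\overline D_i$ is big and that no component of $\overline D$ is contracted; in that case you do not even use the base-point-freeness along $D$. The gap is the remaining case, which you flag as the obstacle but do not resolve. Your claim that the $\tau$-residue condition coincides with the $\tau'$-residue condition fails for conductor curves $C$ with $(K_{\overline X_i}+\overline B_i+\overline D_i)\cdot C=0$. Your degree argument shows that $\tau(C)$ is then contracted too, but that is exactly the bad situation. Such $C$ and $\tau(C)$ drop out of the conductor of $X'$, so they impose no condition on $R(X',B')$. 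In $R(X,B)$, however, they still force the residues along $C$ and $\tau(C)$ to match; these residues are constants, i.e.\ the values at the points $\phi_i(C)$ and $\phi_j(\tau C)$. So you only get $R(X,B)\subseteq R(X',B')$, and in general the inclusion is strict.

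For example, glue two surfaces along $(-1)$-curves $C\subset\overline X_1$, $\tau(C)\subset\overline X_2$ with matching differents, where each $K_{\overline X_i}+\overline B_i+\overline D_i$ is pulled back from an ample divisor on the contraction of that curve. All hypotheses of the lemma hold. Yet $\operatorname{Proj}R(X,B)$ is the two contracted surfaces identified at a single point, which is not $S_2$, while the \Cref{teo:kollar5.13} gluing $X'$ is their disjoint union. This also breaks your last paragraph. There is no morphism $X\to X'$; the finite birational map you actually obtain is $X'\to\operatorname{Proj}R(X,B)$. Concluding that it is an isomorphism needs $\operatorname{Proj}R(X,B)$, not $X'$, to be $S_2$.

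Two further steps use hypotheses this lemma does not have. First, the surjectivity of $\mathcal{R}^m_{\overline X_i\to\overline D_i}$ that you borrow from the proof of \Cref{lem:nobspts} rests on the big-and-nef hypothesis plus vanishing, which are not assumed here; the paper's proof of this lemma does not use them. Even granting surjectivity, it only extends sections from $\overline D_i$ to $\overline X_i$. It does not make them descend to $X$, since $\tau$-compatibility (e.g.\ between two conductor curves on the same component) is a genuine constraint. So ``$\Phi|_{X_i}$ and $\phi_i$ have the same fibers'' is not justified. Second, birationality of $\phi_i$ and the use of \Cref{teo:kollar5.13} need each $K_{\overline X_i}+\overline B_i+\overline D_i$ to be big. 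That is not assumed, and it fails in the paper's own applications, where whole components collapse to curves or points.

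For comparison, the paper never builds an slc model via \Cref{teo:kollar5.13} and makes no $S_2$ claim. It describes $R(X,B)$ via \Cref{pro:kollar5.8} as residue-compatible tuples and reads the $\operatorname{Proj}$ of that ring directly as the gluing of the components' log canonical models. To repair your argument, either add the hypothesis that no conductor curve is contracted (then your first two paragraphs suffice), or replace $X'$ by the gluing that also identifies $\phi_i(C)$ with $\phi_j(\tau C)$ and drop the slc/$S_2$ conclusion.
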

\begin{proof}
	From  \Cref{pro:kollar5.8} we get that for any $m \geq 0$ sufficiently divisible,  
	$$H^0(\overline X, \omega_{\overline X}^m (m (\overline B + \overline D)))^\tau$$
	is given by
	\begin{align*}
		\left\{(\phi_i) \in \bigoplus_{1 \leq i \leq n} H^0(\overline X_i , \omega_{\overline X_i}^m (m(\overline B_i + \overline D_i))) :  \mathcal{R}_{\overline X_i \to \overline D_i}^m (\phi_i) = \pm \tau^* \mathcal{R}_{\overline X_j \to \overline D_j}^m (\phi_j)\right\}.
	\end{align*}
	Now if $|\omega_X^m(mB)|$ has no basepoints along $D$, each $|\omega_{\overline X_i}^m(m(\overline B_i + \overline D_i))|$ must have no basepoints along $\overline D_i$. Therefore, taking the $\mathrm{Proj}$ of $R(\overline X, \overline B + \overline D)^\tau$, will result in gluing the log canonical models of each $(\overline X_i , \overline B_i + \overline D_i)$ along the conductors $\overline D_i$ under $\tau$.
\end{proof}

We thus get our main result for this section:

\begin{cor}\label{cor:gluing}
	Let $X$ be a complex projective variety with irreducible components  $X_1, \dots, X_n$ with normalization $\overline{X}_1, \dots, \overline{X}_n$, let $\overline B_i = \overline B|_{\overline X_i}$ be the restriction of $\overline B$ to $\overline X_i$ and similarly, let $\overline D_i = \overline D|_{\overline X_i}$.  Suppose for all $1 \leq i \leq n$ the pair $(\overline X_i,\overline B_i+\overline D_i)$ is klt. Further suppose
	\begin{enumerate}
		\item \label{cor:gluing1} $\omega_{\overline X_i}^{m-1}((m-1)\overline B_i + (m-2)\overline D_i)$ is big and nef, 
		\item  \label{cor:gluing2} $\omega_{\overline X_i}^m (m(\overline B_i + \overline D_i))$ is semi-ample, and 
		\item \label{cor:gluing3}  $|\omega_{\overline X_i}^{m} (m(\overline B_i + \overline D_i))|_{\overline D_i}|$ is basepoint-free.
	\end{enumerate}
	Then the log canonical model of $(X, B)$ is obtained by gluing the log canonical models of each $(\overline X_i, \overline B_i + \overline D_i)$ along the conductors $\overline D_i$ under $\tau$.
\end{cor}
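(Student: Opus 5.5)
The corollary follows by chaining \Cref{lem:nobspts} and \Cref{lem:gluing}; the plan is to check that the three hypotheses feed exactly into the hypotheses of those two lemmas. Fix $m \geq 0$ sufficiently divisible, so that all the sheaves $\omega_{\overline X_i}^m(m(\overline B_i + \overline D_i))$ are Cartier and the conditions (\ref{cor:gluing1})--(\ref{cor:gluing3}) hold for this $m$. If needed, replace $m$ by a multiple. Semi-ampleness in (\ref{cor:gluing2}) is preserved under taking multiples. Bigness and nefness in (\ref{cor:gluing1}) should also persist, since that divisor is a positive combination of the big and nef $K+\overline B+\overline D$ and of $K + \overline B$ plus a controlled correction. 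I would note this point but not belabor it.

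First, apply \Cref{lem:nobspts}. The klt hypothesis on each $(\overline X_i, \overline B_i + \overline D_i)$ together with conditions (\ref{cor:gluing1}) and (\ref{cor:gluing3}) are exactly its hypotheses. The conclusion is that $|\omega_X^{[m]}(mB)|$ has no basepoints along $D$. The mechanism is Kawamata--Viehweg vanishing of $H^1(\overline X_i,\omega^m(m\overline B_i+(m-1)\overline D_i))$. This makes the Poincaré residue map onto $H^0(\overline D_i, \cdot)$ surjective, and \Cref{pro:kollar5.8} then lets $\tau$-(anti)invariant sections along the smooth rational conductor curves lift.

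Second, we are in the setting of \Cref{pro:kollar5.8}: $X$ is demi-normal and $\tau$ is the generically fixed-point-free involution on $\overline D^n$. Hence $R(X,B)$ is identified with the $\tau$-compatible subring $R(\overline X, \overline B + \overline D)^\tau$, and $\operatorname{Proj} R(X,B) = \operatorname{Proj} R(\overline X,\overline B+\overline D)^\tau$. Combining condition (\ref{cor:gluing2}) with the basepoint-freeness along $D$ from the first step gives precisely the hypotheses of \Cref{lem:gluing}. That lemma then yields that the log canonical model of $(X,B)$ is obtained by gluing the log canonical models of the $(\overline X_i,\overline B_i+\overline D_i)$ along the $\overline D_i$ via $\tau$. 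Finally, \Cref{teo:kollar5.13} guarantees that the glued object is the slc pair corresponding to the lc data, so it is the stable model.

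The main obstacle is the compatibility in the gluing step. Away from $D$ each component's semi-ample system defines its own contraction, but the gluing needs the contracted conductor curves to map isomorphically and $\tau$-equivariantly. In particular, no component of $\overline D_i$ may be contracted on one side and not on the other. I would handle this by using the residue surjectivity from the first step. Every section on $\overline D_i$ of the restricted bundle lifts, so the morphism restricted to $\overline D_i$ is given by the complete linear system of $\omega^m(m(\overline B_i+\overline D_i))|_{\overline D_i}$. Through the different, this restriction is $(K_{\overline D}+\operatorname{Diff})^{\otimes m}$, and $\operatorname{Diff}$ is $\tau$-invariant by construction. Hence the images of $\overline D_i$ and $\overline D_j$ are identified compatibly with $\tau$, and the $\tau$-invariant subring cuts out exactly the pushout.
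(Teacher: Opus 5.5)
Your proposal is correct and follows the paper's proof, which simply chains \Cref{lem:nobspts} (the klt hypothesis with (1) and (3) gives basepoint-freeness of $|\omega_X^{[m]}(mB)|$ along $D$) into \Cref{lem:gluing} (using (2)). One small correction to your aside on passing to multiples: $K_{\overline X_i}+\overline B_i+\overline D_i$ is only assumed semi-ample, not big, but bigness and nefness in (1) still persist because $(km-1)(K_{\overline X_i}+\overline B_i)+(km-2)\overline D_i$ equals the big and nef divisor for $m$ plus $(k-1)m(K_{\overline X_i}+\overline B_i+\overline D_i)$, and that second term is nef.
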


\begin{proof}
	This follows immediately from \Cref{lem:nobspts} and \Cref{lem:gluing}.
\end{proof}

	\section{The wall-and-chamber decomposition and the surfaces at the boundary}\label{S:wallandchamberbytype}
To obtain the full wall-and-chamber decomposition as described in \Cref{teo:main}, we will describe the wall-and-chamber decomposition organized by surface type (see \Cref{conv:types}):
\begin{align*}
	\widetilde{D}_{A_1}, \hspace{.1in} \widetilde{E}_{2A_1} , \hspace{.1in} \widetilde{E}_{3A_1}, \hspace{.1in} \widetilde{E}_{4A_1}, \hspace{.1in} \widetilde{D}_{3A_2}, \hspace{.1in} \widetilde{D}_{A_1} \cap \widetilde{D}_{3A_2}, \hspace{.1in} \widetilde{D}_{3A_2} \cap \widetilde{E}_{2A_1}, \hspace{.1in} \widetilde{D}_{3A_2} \cap \widetilde{E}_{3A_1},
\end{align*} 
and their degenerations. We further stratify these loci, and separate the wall-and-chamber decomposition further by these cases, depending on whether the marked line $\ell$ passes through zero, one, or two nodes on the surface (see \Cref{tab:narukifibers}). Following \cite{schock_moduli_2024}, we will extend the notation of \Cref{conv:types}, which  describes a stratification  of $\overline{Y}_{(1, \dots, 1)}$ as well as the fibers over those strata, to the space  $\overline{Y}_{(1, 1)}$. We will say a weighted $(b, c)$-weighted stable marked cubic surface for \textit{any} weight $(b, c) \in \mathrm{Amp}$ is of type $T$ if its stable replacement for weights $\frac{1}{2} < c \leq \frac{2}{3}$ is of type $T$ as described in \Cref{conv:types}. Any other type of $(b, c)$-weighted stable marked cubic surfaces is described as a degeneration of one of these types above. 

For brevity, we will only present the wall-and-chamber decomposition for surfaces of type $\widetilde{E}_{2A_1}$ where the marked line $\ell$ passes through exactly two nodes in the corresponding boundary divisor of $\overline{Y}_\ell$. Recall the surfaces of type $\widetilde{E}_{2A_1}$ correspond to cubic surfaces with exactly two $A_1$ singularities as shown in \Cref{tab:narukifibers}. The reason why we showcase this example is because this is the simplest case where the following occur:
\begin{enumerate}
	\item There are chambers and surfaces not seen before in \cite{schock_moduli_2024}. 
	\item Unlike in \cite{schock_moduli_2024}, some walls are obtained by failure of the constant volume condition, instead of failure of ampleness or the slc condition.
	\item This case detects the unique new change  in coarse moduli spaces that occurs  when crossing the wall $c = -\frac{b}{3} + \frac{1}{3}$.
\end{enumerate}
\ifAppendix
The remaining cases are similar and shown in Appendix \ref{app:remainingchambers}. 

\else
For the remaining cases, please see the supplementary file \href{https://drive.google.com/file/d/1bTMLHsHJv6GFPxI433TIR5FyklWsarts/view?usp=sharing}{``Wall-and-chamber decomposition for surface types"} on \href{https://www.jon-kim.net/research}{https://www.jon-kim.net/research}.

\fi

\subsection{Type $\widetilde{E}_{2A_1}$: $\ell$ passes through exactly two nodes}
\begin{pro}\label{E2A1_mult4}
	For type $\widetilde{E}_{2A_1}$ $(b,c)$-weighted stable marked cubic surfaces $(S, (b,c)B)$ where $\ell$ passes through exactly two nodes in the corresponding $\overline{Y}_{\ell}$ boundary, there are 11 chambers. The line $c = \frac{b}{3}$ is its own chamber. Additionally, crossing the wall $c = -\frac{b}{3} + \frac{1}{3}$ introduces type $\widetilde{D}_{A_1} \cap \widetilde{E}_{2A_1}$ surfaces as degenerations of type $\widetilde{E}_{2A_1}$. 
	\ifAppendix
	The wall-and-chamber of this type is found in \Cref{app:E2A1_mult4degen}.
	\else
	The wall-and-chamber of this type is found in the supplementary file mentioned above.
	\fi
	\begin{center}
		\begin{tikzpicture}[scale=14]
			\draw[very thick,-latex] (1/9, 1/9) -- (1.03, 1/9) 
			node[right]{$b$};
			
			\draw[very thick,-latex] (1/9, 1/9) -- (1/9, 1.03) 
			node[left]{$c$};
			
			\draw[solid] (1/9,1/9) -- (1,1);
			\node at (1/9, 1/9) [below] {\tiny $(\frac{1}{9}, \frac{1}{9})$};
			\node at (1, 1) [left] {$(1, 1)$};
			
			\draw[thick] (1, 1/9) -- (1,1);
			\node at (1, 1/9) [below] {$(1, \frac{1}{9})$};
			
			\draw[thick] (2/3, 2/3) -- (1, 2/3) node[right] {\tiny{$c=\frac{2}{3}$}};
			\node[circle, fill, inner sep=1pt] at (2/3, 2/3) {};
			\node at (2/3, 2/3) [left] {\tiny$(\frac{2}{3}, \frac{2}{3})$};
			\node at (8/9, 7/9) {Sch $\frac{2}{3} < c \leq 1$};
			\node at (0.9, 0.63) {$-\frac{b}{2} + 1 < c \leq \frac{2}{3}$};
			
			\draw[solid] (2/3, 2/3) -- (1, 1/2) node[right] {\color{black} \tiny{$c = \frac{1}{2}$}};
			\node[rotate=-26.6] at (5/6, 57/96) {\tiny wall due to $\ell$ containing Eckardt point(s)};
			
			\draw[thick] (1/2, 1/2) -- (1, 1/2);
			\node[circle, fill, inner sep=1pt] at (1/2, 1/2) {};
			\node at (1/2, 1/2) [left] {\tiny $(\frac{1}{2}, \frac{1}{2})$};
			\node at (0.7, 0.56) {Sch $\frac{1}{2} < c \leq \frac{2}{3}$};
			
			\draw[thick] (1/2, 1/2) -- (3/4, 1/4);
			\node[rotate=-45] at (7/12, 5/12) [above] {\tiny $c = -b+1$};
			\node[align=center] at (5/6, 0.42) {\(\frac{1}{4} < c \leq \frac{1}{2}\) \\ \(c > -b+1 \)};
			\node[align=center] at (1/2, 1/3) {Sch $\frac{1}{4} < c \leq \frac{1}{2}$};
			
			\draw[thick] (1/4, 1/4) -- (7/13, 2/13);
			\node[circle, fill, inner sep=1pt] at (1/4, 1/4) {};
			\node[rotate=-18.43] at (3/8, 24/120) [above] {\tiny$c=-\frac{b}{3} + \frac{1}{3}$};
			\node at (7/13, 2/13) [below] {\tiny{$(\frac{7}{13}, \frac{2}{13})$}};
			\node at (1/4, 1/4) [left] {\tiny$(\frac{1}{4}, \frac{1}{4})$};
			\node at (1.75/6, 4.55/24) {Sch $\frac{1}{6} < c \leq \frac{1}{4}$};
			
			\draw[thick] (3/4, 1/4) -- (1, 1/4) node[right] {\tiny $c = \frac{1}{4}$};
			\node[align=left] at (5.2/6, 10.5/48) {\tiny \(\frac{b}{10} + \frac{1}{10} < c \leq \frac{1}{4}\) \\ \tiny \(\frac{1}{6} < c < \frac{b}{3}\)};
			
			\draw[very thick] (1/2, 1/6) -- (3/4, 1/4);
			\node[rotate=18.43] at (2/3, 17/81) [above] {\tiny$c=\frac{b}{3}$};
			
			\draw[thick] (1/6, 1/6) -- (2/3, 1/6);
			\node[circle, fill, inner sep=1pt] at (1/6, 1/6) {};
			\node at (7/36, 7/36) [left] {\tiny$(\frac{1}{6}, \frac{1}{6}$)};
			\node at (2/3, 1/6) [below] {\tiny{$(\frac{2}{3}, \frac{1}{6})$}};
			\draw[->] (7/24, 1/11) -- (1/4, 10/72);
			\node[circle, fill, inner sep=1.5pt] at (1/4, 10/72) [above] {};
			\node[align=center] at (7/24, 1/20) {\(\frac{b}{10} + \frac{1}{10} < c \leq \frac{1}{6}\) \\ \(c \leq - \frac{b}{3} + \frac{1}{3}, b \neq c\)};
			\draw[->] (7/13, 1/11) -- (7.3/13, 9.4/60);
			\node[circle, fill, inner sep=1.5pt] at (7.3/13, 9.4/60) [above] {};
			\node[align=center] at (7/13, 1/20) {\(\frac{b}{10} + \frac{1}{10} < c \leq \frac{1}{6}\) \\ \(c > -\frac{b}{3} + \frac{1}{3}\)};
			
			\draw[very thick] (1/9, 1/9) -- (1/6, 1/6);
			\draw[->] (1/11, 1/6) -- (5/36, 5/36);
			\node at (1/10, 1/6) [left] {Sch $\frac{1}{9} < c \leq \frac{1}{6}$};
			
			\draw[dashed] (1/9, 1/9) -- (1, 1/5);
			\node[rotate=5.71] at (5/6, 11/60) [below] {\tiny$c=b/10 + 1/10$};
			\node at (1, 1/5) [right] {\tiny{$c=\frac{1}{5}$}};
			
		\end{tikzpicture}
	\end{center}
\end{pro}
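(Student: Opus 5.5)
We will run the log minimal model program on the explicit fiber and track when KSBA stability fails. The starting point is the type $\widetilde{E}_{2A_1}$ fiber $(S,B)$ of $\ddot\pi$ over a general point of the stratum, taken from \cite[Figs.~9, 10]{schock_moduli_2024} together with \Cref{teo:eckblowup}. We choose the marking so that $\ell=L_1$ is the line through both nodes, which is the dotted line of multiplicity $4$ in \Cref{tab:narukifibers}. We then decrease $(b,c)$ from $(1,1)$ toward $(\frac19+\epsilon,\frac19+\epsilon)$.

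\textbf{Step 1: setting up the candidates.} On each irreducible component $\overline X_i$ (the normalized pieces: the cubic with two $A_1$ points, or its simultaneous resolution $\widetilde S_{2A_1}$ together with the attached components) we write
\[
K_{\overline X_i}+b\,\overline\ell_i+c\sum\overline L_{j,i}+\overline D_i
\]
in the basis $h,e_1,\dots,e_6$ (and $e_7$ for the degree-$2$ pieces). Here $\ell$ is counted with the multiplicity $4$ it inherits, and the dashed lines through the nodes are counted with multiplicity $2$. We then impose the inequalities of \Cref{teo:nefcones}. Each inequality is linear in $(b,c)$, and its vanishing locus gives a candidate wall.

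Two kinds of conditions are added to these ampleness walls.
\begin{itemize}
\item The slc walls come from \Cref{lem:linesA1lccriteria}: the log canonical thresholds at Eckardt points, and at the nodes where $\ell$ and the dashed lines meet. The Eckardt-point wall $c=-\frac b2+1$ is the one that depends on whether $\ell$ passes through an Eckardt point.
\item The volume condition requires $(K+(b,c)B)^2$ to be computed on the glued candidate and compared with $v(b,c)$ of a smooth fiber.
\end{itemize}
Since the multiplicity of $\ell$ enters through $4b$ against $c$, walls such as $c=\frac b3$ and $c=-\frac b3+\frac13$ arise naturally, alongside Schock's horizontal walls $c=\frac23,\frac12,\frac14,\frac16$.

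\textbf{Step 2: identifying the stable replacement in each chamber.} At each wall some component's log canonical divisor becomes nef but not ample. We contract the curves that become $K$-trivial, which are $(-1)$-curves, or degree-$2$ components collapsing to the gluing curve. We then apply \Cref{cor:gluing} to show that gluing the contracted components along the conductors is the log canonical model. Checking the hypotheses of \Cref{cor:gluing} comes down to three verifications:
\begin{itemize}
\item bigness and nefness follow from \Cref{teo:nefcones} applied to the adjusted divisor;
\item semi-ampleness holds because the components are rational surfaces with rational polyhedral nef cones;
\item base-point freeness on $\overline D_i\cong\mathbb P^1$ is just positivity of degree.
\end{itemize}
Where these hypotheses fail, for example on the line $c=\frac b3$, we expect instead to find that the volume of the naive candidate jumps. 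That jump forces $c=\frac b3$ to be its own chamber: on either side a different contraction is stable, and only on the line itself is the intermediate model stable. We will verify this by computing the volumes of the two neighbouring models and checking that they agree with $v(b,c)$ only on the respective sides.

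\textbf{Step 3: the wall $c=-\frac b3+\frac13$.} Here the component containing the second node should contract so that the surface becomes a degeneration that also appears over $\widetilde D_{A_1}\cap\widetilde E_{2A_1}$. We will check this by matching the resulting configuration of lines against the corresponding figure of \cite{schock_moduli_2024}. Finally, counting the regions cut out by all the walls found, restricted to $\mathrm{Amp}$ with $b\ge c$, should give the 11 chambers.

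\textbf{Main obstacle.} We expect the hardest part to be the region below $c=\frac14$. There several components degenerate at once, and the naive gluing of Step 2 can fail; the plan is to resolve this with the explicit volume comparison described there.
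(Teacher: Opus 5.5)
Your framework is the paper's own procedure: compute $K_{S'}+(b,c)B'+\Delta$ on each component of the fiber of $\ddot\pi$, read off ampleness, lc and volume walls, contract, and reglue via \Cref{cor:gluing}. Your expectation about $c=\frac b3$ is also right: the glued model on that line has volume $b^2+8bc-2b+242c^2-52c+3$, which differs from $-b^2+20bc-2b+224c^2-52c+3$ by $2(b-3c)^2$. But the proposition \emph{is} this computation, and the part you do specify is set up on the wrong surface, so it would not produce the stated walls.

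In the weight-$(1,1)$ fiber, $\ell$ is not a multiplicity-$4$ boundary curve on $\widetilde S_{2A_1}$. There $e_3$ is a conductor curve, and so are $L_{123},L_{345}$ and the eight curves $e_1,e_2,L_{46},L_{56},e_4,e_5,L_{16},L_{26}$ that you weight with multiplicity $2$. Instead $\ell$ lies, with multiplicity one, on the attached component $3'\colon\mathrm{Bl}_5\mathbb{P}^2$ (as $h-e_2-e_5$) and on the two components $4'\colon\mathbb{F}_0$ (as a ruling). The multiplicities appear only after attached components are contracted: $2$ for $c\le\frac12$, $4$ on $e_3$ only for $c\le-\frac b3+\frac13$, and $6$ on $L_{123},L_{345}$ for $c\le\frac16$. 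Moreover, a multiplicity-$4$ curve containing $\ell$ has coefficient $b+3c$, not $4b$.

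Consequently the new walls come from components that \Cref{teo:nefcones} does not cover. For example, $\mathrm{Bl}_5\mathbb{F}_0$ with five points on the diagonal contains the $(-3)$-curve $h_1+h_2-\sum e_i$. Their curve cones must be handled directly, and the walls come out as follows. $c=-b+1$ and $c=\frac14$ are where the two rulings of $4'\colon\mathbb{F}_0$ have degrees $b+c-1$ and $4c-1$. $c=\frac b3$ is where the multiplicity-$4$ curves $e_2,e_4$ on $3'\colon\mathrm{Bl}_3\mathbb{P}^2$ have degree $b-3c$ under $(b+4c-1)h-ce_1-(b-3c)(e_2+e_4)$. $c=-\frac b3+\frac13$ is where the ruling $h-e_1$ of $3'\colon\mathrm{Bl}_1\mathbb{P}^2\cong\mathbb{F}_1$ has degree $b+3c-1$. $c=\frac16$ comes from the type-$2$ components. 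Only the last wall $b=c$ lives on $\widetilde S_{2A_1}$, where $L_{123},L_{345}$ have degree $b-c$.

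Your Step~3 also names the wrong contraction. The wall $c=-\frac b3+\frac13$ is crossed twice: from the large region above it, and inside $\frac b{10}+\frac1{10}<c\le\frac16$. Both times it is the descendant of $3'$ that collapses along its ruling $\mathbb{F}_1\to\mathbb{P}^1$; this is the component carrying $\ell$, glued to $\widetilde S_{2A_1}$ along $e_3$. That collapse is what puts $\ell$ onto $e_3$ with multiplicity $4$, and it is the contraction responsible for the change of coarse space at this wall. The components over the two nodes, glued along $L_{123},L_{345}$, contract instead at $c=\frac16$.

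Two smaller corrections. The wall $c=-\frac b2+1$ is the ampleness wall $(b+2c-2)h$ of the $E\colon\mathbb{P}^2$ through which $\ell$ passes. Equivalently it is the threshold of \Cref{lem:lineslccriteria} at a smooth triple point; \Cref{lem:linesA1lccriteria} concerns $A_1$ points. Also, semi-ampleness of a nef log canonical divisor on a component follows from log abundance for surfaces, not from the nef cone being rational polyhedral.
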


\begin{table}[H]
	\centering
	\label{tab:E2A1eck}
	\begin{tabular}{| c | c | c | c |}
		\hline
		Label & Surface & \# & Eckardt points \\
		\hline
		\hline
		1 & $\widetilde S_{2A_1}$ & 1 & 0 or 1 \\
		2 & $\mathrm{Bl}_5\mathbb{F}_0$ & 2 & 0 \\
		3 & $\mathrm{Bl}_5\mathbb{F}^2$ & 1 & 0 or 1 \\
		4 & $\mathbb{F}_0$ & 4 & 0 \\
		\hline
	\end{tabular}
	\caption{This table (taken from \cite[Table~3]{schock_moduli_2024}) gives the possible numbers of Eckardt points on each component of the surface in \Cref{E2A1_mult4_12c23}. The first two columns are the numbered types of irreducible components of the surfaces in the chamber Sch $1/2 < c \leq 2/3$ of type $\widetilde{E}_{2A_1}$ pictured in \Cref{E2A1_mult4_12c23}, or \cite[Fig.~9]{schock_moduli_2024}. The third column tells us the number of each corresponding component found in the surface. Finally, the last row gives the possible numbers of Eckardt points on each component.  For the component of type 1, $\widetilde{S}_{2A_1}$ refers to the minimal resolution of a cubic surface with two $A_1$ singularities. For the components of type 2, $\mathrm{Bl}_5\mathbb{F}_0$ refers to the blowup of $\mathbb{F}_0 \cong \mathbb{P}^1 \times \mathbb{P}^1$ at 5 points on the diagonal. For the component of type 3, $\mathrm{Bl}_5\mathbb{P}^2$ refers to the special blowup of $\mathbb{P}^2$ at 5 points as in \Cref{pro:bl5p2eck}.}
\end{table}

\subsection{Explicit description of the $(b, c)$-weighted stable marked cubic surfaces}\label{S:descriptions}
In this subsection, we give an explicit description of the $(b, c)$-weighted stable marked cubic surfaces of type $\widetilde{E}_{2A_1}$ of \Cref{E2A1_mult4}. Again, as this article is already lengthy and since the computations are similar, we leave out the descriptions of other types. For the explicit descriptions of other types, please see the supplementary file \href{https://drive.google.com/file/d/1nQBGKhehcd8xiDVru9otsMT-grKk8n28/view?usp=sharing}{``Fiber descriptions for surface types"} on \href{https://www.jon-kim.net/research}{https://www.jon-kim.net/research}. Many of the descriptions are inspired by \cite{gallardo_geometric_2021} and \cite{schock_moduli_2024}. Furthermore, by \Cref{teo:eckblowup}, it suffices to provide an explicit description up to $(b, c) = (\frac{2}{3}, \frac{2}{3})$ (exclusive).

\begin{conv}[Conventions of the descriptions]\label{conv:descpresent}
	For the explicit descriptions of the $(b, c)$- weighted stable marked cubic surfaces, we follow the conventions in \cite{schock_moduli_2024} and \cite{gallardo_geometric_2021}: We denote the lines of multiplicities 1, 2, and 4, by solid, dashed, and dotted black lines, respectively. If a line has higher multiplicity, we will denote it by a solid line with a label that explicitly states the multiplicity. For example, the $(-2)$-exceptional curve $e_1$ of \Cref{E2A1_mult4_19c16bneqc} is labeled by $6c \cdot e_1$ to indicate that  it has a multiplicity of 6 with weight $c$. We denote the gluing curves of irreducible components by red lines, usually solid, but there will be some cases where we draw them as dashed red lines in order to avoid confusion about the number of irreducible components. When there is more than one isomorphic copy of a component with different boundary, we will distinguish them by placing a number label, e.g. $1 : \mathbb{P}^2$ and $2 : \mathbb{P}^2$. The $b$-weighted (marked) line $\ell$ will be drawn in green. If a dashed line is in green, then the line has weight $b + c$. Similarly, if a dotted line is green, then the line has weight $b + 3c$. If $\ell$ is present in a component, we will decorate the number label with a prime, e.g. $1' : \mathbb{P}^2$. When many components are glued together where confusion might arise, we indicate this in the figures with subfigures and matching highlighted colors of the gluing lines.
\end{conv}
\newpage
\subsection{Type $\widetilde{E}_{2A_1}$ where $\ell$ passes through exactly two nodes}
\noindent\null\par
\begin{figure}[H]
		\includegraphics[scale=0.14]{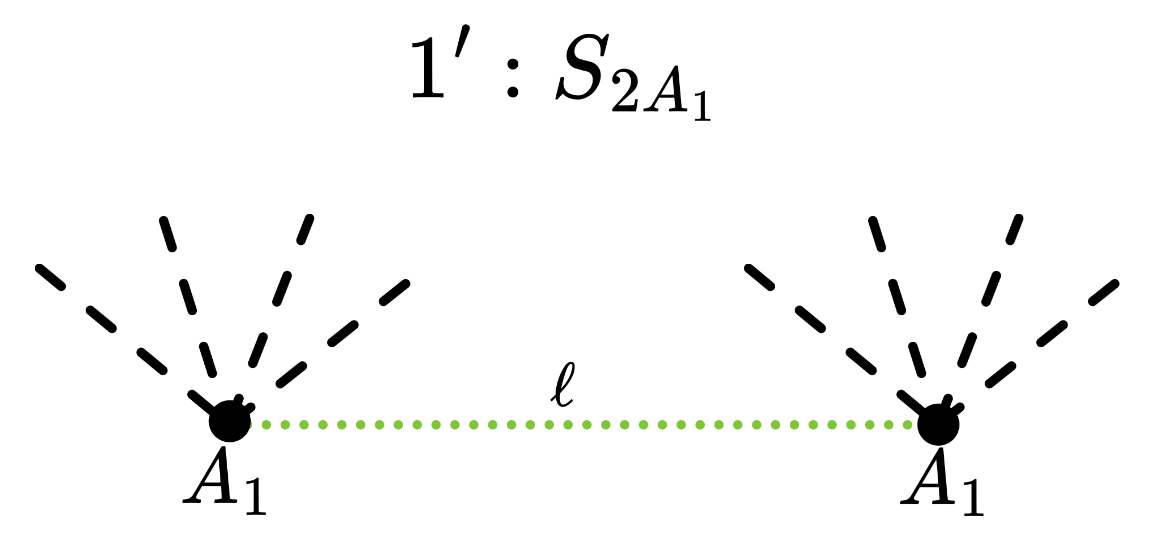}
		\caption{A type $\widetilde{E}_{2A_1}$ surface (see \Cref{conv:types}) $(S_0, (b, c)B_0)$ with $\ell$ passing through exactly two nodes in the chamber Sch $\frac{1}{9} < c \leq \frac{1}{6}$ in \Cref{E2A1_mult4}. There are also seven multiplicity 1 lines (not shown).}
		\label{E2A1_mult4_naruki}
\end{figure}
	
\begin{table}[H]
		\centering
		\label{tab:E2A1_mult4_naruki}
		\begin{tabular}{| c | c | c | c| c|}
			\hline
			Label & Surface & \# & Boundary lines $B$ & Gluing lines $\Delta$\\
			\hline
			\hline
			$1'$ & $\widetilde{S}_{2A_1}$ & 1 & mult 1: $M_1, \dots, M_7$ & None\\
			& & & mult 2: $L_1, \dots, L_8$ & \\
			& & & mult 4: $\ell$ & \\
			\hline
		\end{tabular}
		\caption{Explicit description of the boundary lines and gluing lines of the irreducible surface $1': S_{2A_1}$ of \Cref{E2A1_mult4_naruki}. The following is the intersection theory of the lines: $\ell$ has self intersection $\ell^2 = 0$, has intersection $(\ell \cdot \sum_1^8 L_i) = 4$ with the multiplicity 2 lines, and has intersection $(\ell \cdot \sum_1^7 M_j) = 1$ with the multiplicity 1 lines. On the other hand, the multiplicity 2 lines has self intersection $(\sum_1^8 L_i)^2 = 16$ with itself and has intersection $(\sum_1^8 L_i \cdot \sum_1^7 M_j) = 24$ with the multiplicity 1 lines. Finally, each multiplicity 1 line intersects three other multiplicity 1 line transversely.}
\end{table}

\begin{figure}[H]
		\includegraphics[scale=0.2]{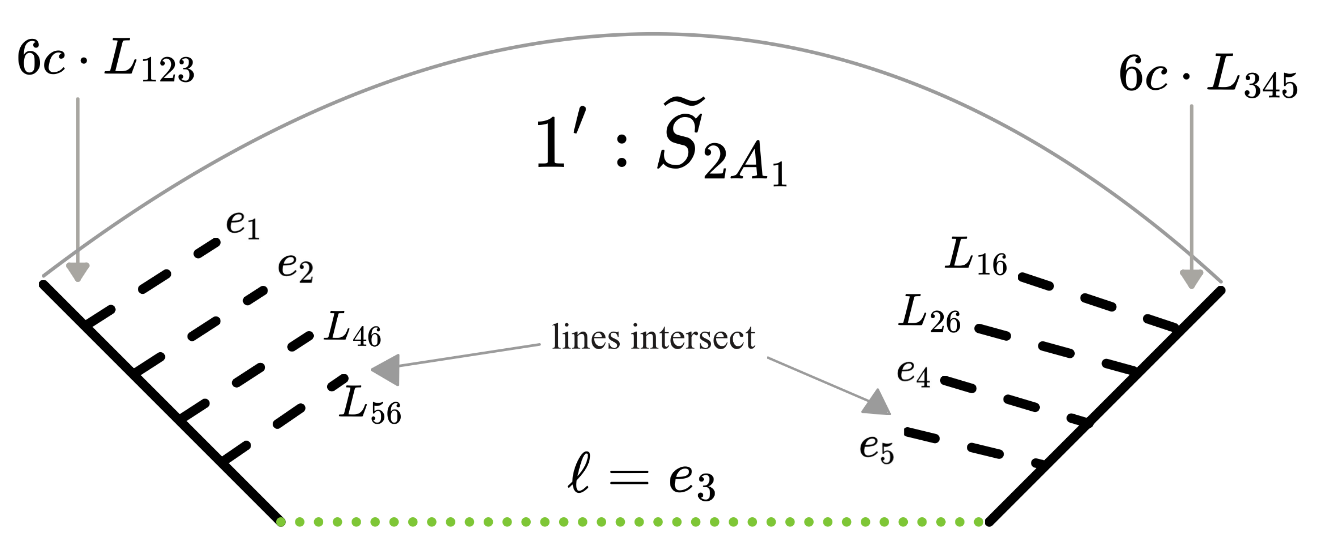}
		\caption{A type $\widetilde{E}_{2A_1}$ surface $(S_1, (b, c) B_1)$ with $\ell$ passing through exactly two nodes in the chamber $\frac{b}{10} + \frac{1}{10} < c \leq \frac{1}{6}, b \leq -\frac{b}{3} + \frac{1}{3}, b \neq c$ in \Cref{E2A1_mult4}, obtained as the stable replacement for these weights of the surface $(S_0, (b, c) B_0)$ of \Cref{E2A1_mult4_naruki}. This is obtained by taking the simultaneous resolution of the two $A_1$ singularities where each $(-2)$-exceptional curve has multiplicity 6. There are also seven multiplicity 1 lines (not shown)}
		\label{E2A1_mult4_19c16bneqc}
\end{figure}
	
\begin{table}[H]
		\centering
		\label{tab:E2A1_mult4_19c16bneqc}
		\begin{tabular}{| c | c | c | c| c|}
			\hline
			Label & Surface & \# & Boundary lines $B$ & Gluing lines $\Delta$\\
			\hline
			\hline
			$1'$ & $\widetilde{S}_{2A_1}$ & 1 & mult 1: $L_{14}, L_{15}, L_{24}, L_{25}, L_{36}, C, e_6$ & None\\
			 & & & mult 2: $e_1, e_2, L_{46}, L_{56}, e_4, e_5, L_{16}, L_{26}$ & \\
			 & & & mult 4: $\ell = e_3$ & \\
			 & & & mult 6: $L_{123}, L_{345}$ & \\
			\hline
		\end{tabular}
		\caption{Explicit description of the boundary lines and gluing lines of the irreducible surface $1': \widetilde{S}_{2A_1}$ of \Cref{E2A1_mult4_19c16bneqc}. We identify $\widetilde{S}_{2A_1}$ as $\mathrm{Bl}_6 \mathbb{P}^2$ where the six points $p_1, \dots, p_6$ are in special position where the sets of points $\{p_1, p_2, p_3\}$ and $\{p_3, p_4, p_5\}$ are colinear (see \Cref{tab:specialposcubics}). Let $L_{ij} - h - e_i - e_j$ denote the strict transform of the line passing through points $p_i$ and $p_j$, where $h$ denotes the class of the pullback of the hyperplane divisor and $e_i$ denotes the class of the exceptional divisor of the blow-up of $p_i$. Similarly, let $L_{ijk} = h - e_i - e_j - e_k$ denote the strict transform of the line passing through points $p_i, p_j, p_k$. Finally, let $C = 2h - e_1 - e_2 - e_4 - e_5 - e_6$ denote the strict transform of the unique conic passing through the five points $p_1, p_2, p_4, p_5, p_6$ in general position.}
\end{table}
	
\begin{figure}[H]
		\includegraphics[scale=0.19]{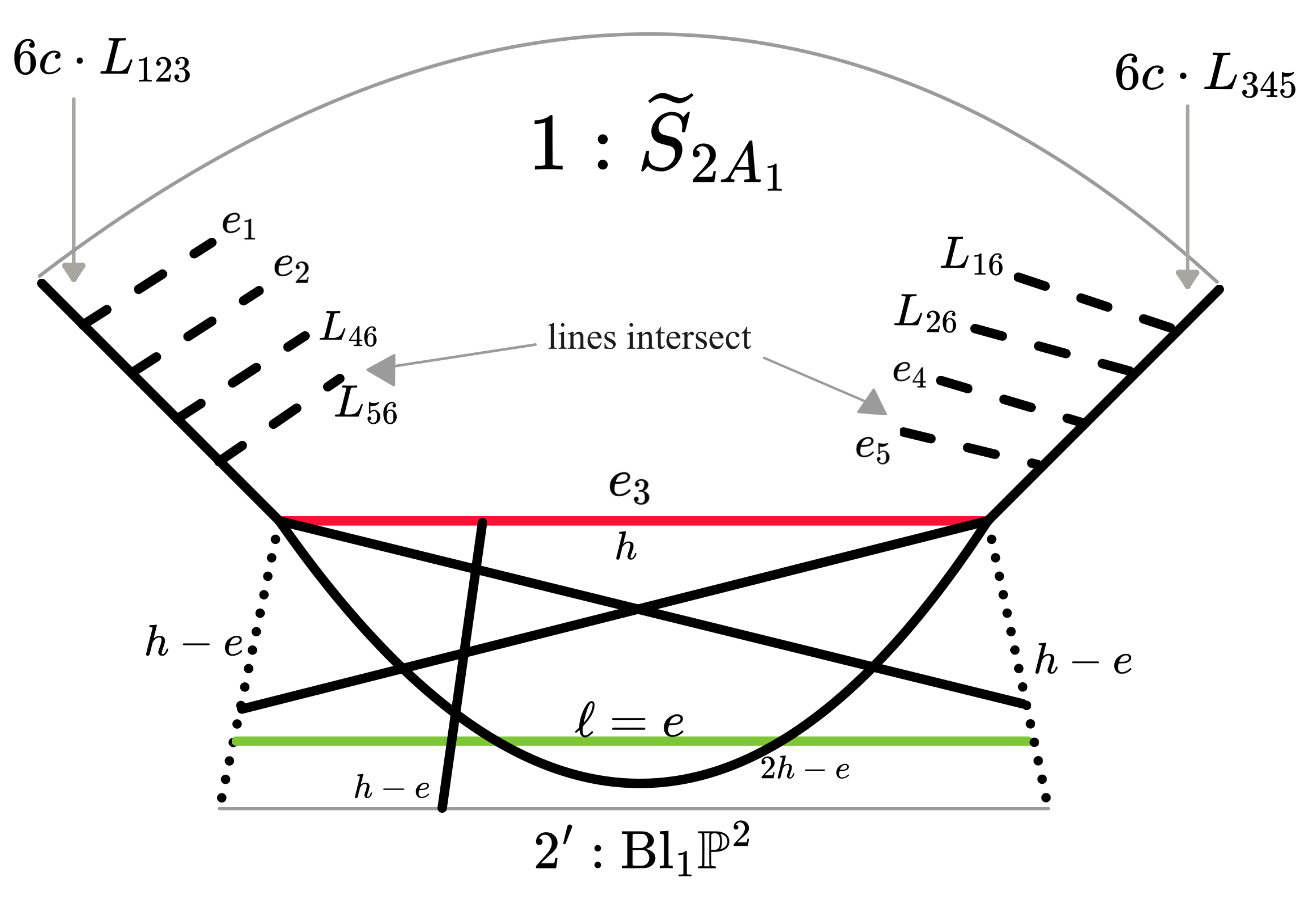}
		\caption{A type $\widetilde{E}_{2A_1}$ surface $(S_2, (b, c)B_2)$ with $\ell$ passing through exactly two nodes in the chamber $\frac{b}{10} + \frac{1}{10} < c \leq \frac{1}{6}, c > -\frac{b}{3} + \frac{1}{3}$ in \Cref{E2A1_mult4}, obtained as the stable replacement for these weights of the surface $(S_1, (b, c)B_1)$ of \Cref{E2A1_mult4_19c16bneqc}. This is obtained by blowing up $\ell$ and attaching to the exceptional divisor a $2': \operatorname{Bl}_1 \mathbb{P}^2$ component, as shown. The four multiplicity 1 lines consisting of $e$ (shown as $\ell$), two copies of $h$, and $2h - e$ on $2' : \mathrm{Bl}_1 \mathbb{P}^2$  come from $\ell$. The remaining multiplicity 1 line comes from a multiplicity 1 line in $1 : \widetilde{S}_{2A_1}$. Each multiplicity 6 $(-2)$-exceptional curve in $1 : \widetilde{S}_{2A_1}$ splits into a passing through exactly two nodes line $h - e$ and the multiplicity 1 lines $h$ and $2h-e$.}
		\label{E2A1_mult4_b313c16}
\end{figure}
	
\begin{table}[H]
		\centering
		\label{tab:E2A1_mult4_b313c16}
		\begin{tabular}{| c | c | c | c| c|}
			\hline
			Label & Surface & \# & Boundary lines $B$ & Gluing lines $\Delta$\\
			\hline
			\hline
			$1$ & $\widetilde{S}_{2A_1}$ & 1 & mult 1: $L_{14}, L_{15}, L_{24}, L_{25}, L_{36}, C, e_6$ & $e_3$\\
			& & & mult 2: $e_1, e_2, L_{46}, L_{56}, e_4, e_5, L_{16}, L_{26}$ & \\
			& & & mult 6: $L_{123}, L_{345}$ & \\
			\hline
			$2'$ & $\mathrm{Bl}_1 \mathbb{P}^2$ & 1 & mult 1: $\ell = e, 2h, h -e, 2h-e$ & $h$\\
			& & & mult 4: $2 (h - e)$ & \\
			\hline
		\end{tabular}
		\caption{Explicit description of the boundary lines and gluing lines of the irreducible components of the surface of \Cref{E2A1_mult4_b313c16}. We identify $1: \widetilde{S}_{2A_1}$ as $\mathrm{Bl}_6 \mathbb{P}^2$ (see \Cref{tab:E2A1_mult4_19c16bneqc} above). For $2: \mathrm{Bl}_1 \mathbb{P}^2$, let $h$ denote the class of the pullback of the hyperplane divisor and $e$ denote the class of the exceptional divisor of the blow-up.}
\end{table}
	
\begin{figure}[H]
		\includegraphics[scale=0.2]{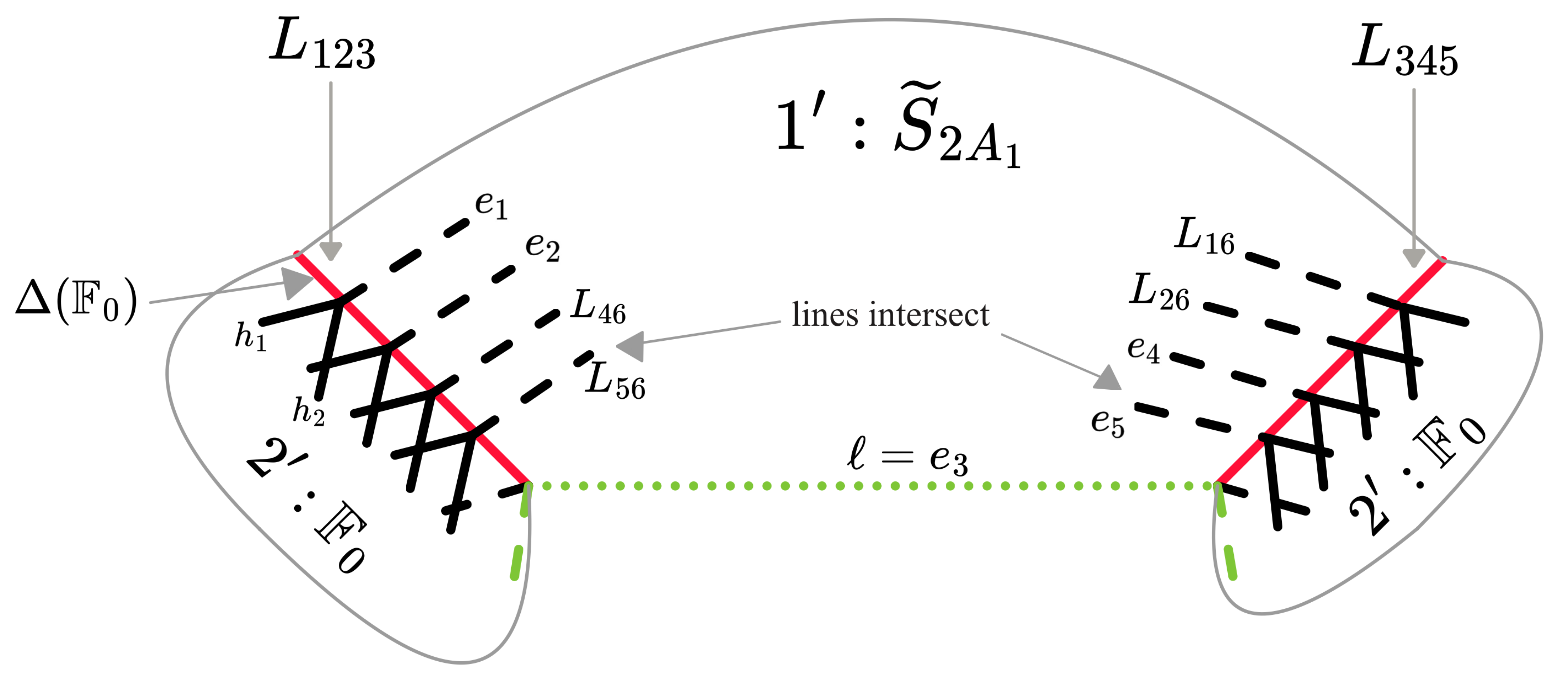}
		\caption{A type $\widetilde{E}_{2A_1}$ surface $(S_3, (b, c)B_3)$ with $\ell$ passing through exactly two nodes in the chamber Sch $\frac{1}{6} < c \leq \frac{1}{4}$ in \Cref{E2A1_mult4} obtained as the stable replacement for these weights of the surface $(S_1, (b, c)B_1)$ of \Cref{E2A1_mult4_19c16bneqc}.}
		\label{E2A1_mult4_16c14}
\end{figure}
	
\begin{table}[H]
		\centering
		\label{tab:E2A1_mult4_16c14}
		\begin{tabular}{| c | c | c | c| c|}
			\hline
			Label & Surface & \# & Boundary lines $B$ & Gluing lines $\Delta$\\
			\hline
			\hline
			$1'$ & $\widetilde{S}_{2A_1}$ & 1 & mult 1: $L_{14}, L_{15}, L_{24}, L_{25}, L_{36}, C, e_6$ & $L_{123}, L_{345}$\\
			& & & mult 2: $e_1, e_2, L_{46}, L_{56}, e_4, e_5, L_{16}, L_{26}$ & \\
			& & & mult 4: $\ell = e_3$ & \\
			\hline
			$2'$ & $\mathbb{F}_0$ & 2 & mult 1: $4h_1, 4h_2$ &$\Delta(\mathbb{F}_0) = h_1 + h_2$\\
			& & & mult 2: $\ell = h_1, h_2$ &\\
			\hline
		\end{tabular}
		\caption{Explicit description of the boundary lines and gluing lines of the irreducible components of the surface of \Cref{E2A1_mult4_16c14}. We identify $\widetilde{S}_{2A_1}$ as $\mathrm{Bl}_6 \mathbb{P}^2$ (see \Cref{tab:E2A1_mult4_19c16bneqc} above). For $2' : \mathbb{F}_0$, let $h_1, h_2$ denote the classes of the two rulings.}
\end{table}
	
\begin{figure}[H]
		\includegraphics[scale=0.17]{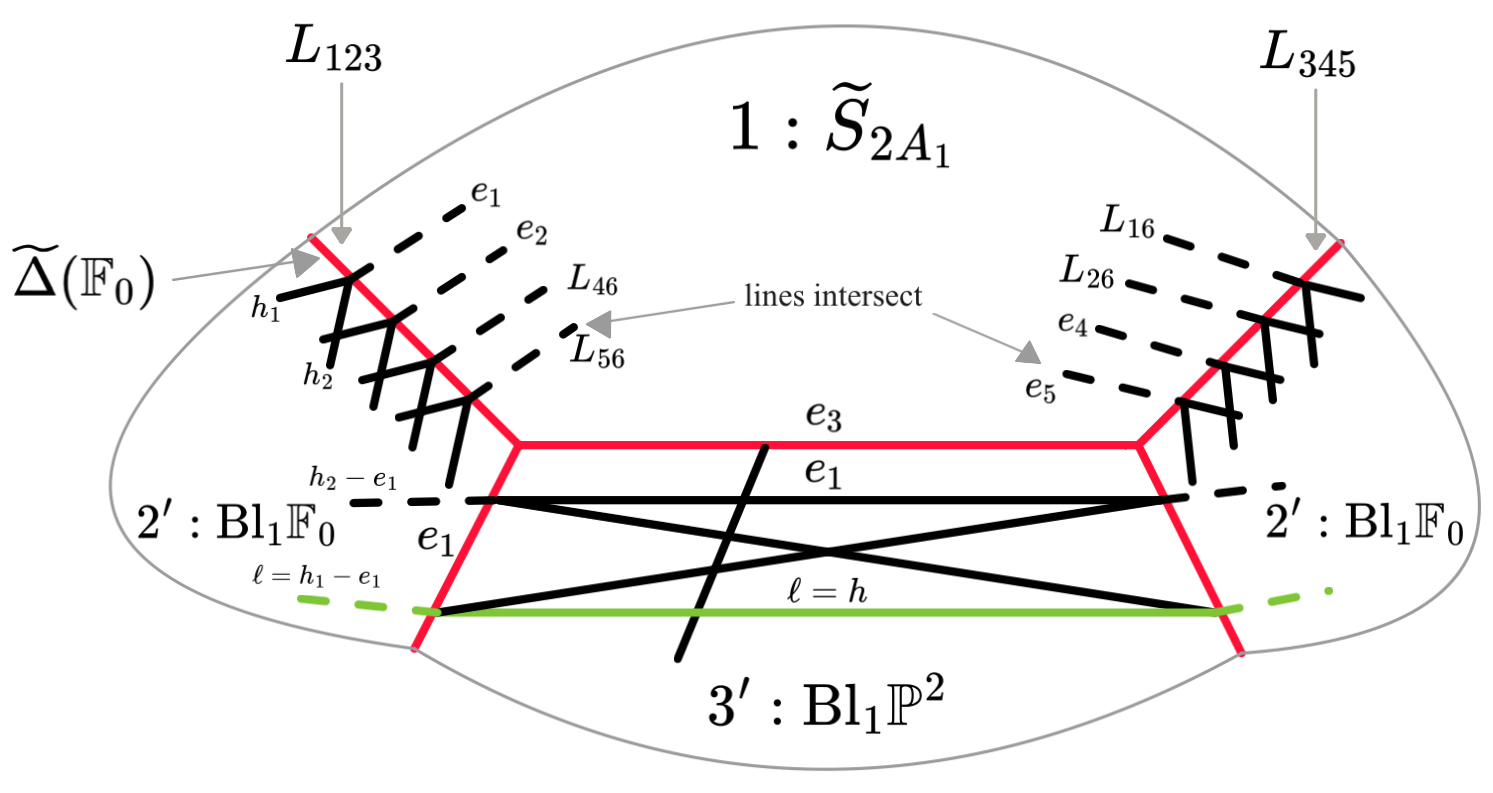}
		\caption{A type $\widetilde{E}_{2A_1}$ surface $(S_4, (b,c)B_4)$ with $\ell$ passing through exactly two nodes in the chamber Sch $\frac{1}{4} < c \leq \frac{1}{2}$ in \Cref{E2A1_mult4} obtained as the stable replacement for these weights of the surface ($S_3, (b, c)B_3)$ of \Cref{E2A1_mult4_16c14}.}
		\label{E2A1_mult4_14c12}
\end{figure}
	
\begin{table}[H]
		\centering
		\label{tab:E2A1_mult4_14c12}
		\begin{tabular}{| c | c | c | c| c|}
			\hline
			Label & Surface & \# & Boundary lines $B$ & Gluing lines $\Delta$\\
			\hline
			\hline
			$1$ & $\widetilde{S}_{2A_1}$ & 1 & mult 1: $L_{14}, L_{15}, L_{24}, L_{25}, L_{36}, C, e_6$ & $L_{123}, L_{345}, e_3$\\
			& & & mult 2: $e_1, e_2, L_{46}, L_{56}, e_4, e_5, L_{16}, L_{26}$ & \\
			\hline
			$2'$ & $\mathrm{Bl}_1 \mathbb{F}_0$ & 2 & mult 1: $4h_1, 4h_2$ &$\widetilde{\Delta}(\mathbb{F}_0) = h_1 + h_2 - e_1$\\
			& & & mult 2: $\ell = h_1 - e_1, h_2 - e_1$ & $e_1$\\
			\hline
			$3'$ & $\mathrm{Bl}_1 \mathbb{P}^2$ & 1 & mult 1: $\ell = h$, $3h, h-e_1$& $2(h-e_1), e_1$\\
			\hline
		\end{tabular}
		\caption{Explicit description of the boundary lines and gluing lines of the irreducible components of the surface of \Cref{E2A1_mult4_14c12}. We identify $\widetilde{S}_{2A_1}$ as $\mathrm{Bl}_6 \mathbb{P}^2$ (see \Cref{tab:E2A1_mult4_19c16bneqc} above). For $2' :\mathrm{Bl}_1 \mathbb{F}_0$, let $h_1, h_2$ denote the pullback of the classes of the two rulings and $e_1$ denote the exceptional divisor of the blow-up. Similarly for $3' : \mathrm{Bl}_1 \mathbb{P}^2$, let $h$ denote the pullback of the classes of the two rulings and $e_1$ denote the exceptional divisor of the blow-up.}
\end{table}
	
\begin{figure}[H]
		\includegraphics[scale=0.17]{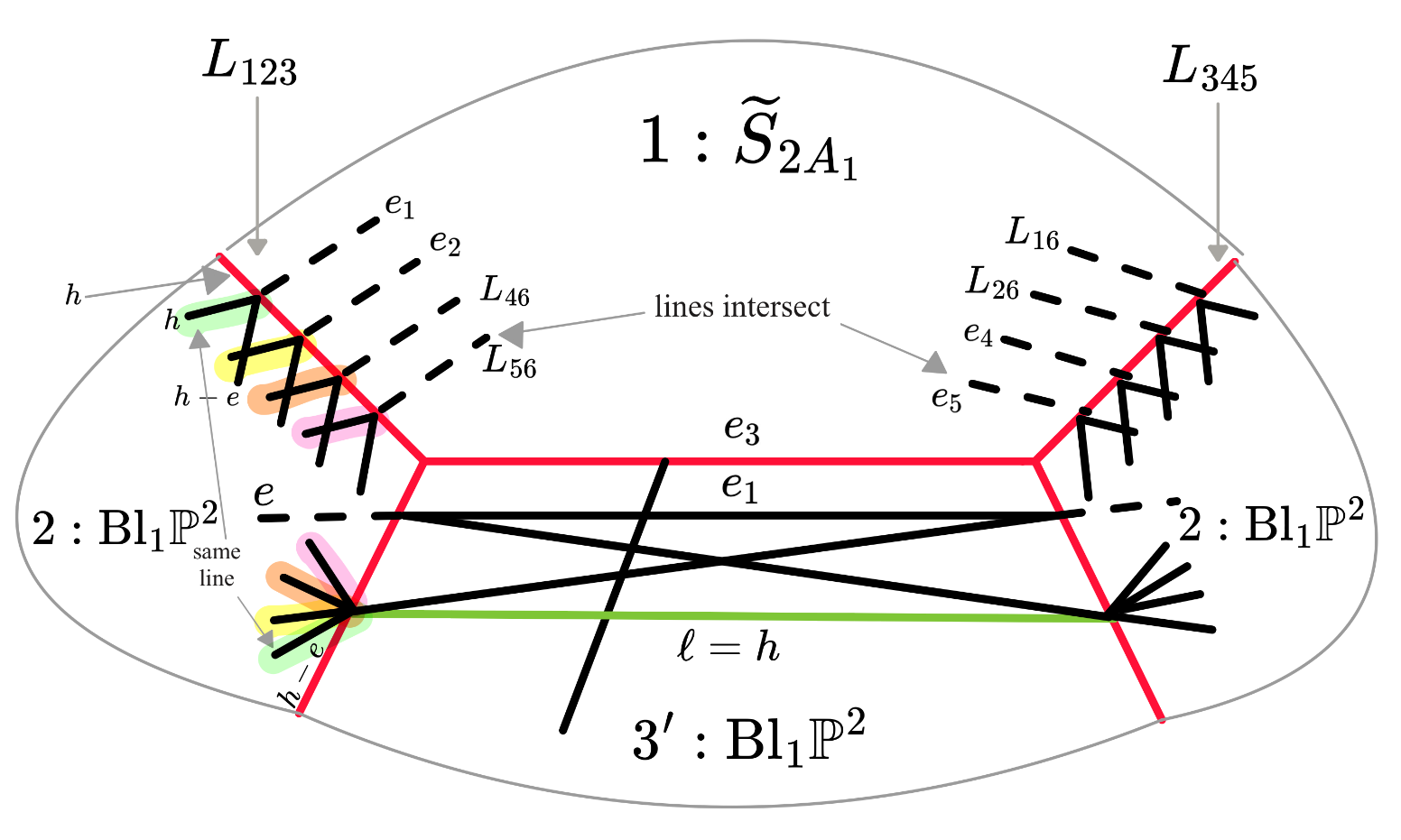}
		\caption{A type $\widetilde{E}_{2A_1}$ surface $(S_5, (b, c)B_5)$ with $\ell$ passing through exactly two nodes in the chamber $c = \frac{b}{3}$ in \Cref{E2A1_mult4} obtained as the stable replacement for these weights of the surface $(S_4, (b, c)B_4)$ of \Cref{E2A1_mult4_14c12}. This is obtained by contracting the multiplicity 2 line corresponding to $\ell$ in each $2' : \mathrm{Bl}_1 \mathbb{F}_0$ bringing along the four multiplicity 1 lines, highlighted above. }
		\label{E2A1_mult4_c=b3}
	\end{figure}
	\begin{table}[H]
	\centering
	\label{tab:E2A1_mult4_c=b3}
	\begin{tabular}{| c | c | c | c| c|}
		\hline
		Label & Surface & \# & Boundary lines $B$ & Gluing lines $\Delta$\\
		\hline
		\hline
		$1$ & $\widetilde{S}_{2A_1}$ & 1 & mult 1: $L_{14}, L_{15}, L_{24}, L_{25}, L_{36}, C, e_6$ & $L_{123}, L_{345}, e_3$\\
		& & & mult 2: $e_1, e_2, L_{46}, L_{56}, e_4, e_5, L_{16}, L_{26}$ & \\
		\hline
		$2'$ & $\mathrm{Bl}_1 \mathbb{P}^2$ & 2 & mult 1: $4h, 4(h-e)$ &$h, h-e$\\
		& & & mult 2: $e$ & \\
		\hline
		$3'$ & $\mathbb{P}^2$ & 1 & mult 1: $\ell = h$, $3h$, $h-e_1$ & $2(h-e_1), e_1$\\
		\hline 
	\end{tabular}
	\caption{Explicit description of the boundary lines and gluing lines of the irreducible components of the surface of \Cref{E2A1_mult4_c=b3}.}
\end{table}

\begin{figure}[H]
		\includegraphics[scale=0.18]{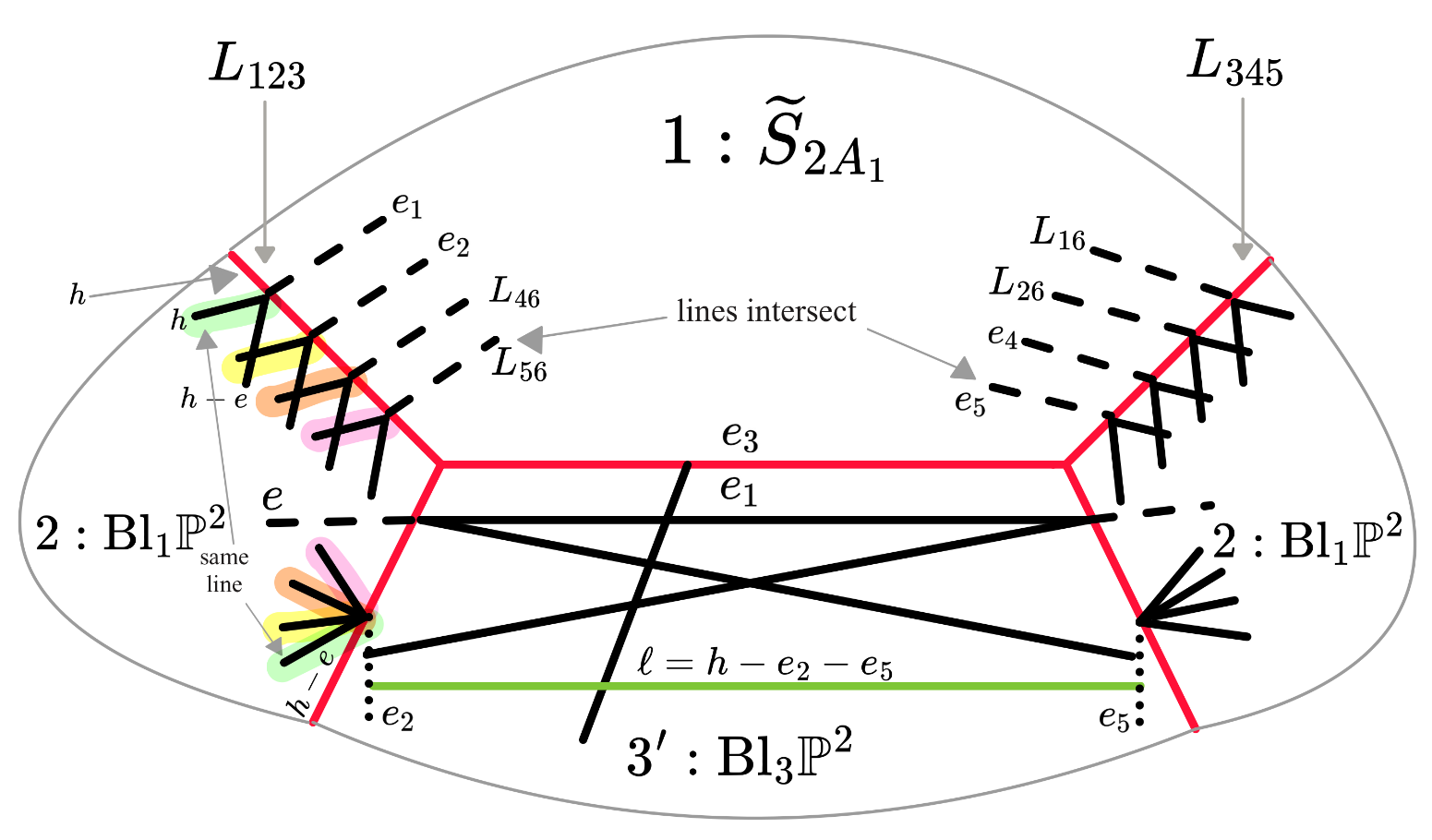}
		\caption{A type $\widetilde{E}_{2A_1}$ surface $(S_6, (b, c)B_6)$ with $\ell$ passing through exactly two nodes in the chamber $\frac{b}{10} + \frac{1}{10} < c \leq \frac{1}{4}, \frac{1}{6} < c < \frac{b}{3}$ in \Cref{E2A1_mult4} obtained as the stable replacement for these weights of the surfaces $(S_2, (b, c) B_2)$ and  $(S_5, (b, c)B_5)$ of \Cref{E2A1_mult4_b313c16} and \Cref{E2A1_mult4_c=b3}, respectively. This is obtained from $(S_5, (b, c)B_5)$ by blowing up the points of intersection of $\ell$ with the gluing line $h - e_1$ in $3': \mathrm{Bl}_1 \mathbb{P}^2$ with the exceptional curve having multiplicity 4.}
		\label{E2A1_mult4_b10110c14}
	\end{figure}
	\begin{table}[H]
	\centering
	\label{tab:E2A1_mult4_b10110c14}
	\begin{tabular}{| c | c | c | c| c|}
		\hline
		Label & Surface & \# & Boundary lines $B$ & Gluing lines $\Delta$\\
		\hline
		\hline
		$1$ & $\widetilde{S}_{2A_1}$ & 1 & mult 1: $L_{14}, L_{15}, L_{24}, L_{25}, L_{36}, C, e_6$ & $L_{123}, L_{345}, e_3$\\
		& & & mult 2: $e_1, e_2, L_{46}, L_{56}, e_4, e_5, L_{16}, L_{26}$ & \\
		\hline
		$2'$ & $\mathrm{Bl}_1 \mathbb{P}^2$ & 2 & mult 1: $4h, 4(h-e)$ &$h, h-e$\\
		& & & mult 2: $e$ & \\
		\hline
		$3'$ & $\mathrm{Bl}_3 \mathbb{P}^2$ & 1 & mult 1: $\ell = h-e_2-e_5$, $h$, $h- e_1$ & $h-e_1 - e_2, h-e_1 - e_5$\\
		& & & $h- e_2$, $h-e_5$ & \\
		& & & mult 4: $e_2, e_5$ & $e_1$\\
		\hline 
	\end{tabular}
	\caption{Explicit description of the boundary lines and gluing lines of the irreducible components of the surface of \Cref{E2A1_mult4_b10110c14}. The component $3' \mathrm{Bl}-3 \mathbb{P}^2$ is obtained by blowing up three points where the classes of the exceptional divisors are denoted by $e_1, e_2, e_5$.}
\end{table}	

\begin{figure}[H]
		\begin{subfigure}{0.8\textwidth}
			\centering
			\includegraphics[scale=0.18]{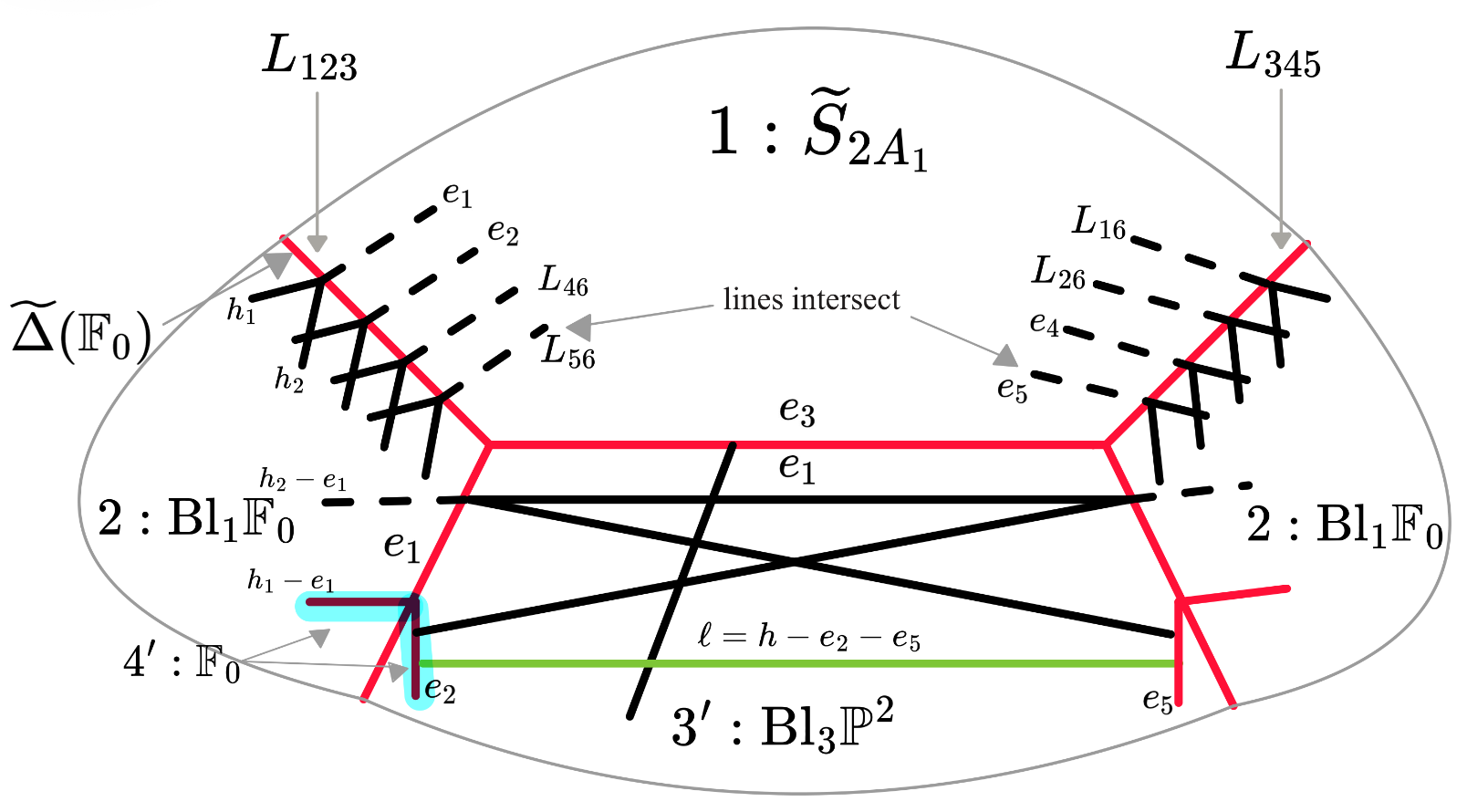}
			\caption{A type $\widetilde{E}_{2A_1}$ surface $(S_7, (b, c)B_7)$ with $\ell$ passing through exactly two nodes in the chamber $\frac{1}{4} < c \leq \frac{1}{2}$, $c > -b +1$.}
		\end{subfigure}
		\begin{subfigure}{0.8\textwidth}
			\centering
			\includegraphics[scale=0.22]{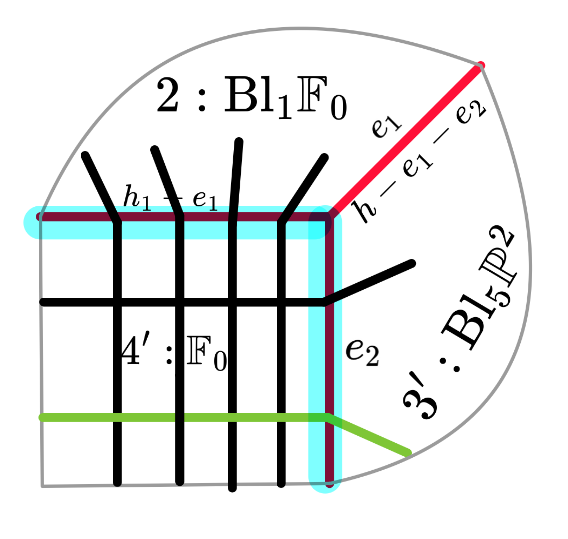}
			\caption{Another view of the gluing of $4': \mathbb{F}_0$ to $2 : \mathrm{Bl}_1 \mathbb{F}_0$ and $3' : \mathrm{Bl}_3 \mathbb{P}^2$, highlighted in blue.}
		\end{subfigure}
		\caption{A type $\widetilde{E}_{2A_1}$ surface $(S_7, (b, c)B_7)$ with $\ell$ passing through exactly two nodes in the chamber $\frac{1}{4} < c \leq \frac{1}{2}$, $c > -b +1$ of \Cref{E2A1_mult4}, obtained as the stable replacement for these weights of the surfaces $(S_4, (b, c)B_4)$,  $(S_5, (b, c)B_5)$, and $(S_6, (b, c)B_6)$ of \Cref{E2A1_mult4_14c12}, \Cref{E2A1_mult4_c=b3}, and \Cref{E2A1_mult4_b10110c14}, respectively. This is  obtained roughly from $(S_4, (b, c)B_4)$ by blowing up $\ell$ on each $2' : \mathrm{Bl}_1 \mathbb{F}_0$ and attaching to the exceptional fiber a copy of $4' : \mathbb{F}_0$.}
		\label{E2A1_mult4_-b1c12}
\end{figure}

\begin{table}[H]
	\centering
	\label{tab:E2A1_mult4_-b1c12}
	\begin{tabular}{| c | c | c | c| c|}
		\hline
		Label & Surface & \# & Boundary lines $B$ & Gluing lines $\Delta$\\
		\hline
		\hline
		$1$ & $\widetilde{S}_{2A_1}$ & 1 & mult 1: $L_{14}, L_{15}, L_{24}, L_{25}, L_{36}, C, e_6$ & $L_{123}, L_{345}, e_3$\\
		& & & mult 2: $e_1, e_2, L_{46}, L_{56}, e_4, e_5, L_{16}, L_{26}$ & \\
		\hline
		$2$ & $\mathrm{Bl}_1 \mathbb{F}_0$ & 2 & mult 1: $4h_1, 4h_2$ &$\widetilde{\Delta}(\mathbb{F}_0) = h_1 + h_2 - e_1$\\
		& & & mult 2: $h_2 - e_1$ & $h_1 - e_1, e_1$\\
		\hline
		$3'$ & $\mathrm{Bl}_3 \mathbb{P}^2$ & 1 & mult 1: $\ell = h - e_2 - e_5$, $h$, $h-e_1$ & $h-e_1 - e_2$, $h - e_1 - e_5$\\
		& & & $h- e_2$, $h-e_5$ & $e_1, e_2, e_5$\\
		\hline
		$4'$ & $\mathbb{F}_0$ & 2 & mult 1: $\ell = h_1, h_1, 4h_2$ & $h_1, h_2$\\
		\hline
	\end{tabular}
	\caption{Explicit description of the boundary lines and gluing lines of the irreducible components of the surface of \Cref{E2A1_mult4_-b1c12}.}
\end{table}
	
\begin{figure}[H]
		\begin{subfigure}{0.8\textwidth}
			\centering
			\includegraphics[scale=0.16]{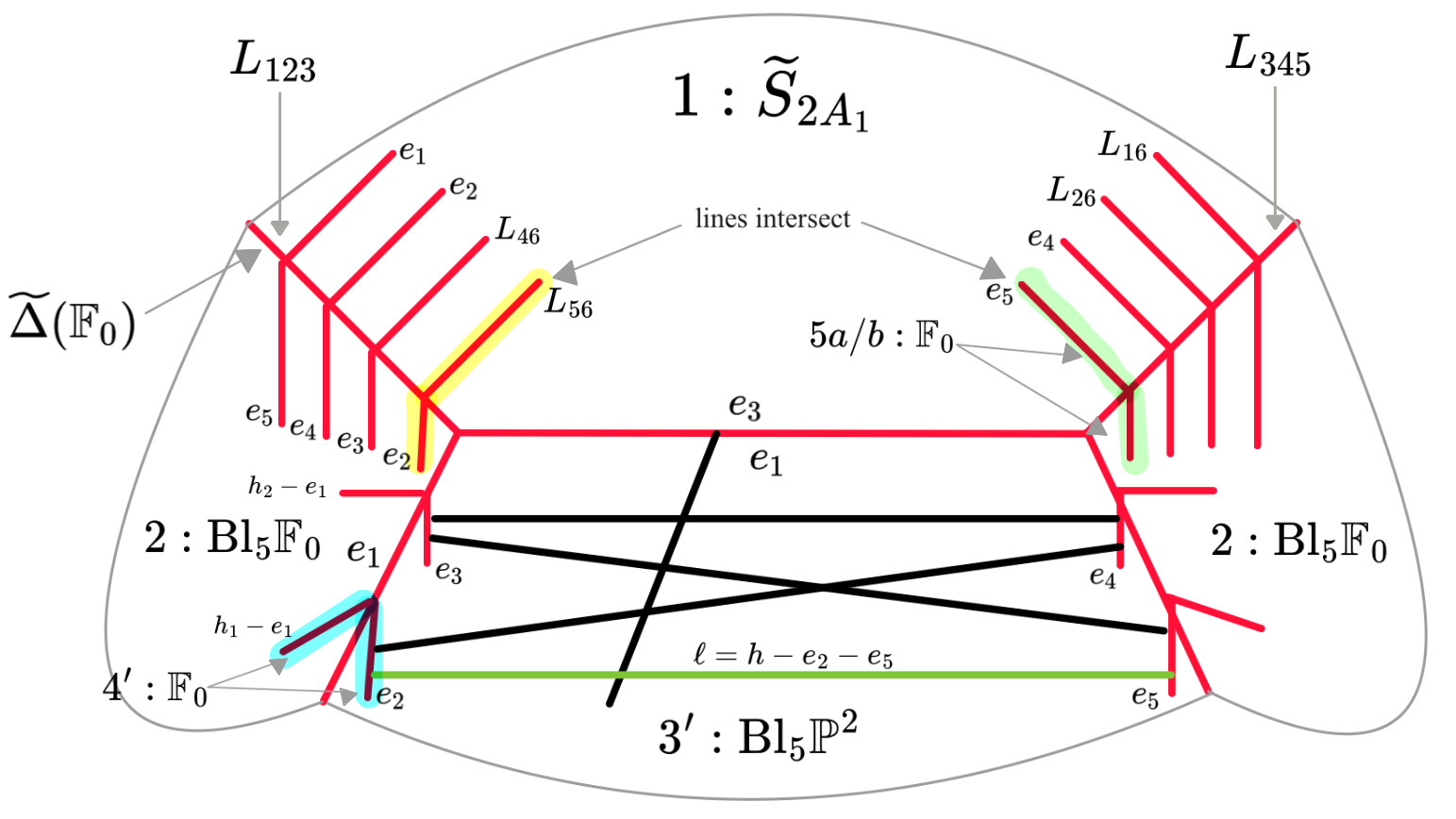}
			\caption{A type $\widetilde{E}_{2A_1}$ surface $(S_8, (b, c)B_8)$ with $\ell$ passing through exactly two nodes in the chamber Sch $\frac{1}{2} < c \leq \frac{2}{3}$.}
		\end{subfigure}
		\begin{subfigure}{0.45\textwidth}
			\centering
			\includegraphics[scale=0.13]{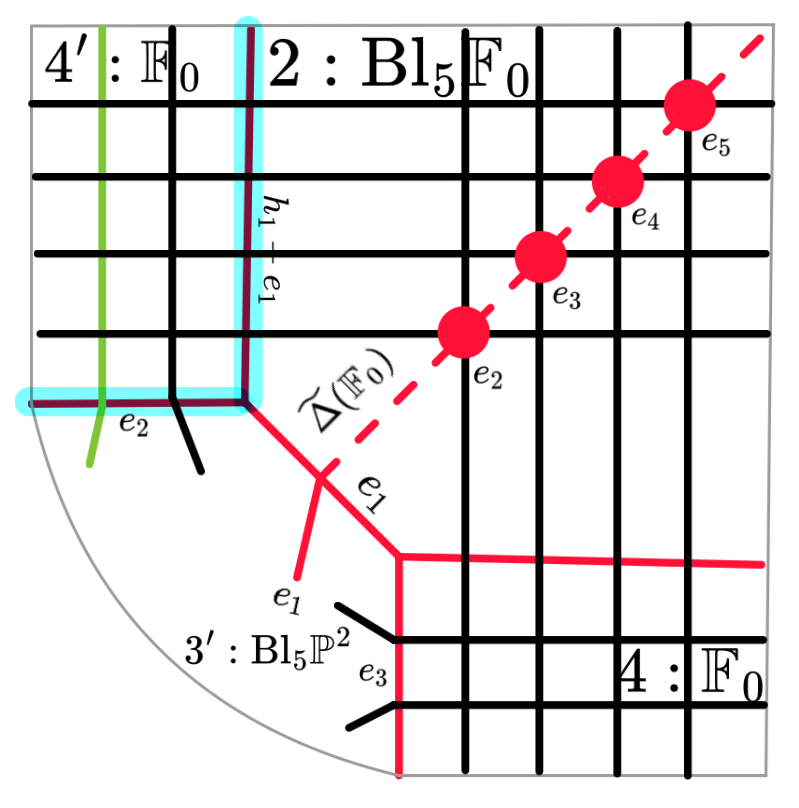}
			\caption{Another view of the gluing of the type 4 components to the type 3' component and type 2 components. The red dashed line indicates a gluing line, not a line of multiplicity 2 to avoid confusion about the number of irreducible components.}
		\end{subfigure}
		\begin{subfigure}{0.45\textwidth}
			\centering
			\includegraphics[scale=0.13]{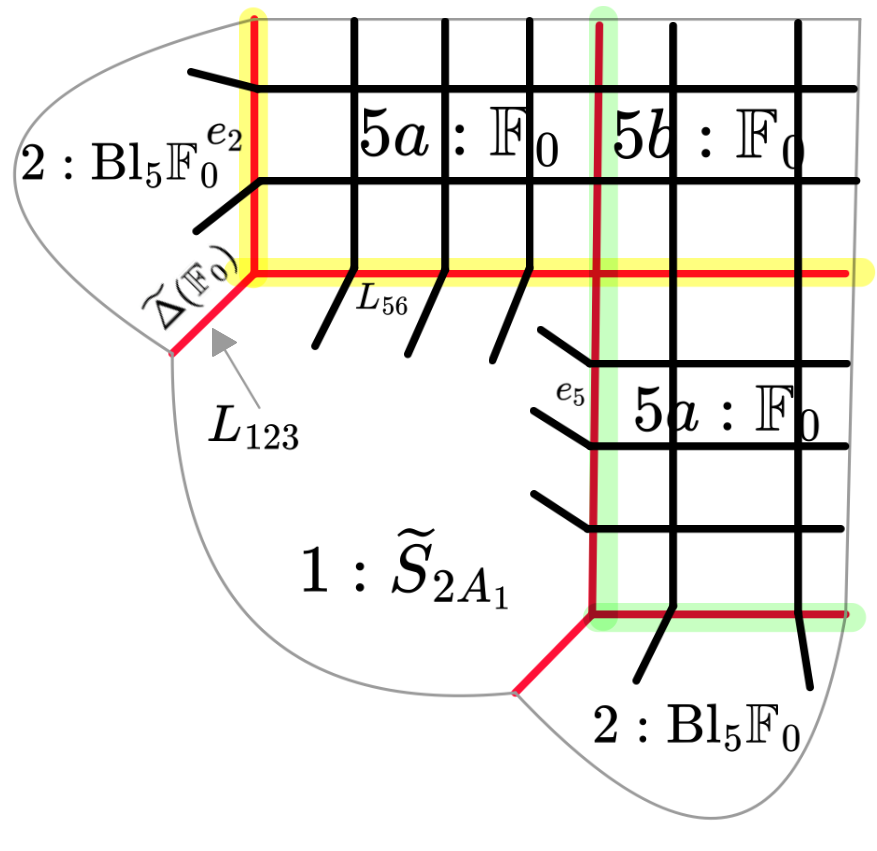}
			\caption{Intersections of a pair of type 5a components and the gluing of a type 5b component isomorphic to $\mathbb{F}_0$ along their intersection.}
		\end{subfigure}
		\caption{A type $\widetilde{E}_{2A_1}$ surface $(S_8, (b, c)B_8)$ with $\ell$ passing through exactly two nodes in the chamber Sch $\frac{1}{2} < c \leq \frac{2}{3}$, obtained as the stable replacement for these weights of the surfaces $(S_4, (b, c)B_4)$ and $(S_7, (b, c)B_7)$ of \Cref{E2A1_mult4_14c12} and  \Cref{E2A1_mult4_-b1c12}, respectively.}
		\label{E2A1_mult4_12c23}
\end{figure}

\begin{table}[H]
		\centering
		\label{tab:E2A1_mult4_12c23}
		\begin{tabular}{| c | c | c | c| c|}
			\hline
			Label & Surface & \# & Boundary lines $B$ & Gluing lines $\Delta$\\
			\hline
			\hline
			$1$ & $\widetilde{S}_{2A_1}$ & 1 & mult 1: $L_{14}, L_{15}, L_{24}, L_{25}, L_{36}, C, e_6$ & $e_1, e_2, L_{46}, L_{56}, e_4, e_5, L_{16}, L_{26}$\\
			& & & & $L_{123}, L_{345}, e_3$\\
			\hline
			$2$ & $\mathrm{Bl}_5 \mathbb{F}_0$ & 2 & mult 1: $h_1 - e_i$, $h_2 - e_i$ for $2 \leq i \leq 5$ &$\widetilde{\Delta}(\mathbb{F}_0) = h_1 + h_2 - \sum_1^5 e_i$\\
			& & & & $e_j$ for $1 \leq j\leq 5$\\
			& & & & $h_1 - e_1$, $h_2 - e_1$\\
			\hline
			$3'$ & $\mathrm{Bl}_5 \mathbb{P}^2$ & 1 & mult 1: $\ell = h - e_2 - e_5$, $h-e_2 - e_4$ & $h-e_1 - e_2 - e_3$, $h - e_1 -e_4- e_5$,\\
			& & & $h- e_3-e_4$, $h-e_3 - e_5$, $h-e_1$ & $e_j$ for $1 \leq j \leq 5$\\
			\hline
			$4$ & $\mathbb{F}_0$ & 2& mult 1: $2h_1, 4h_2$ & $h_1, h_2$\\
			\hline
			$4'$ & $\mathbb{F}_0$ & 2 & mult 1: $\ell = h_1, h_1, 4h_2$ & $h_1, h_2$\\
			\hline
			$5a$ & $\mathbb{F}_0$ & 8 & mult 1: $2h_1, 3h_2$ & $h_1, 2h_2$\\
			\hline
			$5b$ & $\mathbb{F}_0$ & 4 & mult 1: $2h_1, 2h_2$ & $h_1, h_2$\\
			\hline
		\end{tabular}
		\caption{Explicit description of the boundary lines and gluing lines of the irreducible components of the surface of \Cref{E2A1_mult4_12c23}.}
\end{table}

	\newpage
	\section{Proofs of main results}\label{S:proofofchambers}
In this section, we prove Theorems \ref{teo:main}, \ref{cor:mainmorphisms}, and \ref{cor:mainmoduli}. The proofs follow a procedure similar to that in \cite[\S 7]{schock_moduli_2024}. In particular, to prove \Cref{teo:main}, we carry out the following:

\begin{proc}[Procedure of proving \Cref{teo:main}]\label{conv:descproof}
	\noindent The basic strategy is to start at the top of a region and move left, and then down. In other words, it is a right-to-left ``scanning" approach. The new regions are polytopes dictated by the linear inequalities for ampleness and volume. Typically the new region (chamber) has at most two new edges (walls), with one edge having negative slope and the other edge being horizontal (zero slope). In a few cases, the new polytope could have three or four edges with positive slope.  Then the strategy is to start on the right, and move down. In each step, moving either right-to-left or downward, there will be a minimal $c$-value. This is considered a \textit{floor}, and we will compute all the polytope chambers above this floor before finally crossing the floor and repeating this process.
	
	In more detail, the strategy is as follows: 
	\begin{enumerate}
		\item Choose a boundary strata type for strata on $\widetilde{Y}_{(1, \dots, 1)}$ as described in \Cref{conv:types}, and then choose an irreducible component of the strata, which further determines the  marked line $\ell$ on a general surface parameterized by the chosen boundary stratum, as described in   \Cref{rem:line-choice-strata}.
		\item Fix a general surface $(S, B)$ parameterized by the chosen boundary strata type in step (1), for the universal family $\ddot{\pi} : (\ddot{Y}(E_7), B_{\ddot{\pi}}) \to \overline{Y}_{(1, \dots, 1)}$ starting with $(b_0, c_0, \dots, c_0) = (1, 1, \dots, 1)$.
		\item For each irreducible component $S'$ of $S$ and general rational weight coefficients $(b, c)$, compute the log canonical divisor
		\begin{align*}
			K_{S'} + (b,c)B|_{S'} + \Delta_{S'}
		\end{align*}
		where $\Delta_{S'}$ is the gluing locus of $S'$ to $S$.
		\item We can now identify the walls for the chamber containing $(b_0,c_0,\dots,c_0)$: It turns out that the  walls (see \Cref{S:detectingwalls} below for more details) will be given by lines (intersected with the polytope given by $b \geq c$ in the weight domain $\mathrm{Amp}$).
		Only three types of lines occur as walls:
		\begin{enumerate}
			\item Lines having strictly negative slope,
			\item zero slope, or
			\item strictly positive slope.
		\end{enumerate} 
		These lines are determined by when the log canonical divisor fails to be ample, the pair $(S, (b, c)B)$ fails to be slc, or that the volume fails to satisfy
		\begin{align*}
			\mathrm{vol}(K_S + (b,c)B) &= -b^2 + 20bc - 2b + 224c^2 - 52c + 3.
		\end{align*}
		Note that to compute the volume, we will compute the volume $(K_{S'} + B|_{S'} + \Delta_{S'})^2$ for each irreducible component $S' \subset S$.
		\item Once all walls are identified, let the \textit{new known region} be the region in $\mathrm{Amp}$ defined by 
		\begin{align*}
			\textit{new known region } = \{b \geq c\} \cap \{(b, c) \text{ strictly within walls identified}\}
		\end{align*}
		Any \textit{new known region} in our scenario will take the form of one the following regions in $\mathrm{Amp}$ with $b \geq c$ (drawn not to scale): 
		\begin{center}
			\begin{tikzpicture}[scale=2]
				\draw[thick] (-4, 0) -- (-3, 1);
				
				\draw[thick] (-3, 0.5) -- (-2, 0.5);
				
				\draw[solid] (-2, 0) -- (-1, 0);
				\draw[solid] (-1, 0) -- (-1, 1);
				\draw[thick] (-2, 0) -- (-1, 1);
				\node[rotate=45] at (-1.5, .61) {$b = c$};
				
				\draw[thick] (-0.9, 0) -- (0, 1);
				\node[rotate=45] at (-0.5, .61) {$b = c$};
				\draw[solid] (-0.9, 0) -- (0.5, 0);
				\draw[solid] (0.5, 0) -- (0, 1);
				
				\draw[solid] (0.5, 1) -- (1.5, 0);
				\draw[thick] (1.5, 0) -- (1.5, 1);
				\draw[solid] (1.5, 1) -- (0.5, 1);
				\node[rotate=-90] at (1.6, 0.5) {$b= 1$};
				
				\draw[solid] (1.75, 0) -- (2.75, 1);
				\draw[solid] (2.75, 1) -- (3.5, 1);
				\draw[solid] (3.5, 1) -- (1.75, 0);
				
				\draw[solid] (-1.5, -1.1) -- (-0.5, -0.1);
				\draw[solid] (-0.5, -0.1) -- (-0.5, -0.1);
				\draw[solid] (-0.5, -0.1) -- (-0.5, -1.1);
				\draw[solid] (-0.5, -1.1) -- (-1.5, -1.1);
				
				\draw[solid] (-0.2, -0.1) -- (0.8, -1.1);
				\draw[solid] (-0.2, -0.1) -- (1.3, -0.1);
				\draw[solid] (1.3, -0.1) -- (1.3, -1.1);
				\draw[solid] (0.8, -1.1) -- (1.3, -1.1);
				\node[rotate=-90] at (1.45, -0.55) {$b=1$};
				
				\draw[solid] (-3, -1.1) -- (-2, -0.1);
				\draw[solid] (-2, -0.1) -- (-1, -0.1);
				\draw[dashed] (-1, -0.1) -- (-3, -1.1);
				\node[rotate=32] at (-2, -0.75) {$c = \frac{b}{10} + \frac{1}{10}$};
				
				\draw[solid] (-4.5, -0.1) -- (-4.1, -1.1);
				\draw[solid] (-4.5, -0.1) -- (-2.75, -0.1);
				\draw[dashed] (-4.1, -1.1) -- (-2.75, -0.1);
				\node[rotate = 35] at (-3.4, -0.75) {$c = \frac{b}{10} + \frac{1}{10}$};
				
				\draw[solid] (1.6, -1.1) -- (2.0, -1.1);
				\draw[dashed] (2.0, -1.1) -- (3.5, -0.1);
				\node[rotate=32] at (3, -0.6) {$c = \frac{b}{10} + \frac{1}{10}$};
				\draw[solid] (1.6, -1.1) -- (2.5, -0.1);
				\draw[solid] (2.5, -0.1) -- (3.5, -0.1);
			\end{tikzpicture}
		\end{center}
		In each of the forms, when the \textit{new known region} is one-dimensional, the walls will be determined by the failure of the volume condition. When the \textit{new known region} is two-dimensional, the horizontal top walls arise when the slc condtion fails while the remaining edges are either the boundary of $\{b = c\} \cap \mathrm{Amp}$ or walls that arise when the ample condition and/or the volume condition fails. Furthermore, initially set both values \textit{old floor $c$-value} and \textit{new floor $c$-value} as $c = 1$. Now, there are three cases:
		\begin{enumerate}
			\item If the \textit{new known region} is not a horizontal line as shown above and the minimum $c$-value in the \textit{new known region} is strictly smaller than the \textit{new floor $c$-value}, then redeclare the original \textit{new floor $c$-value} as the \textit{old floor $c$-value} and this new minimum $c$-value as the \textit{new floor $c$-value}. 
			\item If the \textit{new known region} is not a horizontal line and the minimum $c$-value in the \textit{new known region} is the same as the \textit{new floor $c$-value}, then do nothing.
			\item If the \textit{new known region} is a horizontal line $c = a$ for $\alpha \leq b \leq \beta$, then redeclare the original \textit{new floor $c$-value} as the \textit{old floor $c$-value} and the value $c = a - \eta$ for $0 < \eta \ll 1$ small as the \textit{new floor $c$-value}. Furthermore, set the weights $(b, c) = (b, a - \eta)$ with $\alpha \leq b \leq \beta$ and skip step (6), i.e. move onto step (7).
		\end{enumerate}
		\item We have another three cases:
		\begin{enumerate}
			\item If the \textit{new floor $c$-value} was redeclared in the previous step and the \textit{new known region} \textbf{does not} contain the boundary $b= 1$ of $\mathrm{Amp}$, then go back to the region containing the \textit{old floor $c$-value}. Then decrease the $c$-weight of $(b, c)$ until it equals the \textit{old floor $c$-value}. 
			\item If the \textit{new floor $c$-value} was \textbf{not} redeclared in the previous step and the \textit{new known region} \textbf{does} contain the boundary $b = c$ of $\mathrm{Amp}$, then set $(b, c)$ back to the region (chamber) where the \textit{new floor $c$-value} was declared. Then decrease the $c$-weight of $(b, c)$ until it equals the \textit{new floor $c$-value}.
			\item Otherwise, let $(b, c)$ be at the left-most wall of the \textit{new known region}.
		\end{enumerate}
		\item For the weight chosen $(b, c)$ in the previous step, compute the log canonical model for each irreducible component and, if \Cref{cor:gluing} applies, glue the modified components back together to get the stable replacement model. If \Cref{cor:gluing} does not apply, perform a sequence of birational modifications until we can apply \Cref{cor:gluing} to get the stable replacement model for the weight $(b, c)$ at the wall.
		\item There are three cases:
		\begin{enumerate}
			\item If the \textit{new known region} does not contain the point $(\frac{1}{9} + \epsilon, \frac{1}{9} + \epsilon)$, then repeat steps (2) - (7) with $(S, B)$ being the stable replacement model computed in step (7), $(b_0, c_0) = (b, c)$ chosen in step (6).
			\item If the \textit{new known region} contains the point $(\frac{1}{9} + \epsilon, \frac{1}{9} + \epsilon)$, continue onto step (9).
		\end{enumerate}
		\item Start over from step (1) with a new boundary divisor type and/or choice of $\ell$; repeat this process until we exhaust all boundary divisor types and choices of $\ell$.
	\end{enumerate}
\end{proc}

\begin{rem}
A priori, this procedure will only tell us that the surfaces obtained as stable replacements at a wall will appear in the boundary for the KSBA moduli space in the respective chamber, and it is possible that there are surfaces in the boundary not detected. However, this is not the case. Indeed, suppose that $(S, B)$ is a cubic surface at the boundary of the KSBA moduli space for weights $(b, c)$. Then there must be some one-parameter family of smooth cubic surfaces degenerating to it in the central fiber. Now, after increasing the weights $(b, c)$, we will eventually hit a wall where we will have already computed how to replace the central fiber by a stable replacement. This allows us to work inductively. The base case is the weight $1$ surfaces given by \cite[Thm.~4.4]{schock_moduli_2024} (see also \Cref{teo:fullweightfibers}); the stable replacement must be one of these weight $1$ surfaces. Inductively, after applying \Cref{conv:descproof} starting from weight $1$, we see the surface $(S, B)$ must be a stable replacement for some weights $(b, c)$ that we have computed in \Cref{conv:descproof}. This argument also shows that this procedure gives all the walls in our wall-and-chamber decomposition.
\end{rem}

As the proof is very similar for each boundary type and choice of $\ell$, we follow \Cref{S:wallandchamberbytype} and only provide a proof of \Cref{E2A1_mult4} following this procedure. In other words, in step (1), we choose the boundary divisor type $\widetilde{E}_{2A_1}$ and choose $\ell$ to be the line passing through exactly two nodes in the corresponding boundary divisor of $\overline{Y}_\ell$. We further suppose we are in the substrata where the surface has exactly one copy of $E : \mathbb{P}^2$ glued along an exceptional divisor obtained by blowing up Eckardt points as in \Cref{teo:eckblowup} \textbf{not} containing $\ell$. In particular, we are in the modification of the substratum where $\ell$ does not contain an Eckardt point. Following \Cref{conv:descproof} in this case, we will compute the chambers in \Cref{E2A1_mult4} in the following order:
\begin{enumerate}
	\item Sch $\frac{2}{3} < c \leq 1$
	\item Union of $-\frac{b}{2} + 1 < c \leq \frac{2}{3}$ and Sch $\frac{1}{2} < c \leq \frac{2}{3}$ (since $\ell$ does not intersect $E : \mathbb{P}^2$, c.f. \Cref{rem:elleckcase})
	\item $\frac{1}{4} < c \leq \frac{1}{2}$, $c > -b+1$
	\item Sch $\frac{1}{4} < c \leq \frac{1}{2}$
	\item $\frac{b}{10} + \frac{1}{10} < c \leq \frac{1}{4}$, $\frac{1}{6} < c < \frac{b}{3}$
	\item $c = \frac{b}{3}$
	\item Sch $\frac{1}{6} < c \leq \frac{1}{4}$
	\item $\frac{b}{10} + \frac{1}{10} < c \leq \frac{1}{6}$, $c > -\frac{b}{3} + \frac{1}{3}$
	\item $\frac{b}{10} < c \leq \frac{1}{6}$, $c \leq -\frac{b}{3} + \frac{1}{3}$, $b \neq c$
	\item Sch $\frac{1}{9} < c \leq \frac{1}{6}$
\end{enumerate}

\subsection{Detecting walls}\label{S:detectingwalls}
Let $(S, (b, c)B)$ be a $(b, c)$-weighted stable marked cubic surface for some fixed weight $(b, c) \in \mathrm{Amp}$. By the definition of being KSBA stable (\Cref{def:KSBApair}), the pair $(S, (b, c)B)$ is slc and $K_S + (b, c)B$ is ample. Furthermore, $(S, (b, c)B)$ must also satisfy the constant volume condition, where in this case,
\begin{align}\label{eq:constantvol}
	\mathrm{vol}(K_S + (b,c)B) &= -b^2 + 20bc - 2b + 224c^2 - 52c + 3.
\end{align}
Also recall that the restriction of $K_S + (b, c)B$ to an irreducible component $S'$ of $S$ is given by
\begin{align*}
	(K_S + (b, c)B)|_{S'} &=  K_{S'} + (b, c)B' + \Delta
\end{align*}
where $B'$ is the restriction of $B$ to $S'$ and $\Delta$ is the gluing locus of $S'$ to the other irreducible components of $S$. Walls occur at values $(b', c') \in \mathrm{Amp}$ for which stability may fail for one of the the following reasons: 
\begin{enumerate}
	\item $(S, (b', c')B)$ is no longer slc, \label{stable:slc}
	\item $K_{S'} + (b', c') B' + \Delta$ is nef and semi-ample but not ample for some irreducible component $S'$ of $S$, or \label{stable:ample}
	\item $(S, (b', c')B)$ fails the constant volume condition \ref{eq:constantvol}. \label{stable:vol}
\end{enumerate}
Note that we do not need to worry about the condition that $K_S + (b', c')B$ is $\mathbb{Q}$-Cartier as all of our irreducible components are Gorenstein $\mathbb{Q}$-factorial surfaces glued along smooth curves. To check for condition (1), we can use the following lemmas:
\begin{lem}\label{lem:lineslccriteria}
	Let $D = \sum_1^n \alpha_i L_i$ be a sum of distinct lines $L_1, \dots, L_n$ with weights $\alpha_1, \dots, \alpha_n \in \mathbb{Q} \cap (0, 1]$ intersecting at a point in $X = \mathbb{A}^2$. Then the pair $(X, D)$ is log canonical if and only if $\sum_1^n \alpha_i \leq 2$.
\end{lem}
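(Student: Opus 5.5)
The plan is to reduce to a single blow-up and compute discrepancies directly. Place the common point at the origin of $X=\mathbb{A}^2$ and write $s=\sum_1^n \alpha_i$. Away from the origin the lines are disjoint smooth curves with coefficients in $(0,1]$, so $(X,D)$ is log canonical there and only the origin needs checking. Let $f\colon Y\to X$ be the blow-up of the origin with exceptional curve $E\cong\mathbb{P}^1$. Since the $L_i$ are distinct lines through the origin, their strict transforms $\widetilde L_i$ meet $E$ transversally at pairwise distinct points. Thus $\sum \widetilde L_i+E$ has simple normal crossings, and $f$ is a log resolution of $(X,D)$.

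Next I will compute the discrepancy of $E$. We have $K_Y=f^*K_X+E$, and each $L_i$ is smooth at the origin, so $f^*L_i=\widetilde L_i+E$. This gives
\begin{align*}
K_Y+\sum_1^n \alpha_i \widetilde L_i + (s-1)E = f^*(K_X+D),
\end{align*}
so the discrepancy is $a(E,X,D)=1-s$. By definition, $(X,D)$ is log canonical exactly when every coefficient of the crepant pullback on the log resolution is at most $1$. The $\widetilde L_i$ have coefficients $\alpha_i\le 1$, so the only condition comes from $E$, namely $s-1\le 1$, that is, $s\le 2$.

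The remaining point is that checking divisors on this one log resolution is enough, and this is the step needing the most care. I will cite the standard fact (Koll\'ar--Mori, Cor.~2.31) that log canonicity can be tested on any single log resolution once all boundary coefficients are at most $1$. Alternatively, I can check directly. If $s\le 2$, the pair $(Y,\sum\alpha_i\widetilde L_i+(s-1)E)$ is an snc pair with coefficients at most $1$ (taking the coefficient of $E$ to be $\max(s-1,0)$ when $s<1$, which only helps), hence log canonical. Discrepancies of further blow-ups at snc points are then $\ge -1$, because blowing up a point on two components with coefficients $a,b\le1$ gives discrepancy $1-a-b\ge -1$, and induction covers the rest. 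Conversely, if $s>2$ then $a(E,X,D)<-1$, so the pair is not log canonical. Both directions together give the lemma.
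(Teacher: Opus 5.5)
Your proposal is correct and follows the paper's proof. You blow up the common point, use $f^*L_i=\widetilde L_i+E$ and $K_Y=f^*K_X+E$, and read off the discrepancy $a(E,X,D)=1-\sum\alpha_i$, so that log canonicity is equivalent to $\sum\alpha_i\le 2$; your extra checks (distinct strict transforms meeting $E$ transversally, so $f$ is a log resolution, and the fact that one log resolution suffices to test log canonicity) only supply details the paper leaves implicit.
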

\begin{proof}
	Without loss of generality, we can assume the lines intersect at $0 \in \mathbb{A}^2$. Blowing up the origin $\pi : \tilde{X} \to X$ with exceptional divisor $e$ provides a log-resolution of $(X, D)$. For each $L_i$, we get $\pi^* L_i = \widetilde{L}_i + e$ where $\widetilde{L}_i$ denotes the strict transform of $L_i$; denote $\widetilde{D} = \sum_1^n \alpha_i \widetilde{L}_i$. Then for some $\alpha \in \mathbb{Q}$, we obtain $K_{\widetilde{X}} + \widetilde{D} = \pi^*(K_X + D) + \alpha e$. Since $K_X = 0$, we have that $K_{\widetilde{X}} = e$, and so $\alpha = 1 - \sum_1^n \alpha_i$. Thus, the pair $(X, D)$ is log canonical if and only if $\sum_1^n \alpha_i \leq 2$.
\end{proof}

\begin{lem}\label{lem:linesA1lccriteria}
	Let $X \subseteq \mathbb{P}^2$ be a quadric cone and let $D = \sum_1^n \alpha_i L_i$ be a sum of distinct lines $L_1, \dots, L_n$ with weights $\alpha_1, \dots, \alpha_n \in \mathbb{Q} \cap (0, 1]$ passing through the $A_1$ singularity. Then the pair $(X, D)$ is log canonical if and only if $\sum_1^n \alpha_i \leq 2$.
\end{lem}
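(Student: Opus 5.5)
The plan is to reduce to a computation on the minimal resolution of the quadric cone, just as in \Cref{lem:lineslccriteria}. The quadric cone $X$ is the cone over a conic, equivalently the weighted projective plane $\mathbb{P}(1,1,2)$. Its $A_1$ vertex $v$ is resolved by a single blow-up $\pi : \widetilde{X} \to X$, where $\widetilde{X} \cong \mathbb{F}_2$ and the exceptional curve is the negative section $e$ with $e^2 = -2$. The lines through $v$ are rulings of the cone, so their strict transforms $\widetilde{L}_i$ are distinct fibers of $\mathbb{F}_2 \to \mathbb{P}^1$. Each $\widetilde{L}_i$ meets $e$ transversally in one point, and these points are pairwise distinct. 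Hence $\pi$ is already a log resolution of $(X, D)$: the divisor $e + \sum \widetilde{L}_i$ is simple normal crossing.

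Next we compute the pullbacks. Since $A_1$ is a canonical (du Val) singularity, $K_{\widetilde{X}} = \pi^*K_X$; one can confirm this by adjunction, since $K_{\widetilde{X}}\cdot e = 0$ because $e^2=-2$ and $e$ is rational. Each $L_i$ is $\mathbb{Q}$-Cartier but not Cartier at $v$. Write $\pi^*L_i = \widetilde{L}_i + a e$. The condition $\pi^*L_i \cdot e = 0$ gives $1 - 2a = 0$, so $a = \tfrac12$. Therefore
\begin{align*}
K_{\widetilde{X}} + \sum_i \alpha_i \widetilde{L}_i = \pi^*(K_X + D) - \tfrac{1}{2}\Big(\sum_i \alpha_i\Big) e,
\end{align*}
and the discrepancy of $e$ is $a(e, X, D) = -\tfrac12 \sum_i \alpha_i$.

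Now apply the log canonical criterion on this log resolution. The pair $(X,D)$ is lc iff all coefficients of the boundary on $\widetilde{X}$ are at most $1$. The $\widetilde{L}_i$ have coefficients $\alpha_i \le 1$ by hypothesis. The exceptional curve $e$ has coefficient $\tfrac12\sum\alpha_i$, so we need $\tfrac12\sum \alpha_i \le 1$, i.e.\ $\sum \alpha_i \le 2$. Away from $v$ the lines are disjoint, smooth, and have coefficients at most $1$, so the pair is lc there automatically. One should note the standard fact that lc can be tested on a single log resolution. Since the boundary there is snc with coefficients at most $1$, further blow-ups only produce discrepancies $\ge -1$, so no other exceptional divisors need to be checked.

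The main point requiring care is the local structure. We must justify that the lines through the node are exactly the rulings, that they become disjoint fibers meeting $e$ transversally, and that $\pi^*L_i$ carries coefficient $\tfrac12$ along $e$ rather than $1$. This is the step that makes the threshold $2$ rather than $1$. Everything else is the same bookkeeping as in \Cref{lem:lineslccriteria}.
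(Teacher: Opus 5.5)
Your proof is correct and follows the paper's argument: blow up the vertex, use $\pi^*L_i \cdot e = 0$ to get coefficient $\tfrac{1}{2}$ along $e$, use crepancy of the $A_1$ resolution to get discrepancy $-\tfrac{1}{2}\sum_i \alpha_i$ along $e$, and compare this with $-1$. You also check explicitly that the strict transforms are distinct fibers of $\mathbb{F}_2$ meeting $e$ transversally, so a single blow-up is already a log resolution; the paper asserts this without comment.
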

\begin{proof}
	Blowing up the $A_1$ singularity $\pi : \widetilde{X} \to X$ with exceptional divisor $e$ provides a log-resolution of $(X, D)$. For each $L_i$, we get $\pi^* L_i = \widetilde{L}_i + c e$ where $\widetilde{L}_i$ denotes the strict transform of $L_i$ and $c \in \mathbb{Q}$. To find $c$, we can intersect both sides with $e$, to get $0 = 1 - 2c e$. Thus, $c = \frac{1}{2}$. Then for some $\alpha \in \mathbb{Q}$, we obtain $K_{\widetilde{X}} + \widetilde{D} = \pi^*(K_X + D) + \alpha e$ where $\widetilde{D} = \sum_1^n \alpha_i \widetilde{L}_i$.  Since resolving an $A_1$ singularity is a crepant resolution, we have $\pi^* K_{\widetilde{X}} = K_X$, and so $\alpha = -\sum_1^n \frac{\alpha_i}{2}$. Thus, the pair $(X, D)$ is log canonicalcanonical if and only if $\sum_1^n \alpha_i \leq 2$.
\end{proof}

\subsection{Proofs of \Cref{E2A1_mult4} and Theorems \ref{teo:main}, \ref{cor:mainmorphisms}, and \ref{cor:mainmoduli}}
We now carry out \Cref{conv:descproof} explicitly for \Cref{E2A1_mult4}. 

\begin{proof}[Proof of \Cref{E2A1_mult4}]\label{proof:E2A1_mult4}\hfill\\
We identify each chamber by finding all possible walls, i.e. values $(b' ,c') \in \mathrm{Amp}$ for which
\begin{enumerate}
		\item the pair $(S, (b' ,c')B)$ is no longer slc,
		\item $K_{S'} + (b', c')B' + \Delta$ is nef (and semi-ample) but not ample for some irreducible component $S'$ of $S$, $B'$ being the restriction of $B$ to $S'$, and $\Delta$ being the gluing locus of $S'$ to the other irreducible components of $S$, or
		\item the pair $(S, (b', c')B)$ fails the constant volume condition \ref{eq:constantvol}.
\end{enumerate}
For each pair $(S, (b' , c')B)$, we will check each condition on the irreducible components given by the explicit descriptions found in \Cref{S:descriptions}. In particular, we will check condition (1) by using the explicit descriptions for each irreducible component and Lemmas \ref{lem:lineslccriteria} and \ref{lem:linesA1lccriteria}. Condition (2) will be checked by an explicit computation of the log canonical divisor $K_{S'} + (b', c')B' + \Delta$ for each irreducible component $S'$ of $S$. Finally, condition (3) will be checked by computing the volume $(K_{S'} + (b', c')B' + \Delta)^2$ for each irreducible component $S'$ and checking if their sum satisfies \ref{eq:constantvol}.

\textbf{Step 0: Obtaining \Cref{E2A1_mult4_12c23}}
Let $b, c \leq 1$ and recall by \Cref{teo:eckblowup} that the surfaces $(\ddot{S}, \ddot{B})$ obtained as the fiber of $\ddot{\pi} :  (\ddot{Y}(E_7), B_{\ddot{\pi}}) \to \overline{Y}_{(1, \dots, 1)}$ is obtained by taking a fiber of $(S, B)$ of $\widetilde{\pi}$ by blowing up each Eckardt point on $(S, B)$ and attaching to the exceptional divisor a copy of $\mathbb{P}^2$ glued along a general line (see \Cref{teo:eckblowup} for more details). Thus, in this case, the surface $(\ddot{S}, \ddot{B})$ will be obtained by doing this modification to the surface $(S, B)$ of \Cref{E2A1_mult4_12c23}. By \Cref{tab:E2A1eck}, we will assume the surface has one Eckardt point in general position in the component labeled $1 : \widetilde{S}_{2A_1}$ given by the intersection of three multiplicity 1 lines. From this description, the surface $(\ddot{S}, \ddot{B})$ will have $21$ irreducible components: One component of Type $1$ isomorphic to $\mathrm{Bl}_1 \widetilde{S}_{2A_1}$ (blow up the Eckardt point in the simultaneous resolution of a cubic surface with two $A_1$ singularities). Two components of Type $2$ isomorphic to $\mathrm{Bl}_5 \mathbb{F}_0$. One component isomorphic of Type $3'$ isomorphic to $\mathrm{Bl}_1 (\mathrm{Bl}_5 \mathbb{P}^2)$ (where the extra blow up is from the Eckardt point containing $\ell$). Two components of Type $4$ isomorphic to $\mathbb{F}_0$. Two components of Type $4'$ isomorphic to $\mathbb{F}_0$. Eight components of Type $5a$ isomorphic to $\mathbb{F}_0$. Four components of Type $5b$ isomorphic to $\mathbb{F}_0$. One component of type $E$ isomorphic to $\mathbb{P}^2$ attached along a general line to the exceptional divisor of the blow up $1: \mathrm{Bl}_1 \widetilde{S}_{2A_1}$.

Let us first show that $(\ddot{S}, (b, c)\ddot{B})$ satisfies condition (1). By the explicit description of the lines on each irreducible component in \Cref{E2A1_mult4_12c23}, \Cref{tab:E2A1_mult4_12c23}, and \Cref{teo:eckblowup} we will have at most two lines intersecting at a point. Therefore, by \Cref{lem:lineslccriteria}, for each irreducible component $\ddot{S}' \subseteq \ddot{S}$ with gluing divisor $\ddot{\Delta}$, the pair $(\ddot{S}', \ddot{B}|_{\ddot{S}'} + \ddot{\Delta})$ will be log canonical. Now to show conditions (2) and (3), let us compute the log canonical divisor for each irreducible component.

\textbf{Type $1$ : $\mathrm{Bl}_1 \widetilde{S}_{2A_1}$ (one copy)}  Let us first identify $\widetilde{S}_{2A_1}$ as $\mathrm{Bl}_6 \mathbb{P}^2$ where the six points are in special position where there are exactly two pairwise intersecting sets of three colinear points (see \Cref{tab:specialposcubics}). So say we are blowing up the points $p_1, \dots, p_6$ where the set of points $\{p_1, p_2, p_3\}$ and $\{p_3, p_4, p_5\}$ are colinear. For some notation, let 
$$L_{ij} - h - e_i - e_j$$
denote the strict transform of the line passing through points $p_i$ and $p_j$, where $h$ denotes the class of the pullback of the hyperplane divisor and $e_i$ denotes the class of the exceptional divisor of the blow up of $p_i$. Similarly, let 
$$L_{ijk} = h - e_i - e_j - e_k$$
denote the strict transform of the line passing through points $p_i, p_j, p_k$. Furthermore, let 
$$C = 2h - e_1 - e_2 - e_4 - e_5 - e_6$$
denote the strict transform of the unique conic passing through the five points $p_1, p_2, p_4, p_5, p_6$ in general position. Then by \Cref{tab:E2A1_mult4_12c23}, the gluing divisor $\Delta_{\widetilde{S}_{2A_1}}$ is then given by
\begin{align*}
	\Delta_{\widetilde{S}_{2A_1}} &= L_{123} + L_{345} + (e_1 + e_2 + L_{46} + L_{56}) + (e_4 + e_5 + L_{16} + L_{26}) + e_3,
\end{align*}
and the boundary $B_{\widetilde{S}_{2A_1}}$ is given by
\begin{align*}
	B_{\widetilde{S}_{2A_1}} = B|_{\widetilde{S}_{2A_1}} = L_{14} + L_{15} + L_{24} + L_{25} + L_{36} + C + e_6.
\end{align*}
Snce $\ell$ ($b$-weighted line) is not contained in $\widetilde{S}_{2A_1}$, we have 
\begin{align*}
	(b, c)B_{\widetilde{S}_{2A_1}} = cB_{\widetilde{S}_{2A_1}} = c( L_{14} + L_{15} + L_{24} + L_{25} + L_{36} + C + e_6).
\end{align*}
We now blow up the Eckardt point $p_7$ in general position giving us $\mathrm{Bl}_1 \widetilde{S}_{2A_1}$ with exceptional divisor $e_7$. Then since we are attaching the component $E : \mathbb{P}^2$ along $e_7$ and none of the lines in $\Delta_{\widetilde{S}_{2A_1}}$ intersects $p_7$, the gluing divisor $\Delta_{\mathrm{Bl}_1 \widetilde{S}_{2A_1}}$ of $\mathrm{Bl}_1 \widetilde{S}_{2A_1}$ is given by
\begin{align*}
	\Delta_{\mathrm{Bl}_1 \widetilde{S}_{2A_1}} = L_{123} + L_{345} + (e_1 + e_2 + L_{46} + L_{56}) + (e_4 + e_5 + L_{16} + L_{26}) + e_3 + e_7.
\end{align*}
On the other hand, by the proof of \Cref{pro:cubiceck}(3) found in \cite[Prop.~5.9]{schock_moduli_2024}, we can suppose that that the three multiplicity 1 lines $L_{14}, L_{25}$, and $L_{36}$ intersect $p_7$ to give the Eckardt point. Then 
\begin{align*}
	(b, c)B_{\mathrm{Bl}_1 \widetilde{S}_{2A_1}} = c(\widetilde{L}_{14} + L_{15} + L_{24} + \widetilde{L}_{25} + \widetilde{L}_{36} + C + e_6)
\end{align*}
where $\widetilde{L}_{14}$, $\widetilde{L}_{25}$, and $\widetilde{L}_{36}$ are the strict transforms of $L_{14}, L_{25}$, and $L_{36}$, respectively. Furthermore, since the other lines do not intersect $p_7$, we slightly abuse notation and denote them the same we did in $\widetilde{S}_{2A_1}$. Thus, the log canonical divisor $K_{\mathrm{Bl}_1 \widetilde{S}_{2A_1}} + cB_{\mathrm{Bl}_1 \widetilde{S}_{2A_1}} + \Delta_{\mathrm{Bl}_1 \widetilde{S}_{2A_1}}$ is given by
\begin{align*}
	K_{\mathrm{Bl}_1 \widetilde{S}_{2A_1}} + cB_{\mathrm{Bl}_1 \widetilde{S}_{2A_1}} + \Delta_{\mathrm{Bl}_1 \widetilde{S}_{2A_1}} &= (7c+3)h - 3ce_1 - 3ce_2 - ce_3 - 3ce_4 - 3ce_5 - (c+3)e_6 - (3c - 2)e_7.
\end{align*}
We claim the log canonical divisor is ample for $c > \frac{2}{3}$. Indeed, since the Eckardt point was in general position, by \Cref{teo:nefcones}, the ample cone of $\mathrm{Bl}_1 \widetilde{S}_{A_1}$ is given by divisors $L = ah - \sum_1^7 b_i e_i$ such that $b_1, \dots, b_7 > 0$, $a > b_1 + b_2 + b_3, b_3 + b_4 + b_5$, and $a> b_i + b_j$ for the remaining points $p_i, p_j$ in general position. Applying this criterion to $K_{\mathrm{Bl}_1 \widetilde{S}_{2A_1}} + cB_{\mathrm{Bl}_1 \widetilde{S}_{2A_1}} + \Delta_{\mathrm{Bl}_1 \widetilde{S}_{2A_1}}$, we see that we will fail ampleness first along $e_7$ when $c = \frac{2}{3}$. On the other hand, in regards to condition (3) we have the volume of the log canonical divisor in given by
\begin{align}\label{eqn:Bl1S2A1_23c1}
	(K_{\mathrm{Bl}_1 \widetilde{S}_{2A_1}} + cB_{\mathrm{Bl}_1 \widetilde{S}_{2A_1}} + \Delta_{\mathrm{Bl}_1 \widetilde{S}_{2A_1}})^2 &= 2c^2 + 42c - 4.
\end{align}

\textbf{Type 2: $\mathrm{Bl}_5 \mathbb{F}_0$ (two copies)}
Since there are no Eckardt points in this component, we can use the description of the component $2 : \mathrm{Bl}_2 \mathbb{F}_0$ in \Cref{E2A1_mult4_12c23}. From \Cref{tab:E2A1_mult4_12c23}, we have the gluing divisor $\Delta_{\mathrm{Bl}_5 \mathbb{F}_0}$ is given by
\begin{align*}
	\Delta_{\mathrm{Bl}_5 \mathbb{F}_0} &= (h_1 - e_1) + (h_2 - e_1) + (e_1 + \cdots + e_5) + (h_1 + h_2 - e_1 - \cdots - e_5).
\end{align*}
where $h_1, h_2$ denote the classes of the two rulings and $e_1, \dots, e_5$ denotes the classes of the exceptional divisors. Now since $\ell$ ($b$-weighted line) is not contained in this $\mathrm{Bl}_5 \mathbb{F}_0$ component, we have
\begin{align*}
	(b, c)B_{\mathrm{Bl}_5 \mathbb{F}_0} = cB_{\mathrm{Bl}_5 \mathbb{F}_0} = 4ch_1 + 4ch_2 - 2c(e_2 + \cdots + e_5).
\end{align*}
Then the log canonical divisor $K_{\mathbb{F}_0} + cB_{\mathbb{F}_0} + \Delta$ is given by
\begin{align*}
	K_{\mathrm{Bl}_5 \mathbb{F}_0} + cB_{\mathrm{Bl}_5 \mathbb{F}_0} + \Delta_{\mathrm{Bl}_5 \mathbb{F}_0} &= -2h_1 - 2h_2 + (e_1 + \cdots + e_5) + cB_{\mathrm{Bl}_5 \mathbb{F}_0} + \Delta_{\mathrm{Bl}_5 \mathbb{F}_0}\\
	&= 4ch_1 + 4ch_2 - e_1 - (2c-1)(e_2 + \cdots + e_5).
\end{align*}
In regards to condition (2), we can see that $K_{\mathrm{Bl}_5 \mathbb{F}_0} + cB_{\mathrm{Bl}_5 \mathbb{F}_0} + \Delta$ is ample for $c > \frac{1}{2}$. Indeed, suppose $c > \frac{1}{2}$ and let $L = a_1h_1 + a_2h_2 - \sum_1^5 m_i e_i$ be the class of an effective irreducible divisor. Then it must be that $a_1, a_2 \geq 0$ and when $a_1 = a_2 = 0$, exactly one $m_k = -1$ while $m_j = 0$ for $j \neq k$. Intersecting with $K_{\mathrm{Bl}_5 \mathbb{F}_0} + cB_{\mathrm{Bl}_5 \mathbb{F}_0} + \Delta_{\mathrm{Bl}_5 \mathbb{F}_0}$, we obtain
\begin{align*}
	(K_{\mathrm{Bl}_5 \mathbb{F}_0} + cB_{\mathrm{Bl}_5 \mathbb{F}_0} + \Delta_{\mathrm{Bl}_5 \mathbb{F}_0} \cdot L) &= 4c(a_1 + a_2) - m_1 - (2c-1)\sum_2^5 m_i.
\end{align*}
Now using the assumptions that $c > \frac{1}{2}$ and $a_1, a_2 \geq 0$,
\begin{align*}
	4c(a_1 + a_2) - m_1 - (2c-1)\sum_2^5 m_i &> 2(a_1 + a_2) - m_1 - (2c-1)\sum_2^5 m_i\\
	&\geq -m_1 - (2c-1) \sum_2^5 m_i.
\end{align*}
Now the last inequality is an equality if and only if $a_1 = a_2 = 0$. In this case, exactly one $m_k = -1$ while $m_j = 0$ for $j \neq k$, thus
\begin{align*}
	-m_1 - (2c-1) \sum_2^5 m_i = \begin{cases}
		1 & k = 1\\
		2c-1 & \text{otherwise}
	\end{cases}.
\end{align*}
Since $c > \frac{1}{2}$, we have $-m_1 - (2c-1) \sum_2^5 m_i > 0$ and
\begin{align*}
	(K_{\mathrm{Bl}_5 \mathbb{F}_0} + cB_{\mathrm{Bl}_5 \mathbb{F}_0} + \Delta_{\mathrm{Bl}_5 \mathbb{F}_0} \cdot L) > 0.
\end{align*}
Therefore, the log canonical divisor $K_{\mathrm{Bl}_5 \mathbb{F}_0} + cB_{\mathrm{Bl}_5 \mathbb{F}_0} + \Delta_{\mathrm{Bl}_5 \mathbb{F}_0}$ fails to be ample first when $c = \frac{1}{2}$ as
\begin{align*}
	(K_{\mathrm{Bl}_5 \mathbb{F}_0} + cB_{\mathrm{Bl}_5 \mathbb{F}_0} + \Delta \cdot e_i) = 2c-1
\end{align*}
for each $i = 2, \dots, 5$. So when $c = \frac{1}{2}$, the log canonical model of components of Type $2 : \mathrm{Bl}_5 \mathbb{F}_0$ will be given by a blow-down morphism $\mathrm{Bl}_5 \mathbb{F}_0 \to \mathrm{Bl}_1 \mathbb{F}_0$ where $e_2, \dots, e_5$ contracts to a smooth point. On the other hand, in regards to condition (3), the volume of the log canonical divisor is given by
\begin{align}\label{eqn:Bl5F0_12c23}
	(K_{\mathrm{Bl}_5 \mathbb{F}_0} + cB_{\mathrm{Bl}_5 \mathbb{F}_0} + \Delta)^2 &= 16c^2 + 16c - 5.
\end{align}

\textbf{Type $3'$: $\mathrm{Bl}_5 \mathbb{P}^2$ (one copy)}
From the description of the component $3' : \mathrm{Bl}_5 \mathbb{P}^2$ in \Cref{E2A1_mult4_12c23}, we have the gluing divisor $\Delta_{\mathrm{Bl}_5 \mathbb{P}^2}$ is given by
\begin{align*}
	\Delta_{\mathrm{Bl}_5 \mathbb{P}^2} &= (h - e_1 - e_2 - e_3) + (h - e_1 - e_4 - e_5) + (e_1 + \cdots + e_5)
\end{align*}
where $h$ denotes the class of the pullback of the hyperplane divisor in $\mathbb{P}^2$ and $e_1, \dots, e_5$ denotes the classes of the exceptional divisors. We see that $\ell$ ($b$-weighted line) is indeed contained in this component, so
\begin{align*}
	(b, c)B_{\mathrm{Bl}_5 \mathbb{P}^2} &=  (b + 4c)h - (b+c)(e_2 + e_4) - 2c(e_3 + e_5) - ce_1.
\end{align*}
Then the log canonical divisor $K_{\mathrm{Bl}_5 \mathbb{P}^2} + (b, c)B_{\mathrm{Bl}_5 \mathbb{P}^2} + \Delta_{\mathrm{Bl}_5 \mathbb{P}^2}$ is given by
\begin{align*}
	K_{\mathrm{Bl}_5 \mathbb{P}^2} + (b, c)B_{\mathrm{Bl}_5 \mathbb{P}^2} + \Delta_{\mathrm{Bl}_5 \mathbb{P}^2} &= -3h + (e_1 + \cdots + e_5) + (b, c)B_{\mathrm{Bl}_5 \mathbb{P}^2} + \Delta_{\mathrm{Bl}_5 \mathbb{P}^2}\\
	&= (b + 4c -1) h - ce_1 - (b+ c - 1)(e_2 + e_4) - (2c-1)(e_3 + e_5).
\end{align*}
In regards to condition (2), we claim that $K_{\mathrm{Bl}_5 \mathbb{P}^2} + (b, c)B_{\mathrm{Bl}_5 \mathbb{P}^2} + \Delta_{\mathrm{Bl}_5 \mathbb{P}^2}$ is ample for $c > \frac{1}{2}$. Indeed, let $L = ah - \sum_1^5 m_i e_i$ be the class of an effective irreducible divisor. Similar to the argument for Type $2$ above, we have $a \geq 0$ and when $a = 0$, exactly one $m_k = -1$ while $m_j = 0$ for $j \neq k$. Intersecting with $K_{\mathrm{Bl}_5 \mathbb{P}^2} + (b, c)B_{\mathrm{Bl}_5 \mathbb{P}^2} + \Delta_{\mathrm{Bl}_5 \mathbb{P}^2}$, we obtain
\begin{align*}
	(K_{\mathrm{Bl}_5 \mathbb{P}^2} + (b, c)B_{\mathrm{Bl}_5 \mathbb{P}^2} + \Delta_{\mathrm{Bl}_5 \mathbb{P}^2} \cdot L) &= (b + 4c - 1)a - cm_1 - (b + c - 1)(m_2 + m_4) - (2c-1)(m_3 + m_5).
\end{align*}
Using the assumptions that $c > \frac{1}{2}$ and $a \geq 0$,
\begin{align*}
	&(b + 4c - 1)a - cm_1 - (b + c - 1)(m_2 + m_4) - (2c-1)(m_3 + m_5)\\
	&> (b +1)a - cm_1 - (b+c-1)(m_2 + m_4) - (2c-1)(m_3 + m_5)\\
	&\geq - cm_1 - (b+c-1)(m_2 + m_4) - (2c-1)(m_3 + m_5).
\end{align*}
Now the last inequality is an equality if and only if $a = 0$. In this case, exactly one $m_k = -1$ while $m_j = 0$ for $j \neq k$, thus
\begin{align*}
	- cm_1 - (b+c-1)(m_2 + m_4) - (2c-1)(m_3 + m_5) &= \begin{cases}
		c & k = 1\\
		b+c-1 & k = 2, 4\\
		2c-1 & k = 3, 5
	\end{cases},
\end{align*}
Since $b \geq c > \frac{1}{2}$, we have $- cm_1 - (b+c-1)(m_2 + m_4) - (2c-1)(m_3 + m_5) > 0$ and
\begin{align*}
	K_{\mathrm{Bl}_5 \mathbb{P}^2} + (b, c)B_{\mathrm{Bl}_5 \mathbb{P}^2} + \Delta_{\mathrm{Bl}_5 \mathbb{P}^2} \cdot L) > 0.
\end{align*}
Therefore, the log canonical divisor $K_{\mathrm{Bl}_5 \mathbb{P}^2} + (b, c)B_{\mathrm{Bl}_5 \mathbb{P}^2} + \Delta_{\mathrm{Bl}_5 \mathbb{P}^2}$ fails to be ample first when $c = \frac{1}{2}$ as
\begin{align*}
	(K_{\mathrm{Bl}_5 \mathbb{P}^2} + (b, c)B_{\mathrm{Bl}_5 \mathbb{P}^2} + \Delta_{\mathrm{Bl}_5 \mathbb{P}^2} \cdot e_i) = 2c-1
\end{align*}
for $i = 3, 5$. So when $c = \frac{1}{2}$, the Type $3': \mathrm{Bl}_5 \mathbb{P}^2$ component will get contracted via the blow-down morphism $\mathrm{Bl}_5 \mathbb{P}^2 \to \mathrm{Bl}_3 \mathbb{P}^2$ where $e_3, e_5$ contracts to a smooth point. On the other hand, in regards to condition (3), the volume of the log canonical divisor is given by
\begin{align}\label{eqn:Bl5P2_12c23}
	(K_{\mathrm{Bl}_5 \mathbb{P}^2} + (b, c)B_{\mathrm{Bl}_5 \mathbb{P}^2} + \Delta_{\mathrm{Bl}_5 \mathbb{P}^2})^2 &= -b^2 +4bc +2b + 5c^2 + 4c - 3.
\end{align}

\textbf{Type $4$: $\mathbb{F}_0$ (two copies)}
From the description of the component $4 : \mathbb{F}_0$ in \Cref{E2A1_mult4_12c23} and \Cref{tab:E2A1_mult4_12c23}, we have the gluing divisor $\Delta_{\mathbb{F}_0}$ is given by
\begin{align*}
	\Delta_{\mathbb{F}_0} = h_1 + h_2
\end{align*}
where $h_1, h_2$ denotes the classes of the two rulings. Now since $\ell$ ($b$-weighted line) is not contained in this $\mathbb{F}_0$ component, we have
\begin{align*}
	(b, c)B_{\mathbb{F}_0}  = cB_{\mathbb{F}_0} = 2ch_1 + 4ch_2.
\end{align*}
Then the log canonical divisor $K_{\mathbb{F}_0} + cB_{\mathbb{F}_0} + \Delta_{\mathbb{F}_0}$ is given by
\begin{align*}
	K_{\mathbb{F}_0} + cB_{\mathbb{F}_0} + \Delta_{\mathbb{F}_0} &= (2c - 1)h_1 + (4c - 1)h_2.
\end{align*}
In regards to condition (2), we see that $K_{\mathbb{F}_0} + B_{\mathbb{F}_0} + \Delta_{\mathbb{F}_0}$ first fails to be ample when $c = \frac{1}{2}$ as
\begin{align*}
	(K_{\mathbb{F}_0} + cB_{\mathbb{F}_0} + \Delta_{\mathbb{F}_0} \cdot h_2) = 2c-1.
\end{align*}
Thus, when $c = \frac{1}{2}$, components of Type $4 : \mathbb{F}_0$ will get contracted via the projection morphism $\mathbb{F}_0 \to \mathbb{P}^1$ where the ruling $h_2$ contracts. On the other hand, in regards to condition (3), the volume of the log canonical divisor is given by
\begin{align}\label{eqn:4F0_12c23}
	(K_{\mathbb{F}_0} + cB_{\mathbb{F}_0} + \Delta_{\mathbb{F}_0})^2 &= 16c^2 - 12c + 2.
\end{align}

\textbf{Type $4'$: $\mathbb{F}_0$ (two copies)}
From the description of the component $4' : \mathbb{F}_0$ in \Cref{E2A1_mult4_12c23}, we have the gluing divisor $\Delta_{\mathbb{F}_0}$ is given by
\begin{align*}
	\Delta_{\mathbb{F}_0} = h_1 + h_2
\end{align*}
where $h_1, h_2$ denotes the classes of the two rulings. Now since $\ell$ ($b$-weighted line) is contained in this $\mathbb{F}_0$ component, we have
\begin{align*}
	(b, c)B_{\mathbb{F}_0} = (b+c)h_1 + 4ch_2.
\end{align*}
Then the log canonical divisor $K_{\mathbb{F}_0} + (b, c)B_{\mathbb{F}_0} + \Delta_{\mathbb{F}_0}$ is given by
\begin{align*}
	K_{\mathbb{F}_0} + (b,c)B_{\mathbb{F}_0} + \Delta_{\mathbb{F}_0} &= (b+c- 1)h_1 + (4c - 1)h_2.
\end{align*}
In regards to condition (2), we see that $K_{\mathbb{F}_0} + B_{\mathbb{F}_0} + \Delta_{\mathbb{F}_0}$ first fails to be ample when $c = -b+1$ as
\begin{align*}
	(K_{\mathbb{F}_0} + (b,c)B_{\mathbb{F}_0} + \Delta_{\mathbb{F}_0} \cdot h_2) = b+c-1.
\end{align*}
Thus, when $c = -b + 1$, components of Type $4' : \mathbb{F}_0$ will get contracted via the projection morphism $\mathbb{F}_0 \to \mathbb{P}^1$ where the ruling $h_2$ contracts. On the other hand, in regards to condition (3), the volume of the log canonical divisor is given by
\begin{align}\label{eqn:4'F0_12c23}
	(K_{\mathbb{F}_0} + (b,c)B_{\mathbb{F}_0} + \Delta_{\mathbb{F}_0})^2 &= 8bc -2b+8c^2 -10c + 2.
\end{align}

\textbf{Type $5a$: $\mathbb{F}_0$ (eight copies)}
From the description of the component $5a : \mathbb{F}_0$ in \Cref{E2A1_mult4_12c23}, we have the gluing divisor $\Delta_{\mathbb{F}_0}$ is given by
\begin{align*}
	\Delta_{\mathbb{F}_0} = h_1 + 2h_2
\end{align*}
where $h_1, h_2$ denotes the classes of the two rulings. Now since $\ell$ ($b$-weighted line) is not contained in this $\mathbb{F}_0$ component, we have
\begin{align*}
	(b, c)B_{\mathbb{F}_0}  = cB_{\mathbb{F}_0} = 2ch_1 + 3ch_2.
\end{align*}
Then the log canonical divisor $K_{\mathbb{F}_0} + cB_{\mathbb{F}_0} + \Delta_{\mathbb{F}_0}$ is given by
\begin{align*}
	K_{\mathbb{F}_0} + cB_{\mathbb{F}_0} + \Delta_{\mathbb{F}_0} &= (2c - 1)h_1 + 3ch_2.
\end{align*}
In regards to condition (2), we see that $K_{\mathbb{F}_0} + B_{\mathbb{F}_0} + \Delta_{\mathbb{F}_0}$ first fails to be ample when $c = \frac{1}{2}$ as
\begin{align*}
	(K_{\mathbb{F}_0} + cB_{\mathbb{F}_0} + \Delta_{\mathbb{F}_0} \cdot h_2) = 2c-1.
\end{align*}
Thus, when $c = \frac{1}{2}$, components of Type $5a : \mathbb{F}_0$ will get contracted via the projection morphism $\mathbb{F}_0 \to \mathbb{P}^1$ where the ruling $h_2$ contracts. On the other hand, in regards to condition (3), the volume of the log canonical divisor is given by
\begin{align}\label{eqn:5aF0_12c23}
	(K_{\mathbb{F}_0} + cB_{\mathbb{F}_0} + \Delta_{\mathbb{F}_0})^2 &= 12c^2 - 6c.
\end{align}

\textbf{Type $5b$: $\mathbb{F}_0$ (four copies)}
From the description of the component $5b : \mathbb{F}_0$ in \Cref{E2A1_mult4_12c23}, we have the gluing divisor $\Delta_{\mathbb{F}_0}$ is given by
\begin{align*}
	\Delta_{\mathbb{F}_0} = h_1 + h_2
\end{align*}
where $h_1, h_2$ denotes the classes of the two rulings. Now since $\ell$ ($b$-weighted line) is not contained in this $\mathbb{F}_0$ component, we have
\begin{align*}
	(b, c)B_{\mathbb{F}_0}  = cB_{\mathbb{F}_0} = 2ch_1 + 2ch_2.
\end{align*}
Then the log canonical divisor $K_{\mathbb{F}_0} + cB_{\mathbb{F}_0} + \Delta_{\mathbb{F}_0}$ is given by
\begin{align*}
	K_{\mathbb{F}_0} + cB_{\mathbb{F}_0} + \Delta_{\mathbb{F}_0} &= (2c - 1)h_1 + (2c-1)h_2.
\end{align*}
In regards to condition (2), we see that $K_{\mathbb{F}_0} + B_{\mathbb{F}_0} + \Delta_{\mathbb{F}_0}$ first fails to be ample when $c = \frac{1}{2}$ as
\begin{align*}
	(K_{\mathbb{F}_0} + cB_{\mathbb{F}_0} + \Delta_{\mathbb{F}_0} \cdot h_i) = 2c-1
\end{align*}
for $i = 1, 2$. Thus, when $c = \frac{1}{2}$, components of Type $5a : \mathbb{F}_0$ will get contracted to a point $\mathbb{F}_0 \to \{*\}$. On the other hand, in regards to condition (3), the volume of the log canonical divisor is given by
\begin{align}\label{eqn:5bF0_12c23}
	(K_{\mathbb{F}_0} + cB_{\mathbb{F}_0} + \Delta_{\mathbb{F}_0})^2 &= 8c^2 - 8c +2.
\end{align}

\textbf{Type $E$: $\mathbb{P}^2$ (one copy)}
This component isomorphic to $\mathbb{P}^2$ is glued along a general line to the exceptional divisor of the blow up $1 : \mathrm{Bl}_1 \widetilde{S}_{2A_1}$. Therefore, the gluing divisor is given by $\Delta_{\mathbb{P}^2} = h$ where $h$ denotes the class of the hyperplane divisor. Furthermore, using the explicit description of this component in \Cref{teo:eckblowup}, we have
\begin{align*}
	(b, c)B_{\mathbb{P}^2} = cB_{\mathbb{P}^2} = 3ch.
\end{align*}
Thus, the log canonical-canonical divisor $K_{\mathbb{P}^2} + cB_{\mathbb{P}^2} + \Delta_{\mathbb{P}^2}$ is given by
\begin{align*}
	K_{\mathbb{P}^2} + cB_{\mathbb{P}^2} + \Delta_{\mathbb{P}^2} &= (3c-2)h.
\end{align*}
In regards to condition (2), the log canonical divisor will first fail to be ample when $c = \frac{2}{3}$, where this component will get contracted to a point $\mathbb{P}^2 \to \{*\}$. On the other hand, in regards to condition (3), the volume of the log canonical divisor is given by
\begin{align}\label{eqn:EP2_12c23}
	(K_{\mathbb{P}^2} + cB_{\mathbb{P}^2} + \Delta_{\mathbb{P}^2})^2 &= 9c^2 - 12c + 4.
\end{align}

We have now computed the log canonical divisor and its volume for each irreducible component. Adding up the volumes in each irreducible component given in \ref{eqn:Bl1S2A1_23c1}, \ref{eqn:Bl5F0_12c23}, \ref{eqn:Bl5P2_12c23}, \ref{eqn:4F0_12c23}, \ref{eqn:4'F0_12c23}, \ref{eqn:5aF0_12c23}, \ref{eqn:5bF0_12c23}, and \ref{eqn:EP2_12c23}, we obtain
\begin{align*}
	-b^2 + 20bc -2b + 224c^2 - 52c + 3,
\end{align*}
so the volume condition (condition (3)) is satisfied. Thus, we will first hit a wall at $c = \frac{2}{3}$ and we obatin the chamber labeled Sch $\frac{2}{3} < c \leq 1$ in \Cref{E2A1_mult4}. When $c = \frac{2}{3}$, the component $E : \mathbb{P}^2$ will contract contract to a point and the component $1: \mathrm{Bl}_1 \widetilde{S}_{2A_1}$ will contract via the blow-down $\mathrm{Bl}_1 \widetilde{S}_{2A_1} \to \widetilde{S}_{2A_1}$. Now, one can check that this is compatible with the reamining components by \Cref{cor:gluing}, and we obtain the surface of \Cref{E2A1_mult4_12c23}.

\textbf{Step 1: Obtaining \Cref{E2A1_mult4_-b1c12}.} Let $(S, B)$ be the surface of \Cref{E2A1_mult4_12c23} and fix $(b, c)$ so that $c \leq \frac{2}{3}$. From this description of $(S, B)$, there are $20$ irreducible components: One component of Type $1$ isomorphic to $\widetilde{S}_{2A_1}$ (the simultaneous resolution of a cubic surface with two $A_1$ singularities). Two components of Type $2$ isomorphic to $\mathrm{Bl}_5 \mathbb{F}_0$. One component isomorphic of Type $3'$ isomorphic to $\mathrm{Bl}_5 \mathbb{P}^2$. Two components of Type $4$ isomorphic to $\mathbb{F}_0$. Two components of Type $4'$ isomorphic to $\mathbb{F}_0$. Eight components of Type $5a$ isomorphic to $\mathbb{F}_0$. Four compoments of Type $5b$ isomorphic to $\mathbb{F}_0$.

To show $(S, (b, c)B)$ satisfies condition (1), we can use the explicit the description of the lines in each irreducible component in \Cref{E2A1_mult4_12c23} and \Cref{tab:E2A1_mult4_12c23} guaranteeing us that the at most three lines are intersecting at a point (one Eckardt point in the component $1 : \widetilde{S}_{2A_1}$). Therefore, by \Cref{lem:lineslccriteria}, for each irreducible component $S' \subseteq S$ with gluing divisor $\Delta_{S'}$, the pair $(S' : B|_{S'} + \Delta)$ will be log canonical. Now to show conditions (2) and (3), we follow the same technique as in \textbf{Step 0} above by computing the log canonical divisor for each irreducible component.

\textbf{Type $1$: $\widetilde{S}_{2A_1}$ (one copy)} 
From the computations done in \textbf{Type $1$: $\mathrm{Bl}_1 \widetilde{S}_{2A_1}$} for \textbf{Step $0$} above, the log canonical divisor for this component is given by
\begin{align*}
	K_{\widetilde{S}_{2A_1}} + cB_{\widetilde{S}_{2A_1}} + \Delta_{\widetilde{S}_{2A_1}} &= (7c + 3)h - 3ce_1 - 3ce_2 - ce_3 - 3ce_4 - 3ce_5 - (c+3)e_6.
\end{align*}
By \Cref{teo:nefcones}, the log canonical divisor will stay ample for $c \geq \frac{1}{2}$. On the other hand, in regards to condition (3) we have the volume of the log canonical divisor in given by
\begin{align}\label{eqn:S2A1_12c23}
	(K_{\widetilde{S}_{2A_1}} + cB_{\widetilde{S}_{2A_1}} + \Delta_{\widetilde{S}_{2A_1}})^2 &= 11c^2 + 36c.
\end{align}

\textbf{Type $2$: $\mathrm{Bl}_5 \mathbb{F}_0$ (two copies)}
By the computations done for the same component type in \textbf{Step 0} above, the log canonical divisor $K_{\mathrm{Bl}_5 \mathbb{F}_0} + B_{\mathrm{Bl}_5 \mathbb{F}_0} + \Delta_{\mathrm{Bl}_5 \mathbb{F}_0}$ will first fail to be ample when $c = \frac{1}{2}$. Furthermore, when $c = \frac{1}{2}$, this component will contract by a blow-down $\mathrm{Bl}_5 \mathbb{F}_0 \to \mathrm{Bl}_1 \mathbb{F}_0$ contracting exceptional divisors $e_2, \dots, e_5$ to a smooth point. Furthermore, the volume of the log canonical divisor is given by \Cref{eqn:Bl5F0_12c23}.

\textbf{Type $3'$: $\mathrm{Bl}_5 \mathbb{P}^2$ (one copy)}
Similarly, the computations for this component was done in \textbf{Step 0} above. When $c = \frac{1}{2}$, the component will contract by a blow-down $\mathrm{Bl}_5 \mathbb{P}^2 \to \mathrm{Bl}_3 \mathbb{P}^2$ contracting exceptional divisors $e_3, e_5$. Furthermore, the volume of the log canonical divisor is given by \Cref{eqn:Bl5P2_12c23}.

\textbf{Type $4$: $\mathbb{F}_0$ (two copies)}
The computations for this component was done in \textbf{Step 0} above. When $c = \frac{1}{2}$, the component will contract via the projection $\mathbb{F}_0 \to \mathbb{P}^1$ where the ruling $h_2$ contracts. The volume of its log canonical divisor is given by \Cref{eqn:4F0_12c23}.

\textbf{Type $4'$: $\mathbb{F}_0$ (two copies)}
The computations for this component was done in \textbf{Step 0} above and the log canonical divisor will stay ample for $c > -b+1$. Its volume is given by \Cref{eqn:4'F0_12c23}.

\textbf{Type $5a$: $\mathbb{F}_0$ (eight copies)} By \textbf{Step 0} above, when $c = \frac{1}{2}$, these components will get contracted via the projection $\mathbb{F}_0 \to \mathbb{P}^1$ where the ruling $h_2$ contracts. The volume of its log canonical divisor is given by \Cref{eqn:5aF0_12c23}.

\textbf{Type $5b$: $\mathbb{F}_0$ (four copies)}
By \textbf{Step 0} above, when $c = \frac{1}{2}$, these components will get contracted to a point $\mathbb{F}_0 \to \{*\}$. The volume of its log canonical divisor is given by \Cref{eqn:5bF0_12c23}.

Adding up the volumes in each irreducible component given in \ref{eqn:S2A1_12c23}, \ref{eqn:Bl5F0_12c23}, \ref{eqn:Bl5P2_12c23}, \ref{eqn:4F0_12c23}, \ref{eqn:4'F0_12c23}, \ref{eqn:5aF0_12c23}, \ref{eqn:5bF0_12c23}, we obtain
\begin{align*}
	-b^2 + 20bc -2b + 224c^2 - 52c + 3,
\end{align*}
so the volume condition (condition (3)) is satisfied. Furthermore, we will first hit a wall when $c = \frac{1}{2}$, so we obtain the union of chambers labeled $-\frac{b}{2} + \frac{1}{2} < c \leq \frac{2}{3}$ and Sch $\frac{1}{2} < c \leq \frac{2}{3}$ in \Cref{E2A1_mult4} (we do not have $c = -\frac{b}{2} + 1$ as a wall because $\ell$ does not intersect a copy of $E: \mathbb{P}^2$, c.f. \Cref{rem:elleckcase}). At $c = \frac{1}{2}$, the components of Types $4$ and $5a$ contract via the projection $\mathbb{F}_0 \to \mathbb{P}^1$ and the components of Type $5b$ contract to a point. Now, one can check that this is compatible with the remaining components by \Cref{cor:gluing}, and we obtain the surface of \Cref{E2A1_mult4_-b1c12}.

For the remaining steps of this proof, we will omit details as the procedure and computations are very similar as the ones done explicitly in this step.\\
\textbf{Step 2: Obtaining Figures \ref{E2A1_mult4_14c12} and \ref{E2A1_mult4_b10110c14}.} By abuse of notation, refer to the surface of \Cref{E2A1_mult4_-b1c12} as $(S, B)$ and consider $c \leq \frac{1}{2}$. Restricting $K_S + (b, c)B$ to $4' : \mathbb{F}_0$, we obtain
\begin{align*}
K_{\mathbb F_0} + (b, c)B_{\mathbb F_0}= 	(4c - 1)h_1 + (b+ c - 1)h_2,
\end{align*}
and we can hit a wall at either $c = -b + 1$ or $c = \frac{1}{4}$ giving the chamber labeled $\frac{1}{4} < c \leq \frac{1}{2}$, $c > -b+1$ in \Cref{E2A1_mult4}. Sticking to \Cref{conv:descproof}, we first consider the wall $c = -b+1$ where the components of Type $4'$ will get contracted via the projection $\mathbb{F}_0 \to \mathbb{P}^1$ contracting the $h_1$ ruling. This is compatible with the analogous contractions for the remaining components of $(S, B)$ and we obtain the surface of \Cref{E2A1_mult4_14c12}. On the other hand, when $c = \frac{1}{4}$, the components of type $4'$ will get contracted via the projection $\mathbb{F}_0 \to \mathbb{P}^1$ contracting the $h_2$ ruling, resulting in the surface of \Cref{E2A1_mult4_b10110c14}.\\
\\
\textbf{Step 3: Obtaining Figures \ref{E2A1_mult4_b313c16} and \ref{E2A1_mult4_c=b3}.} By abuse of notation, we refer to surface of \Cref{E2A1_mult4_b10110c14} as $(S, B)$ and consider $c \leq \frac{1}{4}$. Restricting $K_S + (b, c)B$ to $2 : \mathrm{Bl}_1 \mathbb{P}^2$ and $3' : \mathrm{Bl}_3 \mathbb{P}^2$, we obtain
\begin{align*}
	\text{Type $2$} &: (8c-1)h - 2c e_1\\
	\text{Type $3'$} &: (b+ 4c -1)h - ce_1 - (b-3c)(e_2 + e_4),
\end{align*}
and we hit a wall at either $c = \frac{1}{6}$ or $c = \frac{b}{3}$ giving the chamber labeled $\frac{b}{10} + \frac{1}{10} < c \leq \frac{1}{4}$, $\frac{1}{6} < c < \frac{b}{3}$ in \Cref{E2A1_mult4}. At $c = \frac{1}{6}$, the class $h - e_1$ in $2 : \mathbb{Bl}_1 \mathbb{P}^2$ gets contracted via the covering map $\mathrm{Bl}_1 \mathbb{P}^2 \cong \mathbb{F}_1 \to \mathbb{P}^1$. In $3' : \mathrm{Bl}_3 \mathbb{P}^2$, the classes $h - e_1 - e_2$ and $h - e_1 - e_4$ gets contracted via the birational morphism $\mathrm{Bl}_3 \mathbb{P}^2 \to \mathrm{Bl}_1 \mathbb{P}^2$, resulting in the surface of \Cref{E2A1_mult4_b313c16}. At $c = \frac{b}{3}$, the exceptional divisors $e_2$ and $e_4$ in $3' : \mathrm{Bl}_3 \mathbb{P}^2$ will get contracted via the blowdown $\mathrm{Bl}_3 \mathbb{P}^2  \cong \mathrm{Bl}_2 \mathbb{F}_1 \to \mathbb{F}_1 \cong \mathrm{Bl}_1 \mathbb{P}^2$, resulting in the surface of \Cref{E2A1_mult4_c=b3}. By abuse of notation, we now refer to this surface as $(S, B)$ and consider $c = \frac{b}{3}$. Restricting $K_S + (b, c)B$ to each component, we find that the ampleness condition is always satisfied. However, when computing the volume of $(S, (b, c)B)$, we obtain
\begin{align*}
	(K_S + (b, c) B)^2 &= b^2 + 8 b c - 2 b + 242 c^2 - 52 c + 3,
\end{align*}
which equals \Cref{eq:constantvol} if and only if $c = \frac{b}{3}$, so we obtain the chamber labeled $c = \frac{b}{3}$ in \Cref{E2A1_mult4}. Once $c \neq \frac{b}{3}$, the volume condition is not satisfied, so we must make appropriate birational modification to recover \Cref{E2A1_mult4_b10110c14} if $c < \frac{b}{3}$ and \Cref{E2A1_mult4_14c12} if $c > \frac{b}{3}$.
\textbf{Step 4: Obtaining \Cref{E2A1_mult4_16c14}} By abuse of notation, let $(S, B)$ denote the surface of \Cref{E2A1_mult4_14c12}. Restricting $K_S + (b,c)B$ to $3' : \mathrm{Bl}_1 \mathbb{P}^2$, we obtain
\begin{align*}
	(b + 4c-1)h - ce_1,
\end{align*}
and we hit a new wall when $c = -\frac{b}{3} + \frac{1}{3}$, where the class $h - e_1$ contracts via the covering $\mathrm{Bl}_1 \mathbb{P}^2 \cong \mathbb{F}_1 \to \mathbb{P}^1$. This is compatible with the analogous contractions for the remaining components of $(S, B)$ and we obtain the surface of \Cref{E2A1_mult4_16c14}. Letting $(S, B)$ denote this surface, we can restrict $K_S + (b, c)B$ to $2 : \mathbb{F}_0$, and get
\begin{align*}
	(b + 5c - 1)h_1 + (6c-1)h_2.
\end{align*}\\
So we hit a wall when $c = \frac{1}{6}$ giving the chamber labeled Sch $\frac{1}{6} < c \leq \frac{1}{4}$ in \Cref{E2A1_mult4}. At $c= \frac{1}{6}$, we obtain the surface \Cref{E2A1_mult4_19c16bneqc} if $b \neq c$ and the surface \Cref{E2A1_mult4_naruki} if $b = c$.\\
\textbf{Step 5: Obtaining Figures \ref{E2A1_mult4_19c16bneqc} and \ref{E2A1_mult4_naruki}} Now going back to \Cref{E2A1_mult4_b313c16}, denote this surface by $(S, B)$. Restricting $K_S + (b, c)B$ to $3' : \mathrm{Bl}_1 \mathbb{P}^2$, we obtain
\begin{align*}
	(13c - 2)h - (10 c - b - 1)e,
\end{align*}
and we hit a wall when $c = - \frac{b}{3} + \frac{1}{3}$ giving the chamber labeled $\frac{b}{10} + \frac{1}{10} < c \leq \frac{1}{6}$, $c > -\frac{b}{3} + \frac{1}{3}$ in \Cref{E2A1_mult4}. At $c = - \frac{b}{3} + \frac{1}{3}$, the class $h - e$ contracts via $\mathrm{Bl}_1 \mathbb{P}^2 \cong \mathbb{F}_1 \to \mathbb{P}^1$, resulting in the surface of \Cref{E2A1_mult4_19c16bneqc}. Now this surface is given by one irreducible component $(\widetilde{S}_{2A_1}, B)$ with log canonical form $K_{\widetilde{S}_{2A_1}} + B_{\widetilde{S}_{2A_1}}$ given by
\begin{align*}
	(27c - 3)h - (9c-1)(e_1 + e_2 + e_4 + e_5) - (10 c - b - 1) e_3 - (9c - 1)e_6.
\end{align*}
So, we hit a wall when $c = b$ giving the chamber labeled $\frac{b}{10} + \frac{1}{10} < c \leq \frac{1}{6}$, $c \leq -\frac{b}{3} + \frac{1}{3}$, $b \neq c$ in \Cref{E2A1_mult4}. At this wall, the two $(-2)$ curves $h - e_1 - e_2 - e_3$ and $h - e_3 - e_4 - e_5$ contract to $A_1$ singularities via the simultaneous blow-down $\widetilde{S}_{2A_1} \to S_{2A_1}$ resulting in the surface of \Cref{E2A1_mult4_naruki}. 
\end{proof}

\begin{rem}\label{rem:elleckcase}
	If $\ell$ intersected a type $E :\mathbb{P}^2$ component as appearing in \textbf{Step 0}, when we hit the wall $c = \frac{2}{3}$, the resulting surface will instead be the surface of \Cref{E2A1_mult4_12c23} modified with this attached copy of $E : \mathbb{P}^2$ surviving. A similar computation will then show the next wall will be at $c = -\frac{b}{2} + 1$ giving the chamber $-\frac{b}{2} + 1 < c \leq \frac{2}{3}$ in \Cref{E2A1_mult4}. Similarly, at $c = -\frac{b}{2} + 1$, this $E: \mathbb{P}^2$ component will contract to a point to obtain the surface of \Cref{E2A1_mult4_12c23}.
\end{rem}

\begin{proof}[Proof of Theorems \ref{teo:main}, \ref{cor:mainmorphisms}, and \ref{cor:mainmoduli}]
	The full wall-and-chamber decomposition of \Cref{teo:main} is obtained by following \Cref{conv:descproof} very similar to the proof  of \Cref{E2A1_mult4} above. By \cite[Thm.~1.1]{ascher_wall_2023} (see \Cref{teo:wallcrossing}), there are birational wall-crossing morphisms from one chamber to another. Using \Cref{lem:isomoduli} many of the coarse moduli spaces in different chambers of \Cref{teo:main} are isomorphic. For the chambers where \Cref{lem:isomoduli} does not apply, we can use the explicit descriptions of the stable surfaces to see that the coarse moduli spaces do not change except at the walls $c = \frac{1}{4}$ and $c = -\frac{b}{3} + \frac{1}{3}$, resulting in three non-isomorphic coarse moduli spaces.
	\begin{align*}
		\overline{Y}_{(1, 1)} = \widetilde{Y}_\ell \to \ddot{Y}_\ell \to \overline{Y}_\ell = \overline{Y}_{(\frac{1}{9} + \epsilon, \frac{1}{9} + \epsilon)}.
	\end{align*}
	where each birational morphism correspond to contractions of components of weighted stable surfaces resulting in identifications of surfaces of different types. Namely, the birational morphism $\widetilde{Y}_\ell \to \ddot{Y}_\ell$, is obtained as follows: For any surface whose type involves $\widetilde{E}_{i A_1}$, $i = 2, 3, 4$, we contract the components arising from the non-flat $A_3^2$ divisors (\Cref{tab:piboundary}) of $\pi : \overline{Y}(E_7)^{\text{lc}} \to \overline{Y}_m^{\text{lc}}$ that \textbf{do not contain} $\ell$. On the other hand, the birational morphism $\ddot{Y}_\ell \to \overline{Y}_\ell$ is obtained by contracting the remaining non-flat $A_3^2$ divisor components that \textbf{do contain} $\ell$.
\end{proof}
	
	\ifAppendix
	\newpage
	\appendix
	\section{Remaining wall-and-chamber decompositions by type}\label{app:remainingchambers}
In this appendix, we will describe the wall-and-chamber decomposition of the remaining surface types (see \Cref{conv:types}) and their degenerations. As in the wall-and-chamber decomposition for Type $\widetilde{E}_{2A_1}$ in \Cref{S:wallandchamberbytype}, we will separate the wall-and-chamber decomposition by cases depending on whether our marked line $\ell$ passes through no, one, or two nodes in the corresponding boundary divisor of $\overline{Y}_\ell$ for each type.

\subsection{Smooth case}
\begin{pro}\label{smooth}
	For smooth $(b, c)$-weighted stable marked cubic surfaces $(S, (b,c)B)$, there are three chambers.
	\begin{center}
		\begin{tikzpicture}[scale=14]
			\draw[very thick,-latex] (1/9, 1/9) -- (1.03, 1/9) 
			node[right]{$b$};
			
			\draw[very thick,-latex] (1/9, 1/9) -- (1/9, 1.03) 
			node[left]{$c$};
			
			\draw[solid] (1/9,1/9) -- (1,1);
			\node at (1/9, 1/9) [below] {\tiny $(\frac{1}{9}, \frac{1}{9})$};
			\node at (1, 1) [left] {$(1, 1)$};
			
			\draw[thick] (1, 1/9) -- (1,1);
			\node at (1, 1/9) [below] {$(1, \frac{1}{9})$};
			\node at (0.9, 0.63) {$-\frac{b}{2}+1 < c \leq \frac{2}{3}$};
			\draw[thick] (2/3, 2/3) -- (1, 2/3) node[right] {\tiny{$c=\frac{2}{3}$}};
			\node[circle, fill, inner sep=1pt] at (2/3, 2/3) {};
			\node at (2/3, 2/3) [left] {\tiny$(\frac{2}{3}, \frac{2}{3})$};
			\node at (8/9, 7/9) {Sch $\frac{2}{3} < c \leq 1$};
			
			\node at (2/3, 1/3) {Sch $\frac{1}{9} < c \leq \frac{2}{3}$};
			
			\draw[solid] (2/3, 2/3) -- (1, 1/2) node[right] {\color{black} \tiny{$c = \frac{1}{2}$}};
			\node[rotate=-26.6] at (5/6, 57/96) {\tiny wall due to $\ell$ containing Eckardt point(s)};

			\draw[dashed] (1/9, 1/9) -- (1, 1/5);
			\node[rotate=5.71] at (5/6, 11/60) [below] {\tiny$c=b/10 + 1/10$};
			\node at (1, 1/5) [right] {\tiny{$c=\frac{1}{5}$}};
			
		\end{tikzpicture}
	\end{center}
\end{pro}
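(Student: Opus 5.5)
We follow \Cref{conv:descproof} for a smooth marked cubic surface $(S, b\ell + c\sum_{i=2}^{27}L_i)$, starting from the weight-one fiber of $\ddot\pi$ in \Cref{teo:eckblowup}. A smooth surface has no $A_1$ strata, so its only possible degeneration is at Eckardt points. We therefore split into cases by \Cref{pro:cubiceck}(1): either there is no Eckardt point, or there are Eckardt points, and then $\ell$ either avoids all of them or passes through at least one. In the weight-one model, each Eckardt point $p$ is replaced by $\mathrm{Bl}_p S$ together with a component $E\cong\mathbb{P}^2$ glued along the exceptional curve. The three lines through $p$ meet $E$ in three general lines, and one of them is $\ell$ when $p \in \ell$.

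The first step is to compute the log canonical divisor on each component, as in Step 0 of the proof of \Cref{E2A1_mult4}. On $E$ with gluing line $h$, this gives $(3c-2)h$ when $\ell \not\ni p$ and $(b+2c-2)h$ when $\ell\ni p$. Ampleness therefore fails at $c=\tfrac23$ and at $c=-\tfrac b2+1$, respectively. At these walls $E$ contracts to a point and $\mathrm{Bl}_pS\to S$ blows down. Condition (3) of \Cref{cor:gluing} holds because the gluing curve is a line in $\mathbb{P}^2$ or a $(-1)$-curve, so the result of gluing is $S$ itself. At the wall, the lines through $p$ have weights summing to $3c$ or $b+2c$, both at most $2$, so $(S,(b,c)B)$ is lc by \Cref{lem:lineslccriteria}. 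We must also check that the volumes of the components sum to \eqref{eq:constantvol} throughout. This can be done directly: $(K_S + bL_1 + c\sum L_i)^2$ on a smooth cubic equals $-b^2+20bc-2b+224c^2-52c+3$, using $L_i^2=-1$, $K\cdot L_i=-1$, and the known intersection numbers of the 27 lines. Adding the contribution of each $E$ and of $\mathrm{Bl}_p S$ preserves this total, by the same cancellation as in Step 0.

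Once every $E$ has been contracted, only the smooth irreducible surface $S$ remains. Here we apply the ample cone from \eqref{eqn:ampcone} and \Cref{teo:nefcones}. Writing $\ell=e_6$, for instance, $K_S+(b,c)B$ is ample exactly when $c>b/10+1/10$, which is the boundary of $\mathrm{Amp}$, so no further ampleness wall arises. For slc, at most three lines meet at any point, and they do so only at Eckardt points, which are lc as long as $3c\le 2$ and $b+2c\le 2$; both inequalities hold below the walls already found. Hence no further wall occurs, and the regions are Sch $\tfrac23<c\le1$, the region $-\tfrac b2+1<c\le\tfrac23$, and the rest.

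The main obstacle will be the region between $c=\tfrac23$ and $c=-\tfrac b2+1$ when $\ell$ contains an Eckardt point. We must confirm that with $E$ surviving, the gluing and volume still match \eqref{eq:constantvol}, and that the two walls bound genuinely distinct chambers instead of merging. We also need to show that configurations where $\ell$ passes through two Eckardt points give the same wall, and we would verify this by computing each $E$ component independently.
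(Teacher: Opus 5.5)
Your proposal is correct and follows the same route the paper uses for this case. You apply the procedure of Step~0 to the Eckardt blow-up of \Cref{teo:eckblowup}, read off the walls $3c=2$ and $b+2c=2$ from the $E\cong\mathbb{P}^2$ components (as in \Cref{rem:elleckcase}), and then observe that the smooth cubic itself is lc and ample on all of $\mathrm{Amp}$ with the correct volume. The paper gives no more detail than this for the smooth case.

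One step is left implicit: you only showed ampleness fails on $E$, not that $\mathrm{Bl}_pS$ stays ample until then. The checks you defer as the ``main obstacle'' are routine. On $\mathrm{Bl}_pS$ the log canonical class is $\pi^*(K_S+(b,c)B)-(w_p-2)e$ with $w_p\in\{3c,\,b+2c\}$. It is positive on $e$, on the strict transforms of the three lines through $p$ (values $6c+1$, $5c+b+1$ or $8c-2b+1$), and on the other $24$ lines. So no further wall appears before $w_p=2$.
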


\begin{rem}\label{rem:smoothdesc}
	Since the smooth case is classic, we will not include visual descriptions of the smooth $(b, c)$-weighted stable marked cubic surfaces in each chamber in \Cref{S:descriptions}. For a quick description of the surfaces in Sch $\frac{2}{3} < c \leq 1$, recall that it is a classical fact that a smooth cubic surface can only have $s = 0, 1, 2, 3, 4, 6, 9,10$ or $18$ Eckardt points (\Cref{pro:cubiceck}). So in the chamber Sch $\frac{2}{3} < c \leq 1$, the $(b, c)$-weighted stable marked cubic surfaces arising from smooth cubic surfaces have $s + 1$ irreducible components. One is isomorphic to the blow-up of the smooth cubic surface at its $s$ Eckardt points and $s$ copies of $\mathbb{P}^2$ attached to the $s$ exceptional fibers where the three lines intersecting the Eckardt point intersect in $\mathbb{P}^2$ in general position (see \Cref{teo:eckblowup}).
\end{rem}

\subsection{Type $\widetilde{D}_{A_1}$}
\subsubsection{\textbf{$\ell$ is in the smooth locus}}

\begin{pro}\label{DA1_mult1}
	For type $\widetilde{D}_{A_1}$ $(b,c)$-weighted stable marked cubic surfaces $(S, (b, c)B)$ where $\ell$ is in the smooth locus, there are five chambers. The number of Eckardt points found in each irreducible component of the surfaces in the chamber Sch $\frac{1}{2} < c \leq \frac{2}{3}$ of this type is listed in \Cref{tab:DA1eck}.
	\begin{center}
		\begin{tikzpicture}[scale=14]
			\draw[very thick,-latex] (1/9, 1/9) -- (1.03, 1/9) 
			node[right]{$b$};
			
			\draw[very thick,-latex] (1/9, 1/9) -- (1/9, 1.03) 
			node[left]{$c$};
			
			\draw[thick] (1/9,1/9) -- (1,1);
			\node at (1/9, 1/9) [below] {\tiny $(\frac{1}{9}, \frac{1}{9})$};
			\node at (1, 1) [left] {$(1, 1)$};
			
			\draw[solid] (1, 1/9) -- (1,1);
			\node at (1, 1/9) [below] {$(1, \frac{1}{9})$};
			
			\draw[thick] (2/3, 2/3) -- (1, 2/3) node[right] {\tiny{$c=\frac{2}{3}$}};
			\node[circle, fill, inner sep=1pt] at (2/3, 2/3) {};
			\node at (2/3, 2/3) [left] {\tiny$(\frac{2}{3}, \frac{2}{3})$};
			\node at (8/9, 7/9) {Sch $\frac{2}{3} < c \leq 1$};
			
			\node at (2/3, 1/3) {Sch $\frac{1}{9} < c \leq \frac{1}{2}$};
			
			\draw[solid] (2/3, 2/3) -- (1, 1/2) node[right] {\color{black} \tiny{$c = \frac{1}{2}$}};
			\node[rotate=-26.6] at (5/6, 57/96) {\tiny wall due to $\ell$ containing Eckardt point(s)};
			
			\draw[thick] (1/2, 1/2) -- (1, 1/2);
			\node[circle, fill, inner sep=1pt] at (1/2, 1/2) {};
			\node at (1/2, 1/2) [left] {\tiny $(\frac{1}{2}, \frac{1}{2})$};
			\node at (0.7, 0.56) {Sch $\frac{1}{2} < c \leq \frac{2}{3}$};
			\node at (0.9, 0.63) {$-\frac{b}{2}+1 < c \leq \frac{2}{3}$};
			
			\draw[thick] (1/6, 1/6) -- (2/3, 1/6);
			\node[circle, fill, inner sep=1pt] at (1/6, 1/6) {};
			\node at (7/36, 7/36) [left] {\tiny$(\frac{1}{6}, \frac{1}{6}$)};
			\node at (2/3, 1/6) [below] {\tiny{$(\frac{2}{3}, \frac{1}{6})$}};
			\draw[->] (7/24, 1/11) -- (1/4, 10/72);
			\node[circle, fill, inner sep=1.5pt] at (1/4, 10/72) [above] {};
			\node[align=center] at (7/24, 1/20) {Sch $\frac{1}{9} < c \leq \frac{1}{6}$};
			
			\draw[dashed] (1/9, 1/9) -- (1, 1/5);
			\node[rotate=5.71] at (5/6, 11/60) [below] {\tiny$c=b/10 + 1/10$};
			\node at (1, 1/5) [right] {\tiny{$c=\frac{1}{5}$}};
			
		\end{tikzpicture}
	\end{center}
\end{pro}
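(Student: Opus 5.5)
The plan is to run \Cref{conv:descproof} for type $\widetilde{D}_{A_1}$ with $\ell$ in the smooth locus, following the proof of \Cref{E2A1_mult4}. Step (1) fixes the component of the $\widetilde{D}_{A_1}$ stratum in $\overline{Y}_{(1,1)}$ on which $\ell$ is one of the multiplicity-one lines, i.e.\ $\ell$ avoids the node. Step (2) takes the weight-one fiber $(\ddot S,\ddot B)$ of $\ddot\pi$: Schock's surface \cite[Fig.~4]{schock_moduli_2024}, with the Eckardt points blown up and copies of $E:\mathbb{P}^2$ attached as in \Cref{teo:eckblowup}. By \Cref{tab:DA1eck}, the Eckardt points sit on the $\widetilde{S}_{A_1}$ component. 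Condition (1) of \Cref{S:detectingwalls} is checked with \Cref{lem:lineslccriteria} and \Cref{lem:linesA1lccriteria}, since at most two lines meet at a point after the Eckardt blow-ups.

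For each component $S'$, I will compute $K_{S'}+(b,c)B'+\Delta$ in the bases $h,e_i$ (respectively $h_1,h_2$ on $\mathbb{F}_0$). Ampleness is tested with \Cref{teo:nefcones} for $\mathrm{Bl}_k\widetilde{S}_{A_1}$, and directly for the $\mathbb{P}^2$ and $\mathbb{F}_0$ components. The key observation is that $\ell$ lies only on the main component $\widetilde{S}_{A_1}$, and there it is a $(-1)$-curve disjoint from the node. It never appears with higher multiplicity and never meets a gluing curve in a way that produces a $b$-dependent coefficient on a contractible curve. So every coefficient that can go to zero depends on $c$ alone, except the coefficient of $e_7$ when $\ell$ passes through an Eckardt point. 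The walls found are then:
\begin{itemize}
\item $c=\tfrac23$, where the $E:\mathbb{P}^2$ components not meeting $\ell$ contract;
\item $c=-\tfrac b2+1$, where an $E$ component meeting $\ell$ has $K+B+\Delta=(b+2c-2)h$ and contracts;
\item $c=\tfrac12$, where the $\mathbb{F}_0$-type components coming from the $A_1$ degeneration contract and the surface becomes the nodal cubic $S_{A_1}$ with lines of multiplicity $2$ through the node;
\item $c=\tfrac16$.
\end{itemize}
At each wall I use \Cref{cor:gluing} to justify gluing the component-wise log canonical models. I also verify the volume \eqref{eq:constantvol} by summing the component volumes, as in Step 0.

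The wall at $c=\tfrac16$ is handled as follows. On $S_{A_1}$, six dashed lines pass through the node, each with weight $2c$. By \Cref{lem:linesA1lccriteria}, log canonicity holds exactly when $12c\le 2$, that is $c\le\tfrac16$. Below this value I expect the stable model to be $\widetilde{S}_{A_1}$ with the $(-2)$-curve $2h-\sum e_i$ carrying coefficient $12c$ (compare Figure \ref{E2A1_mult4_19c16bneqc}), ample up to the lower boundary $c=\tfrac b{10}+\tfrac1{10}$ of $\mathrm{Amp}$. Because $\ell$ is not among the lines through the node, this wall is horizontal, and there is no $b=c$ splitting of the kind in \Cref{E2A1_mult4}. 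This gives the five chambers. The chamber $\tfrac16<c\le\tfrac12$ has no further walls because $\ell$ meets no collapsing component.

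The main obstacle is the bookkeeping needed to rule out $b$-dependent walls. For every component and every contractible curve $C$ (the $(-1)$- and $(-2)$-curves from Table \ref{tab:(-1)and(-2)curves}), I must check that $(K+(b,c)B+\Delta)\cdot C$ either involves only $c$ or stays positive for all $b\ge c$ in $\mathrm{Amp}$. I would first check $C=\ell$ itself. There $(K+B)\cdot\ell=-1+b\ell^2+c(\text{other lines}\cdot\ell)=-1-b+10c$, which gives exactly the boundary $c=\tfrac b{10}+\tfrac1{10}$ and no interior wall. I would then check the curves meeting $\ell$ the same way.
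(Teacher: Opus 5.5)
Your overall route is the paper's: run \Cref{conv:descproof} as in the proof of \Cref{E2A1_mult4}. Your wall list $c=\frac23$, $c=-\frac b2+1$, $c=\frac12$, $c=\frac16$ is also correct. However, the stable models you assign to the two lowest chambers are wrong, and the argument at $c=\frac16$ runs backwards.

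At $c=\frac12$ the surface does not become the nodal cubic. By \Cref{lem:linesA1lccriteria}, which you cite yourself, $S_{A_1}$ with six weight-$2c$ lines through the node is lc only for $c\le\frac16$, so it cannot be the model on $(\frac16,\frac12]$. At $c=\frac12$ the six $\mathbb{F}_0$ components collapse onto the gluing curves $e_i\subset\widetilde S_{A_1}$, and $\mathrm{Bl}_6\mathbb{F}_0\to\mathbb{F}_0$ contracts its exceptional curves. The result is $\widetilde S_{A_1}\cup\mathbb{F}_0$, glued along $Q=2h-\sum_k e_k$ and the diagonal. Each $e_i$ now has multiplicity $2$ and continues as one ruling of each class through a point of the diagonal.

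For $\ell=L_{ij}$ the log canonical class is $(6c-1)(h_1+h_2)$ on $\mathbb{F}_0$ and $(14c+b-1)h-(b+2c)(e_i+e_j)-3c\sum_{k\ne i,j}e_k$ on $\widetilde S_{A_1}$. By \Cref{teo:nefcones} these are ample exactly when $c>\frac16$ (given $c>\frac b{10}+\frac1{10}$). Slc fails for $c>\frac12$, because two weight-$c$ rulings meet the double curve at each of the six diagonal points. The two volumes sum to \eqref{eq:constantvol}. This computation is what rules out further walls in $\frac16<c\le\frac12$; the remark that $\ell$ meets no collapsing component does not.

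Below $c=\frac16$ the model is the nodal cubic $(S_{A_1},(b,c)B)$ itself, not $\widetilde S_{A_1}$ with $12c\,Q$. The flat limit of the lines on $\widetilde S_{A_1}$ contains $Q$ with multiplicity $6$, once from each reducible conic $2h-\sum_{k\ne i}e_k=Q+e_i$ (compare the multiplicity $6$ in \Cref{E2A1_mult4_19c16bneqc}). Then $(K+(b,c)B)\cdot Q=6\cdot 2c-12c=0$ for every $(b,c)$, since the six weight-$2c$ curves $e_i$ meet $Q$ once and $L_{ij}\cdot Q=0$, in particular for $\ell$. With your coefficient $12c$ the degree is $-12c$, so the class is not even nef.

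Hence at $c=\frac16$ the $\mathbb{F}_0$ component and $Q$ are contracted together. The result is lc precisely for $c\le\frac16$, and $K+(b,c)B=(9c-1)H+(b-c)\ell$ is ample down to $c=\frac b{10}+\frac1{10}$. The identity $(K+(b,c)B)\cdot Q=0$, as opposed to the value $b-c$ on the $(-2)$-curves in \Cref{E2A1_mult4}, is the real reason there is no separate $b=c$ chamber here.

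A smaller inaccuracy: in the weight-one model $\ell=L_{ij}$ meets the gluing curves $e_i,e_j$, so it is not confined to $\widetilde S_{A_1}$. It continues as a fiber on the two $\mathbb{F}_0$ components glued along $e_i$ and $e_j$, whose log canonical class becomes $(2c-1)h_1+(b+4c-1)h_2$. That $b$-dependent coefficient stays positive, so your conclusion survives. Still, this is exactly the kind of check your last paragraph must actually carry out.
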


\newpage
\subsubsection{\textbf{$\ell$ passes through exactly one node}}

\begin{pro}\label{DA1_mult2}
	For type $\widetilde{D}_{A_1}$ $(b,c)$-weighted stable marked cubic surfaces $(S, (b,c)B)$ where $\ell$ passes through exactly one node in the corresponding $\overline{Y}_{\ell}$ boundary, there are eight chambers. The line $c = \frac{b}{4}$ is its own chamber.  The number of Eckardt points found in each irreducible component of the surfaces in the chamber Sch $\frac{1}{2} < c \leq \frac{2}{3}$ of this type is listed in \Cref{tab:DA1eck}.
	\begin{center}
		\begin{tikzpicture}[scale=14]
			\draw[very thick,-latex] (1/9, 1/9) -- (1.03, 1/9) 
			node[right]{$b$};
			
			\draw[very thick,-latex] (1/9, 1/9) -- (1/9, 1.03) 
			node[left]{$c$};
			
			\draw[solid] (1/9,1/9) -- (1,1);
			\node at (1/9, 1/9) [below] {\tiny $(\frac{1}{9}, \frac{1}{9})$};
			\node at (1, 1) [left] {$(1, 1)$};
			
			\draw[thick] (1, 1/9) -- (1,1);
			\node at (1, 1/9) [below] {$(1, \frac{1}{9})$};
			
			\draw[thick] (2/3, 2/3) -- (1, 2/3) node[right] {\tiny{$c=\frac{2}{3}$}};
			\node[circle, fill, inner sep=1pt] at (2/3, 2/3) {};
			\node at (2/3, 2/3) [left] {\tiny$(\frac{2}{3}, \frac{2}{3})$};
			\node at (8/9, 7/9) {Sch $\frac{2}{3} < c \leq 1$};
			
			\draw[thick] (1/2, 1/2) -- (1, 1/2);
			\node[circle, fill, inner sep=1pt] at (1/2, 1/2) {};
			\node at (1/2, 1/2) [left] {\tiny $(\frac{1}{2}, \frac{1}{2})$};
			\node at (7.5/9, 0.57) {Sch $\frac{1}{2} < c \leq \frac{2}{3}$};
			
			\draw[thick] (1/2, 1/2) -- (4/5, 1/5);
			\node[rotate=-45] at (7/12, 5/12) [above] {\tiny $c = -b+1$};
			\node[align=center] at (5/6, 0.42) {\(\frac{1}{5} < c \leq \frac{1}{2}\) \\ \(c > -b+1 \)};
			\node[align=center] at (1/2, 1/3) {Sch $\frac{1}{6} < c \leq \frac{1}{2}$};
			
			\draw[thick] (4/5, 1/5) -- (1, 1/5);
			\draw[->] (2.5/3, 1/11) -- (0.8, 0.18);
			\node[circle, fill, inner sep=1.5pt] at (0.8, 0.19) {};
			\node[align=center] at (2.5/3, 1/20) {\(\frac{b}{10} + \frac{1}{10} < c \leq \frac{1}{5}\) \\ \(c < \frac{b}{4}\)};
			
			\draw[very thick] (2/3, 1/6) -- (4/5, 1/5);
			\node at (2/3, 1/6) [below] {\tiny{$(\frac{2}{3}, \frac{1}{6})$}};
			\node[rotate=14.04] at (22/30, 21/120) [above] {\tiny $c=\frac{b}{4}$};
			
			\draw[thick] (1/6, 1/6) -- (2/3, 1/6);
			\node[circle, fill, inner sep=1pt] at (1/6, 1/6) {};
			\node at (7/36, 7/36) [left] {\tiny$(\frac{1}{6}, \frac{1}{6}$)};
			\node at (2/3, 1/6) [below] {\tiny{$(\frac{2}{3}, \frac{1}{6})$}};
			\draw[->] (7/24, 1/11) -- (1/4, 10/72);
			\node[circle, fill, inner sep=1.5pt] at (1/4, 10/72) [above] {};
			\node[align=center] at (7/24, 1/20) { \(\frac{b}{10} + \frac{1}{10} < c \leq \frac{1}{6}\) \\ \(b \neq c\)};
			
			\draw[very thick] (1/9, 1/9) -- (1/6, 1/6);
			\draw[->] (1/11, 1/6) -- (5/36, 5/36);
			\node at (1/10, 1/6) [left] {Sch $\frac{1}{9} < c \leq \frac{1}{6}$};
			
			\draw[dashed] (1/9, 1/9) -- (1, 1/5);
			\node[rotate=5.71] at (5/6, 11/60) [below] {\tiny$c=b/10 + 1/10$};
			\node at (1, 1/5) [right] {\tiny{$c=\frac{1}{5}$}};
			
		\end{tikzpicture}
	\end{center}
\end{pro}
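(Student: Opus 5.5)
We follow \Cref{conv:descproof} for type $\widetilde{D}_{A_1}$ with $\ell$ passing through the node, exactly as in the proof of \Cref{E2A1_mult4}. We begin at weight $(1,1)$ with the fiber of $\ddot\pi$ over a general point of the stratum $\widetilde{D}_{A_1}$, as described in \cite[Fig.~4]{schock_moduli_2024} together with \Cref{teo:eckblowup}. Its components are the minimal resolution $\widetilde{S}_{A_1}\cong \mathrm{Bl}_6\mathbb{P}^2$ with the six points on a conic, the $\mathbb{F}_0$-type components attached along the $(-2)$-curve $2h-\sum e_i$, and the $E:\mathbb{P}^2$ components at the Eckardt points from \Cref{tab:DA1eck}. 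Here $\ell$ is a multiplicity-2 line through the node, so its weight is $b+c$ on a component meeting the node locus.

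For each component $S'$ we write $K_{S'}+(b,c)B|_{S'}+\Delta_{S'}$ in the basis $h,e_i$ (respectively $h_1,h_2$). We then test three things:
\begin{itemize}
\item slc, using \Cref{lem:lineslccriteria} and \Cref{lem:linesA1lccriteria};
\item ampleness, using \Cref{teo:nefcones} on $\widetilde{S}_{A_1}$ and intersection with the rulings on $\mathbb{F}_0$;
\item volume, by summing $(K_{S'}+\cdots)^2$ over components and comparing with \eqref{eq:constantvol}.
\end{itemize}
As in Step~0 and Step~1, this should give walls at $c=\frac23$ (the $E$-components contract) and $c=\frac12$ (the $\mathbb{F}_0$ components contract to $\mathbb{P}^1$). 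The $\ell$-bearing $\mathbb{F}_0$ component has log canonical class of the form $(b+c-1)h_1+(\cdot)h_2$, so it degenerates along $c=-b+1$. This accounts for the chamber $\frac15<c\le\frac12$, $c>-b+1$.

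Next we scan downward. On the $\ell$-bearing component, the class should have the shape $(4c-\cdots)h_1+(b+c-1)h_2$, or after blowing up, an exceptional curve with coefficient $b-4c$. This produces the walls $c=\frac15$ (the horizontal floor for $b$ near $1$) and $c=\frac b4$. At $c=\frac b4$ we contract the relevant exceptional curves and compute the total volume of the contracted surface, as in Step~3 of \Cref{E2A1_mult4}. We expect this volume to agree with \eqref{eq:constantvol} only on $c=\frac b4$, so that line is its own chamber. Off that line, the two birational modifications recover, for $c<\frac b4$, the surface of the region $\frac b{10}+\frac1{10}<c\le\frac15$, and for $c>\frac b4$, the Schock-type surface of $\frac16<c\le\frac12$.

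Continuing left and down, the component class on $\widetilde{S}_{A_1}$ becomes $(27c-3)h-(9c-1)\sum e_i$ up to an $\ell$-correction. This gives the floor $c=\frac16$, where the attached components contract onto $\widetilde{S}_{A_1}$ and we glue by \Cref{cor:gluing}. Finally, on the single-component surface, the coefficient of the $(-2)$-curve gives the wall $b=c$, where the $(-2)$-curve contracts to the $A_1$ point. Each gluing step is justified by verifying the three hypotheses of \Cref{cor:gluing} on the rational components.

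The main obstacle will be the positive-slope wall $c=\frac b4$. The naive componentwise contraction there is ample but fails the volume identity, so we must show that the resulting surface is genuinely the stable model only on the line. We must also identify the correct modifications on either side, possibly via a blow-up followed by a blow-down before \Cref{cor:gluing} applies. Doing this requires tracking exactly how $\ell$ and its multiplicity-2 partners meet the gluing curves, which is where we expect most of the work.
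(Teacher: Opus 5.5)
Your overall route, running \Cref{conv:descproof} on the $\widetilde{D}_{A_1}$ fibre with $\ell$ through the node exactly as in the proof of \Cref{E2A1_mult4}, is the paper's route. The paper does not write this case out separately, and the walls you predict are the right ones. The trouble is that some of the data you would feed into the procedure are wrong, and here the computation is the proof.

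\textbf{Starting surface.} By \Cref{tab:DA1eck} the starting surface consists of $\widetilde S_{A_1}$, one $\mathrm{Bl}_6\mathbb{F}_0$ and six copies of $\mathbb{F}_0$, plus the $E:\mathbb{P}^2$ components. It is $\mathrm{Bl}_6\mathbb{F}_0$ that is glued along $C=2h-\sum e_i$, via the strict transform of the diagonal. Each $\mathbb{F}_0$ is glued along $e_i\subset\widetilde S_{A_1}$ and along $e_i\subset\mathrm{Bl}_6\mathbb{F}_0$. At the starting weights $\ell$ is not a double line of weight $b+c$. It is $h_1-e_1$ on $\mathrm{Bl}_6\mathbb{F}_0$, of weight $b$, continuing as an $h_1$-fibre of the $\mathbb{F}_0$ over $e_1$. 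It merges with its partner into the double line $e_1\subset\widetilde S_{A_1}$ of weight $b+c$ only once that $\ell$-bearing $\mathbb{F}_0$ collapses at $c=-b+1$.

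Without $\mathrm{Bl}_6\mathbb{F}_0$, whose class is $(b+5c-1)h_1+(6c-1)h_2-(b+c-1)e_1-(2c-1)\sum_{i\ge 2}e_i$, two things break:
\begin{itemize}
\item the volume sum cannot reproduce \eqref{eq:constantvol};
\item you lose the contractions of $e_2,\dots,e_6$ at $c=\frac12$ and of $e_1$ at $c=-b+1$, which are what make the gluing consistent.
\end{itemize}
This placement of $\ell$ is also why there is no wall $c=-\frac b2+1$ here: neither component containing $\ell$ has an Eckardt point. You should say this explicitly.

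\textbf{Misattributed walls.}
\begin{itemize}
\item The $\ell$-bearing $\mathbb{F}_0$ has class $(b+c-1)h_1+(5c-1)h_2$, because the five lines $h-e_1-e_j$ run through it. So the floor $c=\frac15$ comes from $5c-1$. A coefficient $4c-\cdots$ would give $c=\frac14$, which is the $\widetilde E_{2A_1}$ value.
\item The wall $c=\frac b4$ is seen from above on $\widetilde S_{A_1}$, inside the surface $\widetilde S_{A_1}\cup_C\mathbb{F}_0$ of the chamber Sch $\frac16<c\le\frac12$. There the class is $(15c-1)h-3c\sum_{i\ge2}e_i-(4c-b)e_1$. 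Contracting $e_1$ raises the volume by exactly $(4c-b)^2$, which is why the line is its own chamber.
\item For $c<\frac b4$ the $(-1)$-curve passes across $C$. You blow up the point where $\ell$ meets the diagonal on the $\mathbb{F}_0$ side, and get an exceptional curve of weight $5c$ and degree $b-4c$. This is the curve you anticipated.
\item The floor $c=\frac16$ comes from the $\mathbb{F}_0$ glued along $C$, with class $(b+5c-1)h_1+(6c-1)h_2$. This component collapses onto $C$ and leaves $C$ with multiplicity $6$.
\item The class $(27c-3)h-(9c-1)\sum e_i+(b-c)e_1$ lives on the resulting single component. It produces the wall $b=c$ (degree $b-c$ on $C$) and the ample boundary $c=\frac b{10}+\frac1{10}$ (degree $10c-b-1$ on $e_1$), not the floor $c=\frac16$.
\end{itemize}
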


\begin{table}[H]
	\centering
	\begin{tabular}{| c | c | c | c |}
		\hline
		Label & Surface & \# & Eckardt points \\
		\hline
		\hline
		1 & $\widetilde S_{A_1}$ & 1 & 0, 1, 2, 3, 4, or 6 \\
		2 & $\mathrm{Bl}_6\mathbb{F}_0$ & 1 & 0 \\
		3 & $\mathbb{F}_0$ & 6 & 0 \\
		\hline
	\end{tabular}
	\caption{This table is taken from \cite[Table~2]{schock_moduli_2024} The first two columns are the numbered types of irreducible components of the surfaces in the chamber Sch $1/2 < c \leq 2/3$ of type $\widetilde{D}_{A_1}$ pictured in \cite[Fig.~4]{schock_moduli_2024}.  The third column tells us the number of each corresponding component found in the surface. Finally, the last row gives the possible numbers of Eckardt points on each component. For the component of type 1, $\widetilde{S}_{A_1}$ refers to the minimal resolution of a cubic surface with one $A_1$ singularity. For the component of type 2, $\mathrm{Bl}_6\mathbb{F}_0$ refers to the blowup of $\mathbb{F}_0 \cong \mathbb{P}^1 \times \mathbb{P}^1$ at 6 points on the diagonal.}
	\label{tab:DA1eck}
\end{table}

\newpage
\subsection{Remaining wall-and-chamber decompositions for type $\widetilde{E}_{2A_1}$}
\subsubsection{\textbf{$\ell$ is in the smooth locus}}

\begin{pro}\label{E2A1_mult1}
	For type $\widetilde{E}_{2A_1}$ $(b,c)$-weighted stable marked cubic surfaces $(S, (b, c)B)$ where $\ell$ is in the smooth locus, there are six chambers. Additionally, crossing the wall $c = \frac{1}{4}$ introduces type $\widetilde{D}_{A_1} \cap \widetilde{E}_{2A_1}$ surfaces as degerenations of type $\widetilde{E}_{2A_1}$.
	\begin{center}
			\begin{tikzpicture}[scale=14]
					\draw[very thick,-latex] (1/9, 1/9) -- (1.03, 1/9) 
					node[right]{$b$};
					
					\draw[very thick,-latex] (1/9, 1/9) -- (1/9, 1.03) 
					node[left]{$c$};
					
					\draw[solid] (1/9,1/9) -- (1,1);
					\node at (1/9, 1/9) [below] {\tiny $(\frac{1}{9}, \frac{1}{9})$};
					\node at (1, 1) [left] {$(1, 1)$};
					
					\draw[thick] (1, 1/9) -- (1,1);
					\node at (1, 1/9) [below] {$(1, \frac{1}{9})$};
					
					\draw[thick] (2/3, 2/3) -- (1, 2/3) node[right] {\tiny{$c=\frac{2}{3}$}};
					\node[circle, fill, inner sep=1pt] at (2/3, 2/3) {};
					\node at (2/3, 2/3) [left] {\tiny$(\frac{2}{3}, \frac{2}{3})$};
					\node at (8/9, 7/9) {Sch $\frac{2}{3} < c \leq 1$};
					
					\draw[solid] (2/3, 2/3) -- (1, 1/2) node[right] {\color{black} \tiny{$c = \frac{1}{2}$}};
					\node[rotate=-26.6] at (5/6, 57/96) {\tiny wall due to $\ell$ containing Eckardt point(s)};
					
					\draw[thick] (1/2, 1/2) -- (1, 1/2);
					\node[circle, fill, inner sep=1pt] at (1/2, 1/2) {};
					\node at (1/2, 1/2) [left] {\tiny $(\frac{1}{2}, \frac{1}{2})$};
					\node at (0.7, 0.56) {Sch $\frac{1}{2} < c \leq \frac{2}{3}$};
					\node at (0.9, 0.63) {$-\frac{b}{2}+1 < c \leq \frac{2}{3}$};
					\node at (2/3, 1/3) {Sch $\frac{1}{4} < c \leq \frac{1}{2}$};
					
					\draw[thick] (1/4, 1/4) -- (1,1/4);
					\node[circle, fill, inner sep=1pt] at (1/4, 1/4) {};
					\node at (1, 1/4) [right] {\tiny$c=\frac{1}{4}$};
					\node at (1/4, 1/4) [left] {\tiny$(\frac{1}{4}, \frac{1}{4})$};
					\node at (2/3, 5/24) {Sch $\frac{1}{6} < c \leq \frac{1}{4}$};
					
					\draw[thick] (1/6, 1/6) -- (2/3, 1/6);
					\node[circle, fill, inner sep=1pt] at (1/6, 1/6) {};
					\node at (7/36, 7/36) [left] {\tiny$(\frac{1}{6}, \frac{1}{6}$)};
					\node at (2/3, 1/6) [below] {\tiny{$(\frac{2}{3}, \frac{1}{6})$}};
					\draw[->] (7/24, 1/11) -- (1/4, 10/72);
					\node[circle, fill, inner sep=1.5pt] at (1/4, 10/72) [above] {};
					\node[align=center] at (7/24, 1/20) {Sch $\frac{1}{9} < c \leq \frac{1}{6}$};
					
					\draw[dashed] (1/9, 1/9) -- (1, 1/5);
					\node[rotate=5.71] at (5/6, 11/60) [below] {\tiny$c=b/10 + 1/10$};
					\node at (1, 1/5) [right] {\tiny{$c=\frac{1}{5}$}};
					
				\end{tikzpicture}
		\end{center}
\end{pro}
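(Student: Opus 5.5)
I will follow \Cref{conv:descproof} for type $\widetilde{E}_{2A_1}$ with $\ell$ in the smooth locus, exactly as in the proof of \Cref{E2A1_mult4}. I start from Schock's weight-one fiber (the surface of \cite[Fig.~9]{schock_moduli_2024}), modified by \Cref{teo:eckblowup} at its Eckardt points. Now $\ell$ is a multiplicity-one line lying in the component $\widetilde{S}_{2A_1}$ and missing both nodes, so I treat it as one of the multiplicity-one lines $M_j$. In particular, the marked line does not lie on any of the components $\mathrm{Bl}_5\mathbb{F}_0$, $\mathrm{Bl}_5\mathbb{P}^2$, or the $\mathbb{F}_0$ components coming from the non-flat $2A_3$ divisors.

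For each component I will write $K_{S'}+(b,c)B|_{S'}+\Delta_{S'}$ in the bases used above. On $\widetilde{S}_{2A_1}$ I use the special position of \Cref{tab:specialposcubics}, with $\ell$ a class of the form $L_{ij}$ or $e_6$ that avoids the two $(-2)$-curves $L_{123}$ and $L_{345}$. For each weight I will check:
\begin{enumerate}
\item the slc condition, via \Cref{lem:lineslccriteria} and \Cref{lem:linesA1lccriteria};
\item ampleness, via \Cref{teo:nefcones} on $\widetilde{S}_{2A_1}$ and direct intersection numbers on the rational components;
\item the volume identity \eqref{eq:constantvol}, by summing the component volumes.
\end{enumerate}
Because $b$ only appears with $\ell$ and $\ell$ sits on the ample-for-longest component $\widetilde{S}_{2A_1}$, I expect every wall except the Eckardt one to be horizontal. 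Concretely, I expect:
\begin{enumerate}
\item a wall at $c=\tfrac23$, where the Eckardt $\mathbb{P}^2$ components contract;
\item the slanted wall $c=-\tfrac b2+1$ only when $\ell$ meets an Eckardt $\mathbb{P}^2$, handled as in \Cref{rem:elleckcase};
\item a wall at $c=\tfrac12$, where the components of types $4$, $5a$, $5b$ and the five blown-up points contract, exactly as in Step 1 above, but now without any component carrying $\ell$ with weight $b+c$;
\item a wall at $c=\tfrac14$;
\item a wall at $c=\tfrac16$;
\item the final contraction of the two $(-2)$-curves at the last wall, giving Sch $\tfrac19<c\le\tfrac16$.
\end{enumerate}
This accounts for the six chambers listed.

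At each wall I will compute the contraction on every component and glue using \Cref{cor:gluing}. This requires verifying, for each component, the klt, big-and-nef, semiampleness and basepoint-freeness-along-the-conductor hypotheses; these reduce to elementary checks on toric or del Pezzo components. I will then confirm that the glued surface again satisfies \eqref{eq:constantvol}. The key check is that no $b$-dependent wall appears: on $\widetilde{S}_{2A_1}$ the coefficient of $\ell$ only enters through terms such as $(b-\text{const}\cdot c)$ multiplying classes that are already positive on the nef cone. So I must verify that these terms never produce the walls $c=b/3$, $c=b/4$ or $c=-b/3+1/3$ within $b\ge c$, $c>\tfrac b{10}+\tfrac1{10}$.

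The main obstacle is the wall $c=\tfrac14$, which the statement says introduces $\widetilde{D}_{A_1}\cap\widetilde{E}_{2A_1}$ degenerations. There, the $\mathbb{F}_0$ components from the non-flat $2A_3$ divisors that do not contain $\ell$ are contracted. The naive gluing might fail the volume condition, or \Cref{cor:gluing} might fail because of basepoints along the conductor. I would first compute the component log canonical divisors just below $c=\tfrac14$ and attempt \Cref{cor:gluing} directly. If that fails, I would perform the blow-up/blow-down modification indicated after the main strategy, mirroring Steps 3–4 above, and check \eqref{eq:constantvol} on the resulting surface. I would then identify the result with the degenerate $\widetilde{D}_{A_1}\cap\widetilde{E}_{2A_1}$ fiber, using Schock's description in \cite[Fig.~10]{schock_moduli_2024}.
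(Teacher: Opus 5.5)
Your strategy is the one the paper intends. The appendix gives no separate argument for this case, only that \Cref{conv:descproof} is run as in the proof of \Cref{E2A1_mult4}, and the walls you predict are the right ones. The gap is in your justification that no $b$-dependent wall appears. It rests on the assertion that $\ell$ lies only on $\widetilde S_{2A_1}$, and above $c=\frac12$ that is false.

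In the chamber Sch $\frac12<c\le\frac23$, every multiplicity-one line of $\widetilde S_{2A_1}$ meets the gluing locus:
\begin{itemize}
\item $L_{14}$, for example, meets the multiplicity-two gluing lines $e_1,e_4,L_{26},L_{56}$;
\item $L_{36}$, the residual line of the plane tangent along the line through both nodes, meets the multiplicity-four gluing line $e_3$.
\end{itemize}
The involution $\tau$ in \Cref{teo:kollar5.13} must preserve the different. So a $b$-weighted point on one side of a double curve forces a $b$-weighted curve through the matching point on the other side, and $\ell$ is really a chain:
\begin{itemize}
\item for the six generic choices, $\ell$ has $b$-weighted ruling pieces of class $h_2$ on the four type $5a$ copies of $\mathbb F_0$ glued along those lines;
\item for $\ell=L_{36}$, it continues as $h-e_1$ on the type $3$ component $\mathrm{Bl}_5\mathbb P^2$ coming from the non-flat $2A_3$ divisor. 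This is the multiplicity-one curve $h-e_1$ on $3'$ in the description of \Cref{E2A1_mult4_12c23}.
\end{itemize}
With your assignment, your own volume check would fail. At uniform weight on $\widetilde S_{2A_1}$ one has $(K+cB+\Delta)\cdot\ell=c+3$ (resp.\ $5c$). The component volumes therefore miss \eqref{eq:constantvol} by $2(b-c)(8c-4)$ (resp.\ $2(b-c)(4c-1)$), which is exactly the contribution of the omitted pieces.

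The repair also gives the real reason the walls are horizontal.
\begin{itemize}
\item On a type $5a$ component carrying a piece of $\ell$, the log canonical divisor is $(2c-1)h_1+(2c+b)h_2$.
\item On the type $3$ component when $\ell=L_{36}$, it is $(b+4c-1)h-be_1-(2c-1)(e_2+\dots+e_5)$, becoming $(b+4c-1)h-be_1$ on $\mathrm{Bl}_1\mathbb P^2$.
\item The curves contracted at the walls are $h_2$, then $e_2,\dots,e_5$, then $h-e_1$. They meet these divisors in $2c-1$, $2c-1$ and $4c-1$, independently of $b$.
\end{itemize}
This independence holds because the $b$-pieces have self-intersection $0$ and are themselves fibres of those contractions. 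Only after they are contracted (below $c=\frac12$, resp.\ $c=\frac14$) does your premise become true.

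On $\widetilde S_{2A_1}$ itself, the term $(b-c)\ell$ only lowers the degree on $\ell$, giving at worst $10c-1-b>0$, which is the boundary of $\mathrm{Amp}$. Since $\ell$ misses both $(-2)$-curves, the divisor stays trivial on them for every $b$. That is why there is no $b=c$ wall here, unlike the two-node case.

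Two smaller corrections.
\begin{itemize}
\item At $c=\frac14$, what collapses is the surviving $\mathrm{Bl}_1\mathbb P^2\cong\mathbb F_1$ of the non-flat component, via $\mathbb F_1\to\mathbb P^1$, together with $\mathrm{Bl}_1\mathbb F_0\to\mathbb F_0$ (divisor $(6c-1)(h_1+h_2)+(1-4c)e_1$). The $\mathbb F_0$ components go only at $c=\frac16$. No volume failure occurs at $c=\frac14$, since that needs a $(b+c)$-weighted curve, as at $c=\frac b3$ in the two-node case. So \Cref{cor:gluing} applies directly, as in Step 4 of the proof of \Cref{E2A1_mult4}.
\item Schock's Fig.~10 is a weight-near-one fibre, not a stable model at $c\le\frac14$, so you cannot identify your surface with it. What has to be checked instead is that the $\widetilde D_{A_1}\cap\widetilde E_{2A_1}$ fibres, run through the same procedure, acquire the same stable models as general $\widetilde E_{2A_1}$ fibres once their non-flat piece (now split into four components) has collapsed.
\end{itemize}
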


\newpage
\subsubsection{\textbf{$\ell$ passes through exactly one node}}

\begin{pro}\label{E2A1_mult2}
	For type $\widetilde{E}_{2A_1}$ $(b,c)$-weighted stable marked cubic surfaces $(S, (b,c)B)$ where $\ell$ passes through exactly one node in the corresponding $\overline{Y}_{\ell}$ boundary, there are 10 chambers. The line $c = \frac{b}{4}$ is its own chamber. Additionally, crossing the wall $c = \frac{1}{4}$ introduces type $\widetilde{D}_{A_1} \cap \widetilde{E}_{2A_1}$ surfaces as degenerations of type $\widetilde{E}_{2A_1}$.
	\begin{center}
			\begin{tikzpicture}[scale=14]
					\draw[very thick,-latex] (1/9, 1/9) -- (1.03, 1/9) 
					node[right]{$b$};
					
					\draw[very thick,-latex] (1/9, 1/9) -- (1/9, 1.03) 
					node[left]{$c$};
					
					\draw[solid] (1/9,1/9) -- (1,1);
					\node at (1/9, 1/9) [below] {\tiny $(\frac{1}{9}, \frac{1}{9})$};
					\node at (1, 1) [left] {$(1, 1)$};
					
					\draw[thick] (1, 1/9) -- (1,1);
					\node at (1, 1/9) [below] {$(1, \frac{1}{9})$};
					
					\draw[thick] (2/3, 2/3) -- (1, 2/3) node[right] {\tiny{$c=\frac{2}{3}$}};
					\node[circle, fill, inner sep=1pt] at (2/3, 2/3) {};
					\node at (2/3, 2/3) [left] {\tiny$(\frac{2}{3}, \frac{2}{3})$};
					\node at (8/9, 7/9) {Sch $\frac{2}{3} < c \leq 1$};
					
					\draw[thick] (1/2, 1/2) -- (1, 1/2);
					\node[circle, fill, inner sep=1pt] at (1/2, 1/2) {};
					\node at (1/2, 1/2) [left] {\tiny $(\frac{1}{2}, \frac{1}{2})$};
					\node at (7.5/9, 0.57) {Sch $\frac{1}{2} < c \leq \frac{2}{3}$};
					
					\draw[thick] (1/2, 1/2) -- (4/5, 1/5);
					\node[rotate=-45] at (7/12, 5/12) [above] {\tiny $c = -b+1$};
					\node[align=center] at (5/6, 0.42) {\(\frac{1}{4} < c \leq \frac{1}{2}\) \\ \(c > -b+1 \)};
					\node[align=center] at (1/2, 1/3) {Sch $\frac{1}{4} < c \leq \frac{1}{2}$};
					
					\draw[thick] (1/4, 1/4) -- (1,1/4);
					\node[circle, fill, inner sep=1pt] at (1/4, 1/4) {};
					\node at (1, 1/4) [right] {\tiny$c=\frac{1}{4}$};
					\node at (1/4, 1/4) [left] {\tiny$(\frac{1}{4}, \frac{1}{4})$};
					\node at (2.5/6, 5/24) {Sch $\frac{1}{6} < c \leq \frac{1}{4}$};
					\draw[->] (1.1, 1/6) -- (5.2/6, 10.5/48);
					\node[circle, fill, inner sep=1.5pt] at (5.15/6, 10.5/48) [above] {};
					\node[align=left] at (1.16, 1/6) {\(\frac{1}{5} < c \leq \frac{1}{4}\) \\ \(c > -b+1\)};
					
					\draw[thick] (4/5, 1/5) -- (1, 1/5);
					\draw[->] (2.5/3, 1/11) -- (0.8, 0.18);
					\node[circle, fill, inner sep=1.5pt] at (0.8, 0.19) {};
					\node[align=center] at (2.5/3, 1/20) {\(\frac{b}{10} + \frac{1}{10} < c \leq \frac{1}{5}\) \\ \(c < \frac{b}{4}\)};
					
					\draw[very thick] (2/3, 1/6) -- (4/5, 1/5);
					\node at (2/3, 1/6) [below] {\tiny{$(\frac{2}{3}, \frac{1}{6})$}};
					\node[rotate=14.04] at (22/30, 21/120) [above] {\tiny $c=\frac{b}{4}$};
					
					\draw[thick] (1/6, 1/6) -- (2/3, 1/6);
					\node[circle, fill, inner sep=1pt] at (1/6, 1/6) {};
					\node at (7/36, 7/36) [left] {\tiny$(\frac{1}{6}, \frac{1}{6}$)};
					\node at (2/3, 1/6) [below] {\tiny{$(\frac{2}{3}, \frac{1}{6})$}};
					\draw[->] (7/24, 1/11) -- (1/4, 10/72);
					\node[circle, fill, inner sep=1.5pt] at (1/4, 10/72) [above] {};
					\node[align=center] at (7/24, 1/20) { \(\frac{b}{10} + \frac{1}{10} < c \leq \frac{1}{6}\) \\ \(b \neq c\)};
					
					\draw[very thick] (1/9, 1/9) -- (1/6, 1/6);
					\draw[->] (1/11, 1/6) -- (5/36, 5/36);
					\node at (1/10, 1/6) [left] {Sch $\frac{1}{9} < c \leq \frac{1}{6}$};
					
					\draw[dashed] (1/9, 1/9) -- (1, 1/5);
					\node[rotate=5.71] at (5/6, 11/60) [below] {\tiny$c=b/10 + 1/10$};
					\node at (1, 1/5) [right] {\tiny{$c=\frac{1}{5}$}};
					
				\end{tikzpicture}
		\end{center}
\end{pro}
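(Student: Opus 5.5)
The statement is Proposition \ref{E2A1_mult2}. The plan is to run \Cref{conv:descproof} on a general type $\widetilde{E}_{2A_1}$ fiber where $\ell$ passes through exactly one node, in the same way as the proof of \Cref{E2A1_mult4}. The starting surface is the fiber $(\ddot S,\ddot B)$ of $\ddot\pi$ over a general point of this component of the stratum in $\overline{Y}_{(1,1)}$. It is obtained from \cite[Fig.~9]{schock_moduli_2024}, i.e.\ the surface of \Cref{E2A1_mult4_12c23} with the marked line relabeled, with an Eckardt point blown up and an $E:\mathbb{P}^2$ attached as in \Cref{teo:eckblowup}. I first treat the case where $\ell$ avoids $E$; when $\ell$ meets $E$, \Cref{rem:elleckcase} supplies the extra wall $c=-\frac b2+1$ in the same way. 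Now $\ell$ is a multiplicity $2$ line, a dashed green line of weight $b+c$, passing through one node. So in the $\mathrm{Sch}$ $\frac12<c\le\frac23$ model it lies in one $\mathbb{F}_0$ ruling component $4'$ and in the component adjacent to the $\widetilde{S}_{2A_1}$ part at that node.

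For each component I will write $K_{S'}+(b,c)B'+\Delta$ in the bases of \Cref{tab:E2A1_mult4_12c23}. I will use \Cref{teo:nefcones} and the exceptional/ruling test curves to find the walls, and \Cref{lem:lineslccriteria} and \Cref{lem:linesA1lccriteria} for slc. I will also check that the component volumes sum to \eqref{eq:constantvol}. The walls should come out as follows.
\begin{enumerate}
\item $c=\frac23$, where $E$ contracts.
\item $c=\frac12$ together with the wall $c=-b+1$, where the $4'$ component has log canonical class $(b+c-1)h_1+(4c-1)h_2$.
\item $c=\frac14$. To the left of $c=-b+1$, crossing it degenerates the stratum to $\widetilde{D}_{A_1}\cap\widetilde{E}_{2A_1}$, as in \Cref{E2A1_mult1}. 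To the right, the $h_2$-ruling of $4'$ contracts, giving the chamber $\frac15<c\le\frac14$, $c>-b+1$.
\item $c=\frac15$, where $-b+1$ meets $\frac b4$.
\item $c=\frac16$.
\item $b=c$, where the two $(-2)$-curves contract to the $A_1$ points, giving Naruki's surface.
\end{enumerate}
At each wall the contracted model is glued back using \Cref{cor:gluing}. This requires checking the klt, big-and-nef and base-point-freeness hypotheses on each component; these are routine because every component is a toric-like blow-up of $\mathbb{P}^2$ or $\mathbb{F}_0$ and the conductors are smooth rational curves.

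The new feature, compared with \Cref{E2A1_mult1}, is the segment $c=\frac b4$ for $\frac23<b\le\frac45$. I expect to find it exactly as $c=\frac b3$ was found in Step 3 of \Cref{E2A1_mult4}. On the component containing $\ell$, the class should take the form $(\cdots)h-(b-4c)e$, where $e$ is exceptional over the point of $\ell$ on a gluing line; the coefficient $4$ replaces $3$ because $\ell$ has multiplicity $2$ rather than $4$. Contracting $e$ at $c=\frac b4$ gives a model which is ample there but has volume
\[
(K_S+(b,c)B)^2=\mathrm{vol}_{\eqref{eq:constantvol}}+\lambda\,(4c-b)^2
\]
for some $\lambda\neq 0$. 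Hence the constant-volume condition holds only on the line itself, and the line is its own chamber. Off the line, one recovers the adjacent models by the birational modifications used there.

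The main obstacle is this volume analysis. I have to identify the correct contracted component at $c=\frac b4$ and show that the volume defect is a nonzero multiple of $(4c-b)^2$, which is what makes the wall a one-dimensional chamber rather than an ampleness wall. I also have to check that \Cref{cor:gluing} fails off the line, so that the correct neighboring stable models are the blown-up ones. Once this is settled, counting the regions gives the 10 chambers.
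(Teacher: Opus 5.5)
Your overall route is the paper's: run \Cref{conv:descproof} on the relabeled surface of \Cref{E2A1_mult4_12c23}, and the paper gives no separate argument for this case. But two inputs you carry over from the two-node case are false here, and the first contradicts the statement.

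First, there is no Eckardt sub-case. When $\ell$ passes through exactly one node, it never meets an $E:\mathbb{P}^2$. By the Eckardt table accompanying \Cref{E2A1_mult4} (Schock's Table~3), only the $\widetilde S_{2A_1}$ and $\mathrm{Bl}_5\mathbb{P}^2$ components carry Eckardt points. In the $\frac12<c\le\frac23$ model the boundary of $\widetilde S_{2A_1}$ consists only of the seven multiplicity-one lines, since the multiplicity-two lines are gluing curves there. The boundary of $\mathrm{Bl}_5\mathbb{P}^2$ consists of the four pieces $h-e_i-e_j$ of the multiplicity-four line together with $h-e_1$, the continuation of $L_{36}$. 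A line degenerating to a multiplicity-two line lies on neither component. If you invoke \Cref{rem:elleckcase} as you propose, you add the wall $c=-\frac b2+1$ and an eleventh chamber, which contradicts both the figure and the count of $10$.

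Second, $\ell$ is not an $h_1$-fiber of a $4'$ component. In the table for \Cref{E2A1_mult4_12c23}, the $h_1$-fibers of the four type $4/4'$ copies of $\mathbb{F}_0$ are pieces of the multiplicity-four line. Their $4h_2$ are the sixteen pieces of multiplicity-two lines, which enter across $h_1-e_1$ and $h_2-e_1$ on $\mathrm{Bl}_5\mathbb{F}_0$. So here $\ell$ is an $h_2$-fiber of a type-4 component with class $(2c-1)h_1+(b+3c-1)h_2$, and that component collapses at $c=\frac12$ for every $b$. The class $(b+c-1)h_1+(4c-1)h_2$ never occurs, so neither wall you derive from it is justified.

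The wall $c=-b+1$ actually comes from the places where $\ell$ and its partner run parallel:
\begin{itemize}
\item the curve $e_i\subset\mathrm{Bl}_5\mathbb{F}_0$ crossed by $\ell=h_1-e_i$, on which the degree is $b+c-1$;
\item the type 5a $\mathbb{F}_0$ attached along $\ell$'s multiplicity-two line, with class $(b+c-1)h_1+3c\,h_2$.
\end{itemize}
The wall $c=\frac14$, on both sides of $-b+1$, comes from the $\mathrm{Bl}_5\mathbb{P}^2$ component. Here that component is $\ell$-free and becomes $\mathrm{Bl}_1\mathbb{P}^2$ with class $(5c-1)h-ce_1$, independent of $b$. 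That is why the wall spans all $b$, whereas in \Cref{E2A1_mult4} it becomes $c=-\frac b3+\frac13$.

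A cleaner organization is to take the walls $-b+1$, $\frac15$, $\frac b4$, $\frac16$ of \Cref{DA1_mult2}, which come from the node on $\ell$, and add the $b$-independent wall $c=\frac14$ of \Cref{E2A1_mult1}. This gives $8+2=10$ chambers. The walls $\frac15$ and $\frac b4$ must then be computed from the local structure at that node.

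Your justification for the coefficient $4$ does not hold. In Step~3, $b-3c=b+c-4c$ arises from $\ell$, the $c$-line $h-e_2$, and the coefficient $4c$ carried by the exceptional curve $e_2$ itself (\Cref{E2A1_mult4_b10110c14}). It has nothing to do with the multiplicity of $\ell$. Your volume-defect shape $\lambda(b-4c)^2$ is the right mechanism once the contracted curve is actually located.
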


\subsubsection{Type $\widetilde{E}_{2A_1}$: $\ell$ passes through exactly two nodes}
\begin{pro}\label{app:E2A1_mult4degen}
	The following is the wall and chamber decomposition for type $\widetilde{D}_{A_1} \cap \widetilde{E}_{2A_1}$ $(b,c)$-weighted stable marked cubic surfaces $(S, (b,c)B)$ where $\ell$ passes through exactly two nodes in the corresponding $\overline{Y}_{\ell}$ boundary. There are 14 chambers. Along with the chamber $c = \frac{b}{3}$, the lines $c = \frac{1}{4}$ and $c = \frac{b}{4}$ are their own chambers.
\begin{center}
	\begin{tikzpicture}[scale=14]
		\draw[very thick,-latex] (1/9, 1/9) -- (1.03, 1/9) 
		node[right]{$b$};
		
		\draw[very thick,-latex] (1/9, 1/9) -- (1/9, 1.03) 
		node[left]{$c$};
		
		\draw[solid] (1/9,1/9) -- (1,1);
		\node at (1/9, 1/9) [below] {\tiny$(\frac{1}{9}, \frac{1}{9})$};
		\node at (1, 1) [left] {$(1, 1)$};
		
		\draw[thick] (1, 1/9) -- (1,1);
		\node at (1, 1/9) [below] {\tiny$(1, \frac{1}{9})$};
		
		\draw[thick] (2/3, 2/3) -- (1, 2/3);
		\node at (1, 2/3) [right] {\tiny $c=2/3$};
		\node at (2/3, 2/3) [left] {\tiny$(\frac{2}{3}, \frac{2}{3})$};
		\node at (8/9, 7/9) {Sch $\frac{2}{3} < c \leq 1$};
		
		\draw[thick] (1/2, 1/2) -- (1, 1/2);
		\node at (0.8, 0.58) {Sch $\frac{1}{2} < c \leq \frac{2}{3}$};
		
		\draw[thick] (1/2, 1/2) -- (4/5, 1/5);
		\node at (1/2, 1/2) [left] {\tiny$(\frac{1}{2}, \frac{1}{2})$};
		\node at (1, 1/2) [right] {\tiny$c=1/2$};
		\node[rotate=-45] at (2/3, 29/90) [above] {\tiny $c = -b+1$};
		\node[align=center] at (5/6, 0.42) {\(\frac{1}{4} < c \leq \frac{1}{2}\) \\ \(c > -b+1 \)};
		%
		\draw[thick] (1/4, 1/4) -- (7/13, 2/13);
		\node at (1/4, 1/4) [left] {\tiny$(\frac{1}{4}, \frac{1}{4})$};
		\node at (7/13, 2/13) [below] {\tiny{$(\frac{7}{13}, \frac{2}{13})$}};
		\node[rotate=-18.43] at (3/8, 24/120) [above] {\tiny$c=-\frac{b}{3} + \frac{1}{3}$};
		\node[align=center] at (1/2, 1/3) {Sch $\frac{1}{4} < c \leq \frac{1}{2}$};
		\node at (1.75/6, 4.55/24) {Sch $\frac{1}{6} < c \leq \frac{1}{4}$};
		
		%
		\draw[thick] (1/6, 1/6) -- (2/3, 1/6);
		\node at (7/36, 7/36) [left] {\tiny$(\frac{1}{6}, \frac{1}{6}$)};
		\draw[->] (7/24, 1/11) -- (1/4, 10/72);
		\node[circle, fill, inner sep=1pt] at (1/4, 10/72) [above] {};
		\node[align=center] at (7/24, 1/20) {\(\frac{b}{10} + \frac{1}{10} < c \leq \frac{1}{6}\) \\ \(c \leq - \frac{b}{3} + \frac{1}{3}, b \neq c\)};
		%
		\draw[very thick] (3/4, 1/4) -- (1, 1/4);
		\node at (1, 1/4) [right] {\tiny{$c=1/4$}};
		%
		\draw[thick] (4/5, 1/5) -- (1, 1/5);
		\node at (1, 1/5) [right] {\tiny{$c=1/5$}};
		%
		\draw[very thick] (1/2, 1/6) -- (3/4, 1/4);
		\node[rotate=18.43] at (2/3, 17/81) [above] {\tiny$c=\frac{b}{3}$};
		%
		\draw[very thick] (2/3, 1/6) -- (4/5, 1/5);
		\node at (2/3, 1/6) [below] {\tiny{$(\frac{2}{3}, \frac{1}{6})$}};
		\node[rotate=14.04] at (22/30, 21/120) [above] {\tiny $c=\frac{b}{4}$};
		%
		\draw[very thick] (1/9, 1/9) -- (1/6, 1/6);
		\draw[->] (1/11, 1/6) -- (5/36, 5/36);
		\node at (1/10, 1/6) [left] {Sch $\frac{1}{9} < c \leq \frac{1}{6}$};
		
		\draw[->] (1.1, 1/6) -- (5.2/6, 10.5/48);
		\node[circle, fill, inner sep=1.5pt] at (5.15/6, 10.5/48) [above] {};
		\node[align=left] at (1.16, 1/6) {\(\frac{1}{5} < c < \frac{1}{4}\) \\ \(c > -b+1\)};
		
		\draw[->] (1.05, 1/3) -- (8.5/13, 1/5);
		\node[circle, fill, inner sep=1.5pt] at (8.5/13, 1/5) [left] {};
		\node[align=center] at (1.15, 1/3) {\(\frac{1}{6} < c \leq -b+1\) \\ \(\frac{b}{4} < c < \frac{b}{3}\)};
		
		\draw[->] (7/13, 1/11) -- (7.3/13, 9.4/60);
		\node[circle, fill, inner sep=1.5pt] at (7.3/13, 9.4/60) [above] {};
		\node[align=center] at (7/13, 1/20) {\(\frac{b}{10} + \frac{1}{10} < c \leq \frac{1}{6}\) \\ \(c > -\frac{b}{3} + \frac{1}{3}\)};
		
		\draw[->] (10/13, 1/11) -- (10.5/13, 1.1/6);
		\node[circle, fill, inner sep=1.5pt] at (10.5/13, 1.1/6) [above] {};
		\node[align=center] at (10/13, 1/20) {\(\frac{b}{10} + \frac{1}{10} < c \leq \frac{1}{5}\) \\ \(c < \frac{b}{4}\)};
		
		\draw[dashed] (1/9, 1/9) -- (1, 1/5);
		\node[rotate=5.71] at (5/6, 11/60) [below] {\tiny$c=b/10 + 1/10$};
	\end{tikzpicture}
\end{center}
\end{pro}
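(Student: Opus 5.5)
I will run \Cref{conv:descproof} on the degenerate surfaces, in parallel with the proof of \Cref{E2A1_mult4}. For every surface $(S,(b,c)B)$ and every component $S'$, I compute $K_{S'}+(b,c)B'+\Delta$, test ampleness with \Cref{teo:nefcones} (or directly on $\mathbb{F}_0$, $\mathrm{Bl}_k\mathbb{P}^2$), test slc with \Cref{lem:lineslccriteria} and \Cref{lem:linesA1lccriteria}, and compare the summed volume with \eqref{eq:constantvol}. The starting surface is the weight-one fiber of type $\widetilde{D}_{A_1}\cap\widetilde{E}_{2A_1}$ from \Cref{teo:fullweightfibers}, with $\ell$ chosen to pass through two nodes.

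\textbf{Region $c>\frac14$.} The degeneration differs from the type $\widetilde{E}_{2A_1}$ surfaces only in components that do not contain $\ell$: the components over the extra $\widetilde{D}_{A_1}$ direction are attached along the $\widetilde{S}_{2A_1}$-part. So for $c>\frac14$ I expect the computation of \Cref{E2A1_mult4} to carry over essentially verbatim. It should give the walls $c=\frac23$, $c=\frac12$, $c=-b+1$ and $c=\frac14$. These arise from the $E:\mathbb{P}^2$ component, from the $\mathbb{F}_0$ components of types $4,5a,5b$, from the $4':\mathbb{F}_0$ ruling of class $(b+c-1)h_2$, and from the $(4c-1)h_1$ ruling. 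At each wall I glue the component log canonical models using \Cref{cor:gluing}, checking bigness and nefness of the twisted divisors and basepoint-freeness on the rational gluing curves.

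\textbf{Region $c\le\frac14$ and the new walls.} Below $c=\frac14$ the extra components start to contract, and this is where the new walls should appear: the segment $c=\frac14$ for $\frac34<b\le1$, the chamber $\frac15<c<\frac14$ with $c>-b+1$, the horizontal wall $c=\frac15$, and the lines $c=\frac b4$ and $c=\frac b3$. I will compute restricted log canonical classes of the form $(b+4c-1)h-ce_1-(b-3c)(e_2+e_4)$ on the component containing $\ell$, and analogous classes with a factor $(b-4c)$ coming from the additional degenerate component. The walls $c=\frac b3$ and $c=\frac b4$ come from these vanishing coefficients. The wall $c=-\frac b3+\frac13$ comes from $(b+4c-1)h-ce_1$, as in Step 4 of \Cref{E2A1_mult4}.

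\textbf{One-dimensional chambers and the main obstacle.} The segments $c=\frac14$, $c=\frac b3$ and $c=\frac b4$ are chambers on their own. For each of them I show that the naive glued model is ample there, but that its volume agrees with \eqref{eq:constantvol} only on the line itself, exactly as in Step 3 of \Cref{E2A1_mult4}. Off the line I then perform the birational modification (blow-up of the relevant intersection point with multiplicity-4 exceptional curve) that restores the volume. I expect this to be the main obstacle. At these walls \Cref{cor:gluing} does not apply directly, so the true stable replacement has to be found by blowing up and down until the volume matches, and then checked again with \Cref{cor:gluing}. Finally, below $c=\frac16$ the components collapse back to $\widetilde{S}_{2A_1}$ and then to $S_{2A_1}$ at $b=c$. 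Counting the resulting regions should give 14 chambers.
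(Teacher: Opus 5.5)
Your framework is the paper's: it gives no separate argument for this appendix case beyond running \Cref{conv:descproof} as in \Cref{E2A1_mult4}. However, your plan rests on a wrong description of the degenerate fiber, and that is exactly where the content of the statement lies. By \Cref{teo:eckconfig} and \cite[Ex.~4.16]{schock_moduli_2024}, the degenerate fibers arise by letting a $\mathrm{Bl}_5\mathbb{P}^2$ component as in \Cref{pro:bl5p2eck} break into four components. A type $\widetilde{E}_{2A_1}$ fiber has exactly one such component, namely $3'$. When $\ell$ passes through both nodes, $3'$ is the component carrying $\ell=h-e_2-e_5$ (see \Cref{E2A1_mult4_12c23} and its table). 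The paper's remark in \Cref{E3A1_mult4} agrees: extra chambers appear precisely when $\ell$ lies on the degenerating $\mathrm{Bl}_5\mathbb{P}^2$. So your claim that the new components avoid $\ell$ and hang off $\widetilde{S}_{2A_1}$ is false. Consequently Steps 0--4 of \Cref{E2A1_mult4} cannot be imported verbatim, even for $c>\frac14$. For instance, the wall $c=-\frac b2+1$ comes there from a possible Eckardt point on $\ell$ in $3'$ (cf.\ \Cref{pro:bl5p2eck} and \Cref{rem:elleckcase}). It is absent here because no Eckardt points occur on the degenerate components, which is why the statement's figure has no such wall.

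This is not a cosmetic slip. A component carrying no part of $\ell$ has log canonical class $K_{S'}+cB'+\Delta$ depending on $c$ alone, so it can only produce horizontal ampleness walls. The new features of the statement all depend on $b$:
\begin{enumerate}
\item the line $c=\frac b4$;
\item the continuation of $c=-b+1$ down to $(\frac45,\frac15)$;
\item the wall $c=\frac15$, which appears only for $b>\frac45$;
\item the line $c=\frac14$, which is a one-dimensional chamber only for $\frac34<b\le 1$.
\end{enumerate}
They must therefore come from $\ell$, together with the other three lines through the two nodes, lying on the four pieces that replace $3'$. Your appeal to ``a factor $(b-4c)$ coming from the additional degenerate component'' contradicts your own premise, and you do not support it with any identified component or divisor class. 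The missing input is an explicit description of those four components: how $\ell$, the remaining lines and the gluing curves are distributed on them, and their classes $K+(b,c)B'+\Delta$ from weight one downward. Only with this can you locate the ampleness walls and decide on which lines the volume condition \eqref{eq:constantvol} forces one-dimensional chambers. It is also what you need to find the correct modifications off those lines, which need not be the multiplicity-$4$ blow-up used at $c=\frac b3$.
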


\newpage

\newpage
\subsection{Type $\widetilde{E}_{3A_1}$}
	\subsubsection{\textbf{$\ell$ is in the smooth locus}}
\begin{pro}
	For type $\widetilde{E}_{3A_1}$ $(b,c)$-weighted stable marked cubic surfaces $(S, (b,c)B)$ where $\ell$ is in the smooth locus, the chambers are the same as in Proposition \ref{E2A1_mult1}. Additionally, crossing the wall $c = \frac{1}{4}$ introduces type 
	$$\widetilde{D}_{A_1} \cap \widetilde{E}_{3A_1}, \hspace{.25cm} \widetilde{E}_{2A_1} \cap \widetilde{E}_{3A_1}, \hspace{.25cm} \widetilde{D}_{A_1} \cap \widetilde{E}_{2A_1} \cap \widetilde{E}_{3A_1}$$
	surfaces as degenerations of type $\widetilde{E}_{3A_1}$ surfaces.
\end{pro}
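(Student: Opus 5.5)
We run \Cref{conv:descproof} on a general fiber of type $\widetilde{E}_{3A_1}$ with $\ell$ in the smooth locus (i.e.\ $\ell$ meets none of the three nodes), exactly as in the proof of \Cref{E2A1_mult4}. The case is simpler because $\ell$ lies on a component away from all the non-flat $2A_3$ contributions. The claim is that every wall involving $b$ nontrivially, other than the Eckardt wall $c=-\frac{b}{2}+1$, is absent. Hence the walls are $c=\frac{2}{3}$, $c=-\frac{b}{2}+1$ (only if $\ell$ passes through an Eckardt point, handled as in \Cref{rem:elleckcase}), $c=\frac12$, $c=\frac14$ and $c=\frac16$, giving the six chambers of \Cref{E2A1_mult1}.

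\textbf{Step 0.} Take the weight one fiber $(\ddot S,\ddot B)$ from \Cref{teo:eckblowup}, using Schock's fiber for $\widetilde{E}_{3A_1}$ with the Eckardt configurations of \Cref{teo:eckconfig}. The component containing $\ell$ is $\mathrm{Bl}\,\widetilde{S}_{3A_1}$, possibly blown up at an Eckardt point. On this component $\ell$ is a multiplicity-one line $h-e_i-e_j$ for a pair $\{p_i,p_j\}$ not lying in any of the three collinear triples. Then:
\begin{enumerate}
\item Using \Cref{teo:nefcones}, write $K_{S'}+(b,c)B'+\Delta$ on each component $S'$.
\item Check log canonicity with \Cref{lem:lineslccriteria}; at most three lines meet at any point.
\item Sum the component volumes and verify that the total equals \eqref{eq:constantvol}.
\end{enumerate}
The only components where $b$ appears are the $\widetilde{S}_{3A_1}$ component and, possibly, an $E:\mathbb{P}^2$. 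On the cubic component, the ampleness inequalities of \Cref{teo:nefcones} have the form $a>b_i+b_j(+b_l)$ with $b$ entering only through $a$ and $b_i,b_j$. One checks these stay strict down to $c=\frac16$ because $b\ge c$. The first failure is therefore in $c$ alone, exactly as in Schock's uniform case.

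\textbf{Successive walls.} Descending through the chambers, at $c=\frac23$ the $E$ components contract, and at $c=\frac12$ the $\mathbb{F}_0$-type components from the $2A_3$ divisors collapse as in Step 1 of the proof of \Cref{E2A1_mult4}. Every component other than the cubic one carries only $c$-weighted lines, so its walls are horizontal. At each wall we produce the stable replacement with \Cref{cor:gluing}; its hypotheses reduce to semi-ampleness on rational surfaces with known nef cones.

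\textbf{Wall $c=\frac14$.} Here the log canonical divisor on the cubic component (or on the ruled components attached along the three $(-2)$-curves) fails to be ample along curves through the nodes. This is where the new types $\widetilde{D}_{A_1}\cap\widetilde{E}_{3A_1}$, $\widetilde{E}_{2A_1}\cap\widetilde{E}_{3A_1}$ and $\widetilde{D}_{A_1}\cap\widetilde{E}_{2A_1}\cap\widetilde{E}_{3A_1}$ appear as degenerations, as in \Cref{E2A1_mult1}. We identify them by matching the resulting glued surfaces with the degenerate Schock fibers listed in \Cref{tab:bddstrata}.

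\textbf{Main obstacle.} The hardest step is showing that no positive-slope or volume-type walls appear, such as $c=\frac b3$ or $c=\frac b4$. To rule these out we first verify symbolically that the volume sum equals \eqref{eq:constantvol} identically in $(b,c)$ in every chamber, so no one-dimensional chambers arise. We then check that $\ell$, being disjoint from the gluing curves through the nodes, never becomes the unique curve along which ampleness fails; its intersection with the log canonical divisor is at least that of a $c$-weighted line when $b\ge c$.
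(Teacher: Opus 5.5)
Your plan (run the procedure on Schock's weight-one $\widetilde{E}_{3A_1}$ fiber, as in the proof of Proposition~\ref{E2A1_mult4}) is what the paper does. However, your argument for the decisive point, that no $b$-dependent wall other than $c=-\frac{b}{2}+1$ appears, rests on two false claims.

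\textbf{$\ell$ is not confined to the cubic component and $E$.} It is also not disjoint from the gluing curves. In Schock's fibers the lines through the nodes are gluing curves of the cubic component. The limit of a smooth-locus line continues into the components glued along the node lines it meets. Take collinear triples $\{p_1,p_2,p_3\},\{p_3,p_4,p_5\},\{p_5,p_6,p_1\}$. The smooth-locus lines are exactly $h-e_1-e_4$, $h-e_2-e_5$, $h-e_3-e_6$; not every pair outside the triples works, since $h-e_2-e_4$ passes through a node. The line $\ell=h-e_1-e_4$ meets $e_1$ (through two nodes) and $e_4$, $h-e_2-e_6$ (through one node each).

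The paper's $\widetilde{E}_{2A_1}$ tables show what happens. On component $1$ one has $(K+cB+\Delta)\cdot L_{14}=c+3$ and $(K+cB+\Delta)\cdot L_{36}=5c$. But the limit of a line must have degree $9c-1$, the degree of $K+cB$ on a line of a smooth cubic. The missing $4(2c-1)$ is carried by the $h_2$-rulings on four copies of $5a$. The missing $4c-1$ is carried by the line $h-e_1$ on the degenerate $\mathrm{Bl}_5\mathbb{P}^2$ component $3'$, which comes from a non-flat $2A_3$ divisor. So these statements of yours are false:
\begin{itemize}
\item that $\ell$ lies away from the non-flat $2A_3$ contributions;
\item that every non-cubic component carries only $c$-weighted lines.
\end{itemize}
A volume check with $b$ placed only on the cubic component would not reproduce \eqref{eq:constantvol}.

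\textbf{Your degree comparison is backwards.} On the cubic component $\ell$ is a $(-1)$-curve. Raising its coefficient from $c$ to $b$ therefore \emph{lowers} $(K+(b,c)B+\Delta)\cdot\ell$ by $b-c$; only the curves meeting $\ell$ gain.

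\textbf{What must be verified instead} is the bookkeeping of all pieces of $\ell$ through every wall.
\begin{itemize}
\item On the cubic component the degree on $\ell$ decreases with $b$ and must be shown positive. Taking the node lines and $(-2)$-curves as gluing curves, it is $2+2c-b$ in the top chambers. Summing all pieces gives $(2+2c-b)+(4c-1)+2(2c-1)=10c-b-1$, which vanishes only on the boundary $c=\frac{b}{10}+\frac{1}{10}$ of $\mathrm{Amp}$. Once the other pieces are contracted, this is the degree on $\ell$ itself. In between, the piece on the cubic component has degree $10c-b-1-(4c-1)=6c-b$, still positive for $c>\frac14$.
\item On the other components the pieces of $\ell$ are fibers of rulings of self-intersection $0$: $h_2$ on the $\mathbb{F}_0$'s, and $h-e_1$ on the degenerate $\mathrm{Bl}_5\mathbb{P}^2$. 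So $b$ raises only the degrees of transverse curves and leaves the degree on the contracted ruling unchanged.
\end{itemize}
For instance, in the $\widetilde{E}_{2A_1}$ model with $\ell=L_{36}$, the $\mathrm{Bl}_1\mathbb{P}^2$ remnant of $3'$ has log canonical divisor $(4c+b-1)h-be_1$, giving the horizontal wall $c=\frac14$. In the two-node case the $\ell$-piece is an $h$-class line instead, and the same component gives $(b+4c-1)h-ce_1$ and the wall $c=-\frac{b}{3}+\frac13$. This bookkeeping, not any disjointness of $\ell$ from the nodes, is why the smooth-locus case has only the horizontal walls of Proposition~\ref{E2A1_mult1}. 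It also governs the $c=\frac14$ wall, where the degenerate $\mathrm{Bl}_5\mathbb{P}^2$ components, one of which carries a piece of $\ell$, collapse.
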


\subsubsection{\textbf{$\ell$ passes through exactly one node}}
\begin{pro}
	For type $\widetilde{E}_{3A_1}$ $(b,c)$-weighted stable marked cubic surfaces $(S, (b,c)B)$ where $\ell$ passes through exactly one node in the corresponding $\overline{Y}_{\ell}$ boundary, the chambers are the same as in Proposition \ref{E2A1_mult2}. Additionally, crossing the wall $c = \frac{1}{4}$ introduces type 
	$$\widetilde{D}_{A_1} \cap \widetilde{E}_{3A_1}, \hspace{.25cm} \widetilde{E}_{2A_1} \cap \widetilde{E}_{3A_1}, \hspace{.25cm} \widetilde{D}_{A_1} \cap \widetilde{E}_{2A_1} \cap \widetilde{E}_{3A_1}$$
	surfaces as degenerations of type $\widetilde{E}_{3A_1}$ surfaces.
\end{pro}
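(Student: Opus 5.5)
The plan is to run \Cref{conv:descproof} for type $\widetilde{E}_{3A_1}$ surfaces with $\ell$ through exactly one node, and to compare each step with the already completed computation for \Cref{E2A1_mult2}. The two cases should be parallel: at every stage the $\widetilde{E}_{3A_1}$ fiber has the same components near $\ell$ as the $\widetilde{E}_{2A_1}$ fiber, and the extra components come only from the third node, which $\ell$ avoids. So the only walls can be the ones already found for \Cref{E2A1_mult2}.

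\textbf{Step 0.} Start from the fiber of $\ddot\pi$ over a general point of $\widetilde{E}_{3A_1}$ (Schock's Fig.~15 with Eckardt blow-ups as in \Cref{teo:eckblowup}), with the Eckardt configuration taken from \Cref{tab:E3A1D3A2eck}. Split the components into two classes. The first class consists of those containing $\ell$, namely the component through the node on $\ell$ and the $\mathbb{F}_0$ components where $\ell$ has weight $b+c$. The second class is everything else. For each component $S'$, compute $K_{S'}+(b,c)B|_{S'}+\Delta_{S'}$ as in the proof of \Cref{E2A1_mult4}, and test ampleness using \Cref{teo:nefcones} (now with $k=3$ colinear triples in \Cref{tab:specialposcubics}).

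\textbf{Step 1.} On second-class components the divisor depends on $c$ alone, so their walls are horizontal. They should be exactly Schock's walls $c=\frac23,\frac12,\frac14,\frac16$. On first-class components the $b$-dependence enters through the same local configurations as in \Cref{E2A1_mult2}. These give the walls $c=-b+1$ (the $4'$-type $\mathbb{F}_0$ rulings), $c=\frac{b}{4}$ together with $c=\frac15$ (a volume-only chamber, handled as in Step 3 of the proof of \Cref{E2A1_mult4}), and $c=-\frac b2+1$ when $\ell$ meets an $E:\mathbb{P}^2$, as in \Cref{rem:elleckcase}. Slc is checked with \Cref{lem:lineslccriteria} and \Cref{lem:linesA1lccriteria}. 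Gluing of the contracted components is justified by \Cref{cor:gluing}, since all conductors are smooth rational curves on which the restricted linear systems are basepoint-free.

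\textbf{Step 2.} Check the volume condition \eqref{eq:constantvol} chamber by chamber by summing the component volumes. Off the line $c=\frac b4$, it should hold in each region of \Cref{E2A1_mult2}. On $c=\frac b4$, the contracted model matches the required volume only on that line, giving a separate chamber.

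\textbf{Step 3: degenerations.} At $c=\frac14$, the components coming from the non-flat $2A_3$ divisors attached to the nodes collapse. Near each pair or triple of nodes, the stable models then coincide with those arising from the degenerate strata $\widetilde{D}_{A_1}\cap\widetilde{E}_{3A_1}$, $\widetilde{E}_{2A_1}\cap\widetilde{E}_{3A_1}$ and $\widetilde{D}_{A_1}\cap\widetilde{E}_{2A_1}\cap\widetilde{E}_{3A_1}$. The reasoning is the same as in \Cref{E2A1_mult2}, and I would verify it by comparing the glued models in Schock's Fig.~16.

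\textbf{Main obstacle.} The hardest part is Step 1, specifically confirming that the third node contributes no new $b$-dependent wall. The danger is the colinear triple containing $\ell$: through \Cref{teo:nefcones} it can produce a condition like $a>b_i+b_j+b_l$ that mixes $b$ and $c$ differently from the $\widetilde{E}_{2A_1}$ case. I would first write the component containing $\ell$ as $\mathrm{Bl}_6\mathbb{P}^2$ with three colinear triples. Then I would check that every $(-1)$- and $(-2)$-class meeting $\ell$ gives an inequality already present in \Cref{E2A1_mult2}, namely $c>-b+1$, $c>\frac b4$, or a horizontal wall.
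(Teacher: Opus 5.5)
Your plan is the paper's own route: run Procedure~\ref{conv:descproof} on the weight-one $\widetilde{E}_{3A_1}$ fiber and match each step against the one-node $\widetilde{E}_{2A_1}$ computation. The gap is in the list of $b$-dependent walls you expect, which is not the list for \Cref{E2A1_mult2}.

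You include $c=-\frac b2+1$ ``when $\ell$ meets an $E:\mathbb{P}^2$''. \Cref{E2A1_mult2} has no such wall: none of its ten chambers is cut out by $c>-\frac b2+1$. As written, your Step~1 either produces an extra chamber, contradicting the statement, or it needs an argument that this case never occurs. The case never occurs, and that is exactly what must be shown.

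In the weight-one fiber, a line through exactly one node lies on the $\mathrm{Bl}_4\mathbb{F}_0$ component over that node and on $\mathbb{F}_0$ components; on $\widetilde{S}_{3A_1}$ it is only a gluing curve. By Schock's Table~4 for type $\widetilde{E}_{3A_1}$, none of these components carries an Eckardt point. (You cited \Cref{tab:E3A1D3A2eck}, which is the table for $\widetilde{E}_{3A_1}\cap\widetilde{D}_{3A_2}$.)

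The same fact is visible on the cubic itself. Suppose a line $L$ through a node $q$ passed through an Eckardt point $p$. Then $L\subset T_pS$, so $q\in T_pS$, and the section $T_pS\cap S=L\cup L'\cup L''$ would have to be singular at $q$. But $L'$ and $L''$ already meet $L$ at $p\neq q$, so only $L$ passes through $q$, and the section is smooth there.

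Your ``main obstacle'' is also aimed at the wrong place, and your target list omits a wall that is present. $\ell$ is never one of the collinear-triple classes $h-e_i-e_j-e_l$. Those are the $(-2)$-curves over the nodes, and $\ell$ is a $(-1)$-curve meeting exactly one of them.

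Work on the resolution with each $(-2)$-curve given coefficient $6c$, as in \Cref{E2A1_mult4_19c16bneqc}. The curve $E_q$ over the node on $\ell$ then has degree $(b+c)+2c+8c-12c=b-c$. Here $b+c$ is $\ell$ together with its coincident copy, $2c$ is the other double line, and $8c$ comes from the two quadruple lines. This gives the diagonal wall $b=c$ for $c\le\frac16$. That wall is in \Cref{E2A1_mult2} (its ``$b\neq c$'' chamber) but not in your list ``$c>-b+1$, $c>\frac b4$, or horizontal''.

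For the two nodes off $\ell$, the same count gives $4c+8c-12c=0$. Those curves are therefore contracted in the log canonical model and contribute nothing that depends on $b$. This, rather than a search over all classes meeting $\ell$, is why the third node adds no wall.

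With these two corrections, the comparison with \Cref{E2A1_mult2} goes through as you describe.
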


\subsubsection{\textbf{$\ell$ passes through exactly two nodes}}
\begin{pro}\label{E3A1_mult4}
	For type $\widetilde{E}_{3A_1}$ $(b,c)$-weighted stable marked cubic surfaces $(S, (b,c)B)$ where $\ell$ passes through exactly two nodes in the corresponding $\overline{Y}_{\ell}$ boundary, there are 12 chambers. The line $c = \frac{b}{3}$ is its own chamber. Additionally, crossing the wall $c = -\frac{b}{3} + \frac{1}{3}$ introduces type 
	$$\widetilde{D}_{A_1} \cap \widetilde{E}_{3A_1}, \hspace{.25cm} \widetilde{E}_{2A_1} \cap \widetilde{E}_{3A_1}, \hspace{.25cm} \widetilde{D}_{A_1} \cap \widetilde{E}_{2A_1} \cap \widetilde{E}_{3A_1}$$
	surfaces as degenerations of type $\widetilde{E}_{3A_1}$. In this case, if $\ell$ is in the component that is the degeneration of $\operatorname{Bl}_5 \mathbb{P}^2$, the wall and chamber decomposition has 15 chambers and explained more below. Otherwise, the wall and chamber decomposition is the same as Type $\widetilde{E}_{3A_1}$. 
	\begin{center}
		\begin{tikzpicture}[scale=14]
			\draw[very thick,-latex] (1/9, 1/9) -- (1.03, 1/9) 
			node[right]{$b$};
			
			\draw[very thick,-latex] (1/9, 1/9) -- (1/9, 1.03) 
			node[left]{$c$};
			
			\draw[solid] (1/9,1/9) -- (1,1);
			\node at (1/9, 1/9) [below] {\tiny $(\frac{1}{9}, \frac{1}{9})$};
			\node at (1, 1) [left] {$(1, 1)$};
			
			\draw[thick] (1, 1/9) -- (1,1);
			\node at (1, 1/9) [below] {$(1, \frac{1}{9})$};
			
			\draw[thick] (2/3, 2/3) -- (1, 2/3) node[right] {\tiny{$c=\frac{2}{3}$}};
			\node[circle, fill, inner sep=1pt] at (2/3, 2/3) {};
			\node at (2/3, 2/3) [left] {\tiny$(\frac{2}{3}, \frac{2}{3})$};
			\node at (8/9, 7/9) {Sch $\frac{2}{3} < c \leq 1$};
			\node at (0.9, 0.63) {$-\frac{b}{2}+1 < c \leq \frac{2}{3}$};
			
			\draw[solid] (2/3, 2/3) -- (1, 1/2) node[right] {\color{black} \tiny{$c = \frac{1}{2}$}};
			\node[rotate=-26.6] at (5/6, 57/96) {\tiny wall due to $\ell$ containing Eckardt point(s)};
			
			\draw[thick] (1/2, 1/2) -- (1, 1/2);
			\node[circle, fill, inner sep=1pt] at (1/2, 1/2) {};
			\node at (1/2, 1/2) [left] {\tiny $(\frac{1}{2}, \frac{1}{2})$};
			\node at (0.7, 0.56) {Sch $\frac{1}{2} < c \leq \frac{2}{3}$};
			
			\draw[thick] (1/2, 1/2) -- (3/4, 1/4);
			\node[rotate=-45] at (7/12, 5/12) [above] {\tiny $c = -b+1$};
			\node[align=center] at (5/6, 0.42) {\(\frac{1}{4} < c \leq \frac{1}{2}\) \\ \(c > -b+1 \)};
			\node[align=center] at (1/2, 1/3) {Sch $\frac{1}{4} < c \leq \frac{1}{2}$};
			
			\draw[thick] (1/4, 1/4) -- (7/13, 2/13);
			\node[circle, fill, inner sep=1pt] at (1/4, 1/4) {};
			\node[rotate=-18.43] at (3/8, 24/120) [above] {\tiny$c=-\frac{b}{3} + \frac{1}{3}$};
			\node at (7/13, 2/13) [below] {\tiny{$(\frac{7}{13}, \frac{2}{13})$}};
			\node at (1/4, 1/4) [left] {\tiny$(\frac{1}{4}, \frac{1}{4})$};
			\node at (1.75/6, 4.55/24) {Sch $\frac{1}{6} < c \leq \frac{1}{4}$};
			
			\draw[thick] (1/4, 1/4) -- (1, 1/4) node[right] {\tiny $c = \frac{1}{4}$};
			\node[align=center] at (6.5/13, 2.5/12) {\tiny \(-\frac{b}{3} + \frac{1}{3} < c \leq \frac{1}{4}\) \\ \tiny \(c > \frac{b}{3}\)};
			\node[align=left] at (5.2/6, 10.5/48) {\tiny \(\frac{b}{10} + \frac{1}{10} < c \leq \frac{1}{4}\) \\ \tiny \(\frac{1}{6} < c < \frac{b}{3}\)};
			
			\draw[very thick] (1/2, 1/6) -- (3/4, 1/4);
			\node[rotate=18.43] at (2/3, 17/81) [above] {\tiny$c=\frac{b}{3}$};
			
			\draw[thick] (1/6, 1/6) -- (2/3, 1/6);
			\node[circle, fill, inner sep=1pt] at (1/6, 1/6) {};
			\node at (7/36, 7/36) [left] {\tiny$(\frac{1}{6}, \frac{1}{6}$)};
			\node at (2/3, 1/6) [below] {\tiny{$(\frac{2}{3}, \frac{1}{6})$}};
			\draw[->] (7/24, 1/11) -- (1/4, 10/72);
			\node[circle, fill, inner sep=1.5pt] at (1/4, 10/72) [above] {};
			\node[align=center] at (7/24, 1/20) {\(\frac{b}{10} + \frac{1}{10} < c \leq \frac{1}{6}\) \\ \(c \leq - \frac{b}{3} + \frac{1}{3}, b \neq c\)};
			\draw[->] (7/13, 1/11) -- (7.3/13, 9.4/60);
			\node[circle, fill, inner sep=1.5pt] at (7.3/13, 9.4/60) [above] {};
			\node[align=center] at (7/13, 1/20) {\(\frac{b}{10} + \frac{1}{10} < c \leq \frac{1}{6}\) \\ \(c > -\frac{b}{3} + \frac{1}{3}\)};
			
			\draw[very thick] (1/9, 1/9) -- (1/6, 1/6);
			\draw[->] (1/11, 1/6) -- (5/36, 5/36);
			\node at (1/10, 1/6) [left] {Sch $\frac{1}{9} < c \leq \frac{1}{6}$};
			
			\draw[dashed] (1/9, 1/9) -- (1, 1/5);
			\node[rotate=5.71] at (5/6, 11/60) [below] {\tiny$c=b/10 + 1/10$};
			\node at (1, 1/5) [right] {\tiny{$c=\frac{1}{5}$}};
			
		\end{tikzpicture}
	\end{center}
	The following is the wall and chamber decomposition for type 
	$$\widetilde{D}_{A_1} \cap \widetilde{E}_{3A_1}, \hspace{.25cm} \widetilde{E}_{2A_1} \cap \widetilde{E}_{3A_1}, \hspace{.25cm} \widetilde{D}_{A_1} \cap \widetilde{E}_{2A_1} \cap \widetilde{E}_{3A_1}$$
	$(b,c)$-weighted stable marked cubic surfaces $(S, (b,c)B)$ where $\ell$ passes through exactly two nodes in the corresponding $\overline{Y}_{\ell}$ boundary. There are 15 chambers. Along with the chamber $c = \frac{b}{3}$, the lines $c = \frac{1}{4}$ and $c = \frac{b}{4}$ are their own chambers.
	\begin{center}
		\begin{tikzpicture}[scale=14]
			\draw[very thick,-latex] (1/9, 1/9) -- (1.03, 1/9) 
			node[right]{$b$};
			
			\draw[very thick,-latex] (1/9, 1/9) -- (1/9, 1.03) 
			node[left]{$c$};
			
			\draw[solid] (1/9,1/9) -- (1,1);
			\node at (1/9, 1/9) [below] {\tiny$(\frac{1}{9}, \frac{1}{9})$};
			\node at (1, 1) [left] {$(1, 1)$};
			
			\draw[thick] (1, 1/9) -- (1,1);
			\node at (1, 1/9) [below] {\tiny$(1, \frac{1}{9})$};
			
			\draw[thick] (2/3, 2/3) -- (1, 2/3);
			\node at (1, 2/3) [right] {\tiny $c=2/3$};
			\node at (2/3, 2/3) [left] {\tiny$(\frac{2}{3}, \frac{2}{3})$};
			\node at (8/9, 7/9) {Sch $\frac{2}{3} < c \leq 1$};
			
			\draw[thick] (1/2, 1/2) -- (1, 1/2);
			\node at (0.8, 0.58) {Sch $\frac{1}{2} < c \leq \frac{2}{3}$};
			
			\draw[thick] (1/2, 1/2) -- (4/5, 1/5);
			\node at (1/2, 1/2) [left] {\tiny$(\frac{1}{2}, \frac{1}{2})$};
			\node at (1, 1/2) [right] {\tiny$c=1/2$};
			\node[rotate=-45] at (2/3, 29/90) [above] {\tiny $c = -b+1$};
			\node[align=center] at (5/6, 0.42) {\(\frac{1}{4} < c \leq \frac{1}{2}\) \\ \(c > -b+1 \)};
			%
			\draw[thick] (1/4, 1/4) -- (7/13, 2/13);
			\node at (1/4, 1/4) [left] {\tiny$(\frac{1}{4}, \frac{1}{4})$};
			\node at (7/13, 2/13) [below] {\tiny{$(\frac{7}{13}, \frac{2}{13})$}};
			\node[rotate=-18.43] at (3/8, 24/120) [above] {\tiny$c=-\frac{b}{3} + \frac{1}{3}$};
			\node[align=center] at (1/2, 1/3) {Sch $\frac{1}{6} < c \leq \frac{1}{2}$};
			\node at (1.75/6, 4.55/24) {Sch $\frac{1}{6} < c \leq \frac{1}{4}$};
			
			\draw[thick] (1/6, 1/6) -- (2/3, 1/6);
			\node at (7/36, 7/36) [left] {\tiny$(\frac{1}{6}, \frac{1}{6}$)};
			\draw[->] (7/24, 1/11) -- (1/4, 10/72);
			\node[circle, fill, inner sep=1pt] at (1/4, 10/72) [above] {};
			\node[align=center] at (7/24, 1/20) {\(\frac{b}{10} + \frac{1}{10} < c \leq \frac{1}{6}\) \\ \(c \leq - \frac{b}{3} + \frac{1}{3}, b \neq c\)};
			%
			\draw[thick] (1/4, 1/4) -- (3/4, 1/4);
			\draw[very thick] (3/4, 1/4) -- (1, 1/4);
			\node[align=center] at (6.5/13, 2.5/12) {\tiny \(-\frac{b}{3} + \frac{1}{3} < c \leq \frac{1}{4}\) \\ \tiny \(c > \frac{b}{3}\)};
			\node at (1, 1/4) [right] {\tiny{$c=1/4$}};
			%
			\draw[thick] (4/5, 1/5) -- (1, 1/5);
			\node at (1, 1/5) [right] {\tiny{$c=1/5$}};
			%
			\draw[very thick] (1/2, 1/6) -- (3/4, 1/4);
			\node[rotate=18.43] at (2/3, 17/81) [above] {\tiny$c=\frac{b}{3}$};
			%
			\draw[very thick] (2/3, 1/6) -- (4/5, 1/5);
			\node at (2/3, 1/6) [below] {\tiny{$(\frac{2}{3}, \frac{1}{6})$}};
			\node[rotate=14.04] at (22/30, 21/120) [above] {\tiny $c=\frac{b}{4}$};
			%
			\draw[very thick] (1/9, 1/9) -- (1/6, 1/6);
			\draw[->] (1/11, 1/6) -- (5/36, 5/36);
			\node at (1/10, 1/6) [left] {Sch $\frac{1}{9} < c \leq \frac{1}{6}$};
			
			\draw[->] (1.1, 1/6) -- (5.2/6, 10.5/48);
			\node[circle, fill, inner sep=1.5pt] at (5.15/6, 10.5/48) [above] {};
			\node[align=left] at (1.16, 1/6) {\(\frac{1}{5} < c < \frac{1}{4}\) \\ \(c > -b+1\)};
			
			\draw[->] (1.05, 1/3) -- (8.5/13, 1/5);
			\node[circle, fill, inner sep=1.5pt] at (8.5/13, 1/5) [left] {};
			\node[align=center] at (1.15, 1/3) {\(\frac{1}{6} < c \leq -b+1\) \\ \(\frac{b}{4} < c < \frac{b}{3}\)};
			
			\draw[->] (7/13, 1/11) -- (7.3/13, 9.4/60);
			\node[circle, fill, inner sep=1.5pt] at (7.3/13, 9.4/60) [above] {};
			\node[align=center] at (7/13, 1/20) {\(\frac{b}{10} + \frac{1}{10} < c \leq \frac{1}{6}\) \\ \(c > -\frac{b}{3} + \frac{1}{3}\)};
			
			\draw[->] (10/13, 1/11) -- (10.5/13, 1.1/6);
			\node[circle, fill, inner sep=1.5pt] at (10.5/13, 1.1/6) [above] {};
			\node[align=center] at (10/13, 1/20) {\(\frac{b}{10} + \frac{1}{10} < c \leq \frac{1}{5}\) \\ \(c < \frac{b}{4}\)};
			
			\draw[dashed] (1/9, 1/9) -- (1, 1/5);
			\node[rotate=5.71] at (5/6, 11/60) [below] {\tiny$c=b/10 + 1/10$};
		\end{tikzpicture}
	\end{center}
\end{pro}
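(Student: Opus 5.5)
The statement is Proposition \ref{E3A1_mult4}. I would prove it by running \Cref{conv:descproof} exactly as in the proof of \Cref{E2A1_mult4}. The starting point is the type $\widetilde{E}_{3A_1}$ fiber of $\ddot{\pi}$ over a general point of the stratum, taken from \cite[Figs.~15, 16]{schock_moduli_2024} together with the Eckardt blow-ups of \Cref{teo:eckblowup}, with the marked line $\ell$ chosen to pass through two of the three nodes.

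\textbf{Step 0 (weight near $(1,1)$).} For each irreducible component I would write the components in the presentations used before: the resolution $\widetilde{S}_{3A_1}$, the $\mathrm{Bl}_5\mathbb{F}_0$ and $\mathrm{Bl}_5\mathbb{P}^2$ pieces, and the $\mathbb{F}_0$ pieces from the non-flat $2A_3$ divisors. On each I compute $K_{S'}+(b,c)B'+\Delta$, test ampleness with \Cref{teo:nefcones} or directly on the generators of the effective cone, and test log canonicity with Lemmas \ref{lem:lineslccriteria} and \ref{lem:linesA1lccriteria}. I then check that the componentwise volumes add up to \eqref{eq:constantvol}. This produces the walls $c=\frac23$, $c=\frac12$, and $c=-\frac b2+1$ (the last only when $\ell$ meets an $E:\mathbb{P}^2$, as in \Cref{rem:elleckcase}).

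\textbf{Descent.} Continuing downward, the $\mathbb{F}_0$ components containing $\ell$ (weight $b+c$ on one ruling) produce the walls $c=-b+1$ and $c=\frac14$. The $\mathrm{Bl}_3\mathbb{P}^2$ component containing $\ell$ produces $c=\frac b3$ (contraction of $e_2,e_4$) and $c=-\frac b3+\frac13$ (contraction of $h-e_1$). At each wall I form the contracted components and glue them using \Cref{cor:gluing}; this needs the klt, big-and-nef and basepoint-freeness hypotheses, which hold because every component is a rational surface with known nef cone. The segment $c=\frac b3$ would again come out as a chamber of its own through the volume computation of Step 3 in \Cref{E2A1_mult4}: the glued surface has the correct volume only on that line. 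Finally I continue through $c=\frac16$, $c=b$ and $c=\frac b{10}+\frac1{10}$, ending at $\overline{Y}_\ell$. Counting the polytopes obtained gives the 12 chambers of the first picture.

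\textbf{Degenerate strata.} The strata $\widetilde{D}_{A_1}\cap\widetilde{E}_{3A_1}$, $\widetilde{E}_{2A_1}\cap\widetilde{E}_{3A_1}$ and $\widetilde{D}_{A_1}\cap\widetilde{E}_{2A_1}\cap\widetilde{E}_{3A_1}$ appear once we cross $c=-\frac b3+\frac13$. Here I would redo the computation with the $\mathrm{Bl}_5\mathbb{P}^2$ component replaced by its four-component degeneration (\Cref{teo:eckconfig}, \cite[Ex.~4.16]{schock_moduli_2024}). If $\ell$ does not lie in that degenerated piece, the log canonical divisors on the pieces containing $\ell$ are unchanged, so the decomposition is the same as for $\widetilde{E}_{3A_1}$. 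If $\ell$ does lie in it, the $\mathbb{F}_0$ and $\mathbb{P}^2$ pieces of the degeneration carry $\ell$ with a $b$-weighted ruling. That produces the additional walls $c=\frac15$ and $c=\frac b4$, and turns $c=\frac14$ ($b>\frac34$) and $c=\frac b4$ into one-dimensional chambers, again through volume failure. This yields the 15 chambers of the second picture.

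\textbf{Main obstacle.} I expect the hardest part to be the lower-dimensional chambers $c=\frac b3$, $c=\frac b4$ and $c=\frac14$. There ampleness holds but \Cref{cor:gluing} gives a candidate of the wrong volume off the line, so one has to find the correct birational modification on each side and check that it glues. My first approach would be to compute the volume polynomial of the naive candidate and compare it with \eqref{eq:constantvol}. I would then take the modification to be the blow-up of the points where $\ell$ meets the gluing curves, with the exceptional curves weighted as the multiplicity-4 curves were in \Cref{E2A1_mult4_b10110c14}, and check that the volume then matches on both sides of the line.
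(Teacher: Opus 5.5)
Your route is the paper's own. The paper gives no separate argument for \Cref{E3A1_mult4}; it only says that \Cref{conv:descproof} is run as in \Cref{E2A1_mult4}. The gap is that the walls you actually identify do not produce twelve chambers. Your sources are the $\ell$-containing $\mathbb{F}_0$ components, giving $c=-b+1$ and $c=\frac{1}{4}$, and the $\ell$-containing type-3 component, giving $c=\frac{b}{3}$ and $c=-\frac{b}{3}+\frac{1}{3}$. With only these, the wall $c=\frac{1}{4}$ exists only where those $\mathbb{F}_0$ components survive, i.e.\ for $c>-b+1$, $b>\frac{3}{4}$, and you recover exactly the eleven-chamber picture of \Cref{E2A1_mult4}.

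The twelfth chamber, $-\frac{b}{3}+\frac{1}{3}<c\leq\frac{1}{4}$, $c>\frac{b}{3}$, needs $c=\frac{1}{4}$ to be a wall for all $\frac{1}{4}\leq b\leq 1$. This comes from the node not on $\ell$, which your outline never uses. The $\widetilde{E}_{3A_1}$ fiber has three type-3 components $\mathrm{Bl}_5\mathbb{P}^2$, one for each pair of nodes. (Its type-2 pieces are three copies of $\mathrm{Bl}_4\mathbb{F}_0$, not $\mathrm{Bl}_5\mathbb{F}_0$; see \cite[Table~4]{schock_moduli_2024}.) Apply the computation for $3'$ in Steps 0 and 4 of the proof of \Cref{E2A1_mult4}. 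Once such a component has been contracted to $\mathrm{Bl}_1\mathbb{P}^2$, its ruling $h-e_1$ has degree equal to the total weight of the four lines $h-e_i-e_j$ (the sheets of the line through that pair of nodes) minus one. For the pair on $\ell$ this is $b+3c-1$, which gives $c=-\frac{b}{3}+\frac{1}{3}$. For the other two pairs all four sheets have weight $c$, so the degree is $4c-1$ and those two components collapse at $c=\frac{1}{4}$ for every $b$. This is the same behaviour as in \Cref{E2A1_mult1} and \Cref{E2A1_mult2}.

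Two smaller points. First, in the degenerate case your additions, together with $c=-b+1$ continuing down to $(\frac{4}{5},\frac{1}{5})$, give sixteen chambers, not fifteen. You reach fifteen only because the chamber $-\frac{b}{2}+1<c\leq\frac{2}{3}$ also disappears. It does: by \Cref{teo:eckconfig} the degenerate components carry no Eckardt points, so the wall of \Cref{rem:elleckcase} never arises.

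Second, for the one-dimensional chambers there is no new modification to find, and a single blow-up will not serve on both sides. In Step 3 of the proof of \Cref{E2A1_mult4}, the models off $c=\frac{b}{3}$ are stable models the scan has already produced:
\begin{itemize}
\item for $c<\frac{b}{3}$ it is \Cref{E2A1_mult4_b10110c14}, with points blown up on $3'$;
\item for $c>\frac{b}{3}$ it is \Cref{E2A1_mult4_14c12}, where the $2'$ components are $\mathrm{Bl}_1\mathbb{F}_0$ again.
\end{itemize}
Your blow-up on $3'$ has non-nef log canonical class for $c>\frac{b}{3}$, since $e_2$ then has degree $b-3c<0$. The volume computation serves only to show that the contracted surface is stable exactly on the line.
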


\begin{table}[H]
	\centering
	\label{tab:E3A1eck}
	\begin{tabular}{| c | c | c | c |}
		\hline
		Label & Surface & \# & Eckardt points \\
		\hline
		\hline
		1 & $\widetilde{S}_{3A_1}$ & 1 & 0 or 1 \\
		2 & $\mathrm{Bl}_4\mathbb{F}_0$ & 3 & 0 \\
		3 & $\mathrm{Bl}_5\mathbb{P}^2$ & 3 & 0 or 1 \\
		4a & $\mathbb{F}_0$ & 12 & 0 \\
		4b & $\mathbb{F}_0$ & 12 & 0 \\
		5a & $\mathbb{F}_0$ & 6 & 0 \\
		5b & $\mathbb{F}_0$ & 6 & 0 \\
		\hline
	\end{tabular}
	\caption{This table is taken from \cite[Table~4]{schock_moduli_2024} The first two columns are the numbered types of irreducible components of the surfaces in the chamber Sch $1/2 < c \leq 2/3$ of type $\widetilde{E}_{3A_1}$ pictured in \cite[Fig.~15]{schock_moduli_2024}. The third column tells us the number of each corresponding component found in the surface. Finally, the last row gives the possible numbers of Eckardt points on each component. For the component of type 1, $\widetilde{S}_{3A_1}$ refers to the minimal resolution of a cubic surface with three $A_1$ singularities. For the components of type 2, $\mathrm{Bl}_4\mathbb{F}_0$ refers to the blowup of $\mathbb{F}_0 \cong  \mathbb{P}^1 \times \mathbb{P}^1$ at 4 points on
		the diagonal. For the components of type 3, $\mathrm{Bl}_5\mathbb{P}^2$ refers to the special blowup of $\mathbb{P}^2$ at 5 points as in \Cref{pro:bl5p2eck}.}
\end{table}

\newpage
\subsection{Type $\widetilde{E}_{4A_1}$}
\subsubsection{\textbf{$\ell$ is in the smooth locus}}
\begin{pro}
	For type $\widetilde{E}_{4A_1}$ $(b,c)$-weighted stable marked cubic surfaces $(S, (b, c)B)$ where $\ell$ is in the smooth locus, the chambers are the same as in Proposition \ref{E2A1_mult1}. Additionally, crossing the wall $c = \frac{1}{4}$ introduces type 
	\begin{align*}
		\widetilde{D}_{A_1} \cap \widetilde{E}_{4A_1}, \hspace{.25cm} \widetilde{E}_{2A_1} \cap \widetilde{E}_{4A_1}, \hspace{.25cm} \widetilde{E}_{3A_1} \cap \widetilde{E}_{4A_1},& \hspace{.25cm} \widetilde{D}_{A_1} \cap \widetilde{E}_{2A_1} \cap \widetilde{E}_{4A_1}, \hspace{.25cm} \widetilde{D}_{A_1} \cap \widetilde{E}_{3A_1} \cap \widetilde{E}_{4A_1}, \hspace{.25cm} \widetilde{E}_{2A_1} \cap \widetilde{E}_{3A_1} \cap \widetilde{E}_{4A_1},\\
		&\hspace{.25cm} \widetilde{D}_{A_1} \cap \widetilde{E}_{2A_1} \cap \widetilde{E}_{3A_1} \cap \widetilde{E}_{4A_1}
	\end{align*}
	surfaces as degenerations of type $\widetilde{E}_{4A_1}$ surfaces.
\end{pro}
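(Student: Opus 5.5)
The plan is to run \Cref{conv:descproof} on a general fiber of type $\widetilde{E}_{4A_1}$ with $\ell$ in the smooth locus, using the same template as the proof of \Cref{E2A1_mult4}. The aim is to show that the resulting walls are exactly those of \Cref{E2A1_mult1}: $c=\frac{2}{3}$, $c=-\frac{b}{2}+1$, $c=\frac12$, $c=\frac14$, and $c=\frac16$.

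\textbf{Step 0: the weight-one fiber.} I will start from the fiber of $\ddot\pi$ over a general point of $\widetilde{E}_{4A_1}$, as described in \cite[Figs.~21, 22]{schock_moduli_2024} together with \Cref{teo:eckblowup}. By \Cref{pro:cubiceck}(5), the $\widetilde S_{4A_1}$ component carries no Eckardt points, so any $E:\mathbb{P}^2$ components sit on the $\mathrm{Bl}_5\mathbb{P}^2$-type components (\Cref{pro:bl5p2eck}). Since $\ell$ avoids all nodes, it lies on the $\widetilde S_{4A_1}$ component with weight $b$ and meets the gluing curves only at smooth points. For each component I will write $K+(b,c)B+\Delta$ in a blow-up basis and test ampleness with \Cref{teo:nefcones}; the $\mathbb{F}_0$ and blown-up $\mathbb{F}_0$ components are handled by the direct intersection argument used for Type 2 in \Cref{E2A1_mult4}. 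The slc condition follows from \Cref{lem:lineslccriteria} and \Cref{lem:linesA1lccriteria}. I will also check that the volumes of the components sum to \eqref{eq:constantvol}.

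\textbf{Locating the walls.} The key structural observation is that every component not containing $\ell$ has a log canonical class depending only on $c$. On $\widetilde S_{4A_1}$, the weight $b$ enters only through the coefficient of the single class $\ell$. Since $\ell$ is a $(-1)$-curve disjoint from the four $(-2)$-curves, the inequalities in \Cref{teo:nefcones} that involve $b$ are never the binding ones before a $c$-wall is reached. The one exception is when $\ell$ passes through an Eckardt point, which produces the wall $c=-\frac b2+1$ exactly as in \Cref{rem:elleckcase}.

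\textbf{Descending through the walls.} Moving down, the walls occur at $c=\frac23$ (the $E$ components contract), $c=\frac12$ (the ruled components contract, as in Step 1 of \Cref{E2A1_mult4}), $c=\frac14$, and $c=\frac16$. Each of these coincides with the corresponding contraction in Schock's uniform case. At each wall I will apply \Cref{cor:gluing} to glue the log canonical models of the components.

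\textbf{Degenerations at $c=\frac14$.} Crossing $c=\frac14$, the components coming from the non-flat $2A_3$ divisors contract. This identifies the general $\widetilde{E}_{4A_1}$ surface with the listed intersection-type surfaces, which reproduces Schock's picture \cite[\S7]{schock_moduli_2024}. For each listed degenerate type I will repeat the computation. Because $\ell$ stays on the same component and away from nodes, the $b$-dependence is unchanged, so no new walls appear.

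\textbf{Main obstacle.} The hardest step is verifying that no positive-slope wall appears. Such walls arise in \Cref{E2A1_mult4} at $c=\frac b3$ and $c=\frac b4$, coming from curves meeting $\ell$ with high multiplicity or from volume failure. I will rule them out by checking that $\ell$ meets every contracted curve, including the $(-2)$-curves at the final wall $c=b$, with intersection number either $0$ or $1$ against multiplicity-one lines. This should make each contraction threshold independent of $b$, and the volume identity should then hold automatically along each chamber.
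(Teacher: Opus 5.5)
Your overall route is the one the paper intends; the paper gives nothing for this case beyond running \Cref{conv:descproof} as in \Cref{E2A1_mult4}. Your reduction is also a good one: on the component containing $\ell$ the log canonical class is $L_{(b,c)}=L_{(c,c)}+(b-c)\ell$, and every other component is $b$-independent, so you can import Schock's uniform computation instead of redoing it. But the test you propose for the key step is wrong.

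Since $L_{(b,c)}\cdot C=L_{(c,c)}\cdot C+(b-c)(\ell\cdot C)$, a curve $C$ contracted at one of Schock's walls has a $b$-independent threshold only if $\ell\cdot C=0$. Intersection number $1$ already shifts the threshold by $b-c$. This is exactly how the $b=c$ distinction arises in \Cref{E2A1_mult4}: there $\ell=e_3$ meets $L_{123}$ and $L_{345}$ once each, and the final log canonical class has degree precisely $b-c$ on them. So you must verify $\ell\cdot C=0$, not ``$0$ or $1$''.

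Here the correct check does succeed. $\ell$ is one of the three diagonals $h-e_i-e_j$ of the complete quadrilateral, hence disjoint from the four $(-2)$-curves. As in Schock's uniform computation, no other curve of $\widetilde S_{4A_1}$ is ever contracted. Consequently there is no ``final wall $c=b$'' for this type: the $(-2)$-curves contract for every $b$, which is why Sch $\frac19<c\le\frac16$ is a single chamber in \Cref{E2A1_mult1}.

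The only $b$-sensitive inequality is on $\ell$ itself, $L_{(b,c)}\cdot\ell=L_{(c,c)}\cdot\ell-(b-c)>0$, and you need to check it chamber by chamber. In the last chamber it reads $10c-1-b>0$, so it only reproduces the boundary $c=\frac{b}{10}+\frac{1}{10}$ of $\mathrm{Amp}$.

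Second, your plan cannot produce the wall $c=-\frac b2+1$ that you list. Your own observations are correct:
\begin{itemize}
\item $\widetilde S_{4A_1}$ has no Eckardt points;
\item all $E:\mathbb{P}^2$ components sit on the $\mathrm{Bl}_5\mathbb{P}^2$ components;
\item $\ell$ lies only on $\widetilde S_{4A_1}$;
\item Eckardt points lie in the smooth locus of the fiber, so they are not on the double curves either.
\end{itemize}
Together these mean $\ell$ never meets an $E$ component, so the mechanism of \Cref{rem:elleckcase} never fires. What your argument actually shows is that for this type the regions $-\frac b2+1<c\le\frac23$ and Sch $\frac12<c\le\frac23$ form a single chamber. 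This is exactly the sub-case $\ell\cap E=\emptyset$ handled in the proof of \Cref{E2A1_mult4}. You should state this explicitly, and read ``the same chambers as in \Cref{E2A1_mult1}'' with that wall vacuous, rather than claim to derive it.
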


\subsubsection{\textbf{$\ell$ passes through exactly two nodes}}
\begin{pro}
	For type $\widetilde{E}_{4A_1}$ $(b,c)$-weighted stable marked cubic surfaces $(S, (b,c)B)$ where $\ell$ passes through exactly two nodes in the corresponding $\overline{Y}_\ell$ boundary, the chambers are the same as in Proposition \ref{E3A1_mult4}. Additionally, crossing the wall $c = -\frac{b}{3} \cap \frac{b}{3}$ introduces type
	\begin{align*}
		\widetilde{D}_{A_1} \cap \widetilde{E}_{4A_1}, \hspace{.25cm} \widetilde{E}_{2A_1} \cap \widetilde{E}_{4A_1}, \hspace{.25cm} \widetilde{E}_{3A_1} \cap \widetilde{E}_{4A_1},& \hspace{.25cm} \widetilde{D}_{A_1} \cap \widetilde{E}_{2A_1} \cap \widetilde{E}_{4A_1}, \hspace{.25cm} \widetilde{D}_{A_1} \cap \widetilde{E}_{3A_1} \cap \widetilde{E}_{4A_1}, \hspace{.25cm} \widetilde{E}_{2A_1} \cap \widetilde{E}_{3A_1} \cap \widetilde{E}_{4A_1},\\
		&\hspace{.25cm} \widetilde{D}_{A_1} \cap \widetilde{E}_{2A_1} \cap \widetilde{E}_{3A_1} \cap \widetilde{E}_{4A_1}
	\end{align*}
	surfaces as degenerations of type $\widetilde{E}_{4A_1}$ surfaces. If $\ell$ is in the component that is the degeneration of $\mathrm{Bl}_5 \mathbb{P}^2$, the degenerations introduces more chambers that are the same as the chambers of the higher codimension types in Proposition \ref{E3A1_mult4}. Otherwise, the wall and chamber decomposition is the same as type $\widetilde{E}_{4A_1}$.
\end{pro}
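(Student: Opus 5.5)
The plan is to run \Cref{conv:descproof} for type $\widetilde{E}_{4A_1}$ with $\ell$ passing through exactly two nodes, and compare each step with the run for $\widetilde{E}_{3A_1}$ in \Cref{E3A1_mult4}. First I would fix a general fiber of $\ddot{\pi}$ over the stratum $\widetilde{E}_{4A_1}$, using \cite[Figs.~21, 22]{schock_moduli_2024} and \Cref{teo:eckconfig}. By \Cref{pro:cubiceck}(5), the component $\widetilde{S}_{4A_1}$ has no Eckardt points, so any Eckardt points lie on the $\mathrm{Bl}_5\mathbb{P}^2$-type components of \Cref{pro:bl5p2eck}. Next I would list the components and note which ones contain $\ell$. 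Since $\ell$ joins two of the four nodes, it is a line of multiplicity $4$ (dotted in \Cref{tab:narukifibers}). Its neighbourhood should therefore look exactly like the neighbourhood of $\ell$ in the $\widetilde{E}_{2A_1}$ and $\widetilde{E}_{3A_1}$ cases: a $\mathrm{Bl}_5\mathbb{P}^2$ component of type $3'$, two $\mathbb{F}_0$ components of type $4'$, and the adjacent $\mathrm{Bl}_k\mathbb{F}_0$ components.

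For each component $S'$ I would write $K_{S'}+(b,c)B|_{S'}+\Delta_{S'}$ in the basis $h,e_i$ (or $h_1,h_2,e_i$). Ampleness would be checked with \Cref{teo:nefcones} and the effective-curve argument used in Step~0 of the proof of \Cref{E2A1_mult4}, and the slc condition with \Cref{lem:lineslccriteria} and \Cref{lem:linesA1lccriteria}. The point of the comparison is the following. The components not containing $\ell$ carry only $c$-weighted lines, so they produce the horizontal walls $c=\frac{2}{3},\frac12,\frac14,\frac16$ and the diagonal $b=c$, exactly as in \cite{schock_moduli_2024}. The components containing $\ell$ have the same local boundary as in \Cref{E3A1_mult4}, so their log canonical divisors are the same linear forms in $(b,c)$, for example $(b+c-1)h_1+(4c-1)h_2$ on $4'$ and $(b+4c-1)h-ce_1-(b-3c)(e_2+e_4)$ on $3'$. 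They therefore produce the same walls $c=-b+1$, $c=\frac b3$, $c=-\frac b3+\frac13$, and, when $\ell$ lies on the Eckardt-point components, $c=-\frac b2+1$ as in \Cref{rem:elleckcase}. At each wall the contractions are the same local ones, and \Cref{cor:gluing} applies: each contraction is a ruling projection or a blow-down of $(-1)$-curves away from the gluing curves, so the base-point-freeness along $\overline{D}_i$ follows as before.

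I would then verify the volume identity \eqref{eq:constantvol} by summing the component volumes in each chamber. The extra $c$-only components change only the $c$-dependent terms, and these must match because the identity holds for uniform weights by \cite{schock_moduli_2024}. For the degenerate strata in the statement, I would observe that crossing $c=-\frac b3+\frac13$ contracts the non-flat $2A_3$ components, exactly as in Step~5 of the proof of \Cref{E2A1_mult4}; this yields the listed intersection types. If $\ell$ sits on a degenerated $\mathrm{Bl}_5\mathbb{P}^2$ (four components, \Cref{teo:eckconfig}), its local picture is identical to the higher-codimension case of \Cref{E3A1_mult4}, which gives the extra walls $c=\frac14$ and $c=\frac b4$.

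The main obstacle will be the equality-type walls, in particular $c=\frac b3$, where the naive glued model fails the volume condition off the wall, as in \Cref{E2A1_mult4_c=b3}. I would handle this by explicitly computing the volume polynomial of the glued surface and checking that it agrees with \eqref{eq:constantvol} only on $c=\frac b3$. I would also check carefully that the four-node configuration does not create a new coincidence among walls; for instance, two $\mathrm{Bl}_5\mathbb{P}^2$ components meeting $\ell$ simultaneously would produce the same walls, only doubled.
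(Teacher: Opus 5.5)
Your plan follows the paper's route: run \Cref{conv:descproof} on the $\widetilde{E}_{4A_1}$ fibers, and reduce the $b$-dependent walls, including those of the degenerate strata, to the local computations near $\ell$ from \Cref{E2A1_mult4} and \Cref{E3A1_mult4}; the paper gives no more than this for the appendix case. State the comparison accurately, though: the components over the two nodes on $\ell$ are $\mathrm{Bl}_3\mathbb{F}_0$ carrying two further dotted lines, not one dotted and two dashed, so their log canonical forms agree with the $\widetilde{E}_{3A_1}$ ones only up to $c$-only terms, and the $b=c$ chambers come from the $(-2)$-curves on the $\ell$-component, where $(K_{\widetilde{S}_{2A_1}}+(b,c)B)\cdot L_{123}=b-c$ as in Step~5, not from the $c$-only components.
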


\begin{table}[H]
	\centering
	\label{tab:E4A1eck}
	\begin{tabular}{| c | c | c | c |}
		\hline
		Label & Surface & \# & Eckardt points \\
		\hline
		\hline
		1 & $\widetilde{S}_{4A_1}$ & 1 & 0 \\
		2 & $\mathrm{Bl}_3 \mathbb{F}_0$ & 4 & 0 \\
		3 & $\mathrm{Bl}_5 \mathbb{P}^2$ & 6 & 0 or 1 \\
		4a & $\mathbb{F}_0$ & 24 & 0 \\
		4b & $\mathbb{F}_0$ & 24 & 0 \\
		\hline
	\end{tabular}
	\caption{This table is taken from \cite[Table~5]{schock_moduli_2024} The first two columns are the numbered types of irreducible components of the surfaces in the chamber Sch $1/2 < c \leq 2/3$ of type $\widetilde{E}_{4A_1}$ pictured in \cite[Fig.~21]{schock_moduli_2024}.The third column tells us the number of each corresponding component found in the surface. Finally, the last row gives the possible numbers of Eckardt points on each component. For the component of type 1, $\widetilde{S}_{4A_1}$ refers to the minimal resolution of a cubic surface with four $A_1$ singularities. For the components of type 2, $\mathrm{Bl}_3 \mathbb{F}_0$ refers to the blowup of $\mathbb{F}_0 \cong \mathbb{P}^1 \times \mathbb{P}^1$ at three points on the diagonal. For the components of type 3, $\mathrm{Bl}_5 \mathbb{P}^2$ refers to the special blowup of $\mathbb{P}^2$ at 5 points as in \Cref{pro:bl5p2eck}.}
\end{table}

\newpage
\subsection{Type $\widetilde{D}_{3A_2}$}
\subsubsection{\textbf{$\ell$ is in the smooth locus}}
\begin{pro}\label{D3A2_mult1}
	For type $\widetilde{D}_{3A_2}$ $(b,c)$-weighted stable marked cubic surfaces $(S, (b,c)B)$ where $\ell$ is in the smooth locus, there are five chambers.
	\begin{center}
		\begin{tikzpicture}[scale=14]
			\draw[very thick,-latex] (1/9, 1/9) -- (1.03, 1/9) 
			node[right]{$b$};
			
			\draw[very thick,-latex] (1/9, 1/9) -- (1/9, 1.03) 
			node[left]{$c$};
			
			\draw[solid] (1/9,1/9) -- (1,1);
			\node at (1/9, 1/9) [below] {\tiny $(\frac{1}{9}, \frac{1}{9})$};
			\node at (1, 1) [left] {$(1, 1)$};
			
			\draw[thick] (1, 1/9) -- (1,1);
			\node at (1, 1/9) [below] {$(1, \frac{1}{9})$};
			
			\draw[thick] (2/3, 2/3) -- (1, 2/3) node[right] {\tiny{$c=\frac{2}{3}$}};
			\node[circle, fill, inner sep=1pt] at (2/3, 2/3) {};
			\node at (2/3, 2/3) [left] {\tiny$(\frac{2}{3}, \frac{2}{3})$};
			\node at (8/9, 7/9) {Sch $\frac{2}{3} < c \leq 1$};
			\node at (0.9, 0.63) {$-\frac{b}{2}+1 < c \leq \frac{2}{3}$};
			\node at (2/3, 1/2) {Sch $\frac{1}{3} < c \leq \frac{2}{3}$};
			
			\draw[solid] (2/3, 2/3) -- (1, 1/2) node[right] {\color{black} \tiny{$c = \frac{1}{2}$}};
			\node[rotate=-26.6] at (5/6, 57/96) {\tiny wall due to $\ell$ containing Eckardt point(s)};
			
			\draw[thick] (1/3, 1/3) -- (1, 1/3) node[right] {\tiny $c = \frac{1}{3}$};
			\node at (1/3, 1/3) [left] {\tiny $(\frac{1}{3}, \frac{1}{3}$)};
			\node[circle, fill, inner sep=1pt] at (1/3, 1/3) {};
			
			\draw[very thick] (1/9, 1/9) -- (1/3, 1/3);
			\draw[->] (1/11, 1/4) -- (5/24, 5/24);
			\node at (1/10, 1/4) [left] {Sch $\frac{1}{9} < c \leq \frac{1}{3}$};
			\node at (2/3, 1/4) {$\frac{b}{10} + \frac{1}{10} < c \leq \frac{1}{3}$, \hspace{.1in} $b \neq c$};
			
			\draw[dashed] (1/9, 1/9) -- (1, 1/5);
			\node[rotate=5.71] at (5/6, 11/60) [below] {\tiny$c=b/10 + 1/10$};
			\node at (1, 1/5) [right] {\tiny{$c=\frac{1}{5}$}};
			
		\end{tikzpicture}
	\end{center}
\end{pro}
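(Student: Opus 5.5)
The approach is to run \Cref{conv:descproof} for type $\widetilde{D}_{3A_2}$ with $\ell$ in the smooth locus, exactly as in the proof of \Cref{E2A1_mult4}. The starting point is the weight-one fiber of $\ddot\pi$ over a general point of $\widetilde{D}_{3A_2}$, taken from \cite[Fig.~24]{schock_moduli_2024}, with Eckardt points blown up as in \Cref{teo:eckblowup}. For each irreducible component $S'$ I would write down $\Delta_{S'}$ and $(b,c)B|_{S'}$. Since $\ell$ is in the smooth locus, $\ell$ has multiplicity one and lies in exactly one component. Generically $\ell$ does not meet the exceptional divisor of an Eckardt point, so on every component not containing $\ell$ the log canonical divisor $K_{S'}+(b,c)B|_{S'}+\Delta_{S'}$ depends only on $c$.

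First chamber, $\frac23<c\le 1$. The log canonical divisor on each attached $E:\mathbb{P}^2$ is $(3c-2)h$. By \Cref{teo:nefcones}, the blown-up components become non-ample first along the Eckardt exceptional curves, also at $c=\frac23$. This gives the horizontal wall $c=\frac23$.

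Wall $c=-\frac b2+1$. If $\ell$ passes through an Eckardt point, the surviving $E:\mathbb{P}^2$ carries $b\ell+c(L+L')$, so its log canonical divisor is $(b+2c-2)h$. This produces the wall $c=-\frac b2+1$, as in \Cref{rem:elleckcase}. Below that, the contractions glue via \Cref{cor:gluing}, and I would sum the component volumes and check that the total is $-b^2+20bc-2b+224c^2-52c+3$.

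Chamber $\frac13<c\le\frac23$. In this range the weight-one $\widetilde{D}_{3A_2}$ surface (three $\mathbb{P}^2$-type pieces meeting along a triangle of lines) stays ample. I would check this by intersecting the log canonical divisor with the $(-1)$-curves and gluing curves of each component, again using \Cref{teo:nefcones} and the slc criteria \Cref{lem:lineslccriteria} and \Cref{lem:linesA1lccriteria}.

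Wall $c=\frac13$. This should be where the non-$\ell$ components degenerate. Computing $(K+(b,c)B+\Delta)\cdot C$ for the relevant ruling or line $C$ should give a factor $3c-1$ and no dependence on $b$. Crossing the wall contracts these components and glues to the Naruki-type $3A_2$ fiber of \Cref{tab:narukifibers}. Because $\ell$ is a multiplicity-one line in the smooth locus, I expect the coefficient of $b$ to enter only through $b\ell\cdot C\in\{0,1\}$. In particular no positive-slope or volume walls such as $c=\frac b3$ or $c=\frac b4$ should occur: those walls came from $\ell$ having multiplicity $2$ or $4$ through nodes.

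Final region and diagonal chamber. Below $c=\frac13$, the log canonical divisor on the single irreducible cubic is a positive multiple of $(9c-1)h$-type classes plus $(b-c)\ell$. This stays ample down to $c=\frac b{10}+\frac1{10}$ as long as $b\neq c$, and the diagonal $b=c\le\frac13$ forms its own chamber, as in Schock's uniform case. To see this I would use \Cref{eqn:ampcone} on the resolution and observe that when $b=c$ the stable model is the singular $3A_2$ surface itself, while for $b>c$ a different model is selected.

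The main obstacle is this last step. One must decide precisely which model is stable for $b>c$ versus $b=c$ below $c=\frac13$. That is, one must check whether the extra positivity $(b-c)\ell$ forces a partial resolution or non-contraction near the $A_2$ points, and that \Cref{cor:gluing}'s basepoint-freeness holds along the triangle of gluing lines. I would first test the volume identity on both candidate models, since in \Cref{E2A1_mult4} the volume condition was what separated the diagonal.
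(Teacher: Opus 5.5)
Your route is the paper's: run \Cref{conv:descproof}, checking slc, ampleness and the volume \eqref{eq:constantvol} component by component. The walls $c=\frac23$, $c=-\frac b2+1$ and $c=\frac13$ are located correctly in outline. The gap is the region below $c=\frac13$, which is where the five-chamber count is decided, and there your description of the surfaces is wrong.

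For $\frac13<c\le\frac23$ the fiber has three components $\mathrm{Bl}_6\mathbb{P}^2$ plus nine $\mathbb{F}_0$'s. The $\mathbb{F}_0$'s are attached at the nine points of the double lines through which the lines pass.

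Below $c=\frac13$ you never get an irreducible cubic. The Naruki fiber over $D_{3A_2}$ is the union of three planes meeting along three concurrent lines, each plane containing nine of the lines. The irreducible $3A_2$ cubic has only three lines, each of multiplicity nine, so it is not even a valid pair for $c>\frac19$.

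Moreover, crossing $c=\frac13$ does \emph{not} produce the three-plane fiber when $b>c$. The $b$-dependence you dismiss is exactly what creates the diagonal chamber.

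Concretely, let $p,q$ be the points where $\ell$ meets the two double lines of its plane. On the $\mathbb{F}_0$ at $p$, let $e$ (resp.\ $e'$) be the gluing curve to the plane of $\ell$ (resp.\ the adjacent plane). The boundary there is $(b+2c)[e']+3c[e]$, so $K+\Delta+B=(3c-1)[e]+(b+2c-1)[e']$.

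At $c=\frac13$ this has degree $0$ on the ruling of class $[e']$ but degree $b-c$ on the class $[e]$. So the component collapses onto $e$ rather than to a point. The adjacent plane is blown down at $p$, while the plane of $\ell$ stays blown up at $p$ and $q$. Its exceptional curves $e_p,e_q$ now carry coefficient $3c$, which matches the different $3c$ on the other side.

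On that component $K+\Delta+B=(b+8c-1)h-(b-c)(e_p+e_q)$. This has degree $b-c$ on $e_p,e_q$ and degree $10c-b-1$ on $\ell=h-e_p-e_q$. The other two planes carry $(9c-1)h$. With these, the pair is slc iff $c\le\frac13$, and the total volume is exactly \eqref{eq:constantvol}.

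So this is the stable model on all of $\frac b{10}+\frac1{10}<c\le\frac13$, $b\neq c$. At $b=c$ the curves $e_p,e_q$ contract, an ampleness wall as at the end of the proof of \Cref{E2A1_mult4}, giving the three planes. Off the diagonal the three-plane surface has volume exceeding \eqref{eq:constantvol} by $2(b-c)^2$, because $\ell^2=1$ in a plane.

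Your proposal leaves exactly this step open. Applying \eqref{eqn:ampcone}, which is the ample cone of a smooth cubic, cannot substitute for it. As it stands, the proposal does not establish that the off-diagonal region is a single chamber bounded below by $c=\frac b{10}+\frac1{10}$, nor that the diagonal is a separate chamber.
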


\begin{table}[H]
	\centering
	\label{tab:D3A2eck}
	\begin{tabular}{| c | c | c | c |}
		\hline
		Label & Surface & \# & Eckardt points \\
		\hline
		\hline
		1 & $\mathrm{Bl}_6\mathbb{P}^2$ & 3 & 0, 1, 2, or 3 \\
		2 & $\mathbb{F}_0$ & 9 & 0 \\
		\hline
	\end{tabular}
	\caption{This table is taken from \cite[Table~6]{schock_moduli_2024} The first two columns are the numbered types of irreducible components of the surfaces in the chamber Sch $1/2 < c \leq 2/3$ of type $\widetilde{D}_{3A_2}$ pictured in \cite[Fig.~24]{schock_moduli_2024}. The third column tells us the number of each corresponding component found in the surface. Finally, the last row gives the possible numbers of Eckardt points on each component. For the components of type 1, $\mathrm{Bl}_6 \mathbb{P}^2$ refers to the blowup of $\mathbb{P}^2$ at 3 points on one line and 3 points on another line. Note this is the minimal resolution of a cubic surface with an $A_2$ singularity.}
\end{table}

\newpage
\subsection{Type $\widetilde{D}_{A_1} \cap \widetilde{D}_{3A_2}$}
\subsubsection{\textbf{$\ell$ is in the smooth locus}}
\begin{pro}
	For type $\widetilde{D}_{A_1} \cap \widetilde{D}_{3A_2}$ $(b,c)$-weighted stable marked cubic surfaces $(S, (b,c)B)$ where $\ell$ is in the smooth locus, there are eight chambers.
	\begin{center}
		\begin{tikzpicture}[scale=14]
			\draw[very thick,-latex] (1/9, 1/9) -- (1.03, 1/9) 
			node[right]{$b$};
			
			\draw[very thick,-latex] (1/9, 1/9) -- (1/9, 1.03) 
			node[left]{$c$};
			
			\draw[solid] (1/9,1/9) -- (1,1);
			\node at (1/9, 1/9) [below] {\tiny $(\frac{1}{9}, \frac{1}{9})$};
			\node at (1, 1) [left] {$(1, 1)$};
			
			\draw[thick] (1, 1/9) -- (1,1);
			\node at (1, 1/9) [below] {$(1, \frac{1}{9})$};
			
			\draw[thick] (2/3, 2/3) -- (1, 2/3) node[right] {\tiny{$c=\frac{2}{3}$}};
			\node[circle, fill, inner sep=1pt] at (2/3, 2/3) {};
			\node at (2/3, 2/3) [left] {\tiny$(\frac{2}{3}, \frac{2}{3})$};
			\node at (8/9, 7/9) {Sch $\frac{2}{3} < c \leq 1$};
			\node at (0.9, 0.63) {$-\frac{b}{2}+1 < c \leq \frac{2}{3}$};
			
			\node at (2/3, 2.5/6) {Sch $\frac{1}{3} < c \leq \frac{1}{2}$};
			
			\draw[solid] (2/3, 2/3) -- (1, 1/2) node[right] {\color{black} \tiny{$c = \frac{1}{2}$}};
			\node[rotate=-26.6] at (5/6, 57/96) {\tiny wall due to $\ell$ containing Eckardt point(s)};
			
			\draw[thick] (1/2, 1/2) -- (1, 1/2);
			\node[circle, fill, inner sep=1pt] at (1/2, 1/2) {};
			\node at (1/2, 1/2) [left] {\tiny $(\frac{1}{2}, \frac{1}{2})$};
			\node at (0.7, 0.56) {Sch $\frac{1}{2} < c \leq \frac{2}{3}$};
			
			\draw[thick] (1/3, 1/3) -- (1, 1/3) node[right] {\tiny $c = \frac{1}{3}$};
			\node at (1/3, 1/3) [left] {\tiny $(\frac{1}{3}, \frac{1}{3})$};
			\node[circle, fill, inner sep=1pt] at (1/3, 1/3) {};
			\node at (2/3, 1.5/6)  {$\frac{1}{6} < c \leq \frac{1}{3}$, \hspace{.1in}, $b \neq c$};
			
			\draw[thick] (1/6, 1/6) -- (2/3, 1/6);
			\node[circle, fill, inner sep=1pt] at (1/6, 1/6) {};
			\node at (7/36, 7/36) [left] {\tiny$(\frac{1}{6}, \frac{1}{6}$)};
			\node at (2/3, 1/6) [below] {\tiny{$(\frac{2}{3}, \frac{1}{6})$}};
			\draw[->] (7/24, 1/11) -- (1/4, 10/72);
			\node[circle, fill, inner sep=1.5pt] at (1/4, 10/72) [above] {};
			\node[align=center] at (7/24, 1/20) {\(\frac{b}{10} + \frac{1}{10} < c \leq \frac{1}{6}\) \\ \(c \leq - \frac{b}{3} + \frac{1}{3}, b \neq c\)};
			
			\draw[very thick] (1/6, 1/6) -- (1/3, 1/3);
			\draw[->] (1/11, 1/4) -- (6/24, 6/24);
			\node at (1/10, 1/4) [left] {Sch $\frac{1}{6} < c \leq \frac{1}{3}$};
			
			\draw[very thick] (1/9, 1/9) -- (1/6, 1/6);
			\draw[->] (1/11, 1/6) -- (5/36, 5/36);
			\node at (1/10, 1/6) [left] {Sch $\frac{1}{9} < c \leq \frac{1}{6}$};
			
			\draw[dashed] (1/9, 1/9) -- (1, 1/5);
			\node[rotate=5.71] at (5/6, 11/60) [below] {\tiny$c=b/10 + 1/10$};
			\node at (1, 1/5) [right] {\tiny{$c=\frac{1}{5}$}};
			
		\end{tikzpicture}
	\end{center}
\end{pro}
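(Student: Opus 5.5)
The plan is to run \Cref{conv:descproof} for the stratum $\widetilde{D}_{A_1} \cap \widetilde{D}_{3A_2}$ with $\ell$ in the smooth locus, exactly as in the proof of \Cref{E2A1_mult4}. The starting point is the weight-one fiber of $\ddot{\pi}$ over a general point of this stratum. It is the surface of \cite[Fig.~28]{schock_moduli_2024}, modified by \Cref{teo:eckblowup} at any Eckardt points. Its components are degenerations of the $\widetilde{D}_{3A_2}$ components listed in \Cref{tab:D3A2eck}: copies of $\mathrm{Bl}_6\mathbb{P}^2$, with one of them acquiring the extra node and becoming $\widetilde{S}_{A_1}$-like, together with $\mathbb{F}_0$ components. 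Following \Cref{rem:line-choice-strata}, I will fix $\ell$ to be a multiplicity-one line on a component that avoids the node. For each component $S'$ I will write the classes of $B|_{S'}$ and of the gluing locus $\Delta_{S'}$ in a standard basis ($h,e_i$ or $h_1,h_2$). Then I compute $K_{S'}+(b,c)B|_{S'}+\Delta_{S'}$ and its self-intersection, and check that the volumes add up to \eqref{eq:constantvol}.

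\textbf{Walls from $c=1$ down to $c=\frac13$.} The first walls are found by testing ampleness with \Cref{teo:nefcones} and the explicit curve-by-curve computation used in Step~0. I expect the following.
\begin{itemize}
\item The Eckardt $\mathbb{P}^2$ components die at $c=\frac23$. If $\ell$ passes through an Eckardt point, this wall moves to $c=-\frac b2+1$, by the same reasoning as \Cref{rem:elleckcase}.
\item The $A_1$-flavoured $\mathbb{F}_0$ components give $c=\frac12$. Here $(h_1+h_2)$-type gluing forces the coefficient $2c-1$, as for types $4$, $5a$ and $5b$ above.
\item The $3A_2$-flavoured $\mathbb{F}_0$ components give $c=\frac13$, with coefficient $3c-1$, as in \Cref{D3A2_mult1}.
\end{itemize}
The horizontal top walls come from the slc condition, and I will check it with \Cref{lem:lineslccriteria} and \Cref{lem:linesA1lccriteria}. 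Because $\ell$ lies in the smooth locus and never has multiplicity larger than one on a component where $b$ matters, no wall of nonzero slope other than the Eckardt ones should appear. This is why the chambers above $c=\frac13$ should be exactly Sch $\frac23<c\le1$, $-\frac b2+1<c\le\frac23$, Sch $\frac12<c\le\frac23$ and Sch $\frac13<c\le\frac12$.

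\textbf{Below $c=\frac13$.} At $c=\frac13$ I contract the $\mathbb{F}_0$ components. The surviving $\mathrm{Bl}_6\mathbb{P}^2$ components are glued along the images of the contracted curves, and I will use \Cref{cor:gluing} to justify gluing their log canonical models. Its hypotheses (klt, bigness and nefness, base-point freeness on the smooth rational gluing curves) are checked component by component. Below $c=\frac13$ the contracted components carry $b$ and $c$ symmetrically on the diagonal, so I expect the diagonal $\frac16<b=c\le\frac13$ to split off as its own chamber, and similarly for $\frac19<b=c\le\frac16$, just as in Step~5 of the proof of \Cref{E2A1_mult4}. Off the diagonal, the next wall should be $c=\frac16$, coming from the $\widetilde{S}_{A_1}$ component as in \Cref{DA1_mult1}. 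That gives the chamber $\frac16<c\le\frac13$, $b\neq c$. Below it, the remaining wall is the one at which the $(-2)$-curve with multiplicity $6c$ contracts to recover the nodal cubic.

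\textbf{Main obstacle.} I expect the hardest step to be the region $c\le\frac16$ off the diagonal. The problem is deciding whether the boundary $c\le-\frac b3+\frac13$, which appears in the labeling, is a genuine wall here or only an inherited one. To settle it I will compute the log canonical divisor of the single remaining resolved component, as in the formula $(27c-3)h-\cdots-(10c-b-1)e_3$ from Step~5, and check that it stays ample throughout. I will also verify the volume identity \eqref{eq:constantvol}, since in \Cref{E2A1_mult4} a volume failure rather than an ampleness failure produced an extra wall. Counting the two-dimensional regions and the diagonal segments produced this way should give the eight chambers. Completeness, meaning that no further walls occur, follows from the inductive remark after \Cref{conv:descproof}.
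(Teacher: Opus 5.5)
Your overall plan is the paper's: run \Cref{conv:descproof} on the fiber of \cite[Fig.~28]{schock_moduli_2024}. Your walls $c=\frac23$, $c=-\frac b2+1$, $c=\frac12$, $c=\frac13$ are fine. The gap is below $c=\frac13$, where you import the geometry of type $\widetilde E_{2A_1}$, and that geometry does not exist here.

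There is no $\widetilde S_{A_1}$-like component. By Schock's Table~7 (reproduced in the appendix), the surface at $\frac12<c\le\frac23$ consists of one $\mathrm{Bl}_6\mathbb{P}^2$, four copies of $\mathrm{Bl}_5\mathbb{P}^2$ (three points on one line, two on another) and $21$ copies of $\mathbb{F}_0$. The $A_1$ degeneration is not a node on a cubic component. In Naruki's fiber (Type $A_1\cap 3A_2$ in \Cref{tab:narukifibers}), two of the three marked points on one of the three concurrent double lines of $P_1\cup P_2\cup P_3$ collide. The terminal surface on the diagonal for $c\le\frac16$ is this union of three planes. So the steps ``the $(-2)$-curve with multiplicity $6c$ contracts to recover the nodal cubic'' and ``the single remaining resolved component, as in $(27c-3)h-\cdots$'' have nothing to act on. The wall $c=\frac16$ is correct, but it comes from the collided point: each adjacent plane carries three multiplicity-two lines there, so \Cref{lem:lineslccriteria} gives $1+6c\le 2$.

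More importantly, you give no valid reason why $b=c$ splits off below $c=\frac13$. It is not the Step~5 mechanism; it is the one behind \Cref{D3A2_mult1}. The line $\ell$ is a solid line in some plane $P_\ell$ and meets two double lines at points $p,q$. At $p$, the lines through $p$ contribute $b+2c$ to the different on the $P_\ell$ side but $3c$ on the adjacent plane $P'$. Hence the three-plane surface violates the gluing condition of \Cref{teo:kollar5.13} unless $b=c$ (equivalently, $K_S+(b,c)B$ is not $\mathbb{Q}$-Cartier), and its volume exceeds \eqref{eq:constantvol} by $2(b-c)^2$.

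In the procedure this shows up at $c=\frac13$. The $\mathbb{F}_0$ over $p$ is glued along $e_p^{P_\ell}\sim h_A$ and $e_p^{P'}\sim h_B$, and its log canonical class is $(3c-1)h_A+(b+2c-1)h_B$. So for $b>c$ it contracts only along $h_B$. Then $P'$ is blown down at $p$, while $P_\ell$ keeps $e_p,e_q$ as boundary curves of coefficient $3c$ rather than double curves.

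For $c\le\frac16$ this gives $(K+\Delta+B)|_{\mathrm{Bl}_{p,q}P_\ell}=(b+8c-1)h-(b-c)(e_p+e_q)$. This class is ample if and only if $b>c$ and $c>\frac{b}{10}+\frac1{10}$. The total volume $2(9c-1)^2+(b+8c-1)^2-2(b-c)^2$ equals \eqref{eq:constantvol} exactly. Three consequences follow:
\begin{enumerate}
\item Each off-diagonal region is a single chamber reaching the boundary $c=\frac{b}{10}+\frac1{10}$ of $\mathrm{Amp}$.
\item $c=-\frac b3+\frac13$ is not a wall here; you are right that this label is a leftover, but this computation, not the $\widetilde S_{2A_1}$ formula, is what settles it.
\item Since $(K+\Delta+B)\cdot e_p=b-c$, the curves $e_p,e_q$ contract exactly on $b=c$, which is what makes the diagonal segments their own chambers.
\end{enumerate}
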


\newpage
\subsubsection{\textbf{$\ell$ passes through exactly one node}}

\begin{pro}
	For type $\widetilde{D}_{A_1} \cap \widetilde{D}_{3A_2}$ $(b,c)$-weighted stable marked cubic surfaces $(S, (b,c)B)$ where $\ell$ passes through exactly one node in the corresponding $\overline{Y}_\ell$ boundary, there are 11 chambers. The line $c = \frac{b}{4}$ is its own chamber.
	\begin{center}
		\begin{tikzpicture}[scale=14]
			\draw[very thick,-latex] (1/9, 1/9) -- (1.03, 1/9) 
			node[right]{$b$};
			
			\draw[very thick,-latex] (1/9, 1/9) -- (1/9, 1.03) 
			node[left]{$c$};
			
			\draw[solid] (1/9,1/9) -- (1,1);
			\node at (1/9, 1/9) [below] {\tiny $(\frac{1}{9}, \frac{1}{9})$};
			\node at (1, 1) [left] {$(1, 1)$};
			
			\draw[thick] (1, 1/9) -- (1,1);
			\node at (1, 1/9) [below] {$(1, \frac{1}{9})$};
			
			\draw[thick] (2/3, 2/3) -- (1, 2/3) node[right] {\tiny{$c=\frac{2}{3}$}};
			\node[circle, fill, inner sep=1pt] at (2/3, 2/3) {};
			\node at (2/3, 2/3) [left] {\tiny$(\frac{2}{3}, \frac{2}{3})$};
			\node at (8/9, 7/9) {Sch $\frac{2}{3} < c \leq 1$};
			
			\draw[thick] (1/2, 1/2) -- (1, 1/2);
			\node[circle, fill, inner sep=1pt] at (1/2, 1/2) {};
			\node at (1/2, 1/2) [left] {\tiny $(\frac{1}{2}, \frac{1}{2})$};
			\node at (7.5/9, 0.57) {Sch $\frac{1}{2} < c \leq \frac{2}{3}$};
			
			\draw[thick] (1/2, 1/2) -- (4/5, 1/5);
			\node[rotate=-45] at (7/12, 5/12) [above] {\tiny $c = -b+1$};
			\node[align=center] at (5/6, 0.42) {\(\frac{1}{3} < c \leq \frac{1}{2}\) \\ \(c > -b+1 \)};
			\node[align=center] at (1/2, 2.3/6) {Sch $\frac{1}{3} < c \leq \frac{1}{2}$};
			
			\draw[thick] (1/3, 1/3) -- (1, 1/3) node[right] {\tiny $c = \frac{1}{3}$};
			\node at (1/3, 1/3) [left] {\tiny $(\frac{1}{3}, \frac{1}{3}$)};
			\node[align=center] at (1.5/3, 1.5/6)  {\(\frac{1}{6} < c \leq \frac{1}{3}\)\\ \(\frac{b}{4}< c \leq -b+1, b \neq c\)};
			\node[align=center] at (4.3/5, 1.5/6) {\(-b+1 < c \leq \frac{1}{3}\) \\ \(c > \frac{1}{5}\)};
			\node[circle, fill, inner sep=1pt] at (1/3, 1/3) {};
			
			\draw[thick] (4/5, 1/5) -- (1, 1/5);
			\draw[->] (2.5/3, 1/11) -- (0.8, 0.18);
			\node[circle, fill, inner sep=1.5pt] at (0.8, 0.19) {};
			\node[align=center] at (2.5/3, 1/20) {\(\frac{b}{10} + \frac{1}{10} < c \leq \frac{1}{5}\) \\ \(c < \frac{b}{4}\)};
			
			\draw[very thick] (2/3, 1/6) -- (4/5, 1/5);
			\node at (2/3, 1/6) [below] {\tiny{$(\frac{2}{3}, \frac{1}{6})$}};
			\node[rotate=14.04] at (22/30, 21/120) [above] {\tiny $c=\frac{b}{4}$};
			
			\draw[thick] (1/6, 1/6) -- (2/3, 1/6);
			\node[circle, fill, inner sep=1pt] at (1/6, 1/6) {};
			\node at (7/36, 7/36) [left] {\tiny$(\frac{1}{6}, \frac{1}{6}$)};
			\node at (2/3, 1/6) [below] {\tiny{$(\frac{2}{3}, \frac{1}{6})$}};
			\draw[->] (7/24, 1/11) -- (1/4, 10/72);
			\node[circle, fill, inner sep=1.5pt] at (1/4, 10/72) [above] {};
			\node[align=center] at (7/24, 1/20) { \(\frac{b}{10} + \frac{1}{10} < c \leq \frac{1}{6}\) \\ \(b \neq c\)};
			
			\draw[very thick] (1/6, 1/6) -- (1/3, 1/3);
			\draw[->] (1/11, 1/4) -- (6/24, 6/24);
			\node at (1/10, 1/4) [left] {Sch $\frac{1}{6} < c \leq \frac{1}{3}$};
			
			\draw[very thick] (1/9, 1/9) -- (1/6, 1/6);
			\draw[->] (1/11, 1/6) -- (5/36, 5/36);
			\node at (1/10, 1/6) [left] {Sch $\frac{1}{9} < c \leq \frac{1}{6}$};
			
			\draw[dashed] (1/9, 1/9) -- (1, 1/5);
			\node[rotate=5.71] at (5/6, 11/60) [below] {\tiny$c=b/10 + 1/10$};
			\node at (1, 1/5) [right] {\tiny{$c=\frac{1}{5}$}};
			
		\end{tikzpicture}
	\end{center}
\end{pro}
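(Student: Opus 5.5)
The plan is to run \Cref{conv:descproof} on a general surface of type $\widetilde{D}_{A_1} \cap \widetilde{D}_{3A_2}$ for which $\ell$ passes through the node, in the same way as in the proof of \Cref{E2A1_mult4}.

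\textbf{Starting surface and first walls.} I would start from the weight $(1,1)$ fiber of $\ddot\pi$ over a general point of this stratum. This is Schock's surface \cite[Fig.~28]{schock_moduli_2024}, blown up at its Eckardt points as in \Cref{teo:eckblowup}. I would choose the component carrying $\ell$ so that $\ell$ passes through the node, as in \Cref{rem:line-choice-strata}. For each component $S'$ I compute $K_{S'}+(b,c)B|_{S'}+\Delta_{S'}$ in the bases of \Cref{tab:specialposcubics}, and I test ampleness with \Cref{teo:nefcones}. The components are the $\mathrm{Bl}_6\mathbb{P}^2$ pieces of $A_2$ type, the $\widetilde{S}_{A_1}$-type pieces and the $\mathbb{F}_0$ pieces. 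On the $\mathbb{F}_0$ pieces the divisor has the form $(\alpha_1 b+\beta_1 c-1)h_1+(\alpha_2 b+\beta_2 c-1)h_2$. Reading these off gives the walls $c=\tfrac23$, $c=\tfrac12$ and $c=\tfrac13$. I expect two further walls: $c=-b+1$, from the $\mathbb{F}_0$ component that contains $\ell$ with multiplicity 2 (weight $b+c$), exactly as for the type $4'$ components in \Cref{E2A1_mult4}; and $c=-\tfrac b2+1$, from an Eckardt point lying on $\ell$ (\Cref{rem:elleckcase}). At each of these walls I would check the slc condition with \Cref{lem:lineslccriteria} and \Cref{lem:linesA1lccriteria}. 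I would also check that the component volumes sum to \eqref{eq:constantvol}.

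\textbf{Contractions and the region below $c=\tfrac13$.} At each wall I would contract the components that become non-ample: blow-downs of exceptional curves, and projections $\mathbb{F}_0\to\mathbb{P}^1$ or contractions of $\mathbb{F}_0$ to a point. I would then reglue using \Cref{cor:gluing}, after checking its three hypotheses: the klt condition, big and nef, and base-point freeness along the conductor. Below $c=\tfrac13$ the surface inherited from the $\widetilde{D}_{3A_2}$ part degenerates further, and the $\ell$-component now has divisor involving $b-4c$ together with $c-\tfrac15$. This should produce the walls $c=\tfrac15$ (for $c>-b+1$), $c=\tfrac b4$ and $c=\tfrac16$. Along $c=\tfrac b4$ the ampleness condition still holds, but by analogy with Step 3 of the proof of \Cref{E2A1_mult4} I expect the volume to agree with \eqref{eq:constantvol} only on that line. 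If so, $c=\tfrac b4$ is a one-dimensional chamber. The last walls are $b=c$, where the exceptional $(-2)$-curves contract back to the $A_1$ singularities as in Step 5, and $c=\tfrac16$. Together these give the two Sch diagonal chambers. The full list is then the 11 chambers in the figure: the two Sch diagonal chambers, the line $c=\tfrac b4$, and eight two-dimensional regions.

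\textbf{Main obstacle.} I expect the hardest part to be the neighbourhood of the point $(\tfrac45,\tfrac15)$, where the walls $c=-b+1$, $c=\tfrac15$ and $c=\tfrac b4$ meet. There the naive component-wise contraction may fail \Cref{cor:gluing}, either through base points along the conductor or through the volume failing. My first attempt would be an auxiliary blow-up of the point where $\ell$ meets the gluing curve, as was done for \Cref{E2A1_mult4_b10110c14}, followed by a check that the hypotheses of \Cref{cor:gluing} then hold. I would confirm that each candidate surface is correct by verifying that the volume identity \eqref{eq:constantvol} holds throughout the open chamber.
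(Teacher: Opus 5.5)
Your strategy is the one the paper uses. The paper gives no separate argument for this stratum beyond running \Cref{conv:descproof} as in the proof of \Cref{E2A1_mult4}. However, your predicted list of walls contains a concrete error that contradicts the statement.

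You expect a wall at $c=-\frac b2+1$ coming from an Eckardt point on $\ell$, via \Cref{rem:elleckcase}. That wall is absent from the figure. If it occurred it would cut Sch $\frac12<c\le\frac23$ into two chambers and give $12$ in total. So your closing tally of ``eight two-dimensional regions'' silently drops a chamber that your own wall list creates.

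In fact the wall cannot occur here. By \cite[Table~7]{schock_moduli_2024}, the only component of the weight-one surface that can carry Eckardt points is the single $\mathrm{Bl}_6\mathbb{P}^2$ of $A_2$ type. Its lines are the nine multiplicity-one lines joining the two axes of the $3A_2$ configuration that do not contain the node; the node comes from two of the three points on the remaining axis colliding. Since $\ell$ passes through the node, it lies on $\mathrm{Bl}_5\mathbb{P}^2$ and $\mathbb{F}_0$ components, none of which carries Eckardt points. Hence every $E:\mathbb{P}^2$ component is disjoint from $\ell$ and simply contracts at $c=\frac23$, as in Step~0 of the proof of \Cref{E2A1_mult4}. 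The same happens in the one-node cases of types $\widetilde D_{A_1}$ and $\widetilde E_{2A_1}$, whose figures also lack this wall.

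Two smaller corrections.

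First, there are no $\widetilde S_{A_1}$-type components. The weight-one surface consists of one $\mathrm{Bl}_6\mathbb{P}^2$ (the resolution of an $A_2$ cubic), four copies of $\mathrm{Bl}_5\mathbb{P}^2$ (three points on one line and two on another), twenty-one copies of $\mathbb{F}_0$, and the Eckardt planes. \Cref{teo:nefcones} only treats $S$ and $\widetilde S_{kA_1}$. For these components you must either apply \Cref{teo:gendelpezzonefcone} directly to their $(-1)$- and $(-2)$-curves, or use the curve-by-curve positivity argument given for the type $3'$ component in Step~0.

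Second, the diagonal chamber $\frac16<b=c\le\frac13$ already appears for this stratum when $\ell$ lies in the smooth locus. So on that segment the wall $b=c$ cannot be the Step~5 contraction of a $(-2)$-curve through which $\ell$ passes. You need to identify the curve coming from the $\widetilde D_{3A_2}$ part whose $(K+B)$-degree is a multiple of $b-c$.
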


\begin{table}[H]
	\centering
	\label{tab:DA1D3A2eck}
	\begin{tabular}{| c | c | c | c |}
		\hline
		Label & Surface & \# & Eckardt points \\
		\hline
		\hline
		1 & $\mathrm{Bl}_6\mathbb{P}^2$ & 1 & 0, 1, 2, or 3 \\
		2 & $\mathrm{Bl}_5\mathbb{P}^2$ & 2 & 0 \\
		3a & $\mathrm{Bl}_5\mathbb{P}^2$ & 2 & 0 \\
		3b & $\mathbb{F}_0$ & 2 & 0 \\
		4a & $\mathbb{F}_0$ & 6 & 0 \\
		4b & $\mathbb{F}_0$ & 6 & 0 \\
		5 & $\mathbb{F}_0$ & 1 & 0 \\
		6 & $\mathbb{F}_0$ & 6 & 0 \\
		\hline
	\end{tabular}
	\caption{This table is taken from \cite[Table~7]{schock_moduli_2024} The first two columns are the numbered types of irreducible components of the surfaces in the chamber Sch $1/2 < c \leq 2/3$ of type $\widetilde{D}_{A_1} \cap \widetilde{D}_{3A_2}$ pictured in \cite[Fig.~28]{schock_moduli_2024}. The third column tells us the number of each corresponding component found in the surface. Finally, the last row gives the possible numbers of Eckardt points on each component. For the component of type 1, $\mathrm{Bl}_6\mathbb{P}^2$ refers to the blowup of $\mathbb{P}^2$ at 3 points on one line and 3 points on another line, cf. \cref{tab:D3A2eck}. For the components of types 2 and 3a, $\mathrm{Bl}_5\mathbb{P}^2$ refers to the blowup of $\mathbb{P}^2$ at 3 points on one line and 2 points on another line.}
\end{table}

\newpage
\subsection{Type $\widetilde{E}_{2A_1} \cap \widetilde{D}_{3A_2}$}
\subsubsection{\textbf{$\ell$ is in the smooth locus}}
\begin{pro}\label{E2A1D3A2_mult1}
	For type $\widetilde{E}_{2A_1} \cap \widetilde{D}_{3A_2}$ $(b,c)$-weighted stable marked cubic surfaces $(S, (b,c)B)$ where $\ell$ is in the smooth locus, there are 10 chambers. Additionally, crossing the wall $c = \frac{1}{4}$ introduces type $\widetilde{D}_{A_1} \cap \widetilde{E}_{2A_1} \cap \widetilde{D}_{3A_2}$ surfaces as degerenations of type $\widetilde{E}_{2A_1} \cap \widetilde{D}_{3A_2}$.
	\begin{center}
		\begin{tikzpicture}[scale=14]
			\draw[very thick,-latex] (1/9, 1/9) -- (1.03, 1/9) 
			node[right]{$b$};
			
			\draw[very thick,-latex] (1/9, 1/9) -- (1/9, 1.03) 
			node[left]{$c$};
			
			\draw[solid] (1/9,1/9) -- (1,1);
			\node at (1/9, 1/9) [below] {\tiny $(\frac{1}{9}, \frac{1}{9})$};
			\node at (1, 1) [left] {$(1, 1)$};
			
			\draw[thick] (1, 1/9) -- (1,1);
			\node at (1, 1/9) [below] {$(1, \frac{1}{9})$};
			
			\draw[thick] (2/3, 2/3) -- (1, 2/3) node[right] {\tiny{$c=\frac{2}{3}$}};
			\node[circle, fill, inner sep=1pt] at (2/3, 2/3) {};
			\node at (2/3, 2/3) [left] {\tiny$(\frac{2}{3}, \frac{2}{3})$};
			\node at (8/9, 7/9) {Sch $\frac{2}{3} < c \leq 1$};
			
			\node at (2/3, 2.5/6) {Sch $\frac{1}{3} < c \leq \frac{1}{2}$};
			
			\draw[solid] (2/3, 2/3) -- (1, 1/2) node[right] {\color{black} \tiny{$c = \frac{1}{2}$}};
			\node[rotate=-26.6] at (5/6, 57/96) {\tiny wall due to $\ell$ containing Eckardt point(s)};
			
			\draw[thick] (1/2, 1/2) -- (1, 1/2);
			\node[circle, fill, inner sep=1pt] at (1/2, 1/2) {};
			\node at (1/2, 1/2) [left] {\tiny $(\frac{1}{2}, \frac{1}{2})$};
			\node at (0.7, 0.56) {Sch $\frac{1}{2} < c \leq \frac{2}{3}$};
			\node at (0.9, 0.63) {$-\frac{b}{2}+1 < c \leq \frac{2}{3}$};
			
			\draw[thick] (1/3, 1/3) -- (1, 1/3) node[right] {\tiny $c = \frac{1}{3}$};
			\node at (1/3, 1/3) [left] {\tiny $(\frac{1}{3}, \frac{1}{3})$};
			\node[circle, fill, inner sep=1pt] at (1/3, 1/3) {};
			\node at (2/3, 1.8/6)  {$\frac{1}{4} < c \leq \frac{1}{3}$, \hspace{.1in}, $b \neq c$};
			
			\draw[thick] (1/4, 1/4) -- (1,1/4);
			\node at (1, 1/4) [right] {\tiny$c=1/4$};
			\node at (1/4, 1/4) [left] {\tiny$(\frac{1}{4}, \frac{1}{4})$};
			\node at (1.5/3, 1.25/6)  {$\frac{1}{6} < c \leq \frac{1}{4}$, \hspace{.1in}, $b \neq c$};

			\draw[thick] (1/6, 1/6) -- (2/3, 1/6);
			\node[circle, fill, inner sep=1pt] at (1/6, 1/6) {};
			\node at (7/36, 7/36) [left] {\tiny$(\frac{1}{6}, \frac{1}{6}$)};
			\node at (2/3, 1/6) [below] {\tiny{$(\frac{2}{3}, \frac{1}{6})$}};
			\draw[->] (7/24, 1/11) -- (1/4, 10/72);
			\node[circle, fill, inner sep=1.5pt] at (1/4, 10/72) [above] {};
			\node[align=center] at (7/24, 1/20) {\(\frac{b}{10} + \frac{1}{10} < c \leq \frac{1}{6}\) \\ \(c \leq - \frac{b}{3} + \frac{1}{3}, b \neq c\)};
			
			\draw[very thick] (1/4, 1/4) -- (1/3, 1/3);
			\draw[->] (1/11, 1/3) -- (7/24, 7/24);
			\node at (1/10, 1/3) [left] {Sch $\frac{1}{4} < c \leq \frac{1}{3}$};		
			\node[circle, fill, inner sep=1pt] at (1/4, 1/4) {};
			\node[circle, fill, inner sep=1pt] at (1/3, 1/3) {};
			
			\draw[very thick] (1/6, 1/6) -- (1/4, 1/4);
			\draw[->] (1/11, 1/4) -- (5/24, 5/24);
			\node at (1/10, 1/4) [left] {Sch $\frac{1}{6} < c \leq \frac{1}{4}$};
			\node[circle, fill, inner sep=1pt] at (1/6, 1/6) {};
			
			\draw[very thick] (1/9, 1/9) -- (1/6, 1/6);
			\draw[->] (1/11, 1/6) -- (5/36, 5/36);
			\node at (1/10, 1/6) [left] {Sch $\frac{1}{9} < c \leq \frac{1}{6}$};
			
			\draw[dashed] (1/9, 1/9) -- (1, 1/5);
			\node[rotate=5.71] at (5/6, 11/60) [below] {\tiny$c=b/10 + 1/10$};
			\node at (1, 1/5) [right] {\tiny{$c=\frac{1}{5}$}};
			
		\end{tikzpicture}
	\end{center}
\end{pro}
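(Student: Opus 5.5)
We run \Cref{conv:descproof} exactly as in the proof of \Cref{E2A1_mult4}, now starting from a general fiber of type $\widetilde{E}_{2A_1} \cap \widetilde{D}_{3A_2}$ with $\ell$ in the smooth locus. Step 0 is the fiber of $\ddot{\pi}$ over a general point of this stratum, i.e.\ Schock's surface in \cite[Fig.~35]{schock_moduli_2024} with the Eckardt components $E:\mathbb{P}^2$ attached as in \Cref{teo:eckblowup}. For each irreducible component $S'$ we write $K_{S'} + (b,c)B|_{S'} + \Delta_{S'}$ in a basis of $\mathrm{Pic}(S')$, using the special-position models of \Cref{tab:specialposcubics}. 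We test ampleness with \Cref{teo:nefcones} on the cubic-type components, and by intersecting with rulings or exceptional curves on the $\mathbb{F}_0$ and $\mathrm{Bl}_k\mathbb{P}^2$ components. The slc condition is checked with \Cref{lem:lineslccriteria} and \Cref{lem:linesA1lccriteria}. Finally, we sum the component volumes and compare with \Cref{eq:constantvol}.

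Since $\ell$ lies in the smooth locus, it sits in a multiplicity-one position on a single component and meets no gluing curve at a node. Hence $b$ can only enter a log canonical coefficient through $\ell$ itself or through an Eckardt point on $\ell$. The first consequence is the top of the diagram. The walls $c = \frac{2}{3}$ (the $E:\mathbb{P}^2$ components contract) and $c = -\frac{b}{2}+1$ (an $E:\mathbb{P}^2$ meeting $\ell$ contracts, as in \Cref{rem:elleckcase}) are followed by $c=\frac{1}{2}$, where the $\mathbb{F}_0$-type components of the $\widetilde{E}_{2A_1}$ part contract exactly as in Step 1 of the proof of \Cref{E2A1_mult4}. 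Below $c = \frac{1}{2}$, every remaining wall is horizontal off the diagonal. These are $c=\frac{1}{3}$ from the $3A_2$ part (as in \Cref{D3A2_mult1}), $c=\frac{1}{4}$ from the $\widetilde{E}_{2A_1}$ part (as in \Cref{E2A1_mult1}), and $c=\frac{1}{6}$ where the resolved cubic-type pieces contract their $(-2)$-curves.

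The diagonal $b=c$ behaves differently. Below $c=\frac{1}{3}$ the forgetful identification with Schock's uniform chambers holds only on $b=c$. Off the diagonal, the component containing $\ell$ acquires a coefficient $(c - b)$ or $(b-c)$ on some contracted curve. This makes the segments $\frac{1}{9}<b=c\le\frac{1}{6}$, $\frac{1}{6}<b=c\le\frac{1}{4}$ and $\frac14<b=c\le\frac13$ their own chambers. It also explains why the chamber $\frac{b}{10}+\frac{1}{10}<c\le\frac16$ carries the condition $b \ne c$. Together with the three regions above $c = \frac{1}{2}$, the region $\frac13<c\le\frac12$, and the two off-diagonal strips $\frac14<c\le\frac13$ and $\frac16<c\le\frac14$, this gives the 10 chambers. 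At the wall $c=\frac14$, the contraction of the non-flat $2A_3$ components produces the $\widetilde{D}_{A_1}\cap\widetilde{E}_{2A_1}\cap\widetilde{D}_{3A_2}$ degenerations. At each wall we form the stable model componentwise and glue via \Cref{cor:gluing}, verifying its base-point-freeness hypotheses on the conductors.

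The main obstacle is the diagonal. We must show that for $b\neq c$ near the diagonal the stable model genuinely differs, rather than being an artifact of some birational modification. This requires a volume comparison analogous to the $c=\frac{b}{3}$ computation in Step 3 of the proof of \Cref{E2A1_mult4}. The plan is to exhibit the curve on the $\ell$-component whose log canonical degree is a multiple of $b-c$. We will first look on the $\mathrm{Bl}_6\mathbb{P}^2$ ($A_2$-type) component meeting the $\widetilde{E}_{2A_1}$ part. We also need to rule out any additional sloped wall, such as $c=-\frac b3+\frac13$; this follows from checking that no $\ell$-coefficient ever combines $b$ with $3c$.
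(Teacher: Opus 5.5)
There is a genuine gap. Your overall route is the paper's: run \Cref{conv:descproof} componentwise, glue with \Cref{cor:gluing}, and compare volumes with \eqref{eq:constantvol}. But the one feature that separates this proposition from \Cref{E2A1_mult1}, namely the three diagonal chambers, is left unproved, and the reasoning you give points the wrong way.

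You argue that, since $\ell$ is in the smooth locus, $b$ can only enter through $\ell$ itself or an Eckardt point. That principle is exactly what produces the all-horizontal picture of \Cref{E2A1_mult1}, so it cannot produce diagonal chambers. Your planned search also targets components that do not exist in type $\widetilde{E}_{2A_1}\cap\widetilde{D}_{3A_2}$:
\begin{itemize}
\item there is no $\mathrm{Bl}_6\mathbb{P}^2$ of $A_2$ type and no resolved cubic piece with $(-2)$-curves; the components are $\mathrm{Bl}_5\mathbb{P}^2$'s, $\mathrm{Bl}_4\mathbb{P}^2$'s and $\mathbb{F}_0$'s;
\item for $b=c\le\frac16$ the surface is Naruki's three-plane fiber of type $2A_1\cap 3A_2$ in \Cref{tab:narukifibers};
\item its $c=\frac16$ threshold is the slc bound $1+6c\le 2$ at the two points of the double curves where lines of total multiplicity $6$ meet, not a $(-2)$-curve contraction.
\end{itemize}

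The missing idea is this. In this degeneration even a multiplicity-one $\ell$ runs from one double curve to another. Through each of its endpoints pass other lines of total multiplicity $2$ in the $\ell$-plane and $3$ in the adjacent plane.

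First, the three-plane model has volume $2(9c-1)^2+(b+8c-1)^2$. This is \eqref{eq:constantvol} plus $2(b-c)^2$, so the model is a stable limit only on $b=c$. That volume failure is what makes $\frac19<b=c\le\frac16$ a one-dimensional chamber.

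Second, for $b>c$ you must keep the exceptional curves $E_1,E_2$ over the endpoints of $\ell$ on the $\ell$-plane. Each acquires boundary coefficient $3c$ from the neighbouring plane's lines. The log canonical class of that component then becomes $(b+8c-1)h-(b-c)(E_1+E_2)$, which gives:
\begin{itemize}
\item each $E_i$ has degree $b-c$ (this is the curve you were looking for);
\item the strict transform of $\ell$ has degree $10c-b-1$, which is positive exactly on $\mathrm{Amp}$;
\item the volume drops by $2(b-c)^2$ to the correct value, and the differents on the double curves still match.
\end{itemize}
This also shows that the off-diagonal region below $c=\frac16$ is a single chamber with no sloped wall. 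That replaces your ``no $b$ with $3c$'' heuristic.

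The same endpoint analysis must still be carried out for the diagonal models in $\frac16<c\le\frac14$ and $\frac14<c\le\frac13$. It also explains why the diagonal splits off only below $c=\frac13$: for $c>\frac13$, the slc bound $1+3c\le 2$ at these endpoints, on the adjacent plane, forces them to be blown up for every $b$.
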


\newpage
\subsubsection{\textbf{$\ell$ passes through exactly one node}}

\begin{pro}\label{E2A1D3A2_mult2}
	For type $\widetilde{E}_{2A_1} \cap \widetilde{D}_{3A_2}$ $(b,c)$-weighted stable marked cubic surfaces $(S, (b,c)B)$ where $\ell$ passes through exactly one node in the corresponding $\overline{Y}_\ell$ boundary, there are 14 chambers. The line $c = \frac{b}{4}$ is its own chamber. Additionally, crossing the wall $c = \frac{1}{4}$ introduces type $\widetilde{D}_{A_1}\cap \widetilde{E}_{2A_1} \cap \widetilde{D}_{3A_2}$ surfaces as degerenations of type $\widetilde{E}_{2A_1} \cap \widetilde{D}_{3A_2}$.
	\begin{center}
		\begin{tikzpicture}[scale=14]
			\draw[very thick,-latex] (1/9, 1/9) -- (1.03, 1/9) 
			node[right]{$b$};
			
			\draw[very thick,-latex] (1/9, 1/9) -- (1/9, 1.03) 
			node[left]{$c$};
			
			\draw[solid] (1/9,1/9) -- (1,1);
			\node at (1/9, 1/9) [below] {\tiny$(\frac{1}{9}, \frac{1}{9})$};
			\node at (1, 1) [left] {$(1, 1)$};
			\node[circle, fill, inner sep=1pt] at (1, 1) {};
			
			\draw[thick] (1, 1/9) -- (1,1);
			\node at (1, 1/9) [below] {\tiny$(1, \frac{1}{9})$};
			
			\draw[thick] (2/3, 2/3) -- (1, 2/3);
			\node at (1, 2/3) [right] {\tiny $c=2/3$};
			\node at (2/3, 2/3) [left] {\tiny$(\frac{2}{3}, \frac{2}{3})$};
			\node[circle, fill, inner sep=1pt] at (2/3, 2/3) {};
			\node at (8/9, 7/9) {Sch $\frac{2}{3} < c \leq 1$};
			
			\draw[thick] (1/2, 1/2) -- (1, 1/2);
			\node at (7.5/9, 0.57) {Sch $\frac{1}{2} < c \leq \frac{2}{3}$};
			
			\draw[thick] (1/2, 1/2) -- (4/5, 1/5);
			\node at (1/2, 1/2) [left] {\tiny$(\frac{1}{2}, \frac{1}{2})$};
			\node[circle, fill, inner sep=1pt] at (1/2, 1/2) {};
			\node at (1, 1/2) [right] {\tiny$c=1/2$};
			\node[rotate=-45] at (7/12, 5/12) [above] {\tiny $c = -b+1$};
			\node[align=center] at (5/6, 0.42) {\(\frac{1}{3} < c \leq \frac{1}{2}\) \\ \(c > -b+1 \)};
			\node[align=center] at (1/2, 4.7/12) {Sch $\frac{1}{3} < c \leq \frac{1}{2}$};
			%
			\draw[thick] (1/3, 1/3) -- (1,1/3);
			\node at (1, 1/3) [right] {\tiny$c=1/3$};
			\node at (1/3, 1/3) [left] {\tiny$(\frac{1}{3}, \frac{1}{3})$};
			\node[align=center] at (1/2, 7/24) {\( \frac{1}{4} < c \leq \frac{1}{3}\)\\ \(c \leq -b+1, b \neq c \)};
			\node[align=center] at (5.2/6, 7/24) {\(\frac{1}{4} < c \leq \frac{1}{3}\)\\ \(c > -b+1\)};
			
			\draw[thick] (1/4, 1/4) -- (1,1/4);
			\node at (1, 1/4) [right] {\tiny$c=1/4$};
			\node at (1/4, 1/4) [left] {\tiny$(\frac{1}{4}, \frac{1}{4})$};
			\node[align=center] at (1/2, 5/24) {\( \frac{1}{6} < c \leq \frac{1}{4}\)\\ \(\frac{b}{4} < c \leq -b+1, b \neq c \)};
			\draw[->] (1.1, 1/6) -- (5.2/6, 10.5/48);
			\node[circle, fill, inner sep=1.5pt] at (5.15/6, 10.5/48) [above] {};
			\node[align=left] at (1.16, 1/6) {\(\frac{1}{5} < c \leq \frac{1}{4}\) \\ \(c > -b+1\)};
			
			\draw[thick] (1/6, 1/6) -- (2/3, 1/6);
			\node at (7/36, 7/36) [left] {\tiny$(\frac{1}{6}, \frac{1}{6}$)};
			\draw[->] (7/24, 1/11) -- (1/4, 10/72);
			\node[circle, fill, inner sep=1.5pt] at (1/4, 10/72) [above] {};
			\node[align=center] at (7/24, 1/20) { \(\frac{b}{10} + \frac{1}{10} < c \leq \frac{1}{6}\) \\ \(b \neq c\)};
			%
			%
			\draw[thick] (4/5, 1/5) -- (1, 1/5);
			\node at (1, 1/5) [right] {\tiny{$c=1/5$}};
			\draw[->] (2.5/3, 1/11) -- (0.8, 0.18);
			\node[circle, fill, inner sep=1.5pt] at (0.8, 0.19) {};
			\node[align=center] at (2.5/3, 1/20) {\(\frac{b}{10} + \frac{1}{10} < c \leq \frac{1}{5}\) \\ \(c < \frac{b}{4}\)};

			\draw[very thick] (2/3, 1/6) -- (4/5, 1/5);
			\node at (2/3, 1/6) [below] {\tiny{$(\frac{2}{3}, \frac{1}{6})$}};
			\node[circle, fill, inner sep=1pt] at (2/3, 1/6) {};
			\node[rotate=14.04] at (22/30, 21/120) [above] {\tiny $c=\frac{b}{4}$};
			%
			
			\draw[very thick] (1/4, 1/4) -- (1/3, 1/3);
			\draw[->] (1/11, 1/3) -- (7/24, 7/24);
			\node at (1/10, 1/3) [left] {Sch $\frac{1}{4} < c \leq \frac{1}{3}$};		
			\node[circle, fill, inner sep=1pt] at (1/4, 1/4) {};
			\node[circle, fill, inner sep=1pt] at (1/3, 1/3) {};
			
			\draw[very thick] (1/6, 1/6) -- (1/4, 1/4);
			\draw[->] (1/11, 1/4) -- (5/24, 5/24);
			\node at (1/10, 1/4) [left] {Sch $\frac{1}{6} < c \leq \frac{1}{4}$};
			\node[circle, fill, inner sep=1pt] at (1/6, 1/6) {};
			
			\draw[very thick] (1/9, 1/9) -- (1/6, 1/6);
			\draw[->] (1/11, 1/6) -- (5/36, 5/36);
			\node at (1/10, 1/6) [left] {Sch $\frac{1}{9} < c \leq \frac{1}{6}$};
			
			\draw[dashed] (1/9, 1/9) -- (1, 1/5);
			\node[rotate=5.71] at (5/6, 11/60) [below] {\tiny$c=b/10 + 1/10$};
		\end{tikzpicture}
	\end{center}
\end{pro}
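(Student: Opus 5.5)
The proof runs \Cref{conv:descproof} for type $\widetilde{E}_{2A_1}\cap\widetilde{D}_{3A_2}$ with $\ell$ through exactly one node, in the same way as the proof of \Cref{E2A1_mult4}.

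\textbf{Starting surface and weight-one chambers.} We start from the general fiber of $\ddot{\pi}$ over the corresponding stratum of $\overline{Y}_{(1,\dots,1)}$. This is Schock's surface \cite[Figs.~35, 36]{schock_moduli_2024} with its Eckardt points blown up as in \Cref{teo:eckblowup}, and with the marked line chosen as in \Cref{rem:line-choice-strata}, so that $\ell$ passes through one of the two nodes. For each irreducible component $S'$ we then do the following.
\begin{enumerate}
\item Write $S'$ as a blow up of $\mathbb{P}^2$ or $\mathbb{F}_0$ and record the gluing divisor $\Delta_{S'}$.
\item Record the boundary $(b,c)B|_{S'}$, weighting $\ell$ by $b$, the dashed lines through it by $b+c$, and so on.
\item Compute $K_{S'}+(b,c)B|_{S'}+\Delta_{S'}$, and test ampleness against the $(-1)$- and $(-2)$-curves via \Cref{teo:nefcones}.
\item Check that the component volumes sum to \eqref{eq:constantvol}.
\end{enumerate}
The slc condition is checked with Lemmas \ref{lem:lineslccriteria} and \ref{lem:linesA1lccriteria}. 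As in Step 0 of the proof of \Cref{E2A1_mult4}, this yields the chamber Sch $\frac23<c\le 1$. If $\ell$ meets an $E:\mathbb{P}^2$ component, it also yields the chamber $-\frac b2+1<c\le\frac23$ (cf.\ \Cref{rem:elleckcase}), and then Sch $\frac12<c\le\frac23$.

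\textbf{Scanning downward.} We then decrease $(b,c)$ following the scanning order of \Cref{conv:descproof}. At each wall we contract the components whose log canonical class is nef but not ample: $\mathbb{F}_0$ components are sent to $\mathbb{P}^1$ or to a point, and $(-1)$-curves are blown down. We glue the results using \Cref{cor:gluing}, after checking its big-and-nef and basepoint-freeness hypotheses along the gluing curves. The component containing $\ell$ is a $\mathbb{F}_0$ carrying $\ell$ with coefficient $(b+c-1)h_1+(4c-1)h_2$ on its log canonical class, exactly as for type $4'$. This component produces the negative-slope wall $c=-b+1$ at the top, and the wall $c=\frac14$ below it.

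The remaining $3A_2$-type components, namely the $\mathrm{Bl}_6\mathbb{P}^2$ pieces and the $\mathbb{F}_0$ pieces from Figure 35, give the horizontal wall $c=\frac13$ as in \Cref{D3A2_mult1}. The $2A_1$ part gives the walls $c=\frac14$ and $c=\frac16$ as in \Cref{E2A1_mult2}. The interplay with $\ell$ should reproduce the wall $c=\frac15$ for $c>-b+1$. It should also produce a component on which $\ell$ carries a term of the form $(b-4c)e$, giving the wall $c=\frac b4$.

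\textbf{Main obstacle: the line $c=\frac b4$.} At $c=\frac b4$ I expect all components to be ample, but the glued surface's volume to equal \eqref{eq:constantvol} only on that line. This would parallel the $c=\frac b3$ computation in Step 3 of the proof of \Cref{E2A1_mult4}, and it is exactly what makes $c=\frac b4$ its own chamber. Off the line, the surface is replaced by the model obtained just above or just below it.

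Verifying this requires the explicit volume polynomial of the candidate surface, and checking that its difference with \eqref{eq:constantvol} is a nonzero multiple of $(4c-b)$ times a factor with no zeros in $\mathrm{Amp}$. Where \Cref{cor:gluing} fails at a wall, I would first blow up the points where $\ell$ meets the gluing curve, as in the passage to \Cref{E2A1_mult4_b10110c14}, until it applies. Finally we count the chambers obtained (there should be 14), and read off the degenerations introduced at $c=\frac14$, which are the $\widetilde{D}_{A_1}\cap\widetilde{E}_{2A_1}\cap\widetilde{D}_{3A_2}$ surfaces produced by contracting the non-flat $2A_3$ components.
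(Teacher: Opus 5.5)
Your overall route is what the paper intends: running \Cref{conv:descproof} on the Eckardt-blown-up fiber of \cite[Figs.~35, 36]{schock_moduli_2024}, exactly as in the proof of \Cref{E2A1_mult4}. The paper gives no separate argument for this case. However, the concrete wall attributions you commit to would not reproduce the stated diagram.

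\textbf{The component carrying $\ell$ is misidentified.} The type $4'$ $\mathbb{F}_0$ with log canonical class $(b+c-1)h_1+(4c-1)h_2$ is where a line through \emph{both} nodes sits, one of the four lines collapsing onto the dotted multiplicity-$4$ line. Here $\ell$ is one of two lines collapsing onto a dashed multiplicity-$2$ line, so it lies on different components.

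If you use the type-$4'$ form anyway, the proof of \Cref{E2A1_mult4} shows the outcome. That component is contracted along a ruling at $c=\frac14$, the region $c>-b+1$ ends there, and the chamber below it is bounded by $c=\frac b3$. The statement instead has the wall $c=-b+1$ running down to $(\frac45,\frac15)$, with a chamber $\frac15<c\le\frac14$, $c>-b+1$. Meanwhile $c=\frac14$ spans all of $\frac14\le b\le 1$, so it comes from components not containing $\ell$, as in \Cref{E2A1_mult1} and \Cref{E2A1_mult2}.

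As in the other one-node cases \Cref{DA1_mult2} and \Cref{E2A1_mult2}, some component carrying $\ell$ must keep a $(b+c-1)$-term below $c=\frac14$ and have its other coefficient vanish at $c=\frac15$. You need to compute that component, and the model reached by crossing $c=\frac15$, where a $(b-4c)$-term can then produce the chamber $c=\frac b4$. This is exactly what you leave as ``interplay with $\ell$'', and it is the heart of this case.

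\textbf{Your walls cannot add up to $14$ chambers.} Three of the fourteen are the diagonal segments $b=c$ over $(\frac19,\frac16]$, $(\frac16,\frac14]$ and $(\frac14,\frac13]$, and nothing in your proposal produces them. A diagonal chamber arises when a curve meeting $\ell$ has $(K+(b,c)B)$-degree a positive multiple of $b-c$, so it is contracted only on $b=c$. See Step~5 of the proof of \Cref{E2A1_mult4}, where the $(-2)$-curves $L_{123},L_{345}$ have degree exactly $b-c$. Here the diagonal splits off already at $c=\frac13$, from the $3A_2$ part as in \Cref{D3A2_mult1}, and you need to locate those curves.

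The count also requires \emph{excluding} the chamber $-\frac b2+1<c\le\frac23$, since the diagram has no such wall. You must show that in this stratum the one-node line $\ell$ never passes through an Eckardt point, rather than leaving it as a possible case.

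Finally, on $c=\frac b3$ in \Cref{E2A1_mult4} the volume discrepancy is $2(b-3c)^2$. So for $c=\frac b4$ you should expect a multiple of $(b-4c)^2$, not $(4c-b)$ times a factor without zeros in $\mathrm{Amp}$.
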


\newpage
\subsubsection{\textbf{$\ell$ passes through exactly two nodes}}
\begin{pro}
	For type $\widetilde{E}_{2A_1} \cap \widetilde{D}_{3A_2}$ $(b,c)$-weighted stable marked cubic surfaces $(S, (b,c)B)$ where $\ell$ passes through exactly two nodes in the corresponding $\overline{Y}_\ell$ boundary, there are 15 chambers. The line $c = \frac{b}{3}$ is its own chamber. Additionally, crossing the wall $c = -\frac{b}{3} + \frac{1}{3}$ introduces type $\widetilde{D}_{A_1}\cap \widetilde{E}_{2A_1} \cap \widetilde{D}_{3A_2}$ surfaces as degerenations of type $\widetilde{E}_{2A_1}$. In this case, the wall and chamber decomposition has 18 chambers and explained more below.
	\begin{center}
		\begin{tikzpicture}[scale=14]
			\draw[very thick,-latex] (1/9, 1/9) -- (1.03, 1/9) 
			node[right]{$b$};
			
			\draw[very thick,-latex] (1/9, 1/9) -- (1/9, 1.03) 
			node[left]{$c$};
			
			\draw[solid] (1/9,1/9) -- (1,1);
			\node at (1/9, 1/9) [below] {\tiny $(\frac{1}{9}, \frac{1}{9})$};
			\node at (1, 1) [left] {$(1, 1)$};
			
			\draw[thick] (1, 1/9) -- (1,1);
			\node at (1, 1/9) [below] {$(1, \frac{1}{9})$};
			
			\draw[thick] (2/3, 2/3) -- (1, 2/3) node[right] {\tiny{$c=\frac{2}{3}$}};
			\node[circle, fill, inner sep=1pt] at (2/3, 2/3) {};
			\node at (2/3, 2/3) [left] {\tiny$(\frac{2}{3}, \frac{2}{3})$};
			\node at (8/9, 7/9) {Sch $\frac{2}{3} < c \leq 1$};
			\node at (0.9, 0.63) {$-\frac{b}{2}+1 < c \leq \frac{2}{3}$};
			\draw[solid] (2/3, 2/3) -- (1, 1/2) node[right] {\color{black} \tiny{$c = \frac{1}{2}$}};
			\node[rotate=-26.6] at (5/6, 57/96) {\tiny wall due to $\ell$ containing Eckardt point(s)};
			
			\draw[thick] (1/2, 1/2) -- (1, 1/2);
			\node[circle, fill, inner sep=1pt] at (1/2, 1/2) {};
			\node at (1/2, 1/2) [left] {\tiny $(\frac{1}{2}, \frac{1}{2})$};
			\node at (0.7, 0.56) {Sch $\frac{1}{2} < c \leq \frac{2}{3}$};
			
			\draw[thick] (1/2, 1/2) -- (3/4, 1/4);
			\node[rotate=-45] at (7/12, 5/12) [above] {\tiny $c = -b+1$};
			\node[align=center] at (5/6, 0.42) {\(\frac{1}{3} < c \leq \frac{1}{2}\) \\ \(c > -b+1 \)};
			
			\draw[thick] (1/4, 1/4) -- (7/13, 2/13);
			\node[circle, fill, inner sep=1pt] at (1/4, 1/4) {};
			\node[rotate=-18.43] at (3/8, 24/120) [above] {\tiny$c=-\frac{b}{3} + \frac{1}{3}$};
			\node at (7/13, 2/13) [below] {\tiny{$(\frac{7}{13}, \frac{2}{13})$}};
			\node at (1/4, 1/4) [left] {\tiny$(\frac{1}{4}, \frac{1}{4})$};
			\node at (1.77/6, 4.55/24) {$\frac{1}{6} < c \leq \frac{1}{4}$, $b \neq c$};
			
			\draw[thick] (1/3, 1/3) -- (1,1/3);
			\node at (1, 1/3) [right] {\tiny$c=1/3$};
			\node at (1/3, 1/3) [left] {\tiny$(\frac{1}{3}, \frac{1}{3})$};
			\node[align=center] at (1/2, 6.5/24) {\( -\frac{b}{3} + \frac{1}{3} < c \leq \frac{1}{3}\)\\ \(\frac{b}{3} < c \leq -b+1, b \neq c \)};
			\node[align=center] at (5.2/6, 7/24) {\(\frac{1}{4} < c \leq \frac{1}{3}\)\\ \(c > -b+1\)};
			\node[align=center] at (1/2, 4.7/12) {Sch $\frac{1}{3} < c \leq \frac{1}{2}$};
			
			\draw[thick] (3/4, 1/4) -- (1,1/4);
			\node at (1, 1/4) [right] {\tiny$c=1/4$};
			\node at (1/4, 1/4) [left] {\tiny$(\frac{1}{4}, \frac{1}{4})$};
			\node[align=left] at (5.2/6, 10.5/48) {\tiny \(\frac{b}{10} + \frac{1}{10} < c \leq \frac{1}{4}\) \\ \tiny \(\frac{1}{6} < c < \frac{b}{3}\)};
			
			\draw[very thick] (1/2, 1/6) -- (3/4, 1/4);
			\node[rotate=18.43] at (2/3, 17/81) [above] {\tiny$c=\frac{b}{3}$};
			
			\draw[thick] (1/6, 1/6) -- (2/3, 1/6);
			\node[circle, fill, inner sep=1pt] at (1/6, 1/6) {};
			\node at (7/36, 7/36) [left] {\tiny$(\frac{1}{6}, \frac{1}{6}$)};
			\node at (2/3, 1/6) [below] {\tiny{$(\frac{2}{3}, \frac{1}{6})$}};
			\draw[->] (7/24, 1/11) -- (1/4, 10/72);
			\node[circle, fill, inner sep=1.5pt] at (1/4, 10/72) [above] {};
			\node[align=center] at (7/24, 1/20) {\(\frac{b}{10} + \frac{1}{10} < c \leq \frac{1}{6}\) \\ \(c \leq - \frac{b}{3} + \frac{1}{3}, b \neq c\)};
			\draw[->] (7/13, 1/11) -- (7.3/13, 9.4/60);
			\node[circle, fill, inner sep=1.5pt] at (7.3/13, 9.4/60) [above] {};
			\node[align=center] at (7/13, 1/20) {\(\frac{b}{10} + \frac{1}{10} < c \leq \frac{1}{6}\) \\ \(c > -\frac{b}{3} + \frac{1}{3}\)};
			
			\draw[very thick] (1/4, 1/4) -- (1/3, 1/3);
			\draw[->] (1/11, 1/3) -- (7/24, 7/24);
			\node at (1/10, 1/3) [left] {Sch $\frac{1}{4} < c \leq \frac{1}{3}$};		
			\node[circle, fill, inner sep=1pt] at (1/4, 1/4) {};
			\node[circle, fill, inner sep=1pt] at (1/3, 1/3) {};
			
			\draw[very thick] (1/6, 1/6) -- (1/4, 1/4);
			\draw[->] (1/11, 1/4) -- (5/24, 5/24);
			\node at (1/10, 1/4) [left] {Sch $\frac{1}{6} < c \leq \frac{1}{4}$};
			\node[circle, fill, inner sep=1pt] at (1/6, 1/6) {};
			
			\draw[very thick] (1/9, 1/9) -- (1/6, 1/6);
			\draw[->] (1/11, 1/6) -- (5/36, 5/36);
			\node at (1/10, 1/6) [left] {Sch $\frac{1}{9} < c \leq \frac{1}{6}$};
			
			\draw[dashed] (1/9, 1/9) -- (1, 1/5);
			\node[rotate=5.71] at (5/6, 11/60) [below] {\tiny$c=b/10 + 1/10$};
			\node at (1, 1/5) [right] {\tiny{$c=\frac{1}{5}$}};
			
		\end{tikzpicture}
	\end{center}
	The following is the wall and chamber decomposition for type $\widetilde{D}_{A_1}\cap \widetilde{E}_{2A_1} \cap \widetilde{D}_{3A_2}$ $(b,c)$-weighted stable marked cubic surfaces $(S, (b,c)B)$ where $\ell$ passes through exactly two nodes in the corresponding $\overline{Y}_\ell$ boundary. There are 18 walls. Along with the line $c = \frac{b}{3}$, the lines $c = \frac{1}{4}$ and $c = \frac{b}{4}$ are their own chambers.
	\begin{center}
		\begin{tikzpicture}[scale=14]
			\draw[very thick,-latex] (1/9, 1/9) -- (1.03, 1/9) 
			node[right]{$b$};
			
			\draw[very thick,-latex] (1/9, 1/9) -- (1/9, 1.03) 
			node[left]{$c$};
			
			\draw[solid] (1/9,1/9) -- (1,1);
			\node at (1/9, 1/9) [below] {\tiny$(\frac{1}{9}, \frac{1}{9})$};
			\node at (1, 1) [left] {$(1, 1)$};
			
			\draw[thick] (1, 1/9) -- (1,1);
			\node at (1, 1/9) [below] {\tiny$(1, \frac{1}{9})$};
			
			\draw[thick] (2/3, 2/3) -- (1, 2/3);
			\node at (1, 2/3) [right] {\tiny $c=2/3$};
			\node at (2/3, 2/3) [left] {\tiny$(\frac{2}{3}, \frac{2}{3})$};
			\node at (8/9, 7/9) {Sch $\frac{2}{3} < c \leq 1$};
			
			\draw[thick] (1/2, 1/2) -- (1, 1/2);
			\node at (0.8, 0.58) {Sch  $\frac{1}{2} < c \leq \frac{2}{3} $};
			
			\draw[thick] (1/2, 1/2) -- (4/5, 1/5);
			\node at (1/2, 1/2) [left] {\tiny$(\frac{1}{2}, \frac{1}{2})$};
			\node at (1, 1/2) [right] {\tiny$c=1/2$};
			\node[rotate=-45] at (2/3, 29/90) [above] {\tiny $c = -b+1$};
			\node[align=center] at (5/6, 0.42) {\(\frac{1}{3} < c \leq \frac{1}{2}\) \\ \(c > -b+1 \)};
			%
			\draw[thick] (1/4, 1/4) -- (7/13, 2/13);
			\node at (1/4, 1/4) [left] {\tiny$(\frac{1}{4}, \frac{1}{4})$};
			\node at (7/13, 2/13) [below] {\tiny{$(\frac{7}{13}, \frac{2}{13})$}};
			\node[rotate=-18.43] at (3/8, 24/120) [above] {\tiny$c=-\frac{b}{3} + \frac{1}{3}$};
			\node at (1.77/6, 4.55/24) {$\frac{1}{6} < c \leq \frac{1}{4}$, $b \neq c$};
			
			\draw[thick] (1/3, 1/3) -- (1,1/3);
			\node at (1, 1/3) [right] {\tiny$c=1/3$};
			\node at (1/3, 1/3) [left] {\tiny$(\frac{1}{3}, \frac{1}{3})$};
			\node[align=center] at (1/2, 6.5/24) {\( -\frac{b}{3} + \frac{1}{3} < c \leq \frac{1}{3}\)\\ \(\frac{b}{3} < c \leq -b+1, b \neq c \)};
			\node[align=center] at (5.2/6, 7/24) {\(\frac{1}{4} < c \leq \frac{1}{3}\)\\ \(c > -b+1\)};
			\node[align=center] at (1/2, 4.7/12) {Sch $\frac{1}{3} < c \leq \frac{1}{2}$};
			%
			\draw[thick] (1/6, 1/6) -- (2/3, 1/6);
			\node at (7/36, 7/36) [left] {\tiny$(\frac{1}{6}, \frac{1}{6}$)};
			\draw[->] (7/24, 1/11) -- (1/4, 10/72);
			\node[circle, fill, inner sep=1pt] at (1/4, 10/72) [above] {};
			\node[align=center] at (7/24, 1/20) {\(\frac{b}{10} + \frac{1}{10} < c \leq \frac{1}{6}\) \\ \(c \leq - \frac{b}{3} + \frac{1}{3}, b \neq c\)};
			%
			\draw[very thick] (3/4, 1/4) -- (1, 1/4);
			\node at (1, 1/4) [right] {\tiny{$c=1/4$}};
			%
			\draw[thick] (4/5, 1/5) -- (1, 1/5);
			\node at (1, 1/5) [right] {\tiny{$c=1/5$}};
			%
			\draw[very thick] (1/2, 1/6) -- (3/4, 1/4);
			\node[rotate=18.43] at (2/3, 17/81) [above] {\tiny$c=\frac{b}{3}$};
			%
			\draw[very thick] (2/3, 1/6) -- (4/5, 1/5);
			\node at (2/3, 1/6) [below] {\tiny{$(\frac{2}{3}, \frac{1}{6})$}};
			\node[rotate=14.04] at (22/30, 21/120) [above] {\tiny $c=\frac{b}{4}$};
			%
			
			\draw[very thick] (1/4, 1/4) -- (1/3, 1/3);
			\draw[->] (1/11, 1/3) -- (7/24, 7/24);
			\node at (1/10, 1/3) [left] {Sch $\frac{1}{4} < c \leq \frac{1}{3}$};		
			\node[circle, fill, inner sep=1pt] at (1/4, 1/4) {};
			\node[circle, fill, inner sep=1pt] at (1/3, 1/3) {};
			
			\draw[very thick] (1/6, 1/6) -- (1/4, 1/4);
			\draw[->] (1/11, 1/4) -- (5/24, 5/24);
			\node at (1/10, 1/4) [left] {Sch $\frac{1}{6} < c \leq \frac{1}{4}$};
			\node[circle, fill, inner sep=1pt] at (1/6, 1/6) {};
			
			\draw[->] (1.1, 1/6) -- (5.2/6, 10.5/48);
			\node[circle, fill, inner sep=1.5pt] at (5.15/6, 10.5/48) [above] {};
			\node[align=left] at (1.16, 1/6) {\(\frac{1}{5} < c < \frac{1}{4}\) \\ \(c > -b+1\)};
			
			\draw[->] (1.05, .85/3) -- (8.5/13, 1/5);
			\node[circle, fill, inner sep=1.5pt] at (8.5/13, 1/5) [left] {};
			\node[align=center] at (1.15, .85/3) {\(\frac{1}{6} < c \leq -b+1\) \\ \(\frac{b}{4} < c < \frac{b}{3}\)};
			
			\draw[->] (7/13, 1/11) -- (7.3/13, 9.4/60);
			\node[circle, fill, inner sep=1.5pt] at (7.3/13, 9.4/60) [above] {};
			\node[align=center] at (7/13, 1/20) {\(\frac{b}{10} + \frac{1}{10} < c \leq \frac{1}{6}\) \\ \(c > -\frac{b}{3} + \frac{1}{3}\)};
			
			\draw[->] (10/13, 1/11) -- (10.5/13, 1.1/6);
			\node[circle, fill, inner sep=1.5pt] at (10.5/13, 1.1/6) [above] {};
			\node[align=center] at (10/13, 1/20) {\(\frac{b}{10} + \frac{1}{10} < c \leq \frac{1}{5}\) \\ \(c < \frac{b}{4}\)};
			
			\draw[very thick] (1/9, 1/9) -- (1/6, 1/6);
			\draw[->] (1/11, 1/6) -- (5/36, 5/36);
			\node at (1/10, 1/6) [left] {Sch $\frac{1}{9} < c \leq \frac{1}{6}$};
			
			\draw[dashed] (1/9, 1/9) -- (1, 1/5);
			\node[rotate=5.71] at (5/6, 11/60) [below] {\tiny$c=b/10 + 1/10$};
		\end{tikzpicture}
	\end{center}
\end{pro}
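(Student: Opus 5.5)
The statement is the last proposition of the appendix: type $\widetilde{E}_{2A_1} \cap \widetilde{D}_{3A_2}$ surfaces where $\ell$ passes through exactly two nodes, together with its degeneration $\widetilde{D}_{A_1}\cap \widetilde{E}_{2A_1} \cap \widetilde{D}_{3A_2}$. I would prove it by running \Cref{conv:descproof} exactly as in the proof of \Cref{E2A1_mult4}.

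\textbf{Step 0 (starting surface).} Take the general fiber of $\ddot{\pi}$ over the stratum $\widetilde{E}_{2A_1} \cap \widetilde{D}_{3A_2}$, using Schock's Figures 35--36 and \Cref{tab:E2A1D3A2eck}. Choose the component of the stratum in $\overline{Y}_{(1,1)}$ for which $\ell$ is the line through both nodes. Attach an $E : \mathbb{P}^2$ at each Eckardt point by \Cref{teo:eckblowup}.

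\textbf{Per-component computation.} For each component $S'$, written as a blow-up of $\mathbb{P}^2$ or of $\mathbb{F}_0$, I will:
\begin{itemize}
\item write $K_{S'} + (b,c)B' + \Delta$ in the basis $h, e_i$ (or $h_1, h_2, e_i$);
\item read off the ample cone from \Cref{teo:nefcones}, or from direct intersection with the negative curves as was done for $\mathrm{Bl}_5\mathbb{F}_0$;
\item check slc by \Cref{lem:lineslccriteria} and \Cref{lem:linesA1lccriteria};
\item sum the squares $(K_{S'}+(b,c)B'+\Delta)^2$ and compare with \eqref{eq:constantvol}.
\end{itemize}
Most components coincide with ones already treated. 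The $\widetilde{D}_{3A_2}$ part contributes the $\mathrm{Bl}_6\mathbb{P}^2$ and $\mathbb{F}_0$ pieces of \Cref{D3A2_mult1}. The two-node part reproduces the $3'$, $4'$ and $\mathrm{Bl}_5\mathbb{F}_0$ pieces of \Cref{E2A1_mult4}. I therefore expect the walls to be the union of those two decompositions:
\begin{itemize}
\item $c=\tfrac23$, $c=-\tfrac b2+1$, $c=\tfrac12$ and $c=-b+1$;
\item $c=\tfrac13$, coming from the $3A_2$ components, which replaces part of the $c=\tfrac14$ wall;
\item $c=-\tfrac b3+\tfrac13$, $c=\tfrac b3$, $c=\tfrac16$, $c=\tfrac14$ and $b=c$.
\end{itemize}

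\textbf{Descending through chambers.} At each wall I contract the curves on which the log canonical divisor becomes trivial. I then apply \Cref{cor:gluing}, checking that its three hypotheses hold componentwise, to glue the new stable replacement, and I scan the chambers in the order prescribed by the procedure. Three features need separate treatment:
\begin{itemize}
\item On the ray $c=\tfrac b3$, ampleness holds but the volume formula agrees with \eqref{eq:constantvol} only when $c=\tfrac b3$. This makes the ray its own chamber, exactly as in Step 3 of the proof of \Cref{E2A1_mult4}.
\item The $3A_2$ components force the diagonal chambers Sch $(\tfrac14,\tfrac13]$, $(\tfrac16,\tfrac14]$ and $(\tfrac19,\tfrac16]$ on $b=c$. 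There the $\mathrm{Bl}_6\mathbb{P}^2$ pieces contract only when $b=c$.
\item Crossing $c=-\tfrac b3+\tfrac13$ produces the degenerate type $\widetilde{D}_{A_1}\cap \widetilde{E}_{2A_1}\cap \widetilde{D}_{3A_2}$. On these, the special $\mathrm{Bl}_5\mathbb{P}^2$ containing $\ell$ has split into four components, as in \cite[Ex.~4.16]{schock_moduli_2024}. Rerunning the procedure on them adds the walls $c=\tfrac b4$, $c=\tfrac15$ and the segment $c=\tfrac14$ for $b>\tfrac34$. The pieces $c=\tfrac b4$ and $c=\tfrac14$ are one-dimensional chambers detected by failure of the volume condition, as in \Cref{app:E2A1_mult4degen}.
\end{itemize}
Counting regions then gives 15 chambers for the general type and 18 for the degeneration.

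\textbf{Main obstacle.} The hardest part is the gluing at the new walls near $c=\tfrac14$, $c=\tfrac b3$ and $c=\tfrac b4$, where the conductor hypotheses of \Cref{cor:gluing} may fail. There the naive componentwise contraction gives the wrong volume, as happens in \Cref{E2A1_mult4_c=b3}. I would first test base-point freeness along $\overline D_i$ directly, via restriction to the smooth rational gluing curves. If that fails, I would instead blow up the intersection of $\ell$ with the gluing line, as was done to produce \Cref{E2A1_mult4_b10110c14}, and then verify the volume identity on the modified surface.
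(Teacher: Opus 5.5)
Your plan matches the paper's argument: the paper gives no separate proof of this proposition and only says it follows by rerunning \Cref{conv:descproof} as in the proof of \Cref{E2A1_mult4}, which is what you propose. When you carry it out, fix three points:
\begin{enumerate}
\item According to the component table for this type (Schock's Table~8, reproduced in the appendix), the starting fiber has no $\mathrm{Bl}_6\mathbb{P}^2$ or $\mathrm{Bl}_5\mathbb{F}_0$ components; it has only $\mathrm{Bl}_5\mathbb{P}^2$, $\mathrm{Bl}_4\mathbb{P}^2$ and $\mathbb{F}_0$ pieces. So neither your ``union of \Cref{D3A2_mult1} and \Cref{E2A1_mult4}'' heuristic nor your explanation of the diagonal chambers via $\mathrm{Bl}_6\mathbb{P}^2$ pieces is justified, and the walls must come from the actual componentwise computation.
\item In the degenerate type the wall $c=-\tfrac b2+1$ is absent, consistent with \Cref{teo:eckconfig}: no Eckardt points occur on the degenerate components.
\item In the degenerate type, $c=-b+1$ extends down to $(\tfrac45,\tfrac15)$.
\end{enumerate}
Points (2) and (3) are what make the count $18$; adding your new walls to the $15$-chamber picture while keeping $c=-\tfrac b2+1$ gives $19$.
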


\begin{table}[H]
	\centering
	\label{tab:E2A1D3A2eck}
	\begin{tabular}{| c | c | c | c |}
		\hline
		Label & Surface & \# & Eckardt points \\
		\hline
		\hline
		1 & $\mathrm{Bl}_5\mathbb{P}^2$ & 2 & 0 \\
		2 & $\mathrm{Bl}_4\mathbb{P}^2$ & 1 & 0 \\
		3a & $\mathrm{Bl}_5\mathbb{P}^2$ & 2 & 0 \\
		3b & $\mathrm{Bl}_4\mathbb{P}^2$ & 2 & 0 \\
		3c & $\mathbb{F}_0$ & 4 & 0 \\
		3d & $\mathbb{F}_0$ & 4 & 0 \\
		3e & $\mathbb{F}_0$ & 4 & 0 \\
		4 & $\mathrm{Bl}_5 \mathbb{P}^2$ & 1 & 0 or 1 \\
		5a & $\mathbb{F}_0$ & 3 & 0 \\
		5b & $\mathbb{F}_0$ & 6 & 0 \\
		5c & $\mathbb{F}_0$ & 3 & 0 \\
		6a & $\mathbb{F}_0$ & 2 & 0 \\
		6b & $\mathbb{F}_0$ & 2 & 0 \\
		7 & $\mathbb{F}_0$ & 6 & 0 \\
		8a & $\mathbb{F}_0$ & 2 & 0 \\
		8b & $\mathbb{F}_0$ & 1 & 0 \\
		\hline
	\end{tabular}
	\caption{This table is taken from \cite[Table~8]{schock_moduli_2024} The first two columns are the numbered types of irreducible components of the surfaces in the chamber Sch $1/2 < c \leq 2/3$ of type $\widetilde{E}_{2A_1} \cap \widetilde{D}_{3A_2}$ pictured in \cite[Fig.~35]{schock_moduli_2024}. The third column tells us the number of each corresponding component found in the surface. Finally, the last row gives the possible numbers of Eckardt points on each component. For the components of types 1 and 3a, $\mathrm{Bl}_5\mathbb{P}^2$ refers to the blowup of $\mathbb{P}^2$ at 3 points on one line and 2 points on another line. For the components of types 2 and 3b, $\mathrm{Bl}_4 \mathbb{P}^2$ refers to the blowup of $\mathbb{P}^2$ at 2 points on one line and 2 points on another line (i.e., 4 points in general position). For the component of type 4, $\mathrm{Bl}_5 \mathbb{P}^2$ refers to the blowup of $\mathbb{P}^2$ at 2 points on one line, 2 points on another line, and at the intersection point of these two lines.}
\end{table}

\newpage
\subsection{Type $\widetilde{E}_{3A_1} \cap \widetilde{D}_{3A_2}$}
\subsubsection{\textbf{$\ell$ is in the smooth locus}}
\begin{pro}
	For type $\widetilde{E}_{3A_1} \cap \widetilde{D}_{3A_2}$ $(b,c)$-weighted stable marked cubic surfaces $(S, (b,c)B)$ where $\ell$ is in the smooth, the walls are the same as in Proposition \ref{E2A1D3A2_mult1}. Additionally, crossing the wall $c = \frac{1}{4}$ introduces type 
	$$\widetilde{D}_{A_1}\cap \widetilde{E}_{3A_1} \cap \widetilde{D}_{3A_2}, \hspace{.25cm} \widetilde{E}_{2A_1} \cap \widetilde{E}_{3A_1} \cap \widetilde{D}_{3A_2}, \hspace{.25cm} \widetilde{D}_{A_1}\cap \widetilde{E}_{2A_1} \cap \widetilde{E}_{3A_1} \cap \widetilde{D}_{3A_2}$$
	surfaces as degenerations of type $\widetilde{E}_{3A_1} \cap \widetilde{D}_{3A_2}$ surfaces.
\end{pro}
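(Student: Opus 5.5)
The plan is to run \Cref{conv:descproof} on this type, exactly as in the proof of \Cref{E2A1_mult4}, starting from the weight-$1$ fiber of $\ddot{\pi}$ over a general point of a stratum of type $\widetilde{E}_{3A_1} \cap \widetilde{D}_{3A_2}$. For that fiber I would use Schock's description \cite[Figs.~43, 44]{schock_moduli_2024}, modified by \Cref{teo:eckblowup} at any Eckardt points (see \Cref{tab:E3A1D3A2eck}). Since $\ell$ lies in the smooth locus, every component containing $\ell$ carries $\ell$ with multiplicity one. On such a component the $b$-dependence of $K_{S'} + (b,c)B' + \Delta$ is a single term $b\ell$ with $\ell$ a $(-1)$-curve or a fiber class. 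I would therefore first check, component by component, that the only walls with nonzero slope are the Eckardt wall $c = -\frac{b}{2}+1$, present when $\ell$ passes through an attached $E:\mathbb{P}^2$ (\Cref{rem:elleckcase}), and the boundary $b=c$. The aim is to show that every other wall is horizontal, with the same values $c = \frac{2}{3}, \frac{1}{2}, \frac{1}{3}, \frac14, \frac16$ as in \Cref{E2A1D3A2_mult1}.

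Concretely, I would reuse the component computations for the $\widetilde{E}_{2A_1} \cap \widetilde{D}_{3A_2}$ case. Each component type of the $3A_1$ surface ($\mathrm{Bl}_k\mathbb{P}^2$ with points on two lines, and the various $\mathbb{F}_0$ pieces) is a degeneration of one already treated. The log canonical divisors are computed with \Cref{teo:nefcones}, and ampleness fails along the exceptional curves or rulings at these horizontal values. The slc condition is verified with \Cref{lem:lineslccriteria} and \Cref{lem:linesA1lccriteria}. For the constant volume condition \eqref{eq:constantvol}, I would sum the component volumes. Because $\ell$ enters only linearly through a multiplicity-one curve, no volume-only walls like $c=\frac b3$ should appear. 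At each wall I would apply \Cref{cor:gluing} to glue the componentwise log canonical models, checking conditions \ref{cor:gluing1}--\ref{cor:gluing3} on the rational components as in Step 0.

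For the degeneration statement I would argue as follows. At $c = \frac14$ the components coming from the non-flat $2A_3$ divisors that do not contain $\ell$ are contracted, since the relevant rulings have intersection $4c-1$ with the log canonical divisor. The resulting stable models coincide with those over the strata $\widetilde{D}_{A_1}\cap\widetilde{E}_{3A_1}\cap\widetilde{D}_{3A_2}$ and the other two listed strata. The result is then that these strata are identified as degenerations below $c=\frac14$, which mirrors the analogous statement in \Cref{E2A1D3A2_mult1}.

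I expect the main obstacle to be the bookkeeping of the many $\mathbb{F}_0$ components and the gluing. In particular, one must verify base-point-freeness along the conductor for \Cref{cor:gluing} at $c=\frac14$ and $c=\frac16$, where several components collapse at once. If the gluing lemma fails there, I would first try a small blow-up and blow-down of the offending component before contracting, as suggested in the remark preceding \Cref{lem:nobspts}.
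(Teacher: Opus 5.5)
Your proposal follows the paper's route: the paper gives no separate argument for this case beyond running \Cref{conv:descproof} component by component as in the proof of \Cref{E2A1_mult4}. Your reading of the $c=\frac14$ wall, as the contraction of the non-flat $2A_3$ components not containing $\ell$, also matches its description of $\widetilde{Y}_\ell \to \ddot{Y}_\ell$. When you carry out the check, note that the separate diagonal chambers for $b=c\le\frac13$ do not follow from $\ell$ having multiplicity one: they require some curve whose log canonical degree is a positive multiple of $b-c$, which only appears after the $c=\frac13$ contractions, so you have to locate that curve explicitly in the computation.
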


\subsubsection{\textbf{$\ell$ passes through exactly one node}}
\begin{pro}
	For type $\widetilde{E}_{3A_1}\cap\widetilde{D}_{3A_2}$ $(b,c)$-weighted stable marked cubic surfaces $(S, (b,c)B)$ where $\ell$ is passing through exactly one node in the corresponding $\overline{Y}_\ell$ boundary, the walls are the same as in Proposition \ref{E2A1D3A2_mult2}. Additionally, crossing the wall $c = \frac{1}{4}$ introduces type
	$$\widetilde{D}_{A_1}\cap \widetilde{E}_{3A_1} \cap \widetilde{D}_{3A_2}, \hspace{.25cm} \widetilde{E}_{2A_1} \cap \widetilde{E}_{3A_1} \cap \widetilde{D}_{3A_2}, \hspace{.25cm} \widetilde{D}_{A_1}\cap \widetilde{E}_{2A_1} \cap \widetilde{E}_{3A_1} \cap \widetilde{D}_{3A_2}$$ 
	surfaces as degenerations of type $\widetilde{E}_{3A_1} \cap \widetilde{D}_{3A_2}$ surfaces.
\end{pro}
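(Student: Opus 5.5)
The statement is the last appendix proposition: for type $\widetilde{E}_{3A_1}\cap\widetilde{D}_{3A_2}$ surfaces with $\ell$ through exactly one node, the walls agree with those of \Cref{E2A1D3A2_mult2}, and crossing $c=\frac14$ produces the listed degenerations. I will follow \Cref{conv:descproof} and model each step on the proof of \Cref{E2A1_mult4}, with one twist. Rather than recompute everything from scratch, I will reduce to the $\widetilde{E}_{2A_1}\cap\widetilde{D}_{3A_2}$ one-node case. The idea is to compare irreducible components of the two surface types and show that every component whose log canonical divisor depends on $b$ is shared between them.

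\textbf{Step 1: set up the starting surface.} Start from the weight $(1,1)$ fiber of $\ddot\pi$ over a general point of $\widetilde{E}_{3A_1}\cap\widetilde{D}_{3A_2}$. This is \cite[Figs.~43, 44]{schock_moduli_2024} with its Eckardt points blown up as in \Cref{teo:eckblowup}, and the Eckardt configurations are read off from \Cref{teo:eckconfig}. Fix $\ell$ through exactly one node.

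\textbf{Step 2: split the components and compute.} Split the components into two classes: those containing $\ell$ (or meeting it at a gluing curve), and those not. For each component, write $K_{S'}+(b,c)B'+\Delta$ in a basis $h,e_i$ or $h_1,h_2,e_i$, as done for the Type $2$, $3'$ and $4'$ computations in Step 0 of that proof. For the components not meeting $\ell$, the divisor depends only on $c$. Their walls are horizontal and coincide with Schock's uniform walls $c=\frac23,\frac12,\frac13,\frac14,\frac16$, which are already present in \Cref{E2A1D3A2_mult1,E2A1D3A2_mult2}.

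\textbf{Step 3: identify the $b$-dependent components.} The components containing $\ell$ sit near the node through which $\ell$ passes, together with the adjacent non-flat $2A_3$ pieces. I expect them to be isomorphic, with identical boundary and gluing data, to the corresponding components in the $\widetilde{E}_{2A_1}\cap\widetilde{D}_{3A_2}$ one-node case. This is because the extra $A_1$ is disjoint from $\ell$. If so, the slanted walls $c=-b+1$, $c=\frac{b}{4}$ and $c=\frac{b}{10}+\frac1{10}$ arise from the same inequalities as before. Slc failures are handled with \Cref{lem:lineslccriteria,lem:linesA1lccriteria}, and ampleness with \Cref{teo:nefcones}. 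Each contraction is glued via \Cref{cor:gluing}, after checking its hypotheses: klt components, nefness, and basepoint-freeness along the conductors, which are smooth rational curves.

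\textbf{Step 4: check the volume condition.} Verify that the component volumes sum to \eqref{eq:constantvol} in every chamber. The line $c=\frac b4$ should again be a chamber by itself because the volume condition fails off it, exactly as $c=\frac b3$ does in Step 3 of the proof of \Cref{E2A1_mult4}.

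\textbf{Step 5: the wall $c=\frac14$.} Show that at $c=\frac14$ the non-flat $2A_3$ components not containing $\ell$ contract. This identifies the fibers with the fibers of the listed types $\widetilde{D}_{A_1}\cap\widetilde{E}_{3A_1}\cap\widetilde{D}_{3A_2}$, etc. Then run the procedure on these degenerate fibers. For these degenerate fibers, \Cref{teo:eckconfig} guarantees there are no Eckardt points on the degenerate $\mathrm{Bl}_5\mathbb{P}^2$ pieces.

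\textbf{Main obstacle.} The hardest part is the claim in Step 3 that the $\ell$-components match those of the $\widetilde{E}_{2A_1}\cap\widetilde{D}_{3A_2}$ case, and, relatedly, the volume bookkeeping in Step 4. In the degenerate fibers the $\mathrm{Bl}_5\mathbb{P}^2$ components break into four pieces, so the gluing data changes. Basepoint-freeness along the conductors may then fail, which would make \Cref{cor:gluing} inapplicable. If that happens, I would first try blowing up the intersection points of $\ell$ with the gluing curves, as in the passage from $S_5$ to $S_6$ in the proof of \Cref{E2A1_mult4}, and reapply the corollary.
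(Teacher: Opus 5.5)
Your overall framework (running \Cref{conv:descproof} on the fiber over a general point of $\widetilde{E}_{3A_1}\cap\widetilde{D}_{3A_2}$, gluing via \Cref{cor:gluing}, checking \eqref{eq:constantvol}, and contracting the non-flat $2A_3$ pieces at $c=\frac14$) is what the paper intends. The paper gives no separate argument for this proposition beyond running the procedure as in the proof of \Cref{E2A1_mult4}.

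\textbf{The gap is the shortcut in Step 3.} That step is where the content of the statement (that the $b$-dependent walls are those of \Cref{E2A1D3A2_mult2}) would come from, and its justification fails.
\begin{itemize}
\item Any two nodes of a cubic surface are joined by a line of the surface, since that line meets the cubic with multiplicity at least $4$. So the extra node $R$ is joined to the node $P\in\ell$ by a multiplicity-$4$ line through $P$; it is not ``disjoint from $\ell$'' in any useful sense.
\item Comparing the $2A_1\cap 3A_2$ and $3A_1\cap 3A_2$ pictures in \Cref{tab:narukifibers}, two of the four dashed lines at $P$ merge into the dotted line $PR$. So the configuration at the node $\ell$ passes through changes, and with it the components over $P$ and their gluing curves. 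Indeed, the component inventories of the two types differ: compare the appendix table for $\widetilde{E}_{2A_1}\cap\widetilde{D}_{3A_2}$ with \Cref{tab:E3A1D3A2eck}.
\item The set of components carrying $\ell$ is also not fixed. When the component carrying $\ell$ collapses, $\ell$ reappears on a neighbour whose former gluing curve becomes a boundary curve of weight $b+c$. In the proof of \Cref{E2A1_mult4}, compare the type-2 components of \Cref{E2A1_mult4_-b1c12} and \Cref{E2A1_mult4_14c12}.
\end{itemize}
Hence the later $b$-dependent inequalities are computed on components that genuinely differ between the two types. Agreement of the walls has to be an output of computing $K_{S'}+(b,c)B'+\Delta$ and the volumes on this fiber; it cannot be imported from the other case.

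\textbf{Your wall bookkeeping also needs correcting.}
\begin{itemize}
\item The line $c=\frac{b}{10}+\frac1{10}$ is not a wall but the boundary of the weight domain \eqref{eqn:ampcone}.
\item Conversely, \Cref{E2A1D3A2_mult2} also has the horizontal wall $c=\frac15$ for $\frac45<b\le 1$, and it has the segments of $b=c$ with $c\le\frac13$ as chambers of their own. Both features come from components carrying $\ell$. The diagonal ones come from curves meeting $\ell$ whose log canonical degree vanishes exactly at $b=c$, like the $(-2)$-curves of degree $b-c$ in Step 5 of the proof of \Cref{E2A1_mult4}. Your dichotomy (horizontal walls from $\ell$-free components, slanted walls from $\ell$-components) does not account for either.
\item This fiber can carry Eckardt points on its type-3 $\mathrm{Bl}_5\mathbb{P}^2$ components (\Cref{tab:E3A1D3A2eck}). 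If a piece of $\ell$ passed through one, the attached $\mathbb{P}^2$ would have log canonical divisor $(b+2c-2)h$ and produce the wall $c=-\frac b2+1$. Since \Cref{E2A1D3A2_mult2} has no such wall, you must check that no piece of $\ell$ lies on those components.
\end{itemize}
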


\subsubsection{\textbf{$\ell$ passes through exactly two nodes}}
\begin{pro}
	For type $\widetilde{E}_{3A_1} \cap \widetilde{D}_{3A_2}$ $(b,c)$-weighted stable marked cubic surfaces $(S, (b,c)B)$ where $\ell$ passes through exactly two nodes in the corresponding $\overline{Y}_\ell$ boundary, there are 16 chambers. The line $c = \frac{b}{3}$ is its own chamber. Additionally, crossing the wall $c = -\frac{b}{3} + \frac{1}{3}$ introduces type
	$$\widetilde{D}_{A_1}\cap \widetilde{E}_{3A_1} \cap \widetilde{D}_{3A_2}, \hspace{.25cm} \widetilde{E}_{2A_1} \cap \widetilde{E}_{3A_1} \cap \widetilde{D}_{3A_2}, \hspace{.25cm} \widetilde{D}_{A_1}\cap \widetilde{E}_{2A_1} \cap \widetilde{E}_{3A_1} \cap \widetilde{D}_{3A_2}$$
	surfaces as degerenations of type $\widetilde{E}_{3A_1} \cap \widetilde{D}_{3A_2}$. In this case, if $\ell$ is in the component that is the degeneration of $\mathrm{Bl}_5 \mathbb{P}^2$, the wall and chamber decomposition has 19 chambers and explained more below. Otherwise, the wall and chamber decomposition is the same as type $\widetilde{E}_{3A_1} \cap \widetilde{D}_{3A_2}$.
	\begin{center}
		\begin{tikzpicture}[scale=14]
			\draw[very thick,-latex] (1/9, 1/9) -- (1.03, 1/9) 
			node[right]{$b$};
			
			\draw[very thick,-latex] (1/9, 1/9) -- (1/9, 1.03) 
			node[left]{$c$};
			
			\draw[solid] (1/9,1/9) -- (1,1);
			\node at (1/9, 1/9) [below] {\tiny $(\frac{1}{9}, \frac{1}{9})$};
			\node at (1, 1) [left] {$(1, 1)$};
			
			\draw[thick] (1, 1/9) -- (1,1);
			\node at (1, 1/9) [below] {$(1, \frac{1}{9})$};
			
			\draw[thick] (2/3, 2/3) -- (1, 2/3) node[right] {\tiny{$c=\frac{2}{3}$}};
			\node[circle, fill, inner sep=1pt] at (2/3, 2/3) {};
			\node at (2/3, 2/3) [left] {\tiny$(\frac{2}{3}, \frac{2}{3})$};
			\node at (8/9, 7/9) {Sch $\frac{2}{3} < c \leq 1$};
			\node at (0.9, 0.63) {$-\frac{b}{2}+1 < c \leq \frac{2}{3}$};
			
			\draw[solid] (2/3, 2/3) -- (1, 1/2) node[right] {\color{black} \tiny{$c = \frac{1}{2}$}};
			\node[rotate=-26.6] at (5/6, 57/96) {\tiny wall due to $\ell$ containing Eckardt point(s)};
			
			\draw[thick] (1/2, 1/2) -- (1, 1/2);
			\node[circle, fill, inner sep=1pt] at (1/2, 1/2) {};
			\node at (1/2, 1/2) [left] {\tiny $(\frac{1}{2}, \frac{1}{2})$};
			\node at (0.7, 0.56) {Sch $\frac{1}{2} < c \leq \frac{2}{3}$};
			
			\draw[thick] (1/2, 1/2) -- (3/4, 1/4);
			\node[rotate=-45] at (7/12, 5/12) [above] {\tiny $c = -b+1$};
			\node[align=center] at (5/6, 0.42) {\(\frac{1}{3} < c \leq \frac{1}{2}\) \\ \(c > -b+1 \)};
			
			\draw[thick] (1/4, 1/4) -- (7/13, 2/13);
			\node[circle, fill, inner sep=1pt] at (1/4, 1/4) {};
			\node[rotate=-18.43] at (3/8, 24/120) [above] {\tiny$c=-\frac{b}{3} + \frac{1}{3}$};
			\node at (7/13, 2/13) [below] {\tiny{$(\frac{7}{13}, \frac{2}{13})$}};
			\node at (1/4, 1/4) [left] {\tiny$(\frac{1}{4}, \frac{1}{4})$};
			\node at (1.77/6, 4.55/24) {$\frac{1}{6} < c \leq \frac{1}{4}$, $b \neq c$};
			
			\draw[thick] (1/3, 1/3) -- (1,1/3);
			\node at (1, 1/3) [right] {\tiny$c=1/3$};
			\node at (1/3, 1/3) [left] {\tiny$(\frac{1}{3}, \frac{1}{3})$};
			\node[align=center] at (5.2/6, 7/24) {\(\frac{1}{4} < c \leq \frac{1}{3}\)\\ \(c > -b+1\)};
			\node[align=center] at (1/2, 4.7/12) {Sch $\frac{1}{3} < c \leq \frac{1}{2}$};
			\node[align=center] at (1/2, 7/24) {\( \frac{1}{4} < c \leq \frac{1}{3}\)\\ \(c \leq -b+1, b \neq c \)};
			
			\draw[thick] (1/4, 1/4) -- (1,1/4);
			\node at (1, 1/4) [right] {\tiny$c=1/4$};
			\node at (1/4, 1/4) [left] {\tiny$(\frac{1}{4}, \frac{1}{4})$};
			\node[align=center] at (6.5/13, 2.5/12) {\tiny \(-\frac{b}{3} + \frac{1}{3} < c \leq \frac{1}{4}\) \\ \tiny \(c > \frac{b}{3}\)};
			\node[align=left] at (5.2/6, 10.5/48) {\tiny \(\frac{b}{10} + \frac{1}{10} < c \leq \frac{1}{4}\) \\ \tiny \(\frac{1}{6} < c < \frac{b}{3}\)};
			
			\draw[very thick] (1/2, 1/6) -- (3/4, 1/4);
			\node[rotate=18.43] at (2/3, 17/81) [above] {\tiny$c=\frac{b}{3}$};
			
			\draw[thick] (1/6, 1/6) -- (2/3, 1/6);
			\node[circle, fill, inner sep=1pt] at (1/6, 1/6) {};
			\node at (7/36, 7/36) [left] {\tiny$(\frac{1}{6}, \frac{1}{6}$)};
			\node at (2/3, 1/6) [below] {\tiny{$(\frac{2}{3}, \frac{1}{6})$}};
			\draw[->] (7/24, 1/11) -- (1/4, 10/72);
			\node[circle, fill, inner sep=1.5pt] at (1/4, 10/72) [above] {};
			\node[align=center] at (7/24, 1/20) {\(\frac{b}{10} + \frac{1}{10} < c \leq \frac{1}{6}\) \\ \(c \leq - \frac{b}{3} + \frac{1}{3}, b \neq c\)};
			\draw[->] (7/13, 1/11) -- (7.3/13, 9.4/60);
			\node[circle, fill, inner sep=1.5pt] at (7.3/13, 9.4/60) [above] {};
			\node[align=center] at (7/13, 1/20) {\(\frac{b}{10} + \frac{1}{10} < c \leq \frac{1}{6}\) \\ \(c > -\frac{b}{3} + \frac{1}{3}\)};
			
			\draw[very thick] (1/4, 1/4) -- (1/3, 1/3);
			\draw[->] (1/11, 1/3) -- (7/24, 7/24);
			\node at (1/10, 1/3) [left] {Sch $\frac{1}{4} < c \leq \frac{1}{3}$};		
			\node[circle, fill, inner sep=1pt] at (1/4, 1/4) {};
			\node[circle, fill, inner sep=1pt] at (1/3, 1/3) {};
			
			\draw[very thick] (1/6, 1/6) -- (1/4, 1/4);
			\draw[->] (1/11, 1/4) -- (5/24, 5/24);
			\node at (1/10, 1/4) [left] {Sch $\frac{1}{6} < c \leq \frac{1}{4}$};
			\node[circle, fill, inner sep=1pt] at (1/6, 1/6) {};
			
			\draw[very thick] (1/9, 1/9) -- (1/6, 1/6);
			\draw[->] (1/11, 1/6) -- (5/36, 5/36);
			\node at (1/10, 1/6) [left] {Sch $\frac{1}{9} < c \leq \frac{1}{6}$};
			
			\draw[dashed] (1/9, 1/9) -- (1, 1/5);
			\node[rotate=5.71] at (5/6, 11/60) [below] {\tiny$c=b/10 + 1/10$};
			\node at (1, 1/5) [right] {\tiny{$c=\frac{1}{5}$}};
			
		\end{tikzpicture}
	\end{center}
	The following is the wall and chamber decomposition for type
	$$\widetilde{D}_{A_1}\cap \widetilde{E}_{3A_1} \cap \widetilde{D}_{3A_2}, \hspace{.25cm} \widetilde{E}_{2A_1} \cap \widetilde{E}_{3A_1} \cap \widetilde{D}_{3A_2}, \hspace{.25cm} \widetilde{D}_{A_1}\cap \widetilde{E}_{2A_1} \cap \widetilde{E}_{3A_1} \cap \widetilde{D}_{3A_2}$$ 
	$(b,c)$-weighted stable marked cubic surfaces $(S, (b,c)B)$ where $\ell$ passes through exactly two nodes in the corresponding $\overline{Y}_\ell$ boundary. There are 19 chambers. Along with the line $c = \frac{b}{3}$, the lines $c = \frac{1}{4}$ $(\frac{2}{3} < b \leq 1)$ and $c = \frac{b}{4}$ are their own chambers.
	\begin{center}
		\begin{tikzpicture}[scale=14]
			\draw[very thick,-latex] (1/9, 1/9) -- (1.03, 1/9) 
			node[right]{$b$};
			
			\draw[very thick,-latex] (1/9, 1/9) -- (1/9, 1.03) 
			node[left]{$c$};
			
			\draw[solid] (1/9,1/9) -- (1,1);
			\node at (1/9, 1/9) [below] {\tiny $(\frac{1}{9}, \frac{1}{9})$};
			\node at (1, 1) [left] {$(1, 1)$};
			
			\draw[thick] (1, 1/9) -- (1,1);
			\node at (1, 1/9) [below] {$(1, \frac{1}{9})$};
			
			\draw[thick] (2/3, 2/3) -- (1, 2/3) node[right] {\tiny{$c=\frac{2}{3}$}};
			\node[circle, fill, inner sep=1pt] at (2/3, 2/3) {};
			\node at (2/3, 2/3) [left] {\tiny$(\frac{2}{3}, \frac{2}{3})$};
			\node at (8/9, 7/9) {Sch $\frac{2}{3} < c \leq 1$};
			
			\draw[thick] (1/2, 1/2) -- (1, 1/2);
			\node[circle, fill, inner sep=1pt] at (1/2, 1/2) {};
			\node at (1/2, 1/2) [left] {\tiny $(\frac{1}{2}, \frac{1}{2})$};
			\node at (0.8, 0.58) {Sch $\frac{1}{2} < c \leq \frac{2}{3}$};
			
			\draw[thick] (1/2, 1/2) -- (4/5, 1/5);
			\node[rotate=-45] at (7/12, 5/12) [above] {\tiny $c = -b+1$};
			\node[align=center] at (5/6, 0.42) {\(\frac{1}{3} < c \leq \frac{1}{2}\) \\ \(c > -b+1 \)};
			
			\draw[thick] (1/4, 1/4) -- (7/13, 2/13);
			\node[circle, fill, inner sep=1pt] at (1/4, 1/4) {};
			\node[rotate=-18.43] at (3/8, 24/120) [above] {\tiny$c=-\frac{b}{3} + \frac{1}{3}$};
			\node at (7/13, 2/13) [below] {\tiny{$(\frac{7}{13}, \frac{2}{13})$}};
			\node at (1/4, 1/4) [left] {\tiny$(\frac{1}{4}, \frac{1}{4})$};
			\node at (1.77/6, 4.55/24) {$\frac{1}{6} < c \leq \frac{1}{4}$, $b \neq c$};
			
			\draw[thick] (1/3, 1/3) -- (1,1/3);
			\node at (1, 1/3) [right] {\tiny$c=1/3$};
			\node at (1/3, 1/3) [left] {\tiny$(\frac{1}{3}, \frac{1}{3})$};
			\node[align=center] at (5.2/6, 7/24) {\(\frac{1}{4} < c \leq \frac{1}{3}\)\\ \(c > -b+1\)};
			\node[align=center] at (1/2, 4.7/12) {Sch $\frac{1}{3} < c \leq \frac{1}{2}$};
			\node[align=center] at (1/2, 7/24) {\( \frac{1}{4} < c \leq \frac{1}{3}\)\\ \(c \leq -b+1, b \neq c \)};
			
			\draw[thick] (1/4, 1/4) -- (3/4,1/4);
			\node at (1, 1/4) [right] {\tiny$c=1/4$};
			\node at (1/4, 1/4) [left] {\tiny$(\frac{1}{4}, \frac{1}{4})$};
			\node[align=center] at (6.5/13, 2.5/12) {\tiny \(-\frac{b}{3} + \frac{1}{3} < c \leq \frac{1}{4}\) \\ \tiny \(c > \frac{b}{3}\)};
			
			\draw[thick] (1/6, 1/6) -- (2/3, 1/6);
			\node[circle, fill, inner sep=1pt] at (1/6, 1/6) {};
			\node at (7/36, 7/36) [left] {\tiny$(\frac{1}{6}, \frac{1}{6}$)};
			\node at (2/3, 1/6) [below] {\tiny{$(\frac{2}{3}, \frac{1}{6})$}};
			\draw[->] (7/24, 1/11) -- (1/4, 10/72);
			\node[circle, fill, inner sep=1.5pt] at (1/4, 10/72) [above] {};
			
			\draw[very thick] (1/4, 1/4) -- (1/3, 1/3);
			\draw[->] (1/11, 1/3) -- (7/24, 7/24);
			\node at (1/10, 1/3) [left] {Sch $\frac{1}{4} < c \leq \frac{1}{3}$};		
			\node[circle, fill, inner sep=1pt] at (1/4, 1/4) {};
			\node[circle, fill, inner sep=1pt] at (1/3, 1/3) {};
			
			\draw[very thick] (1/6, 1/6) -- (1/4, 1/4);
			\draw[->] (1/11, 1/4) -- (5/24, 5/24);
			\node at (1/10, 1/4) [left] {Sch $\frac{1}{6} < c \leq \frac{1}{4}$};
			\node[circle, fill, inner sep=1pt] at (1/6, 1/6) {};
			%
			\draw[thick] (1/6, 1/6) -- (2/3, 1/6);
			\node at (7/36, 7/36) [left] {\tiny$(\frac{1}{6}, \frac{1}{6}$)};
			\draw[->] (7/24, 1/11) -- (1/4, 10/72);
			\node[circle, fill, inner sep=1pt] at (1/4, 10/72) [above] {};
			\node[align=center] at (7/24, 1/20) {\(\frac{b}{10} + \frac{1}{10} < c \leq \frac{1}{6}\) \\ \(c \leq - \frac{b}{3} + \frac{1}{3}, b \neq c\)};
			%
			\draw[thick] (1/4, 1/4) -- (3/4, 1/4);
			\draw[very thick] (3/4, 1/4) -- (1, 1/4);
			\node[align=center] at (6.5/13, 2.5/12) {\tiny \(-\frac{b}{3} + \frac{1}{3} < c \leq \frac{1}{4}\) \\ \tiny \(c > \frac{b}{3}\)};
			\node at (1, 1/4) [right] {\tiny{$c=1/4$}};
			%
			\draw[thick] (4/5, 1/5) -- (1, 1/5);
			\node at (1, 1/5) [right] {\tiny{$c=1/5$}};
			%
			\draw[very thick] (1/2, 1/6) -- (3/4, 1/4);
			\node[rotate=18.43] at (2/3, 17/81) [above] {\tiny$c=\frac{b}{3}$};
			%
			\draw[very thick] (2/3, 1/6) -- (4/5, 1/5);
			\node at (2/3, 1/6) [below] {\tiny{$(\frac{2}{3}, \frac{1}{6})$}};
			\node[rotate=14.04] at (22/30, 21/120) [above] {\tiny $c=\frac{b}{4}$};
			
			\draw[->] (1.1, 1/6) -- (5.2/6, 10.5/48);
			\node[circle, fill, inner sep=1.5pt] at (5.15/6, 10.5/48) [above] {};
			\node[align=left] at (1.16, 1/6) {\(\frac{1}{5} < c < \frac{1}{4}\) \\ \(c > -b+1\)};
			
			\draw[->] (1.05, .85/3) -- (8.5/13, 1/5);
			\node[circle, fill, inner sep=1.5pt] at (8.5/13, 1/5) [left] {};
			\node[align=center] at (1.15, .85/3) {\(\frac{1}{6} < c \leq -b+1\) \\ \(\frac{b}{4} < c < \frac{b}{3}\)};
			
			\draw[->] (7/13, 1/11) -- (7.3/13, 9.4/60);
			\node[circle, fill, inner sep=1.5pt] at (7.3/13, 9.4/60) [above] {};
			\node[align=center] at (7/13, 1/20) {\(\frac{b}{10} + \frac{1}{10} < c \leq \frac{1}{6}\) \\ \(c > -\frac{b}{3} + \frac{1}{3}\)};
			
			\draw[->] (10/13, 1/11) -- (10.5/13, 1.1/6);
			\node[circle, fill, inner sep=1.5pt] at (10.5/13, 1.1/6) [above] {};
			\node[align=center] at (10/13, 1/20) {\(\frac{b}{10} + \frac{1}{10} < c \leq \frac{1}{5}\) \\ \(c < \frac{b}{4}\)};
			
			\draw[very thick] (1/9, 1/9) -- (1/6, 1/6);
			\draw[->] (1/11, 1/6) -- (5/36, 5/36);
			\node at (1/10, 1/6) [left] {Sch $\frac{1}{9} < c \leq \frac{1}{6}$};
			
			\draw[dashed] (1/9, 1/9) -- (1, 1/5);
			\node[rotate=5.71] at (5/6, 11/60) [below] {\tiny$c=b/10 + 1/10$};
			\node at (1, 1/5) [right] {\tiny{$c=\frac{1}{5}$}};
			
		\end{tikzpicture}
	\end{center}
\end{pro}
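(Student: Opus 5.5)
The statement is the last proposition of the appendix: type $\widetilde{E}_{3A_1} \cap \widetilde{D}_{3A_2}$ with $\ell$ through exactly two nodes, giving 16 chambers, plus 19 chambers for the degenerate types. I would prove it by running \Cref{conv:descproof} exactly as in the proof of \Cref{E2A1_mult4}, using the weight-one fibers of Schock \cite[Figs.~43, 44]{schock_moduli_2024} with $\ell$ chosen through two nodes.

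\textbf{Step 0: the weight-one surface.} Start from the fiber of $\ddot\pi$ over a general point of the relevant component of $\widetilde{E}_{3A_1}\cap\widetilde{D}_{3A_2}$ in $\overline{Y}_{(1,1)}$. Its components are the analogues of \Cref{tab:E2A1D3A2eck}, together with the extra $\mathbb{F}_0$'s coming from the third node, and an attached $E:\mathbb{P}^2$ if an Eckardt point is present (\Cref{teo:eckblowup}). For each component $S'$ I would:
\begin{itemize}
\item record the classes of $B|_{S'}$ and $\Delta_{S'}$ in a basis $h,e_i$ (or $h_1,h_2,e_i$);
\item compute $K_{S'}+(b,c)B|_{S'}+\Delta_{S'}$ and test ampleness against the $(-1)$- and $(-2)$-curves via \Cref{teo:nefcones};
\item check slc using \Cref{lem:lineslccriteria} and \Cref{lem:linesA1lccriteria}.
\end{itemize}
Summing the component volumes should recover \eqref{eq:constantvol}, which confirms the starting chamber Sch $\frac23<c\le1$ and also the chamber $-\frac b2+1<c\le\frac23$ when $\ell$ meets $E$ (\Cref{rem:elleckcase}).

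\textbf{Descending the scan.} I would then scan downward through the walls. At each wall:
\begin{itemize}
\item locate the first curve class whose intersection with the log canonical divisor vanishes; these give walls of the shape $c=\frac23,\frac12,-b+1,\frac13,\frac14,-\frac b3+\frac13,\frac16,\frac b3,\ldots$;
\item perform the corresponding contraction on each component (blow-down, ruling projection, or contraction to a point);
\item reglue the components using \Cref{cor:gluing}.
\end{itemize}
Applying \Cref{cor:gluing} needs three hypotheses on each component: klt, big-and-nef of the shifted divisor, and basepoint-freeness along the conductor. On these toric or near-toric rational components, all three follow from the explicit nef cones.

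Most walls should be inherited. Walls involving only $c$, or $\ell$ through nodes, should reproduce those of \Cref{E2A1_mult4} and \Cref{app:E2A1_mult4degen}, because the $\ell$-containing components (the $\mathrm{Bl}_5\mathbb{P}^2$ type $3'$ and the $4':\mathbb{F}_0$ pieces) have the same local boundary data. The $3A_2$ structure contributes the extra horizontal walls $c=\frac13$ and $\frac14$, as in \Cref{E2A1D3A2_mult2}. Along $b=c$, the three Sch segments come from Schock's uniform case. The count of 16 chambers is then read off by listing the resulting polytopes.

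\textbf{Degenerate types, and the main obstacle.} The second diagram covers the degenerations $\widetilde{D}_{A_1}\cap\widetilde{E}_{3A_1}\cap\widetilde{D}_{3A_2}$ and the others listed, which appear after crossing $c=-\frac b3+\frac13$. Here $\ell$ lies on the four-component degeneration of $\mathrm{Bl}_5\mathbb{P}^2$ from \Cref{teo:eckconfig}, and I would repeat the computation on those components. The step I expect to be hardest is the set of lower-dimensional chambers $c=\frac b3$, $c=\frac b4$, and $c=\frac14$ ($b>\frac34$). There ampleness holds on every component, but the naive glued surface violates the volume condition, exactly as at $c=\frac b3$ in Step~3 of the proof of \Cref{E2A1_mult4}.

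For these chambers I would compute $(K_S+(b,c)B)^2$ of the candidate and show it equals \eqref{eq:constantvol} only on the line in question. Off the line, I would construct the correct replacement by blowing up the intersection points of $\ell$ with the gluing lines (weight-4 exceptional curves) and contracting the $\ell$-ruling. This yields the chambers $\frac b4<c<\frac b3$ and $\frac b{10}+\frac1{10}<c\le\frac15$, $c<\frac b4$. Finally, I would verify by \Cref{cor:gluing} that each replacement is the log canonical model, and use the completeness remark after \Cref{conv:descproof} to conclude that no further walls occur.
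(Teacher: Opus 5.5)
Your proposal takes essentially the same approach as the paper. The paper gives no separate argument for this appendix case; it obtains it by running \Cref{conv:descproof} on Schock's weight-one fibers \cite[Figs.~43, 44]{schock_moduli_2024}, checking slc, ampleness via \Cref{teo:nefcones}, and the constant volume condition \eqref{eq:constantvol} component by component, regluing with \Cref{cor:gluing}, and detecting the one-dimensional chambers such as $c=\frac{b}{3}$ through the volume condition, exactly as in Step~3 of the proof of \Cref{E2A1_mult4}. (One small slip: \Cref{cor:gluing} also requires semi-ampleness of $\omega_{\overline X_i}^m(m(\overline B_i+\overline D_i))$, which you left off your list of hypotheses.)
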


\begin{table}[H]
	\centering
	\begin{tabular}{| c | c | c | c |}
		\hline
		Label & Surface & \# & Eckardt points \\
		\hline
		\hline
		1 & $\mathrm{Bl}_4 \mathbb{P}^2$ & 3 & 0 \\
		2a & $\mathrm{Bl}_4 \mathbb{P}^2$ & 6 & 0 \\
		2b & $\mathbb{F}_0$ & 6 & 0 \\
		2c & $\mathbb{F}_0$ & 12 & 0 \\
		2d & $\mathbb{F}_0$ & 12 & 0 \\
		2e & $\mathbb{F}_0$ & 6 & 0 \\
		3 & $\mathrm{Bl}_5\mathbb{P}^2$ & 3 & 0 or 1 \\
		4a & $\mathbb{F}_0$ & 3 & 0 \\
		4b & $\mathbb{F}_0$ & 6 & 0 \\
		4c & $\mathbb{F}_0$ & 3 & 0 \\
		5a & $\mathbb{F}_0$ & 6 & 0 \\
		5b & $\mathbb{F}_0$ & 3 & 0 \\
		\hline
	\end{tabular}
	\caption{This table is taken from \cite[Table~9]{schock_moduli_2024} The first two columns are the numbered types of irreducible components of the surfaces in the chamber Sch $1/2 < c \leq 2/3$ of type $\widetilde{E}_{3A_1} \cap \widetilde{D}_{3A_2}$ pictured in \cite[Fig.~43]{schock_moduli_2024}. The third column tells us the number of each corresponding component found in the surface. Finally, the last row gives the possible numbers of Eckardt points on each component. For the components of types 1 and 2a, $\mathrm{Bl}_4\mathbb{P}^2$ refers to the blowup of $\mathbb{P}^2$ at 2 points on one line and 2 points on another line (i.e., 4 points in general position). For the components of type 3, $\mathrm{Bl}_5 \mathbb{P}^2$ refers to the blowup of $\mathbb{P}^2$ at 2 points on one line, 2 points on another line, and at the intersection point of these two lines.}
	\label{tab:E3A1D3A2eck}
\end{table}
	
	\else
	
	\fi
	
	\bibliographystyle{amsalpha}
	\bibliography{cubics}

@book{kollar_birational_1998,
	address = {Cambridge},
	series = {Cambridge {Tracts} in {Mathematics}},
	title = {Birational {Geometry} of {Algebraic} {Varieties}},
	isbn = {978-0-521-63277-5},
	url = {https://www.cambridge.org/core/books/birational-geometry-of-algebraic-varieties/FB73FD7FDC65F023521ED6B1BB7F3EC7},
	urldate = {2025-08-21},
	publisher = {Cambridge University Press},
	author = {Kollár, Janos and Mori, Shigefumi},
	year = {1998},
	doi = {10.1017/CBO9780511662560},
}

@misc{casalaina-martin_birational_2024,
	title = {The birational geometry of moduli of cubic surfaces and cubic surfaces with a line},
	url = {http://arxiv.org/abs/2312.15369},
	doi = {10.48550/arXiv.2312.15369},
	urldate = {2024-11-13},
	publisher = {arXiv},
	author = {Casalaina-Martin, Sebastian and Grushevsky, Samuel and Hulek, Klaus},
	month = oct,
	year = {2024},
	note = {arXiv:2312.15369},
}

@article{hacking_stable_2009,
	title = {Stable pair, tropical, and log canonical compactifications of moduli spaces of del {Pezzo} surfaces},
	volume = {178},
	issn = {1432-1297},
	url = {https://doi.org/10.1007/s00222-009-0199-1},
	doi = {10.1007/s00222-009-0199-1},
	language = {en},
	number = {1},
	urldate = {2024-11-13},
	journal = {Inventiones mathematicae},
	author = {Hacking, Paul and Keel, Sean and Tevelev, Jenia},
	month = oct,
	year = {2009},
	pages = {173--227},
}

@article{allcock_complex_2002,
	title = {The complex hyperbolic geometry of the moduli space of cubic surfaces},
	volume = {11},
	issn = {1056-3911, 1534-7486},
	url = {https://www.ams.org/jag/2002-11-04/S1056-3911-02-00314-4/},
	doi = {10.1090/S1056-3911-02-00314-4},
	language = {en},
	number = {4},
	urldate = {2024-11-13},
	journal = {Journal of Algebraic Geometry},
	author = {Allcock, Daniel and Carlson, James and Toledo, Domingo},
	year = {2002},
	pages = {659--724},
}

@article{cayley1849triple,
  title={On the triple tangent planes to a surface of the third order},
  author={Cayley, Arthur},
  journal={Cambridge and Dublin Math},
  volume={4},
  pages={118--132},
  year={1849}
}

@article{naruki_cross_1982,
	title = {Cross {Ratio} {Variety} as a {Moduli} {Space} of {Cubic} {Surfaces}},
	volume = {s3-45},
	copyright = {© 1982 London Mathematical Society},
	issn = {1460-244X},
	url = {https://onlinelibrary.wiley.com/doi/abs/10.1112/plms/s3-45.1.1},
	doi = {10.1112/plms/s3-45.1.1},
	language = {en},
	number = {1},
	urldate = {2025-02-21},
	journal = {Proceedings of the London Mathematical Society},
	author = {Naruki, Isao},
	year = {1982},
	note = {\_eprint: https://onlinelibrary.wiley.com/doi/pdf/10.1112/plms/s3-45.1.1},
	pages = {1--30},
}

@book{kollar_families_2023,
	address = {Cambridge},
	series = {Cambridge {Tracts} in {Mathematics}},
	title = {Families of {Varieties} of {General} {Type}},
	isbn = {978-1-009-34610-8},
	url = {https://www.cambridge.org/core/books/families-of-varieties-of-general-type/423EBD2D90435E5EA8AAC69CE0E6EFD1},
	urldate = {2025-02-21},
	publisher = {Cambridge University Press},
	author = {Kollár, János},
	year = {2023},
	doi = {10.1017/9781009346115},
}

@article{tu_semi-stable_2005,
  title     = {On Semi-stable, Singular Cubic Surfaces},
  author    = {Tu, Nguyen Chanh},
  journal   = {S{\'e}minaires et Congr{\`e}s},
  volume    = {10},
  pages     = {373--389},
  year      = {2005},
  publisher = {Soci{\'e}t{\'e} Math{\'e}matique de France},
  url       = {https://smf.emath.fr/sites/default/files/2017-08/smf_sem-cong_10_373-389__sample.pdf}
}

@article{sekiguchi_cross_2000,
	title = {Cross ratio varieties for root systems {II}: {The} case of the root system of type {E7}},
	volume = {54},
	doi = {10.2206/kyushujm.54.7},
	number = {1},
	journal = {Kyushu Journal of Mathematics},
	author = {Sekiguchi, Jiro},
	year = {2000},
	pages = {7--37},
}

@article{ascher_wall_2023,
	title = {Wall crossing for moduli of stable log pairs},
	volume = {198},
	issn = {0003-486X, 1939-8980},
	url = {https://projecteuclid.org/journals/annals-of-mathematics/volume-198/issue-2/Wall-crossing-for-moduli-of-stable-log-pairs/10.4007/annals.2023.198.2.7.full},
	doi = {10.4007/annals.2023.198.2.7},
	number = {2},
	urldate = {2025-07-29},
	journal = {Annals of Mathematics},
	author = {Ascher, Kenneth and Bejleri, Dori and Inchiostro, Giovanni and Patakfalvi, Zsolt},
	month = sep,
	year = {2023},
	pages = {825--866},
}

@article{gallardo_geometric_2021,
	title = {Geometric interpretation of toroidal compactifications of moduli of points in the line and cubic surfaces},
	volume = {381},
	issn = {0001-8708},
	url = {https://www.sciencedirect.com/science/article/pii/S0001870821000700},
	doi = {10.1016/j.aim.2021.107632},
	urldate = {2025-08-21},
	journal = {Advances in Mathematics},
	author = {Gallardo, Patricio and Kerr, Matt and Schaffler, Luca},
	month = apr,
	year = {2021},
	pages = {107632},
}

@book{kollar_singularities_2013,
	address = {Cambridge},
	series = {Cambridge {Tracts} in {Mathematics}},
	title = {Singularities of the {Minimal} {Model} {Program}},
	isbn = {978-1-107-03534-8},
	url = {https://www.cambridge.org/core/books/singularities-of-the-minimal-model-program/BFD87F262E5174A425F12AD85270E0AA},
	urldate = {2025-08-21},
	publisher = {Cambridge University Press},
	author = {Kollár, János},
	year = {2013},
	doi = {10.1017/CBO9781139547895},
}

@misc{schock_moduli_2024,
	title = {Moduli of weighted stable marked cubic surfaces},
	url = {http://arxiv.org/abs/2305.06922},
	doi = {10.48550/arXiv.2305.06922},
	urldate = {2025-08-21},
	publisher = {arXiv},
	author = {Schock, Nolan},
	month = apr,
	year = {2024},
	note = {arXiv:2305.06922 [math]},
}

@book{oxbury_introduction_2003,
	address = {Cambridge},
	series = {Cambridge {Studies} in {Advanced} {Mathematics}},
	title = {An {Introduction} to {Invariants} and {Moduli}},
	isbn = {978-0-521-80906-1},
	url = {https://www.cambridge.org/core/books/an-introduction-to-invariants-and-moduli/FD47BBB910AA000A98E491D105185928},
	urldate = {2025-02-21},
	publisher = {Cambridge University Press},
	author = {Mukai, Shigeru},
	translator = {Oxbury, W. M.},
	year = {2003},
	doi = {10.1017/CBO9781316257074},
}

@book{manin1986cubic,
  title={Cubic forms: algebra, geometry, arithmetic},
  author={Manin, Yuri},
  volume={4},
  year={1986},
  publisher={Elsevier}
}

@article{dolgachev2005complex,
  title={A complex ball uniformization of the moduli space of cubic surfaces via periods of K3 surfaces},
  author={Dolgachev, Igor and van Geemen, Bert and Kond{\=o}, S},
  year={2005},
  volume = {588},
  journal={J. Reine Angew. Math.},
  pages = {99-148}
}

@book{salmon1912treatise,
  title={A treatise on the analytic geometry of three dimensions},
  author={Salmon, George},
  volume={1},
  year={1912},
  publisher={Longmans, Green and Company}
}

@article{borel1972some,
	title={Some metric properties of arithmetic quotients of symmetric spaces and an extension theorem},
	author={Borel, Armand},
	journal={Journal of Differential Geometry},
	volume={6},
	number={4},
	pages={543--560},
	year={1972},
	publisher={Lehigh University}
}

@article{looijenga1981rational,
  title={Rational surfaces with an anti-canonical cycle},
  author={Looijenga, Eduard},
  journal={Annals of Mathematics},
  volume={114},
  number={2},
  pages={267--322},
  year={1981},
  publisher={JSTOR}
}

@article{ren2016tropicalization,
  title={Tropicalization of del Pezzo surfaces},
  author={Ren, Qingchun and Shaw, Kristin and Sturmfels, Bernd},
  journal={Advances in Mathematics},
  volume={300},
  pages={156--189},
  year={2016},
  publisher={Elsevier}
}

@misc{meng2023mmplocallystablefamilies,
      title={MMP for locally stable families and wall crossing for moduli of stable pairs}, 
      author={Fanjun Meng and Ziquan Zhuang},
      year={2023},
      eprint={2311.01319},
      archivePrefix={arXiv},
      primaryClass={math.AG},
      url={https://arxiv.org/abs/2311.01319}, 
}

@article{gallardo2025explicit,
  title={An explicit wall crossing for the moduli space of hyperplane arrangements},
  author={Gallardo, Patricio and Schaffler, Luca},
  journal={Journal of the London Mathematical Society},
  volume={111},
  number={6},
  pages={e70196},
  year={2025},
  publisher={Wiley Online Library}
}

@book{alexeev2015moduli,
  title={Moduli of Weighted Hyperplane Arrangements},
  author={Alexeev, V. and Bini, G. and Lahoz, M. and Macr{\'\i}, E. and Stellari, P.},
  isbn={9783034809146},
  lccn={2015940007},
  series={Advanced Courses in Mathematics - CRM Barcelona},
  url={https://books.google.com/books?id=NWpbrgEACAAJ},
  year={2015},
  publisher={Springer Basel}
}

@article{doran2004moduli,
  title={Moduli space of cubic surfaces as ball quotient via hypergeometric functions},
  author={Doran, Brent R},
  journal={arXiv preprint math/0404062},
  year={2004}
}

@article{casalaina2021complete,
  title={Complete moduli of cubic threefolds and their intermediate Jacobians},
  author={Casalaina-Martin, Sebastian and Grushevsky, Samuel and Hulek, Klaus and Laza, Radu},
  journal={Proceedings of the London Mathematical Society},
  volume={122},
  number={2},
  pages={259--316},
  year={2021},
  publisher={Wiley Online Library}
}

@article{birkar2010existence,
  title={Existence of minimal models for varieties of log general type},
  author={Birkar, Caucher and Cascini, Paolo and Hacon, Christopher and McKernan, James},
  journal={Journal of the American Mathematical Society},
  volume={23},
  number={2},
  pages={405--468},
  year={2010}
}

@article{cools2011singular,
  title={Singular hypersurfaces possessing infinitely many star points},
  author={Cools, Filip and Coppens, Marc},
  journal={Proceedings of the American Mathematical Society},
  volume={139},
  number={10},
  pages={3413--3422},
  year={2011}
}

@article{derenthal2008nef,
  title={The nef cone volume of generalized del Pezzo surfaces},
  author={Derenthal, Ulrich and Joyce, Michael and Teitler, Zachariah},
  journal={Algebra \& Number Theory},
  volume={2},
  number={2},
  pages={157--182},
  year={2008},
  publisher={Mathematical Sciences Publishers}
}

@article{alexeev2008weighted,
  title={Weighted Grassmannians and stable hyperplane arrangements},
  author={Alexeev, Valery},
  journal={arXiv preprint arXiv:0806.0881},
  year={2008}
}
	
\end{document}